% 
% 1D inelastic Boltzmann
% 
%-----------------------------------------------------------------

\documentclass[english,reqno,11pt,empty]{amsart}
\usepackage{amsmath, amsthm, amsfonts}
\setcounter{tocdepth}{3} 
\usepackage{hyperref}
\hypersetup{
%	hidelinks,
%	colorlinks=false,
	colorlinks=true,
		citecolor=blue!60!black,
		linkcolor=red!60!black,
		urlcolor=green!40!black,
		filecolor=yellow!50!black,
%frenchlinks=true,
	breaklinks=true,
	pdfpagemode=UseNone,
	bookmarksopen=false,
}
%\usepackage{tcolorbox}
%\setmainfont{XITS}
\usepackage{amsmath,amsthm,amssymb}
\usepackage{amscd,indentfirst,epsfig}
\usepackage{latexsym}
\usepackage{times}
\usepackage{enumerate}
\usepackage{mathrsfs}
\usepackage{stmaryrd}
\usepackage{amsopn}
\usepackage{amsmath}
\usepackage{amssymb,dsfont,mathtools}
\usepackage{amsfonts,bm}
\usepackage{amsbsy,amsmath}
\usepackage{amscd}
\usepackage{xcolor}
\usepackage{mathtools}%\usepackage{fourier,setspace,graphicx,xcolor}
\setcounter{tocdepth}{3}
\usepackage{cancel}
\usepackage{hyperref} 
\usepackage{mathrsfs}
\usepackage{amsmath,amsthm,amssymb}
\usepackage{latexsym}
\usepackage{latexsym}
\usepackage{bm}
\usepackage{enumerate}
\usepackage{mathrsfs}
\usepackage{stmaryrd}
\usepackage{amsopn}
\usepackage{amsmath}
\usepackage{amssymb}
\usepackage{amsfonts}
\usepackage{amsbsy}
\usepackage{amscd,indentfirst}
\usepackage{hyperref}
\usepackage{amsfonts,amsmath,latexsym,amssymb,verbatim,amsbsy}
\usepackage{amsthm}
\usepackage{colordvi}
\usepackage{dsfont}
\hsize=126mm \vsize=180mm
\parindent=5mm
\usepackage{hyperref}
\hypersetup{
%	hidelinks,
%	colorlinks=false,
	colorlinks=true,
		citecolor=blue!60!black,
		linkcolor=red!60!black,
		urlcolor=green!40!black,
		filecolor=yellow!50!black,
%frenchlinks=true,
	breaklinks=true,
	pdfpagemode=UseNone,
	bookmarksopen=false,
}
\usepackage{amsmath,amsthm,amssymb}
\usepackage{amscd,indentfirst,epsfig}
\usepackage{latexsym}
\usepackage{times}
\usepackage{enumerate}
\usepackage{mathrsfs}
\usepackage{stmaryrd}
\usepackage{amsopn}
\usepackage{amsmath}
\usepackage{amssymb,dsfont,mathtools}
\usepackage{amsfonts,bm}
\usepackage{amsbsy,amsmath}
\usepackage{amscd}
\usepackage{xcolor}

\linespread{1.1}

\usepackage[mono=false]{libertine}
\usepackage[T1]{fontenc}
\usepackage{amsthm}
\usepackage[cal=euler, scr=boondoxo]{}
\usepackage{microtype}

\usepackage{numprint}

\hsize=126mm \vsize=180mm
\parindent=5mm
\setlength{\oddsidemargin}{.5cm} \setlength{\evensidemargin}{.5cm}
\setlength{\textwidth}{15.0cm} \setlength{\textheight}{21.2cm}

\makeatletter
\def \leq {\leqslant}
\def \le {\leq}
\def \geq {\geqslant}

\def \ge {\geq}

\def\R{\mathbb R}

\def\N{\mathbb N}
\def\C{\mathscr{C}}
\def\D{\mathcal D}

\def \M {\mathcal{M}}

\def\g{\gamma}
\def \ds {\displaystyle}
\def\Gg  {\bm{G}_{\g}}

\def \d {\mathrm{d}}

\def \Q {\mathcal{Q}}

 \def \w {\bm{w}}

\def \X {\mathbb{X}}

\def \ind {\mathbf{1}}
\numberwithin{equation}{section}

\newcommand{\emptylabel}[1]{}

%-----------------------------------------------------------------

% This uses a bibliography style which hyperlinks the paper titles to
% the paper URL specified in the bibtex file. It also uses natbib,
% which cites papers by name such as Euler (1770) instead of [17].

%\usepackage{breakurl}
%\usepackage{natbib}
%\usepackage{url}
\bibliographystyle{plainnat-linked}
% \bibliographystyle{plain}

% Shortcuts
%-----------------------------------------------------------------

% Absolute value 
\newcommand{\abs}[1]{\left\vert#1\right\vert}
% Inner product

% Norm

% Triple norm

%
\def\:{\colon}

% Common double-stroke letters
\def\C{\mathbb{C}}
\def\N{\mathbb{N}}
\def\P{\mathbb{P}}
\def\R{\mathbb{R}}

\def\D{\mathcal{D}}

\DeclareMathOperator{\sgn}{sgn}

% Notation for differentials
\def\d{\,\mathrm{d}}
\def\dx{\d x}

\def\dy{\d y}
\def\dz{\d z}

\def\M{\mathcal M} 
\def\P{\mathcal P}
\def\p{\partial}

\def\ir{\int_{\R}}

\numberwithin{equation}{section}

\usepackage{natbib}
\bibliographystyle{unsrtnat}
% Theorems
%-----------------------------------------------------------------
\newtheorem{theo}{Theorem}[section]
\newtheorem{cor}[theo]{Corollary}
\newtheorem{rmk}[theo]{Remark}
\newtheorem{lem}[theo]{Lemma}
\newtheorem{prp}[theo]{Proposition}

\newtheorem{rem}[theo]{Remark}
\newtheorem{defi}[theo]{Definition}
\theoremstyle{example}

\newcommand{\vertiii}[1]{{\left\vert\kern-0.25ex\left\vert\kern-0.25ex\left\vert #1  %%
    \right\vert\kern-0.25ex\right\vert\kern-0.25ex\right\vert}}                      %%
\newcommand{\verti}[1]{{\left\vert\kern-0.25ex\left\vert\kern-0.25ex\left\vert #1    %%
    \right\vert\kern-0.25ex\right\vert\kern-0.25ex\right\vert}}						 %%	

% Title, author, date
%-----------------------------------------------------------------

% If set, these will be the internal title and author of the PDF (and
% will be listed for example in ereaders and tablets).

\title[One-dimensional inelastic Boltzmann equation]{One-dimensional
  inelastic Boltzmann equation: Regularity \& uniqueness of
  self-similar profiles for moderately hard potentials}

\def\theauthor{R. Alonso, V. Bagland, J. A. Ca\~{n}izo, B. Lods, S. Throm}

\hypersetup{pdfauthor={\theauthor}}

% Paper title and author
%\author{\theauthor}

\vspace{1cm}

\author{R. Alonso}
\address{Division of Arts \& Sciences, Texas A\&M University at Qatar, Education City, Doha, Qatar.}
\email{ricardo.alonso@qatar.tamu.edu}

\author{V. Bagland}
\address{Universit\'{e} Clermont Auvergne, LMBP, UMR 6620 - CNRS,  Campus des C\'ezeaux, 3, place Vasarely, TSA 60026, CS 60026, F-63178 Aubi\`ere Cedex, France.}
\email{Veronique.Bagland@uca.fr}

\author{J. A. Ca\~{n}izo}
\address{Departamento de Matem\'{a}tica Aplicada \& IMAG, Universidad de Granada, Avenida de Fuentenueva S/N, 18071 Granada, Spain.}
\email{canizo@ugr.es}

\author{B. Lods}
\address{Universit\`{a} degli Studi di Torino \& Collegio Carlo Alberto, Department of Economics, Social Sciences, Applied Mathematics and Statistics ``ESOMAS'', Corso Unione Sovietica, 218/bis, 10134 Torino, Italy.}
\email{bertrand.lods@unito.it}

\author{S. Throm}
\address{{Ume\aa} University,  Department of Mathematics and Mathematical Statistics,  901 87 Ume\aa, Sweden }
\email{sebastian.throm@umu.se}

% Date is set automatically unless specified.
\date{}

\begin{document}

\maketitle

\begin{abstract}
  We prove uniqueness of self-similar profiles for the one-dimensional
  inelastic Boltzmann equation with moderately hard potentials, that
  is with collision kernel of the form $|\cdot|^{\g}$ for $\g >0$
  small enough (explicitly quantified). Our result provides the first
  uniqueness statement for self-similar profiles of inelastic
  Boltzmann models allowing for strong inelasticity besides the
  explicitly solvable case of Maxwell interactions (corresponding to
  $\g=0$). Our approach relies on a perturbation argument from the
  corresponding Maxwell model through a careful study of the
  associated linearised operator. In particular, a part of the paper
  is devoted to the trend to equilibrium for the Maxwell model in
  suitable weighted Sobolev spaces, an extension of results which are
  known to hold in weaker topologies. Our results can be seen as a
  first step towards a full proof, in the one-dimensional setting, of
  a conjecture in \cite{ernst} regarding the determination of the
  long-time behaviour of solutions to inelastic Boltzmann equation.
\end{abstract}

\tableofcontents

\section{Introduction}
\label{sec:intro}
 
We treat the one-dimensional Boltzmann equation for moderately hard
potentials, proving regularity and uniqueness of equilibrium
self-similar profiles.  In the process we also contribute to
generalising results related to the asymptotic convergence for the
time-dependent Maxwell model as well by better describing such
convergence in standard Lebesgue and Sobolev spaces.

\subsection{One-dimensional inelastic Boltzmann equation} Inelastic
models for granular matter are ubiquitous in nature and rapid granular
flows are usually described by a suitable modification of the
Boltzmann equation, see \cite{garzo,vill}. Inelastic interactions are
characterised, at the microscopic level, by the continuous dissipation
of the kinetic energy for the system. Typically, in the usual $3D$
physical situation, two particles with velocities
$(v,v_{\star}) \in \R^{3}\times \R^{3}$ interact and, due to inelastic
collision, their respective velocities $v'$ and $v_{\star}'$ after
collision are such that momentum is conserved
$$v+v_{\star}=v'+v_{\star}'$$
but kinetic energy is dissipated at the moment of impact:
$$|v'|^{2}+|v_{\star}'|^{2}  \leq |v|^{2}+|v_{\star}|^{2}.$$
Often the dissipation of kinetic energy is measured in terms of a
single parameter, usually called the \emph{restitution coefficient},
which is the ratio between the magnitude of the normal component of
the relative velocity after and before collision. This coefficient
$e \in [0,1]$ may depend on the relative velocity and encode all the
physical features. It holds then
$$\langle v'-v'_{\ast},n\rangle=-e\,\langle v-v_{\star},n\rangle\,$$
where $n \in \mathbb{S}^{2}$ stands for the unit vector that points
from the $v$-particle center to the $v_{\star}$-particle center at the
moment of impact. Here above, $\langle\cdot,\cdot\rangle$ denotes the
Euclidean inner product in $\R^{3}$.

For one-dimensional interactions, we will rather denote by $x,y$ the
velocities before collision and $x',y'$ those after collision and the
collision mechanism is then described more easily as
$$x'=ax+(1-a)y, \qquad y'=(1-a)x+ay, \qquad a \in [\tfrac{1}{2},1]$$
where  the parameter $a$ describes now the intensity of inelasticity and one checks indeed that $x'+y'=x+y$ whereas
\begin{equation}
  \label{eq:enxx}
  |x'|^{2}+|y'|^{2}-|x|^{2}-|y|^{2} = - 2ab |x-y|^{2} \leq 0,
  \qquad b=1-a,
\end{equation}
where we used that $a^{2}+b^{2}-1=-2ab$ if $b=1-a$. In this case, the
inelastic Boltzmann equation is given by the following, as proposed
in \cite{BK}:
\begin{equation}\label{Intro-e1}
\partial_{s}f(s,x) = \Q_{\g}(f,f)(s,x),\qquad\qquad (s,x)\in (0,\infty)\times\R\,,
\end{equation}
with given initial condition $f(0,x)=f_0(x)\geq0$.  The interaction operator is defined as
\begin{equation}\label{Intro-e1.5}
\Q_{\g}(f,f)(x) = \int_{\R}f(x-ay)f(x+(1-a)y)|y|^{\gamma}\d y - f(x)\int_{\R}f(x-y)|y|^{\gamma}\d y
\end{equation}
for fixed $\gamma\geq0$ and $a\in[\frac{1}{2},1]$.  Notice that the model \eqref{Intro-e1} conserves mass and momentum
$$\ds\int_{\R}\Q_\g(f,f)(x)\dx =\int_{\R}\Q_{\g}(f,f)x\dx= 0\,, 
$$
but dissipates energy since
\begin{multline}\label{eq:ener}\int_{\R}\Q_{\g}(f,f)(x)|x|^{2}\d x\\
=\frac{1}{2}\int_{\R^{2}}f(u)f(v)|u'-v'|^{\g}\left[|u'|^{2}+|v'|^{2}-|u|^{2}-|v|^{2}\right]\d u\d v		\\
=-ab\int_{\R \times \R}f(u)f(v)|u-v|^{\g+2}\d u\d v, \qquad \qquad b=1-a,
\end{multline}
where we used a change of variable $u=x-ay,v=x+by$ and a symmetry argument to get the first identity while we used \eqref{eq:enxx} to establish the second one. This implies that, for any nonnegative initial datum $f_{0}$ and  any solution $f(s,z)$ to \eqref{Intro-e1}, it holds
$$\dfrac{\d}{\d s}\int_{\R}f(s,z)\left(\begin{array}{cc}1 \\ z \end{array}\right)\d z=\left(\begin{array}{cc}0 \\ 0\end{array}\right)$$
while
\begin{equation}\label{eq:dissiab}
\dfrac{\d}{\d s}\int_{\R}f(s,z)z^{2}\d z=-ab\int_{\R\times \R}f(s,u)f(s,v)|u-v|^{\g+2}\d u\d v \leq 0.\end{equation}
One sees therefore that the single parameter $a$ (through the product $ab=a(1-a)$) measures the strength of energy dissipation. The case $a=1$ represents a purely elastic interaction which, in one dimension, is described by no interaction at all, or simply $\Q_\gamma(f,f)=0$.  The other case $a=\frac{1}{2}$ is the case of extreme inelasticity or the sticky particle case; that is, after interaction the particles remain attached yet considered distinct so that no global mass is lost. From now, in all this manuscript, we consider this latter case
$$a=\frac{1}{2}$$ 
but wish to point out that the general case $a\in\left(\frac{1}{2},1\right)$ can be treated in a similar fashion.\\

The above dissipation of kinetic energy (together with mass and
momentum conservation) leads to a natural equilibrium given by the
distribution that accumulates all the initial mass, say $m_0$, at the
initial system's bulk momentum $z_0$:
$$\Q_{\g}(F,F)= 0 \implies \exists \ m_{0} \geq0, z_{0} \in \R \text{ such that } F=m_0\,\delta_{z_0}.$$ 
Such a degenerate solution is of course expected to attract all solutions to \eqref{Intro-e1} but, as for the multi-dimensional model, one expects that, before reaching the degenerate state, solutions behave according to some universal profile as an intermediate asymptotics. 

More precisely, it is believed that a more accurate description can be derived introducing a rescaling of the form 
\begin{equation*}
V_{\g}(s)\,g(t(s),x) = f(s,z)\,,\qquad \qquad x=V_{\g}(s)\,z\,,
\end{equation*}      
where the rescaling functions are given by 
\begin{equation*}
V_{\g}(s) =\begin{cases}
(1+c \,\gamma\, s)^{\frac1\gamma}\,\qquad &\text{ if } \g \in (0,1),\\
e^{cs}\,\qquad &\text{ if } \g=0\,,\end{cases} \quad \text{and} \quad  t(s)=t_{\g}(s)=\begin{cases} \frac{1}{c\gamma}\log(1+c\,\gamma\, s)\,, \qquad &\text{ if } \g \in (0,1),\\
s \,,&\text{ if } \g=0\,.
\end{cases}
\end{equation*}
We refer to \cite{bobcerc1} for a study of the Maxwell model $(\g=0)$
and \cite{ABCL} for the hard potential model $(\g \in (0,1))$. For
$\gamma >0$, this rescaling is useful for any $c > 0$ and we are free
to choose this parameter as we see fit; while for $\gamma = 0$, the
only useful choice is $c=\frac{1}{4}$. We will come back to this
later. Straightforward computations show then that, if $f(s,z)$ is a
solution to \eqref{Intro-e1}, the solution $g(s,z)$ satisfies
\begin{equation}\label{Intro-e2}
\partial_{t}g(t,x) + c\,\partial_{x}\big(x\,g(t,x)\big) = \Q_{\gamma}(g,g)(t,x),\qquad\qquad (t,x)\in (0,\infty)\times\R\,,
\end{equation} 
with $g(0,x) = f_0(x)$ so that
\begin{equation}\label{eq:conse}
  \int_{\R}g(t,x)\d x=\int_{\R}f_{0}(x)\d x=1, \qquad \int_{\R}xg(t,x)\d x=\int_{\R}xf_{0}(x)\d x=0, \qquad \forall t \geq0\end{equation}
due to the conservation of mass and momentum induced by both the drift term and the collision operator $\Q_{\g}$. 
Since, formally,
$$ \int_{\R}\partial_{x}\big(x\,g(t,x)\big) |x|^{2}\d x = -2\int_{\R}g(t,x)\,|x|^{2}\d x\,,$$    
one can interpret the rescaling as an artificial way to add energy into the system, the bigger the $c>0$ the more energy per time unit is added.  Thus, the rescaling has the same effect of adding a background linear ``anti-friction'' with constant $c>0$.  However, unless in the special case $\g=0$, evolution of the kinetic energy along solutions to \eqref{Intro-e2} is not given in closed form. Namely, if 
$$M_{2}(g(t))=\int_{\R}g(t,x)x^{2}\dx$$
one sees from \eqref{Intro-e2} and \eqref{eq:ener} that
\begin{equation}\label{eq:evolenerg}\begin{split}
\dfrac{\d}{\d t}M_{2}(g(t))-2cM_{2}(g(t))&=\int_{\R}\Q_{\g}(g,g)(t,x)x^{2}\d x\\
&=-\frac{1}{4}\int_{\R\times\R}g(t,x)g(t,y)|x-y|^{\g+2}\d x\d y\end{split}\end{equation}
so that the evolution of the second moment of $g(t)$ depends on the evolution of moments of order $\g+2$. The situation is very different in the case $\g=0$ and this basic observation will play a crucial role in our analysis.\medskip

It is important to observe that problems \eqref{Intro-e1} and
\eqref{Intro-e2} are related by a simple rescaling, so that knowledge
of properties of one of them is transferable to the other. Equation
\eqref{Intro-e2} is referred to as the \emph{self-similar
  equation}. For $\gamma > 0$ and any $c > 0$, it has at least one
non-trivial equilibrium with positive energy \citep{ABCL}, satisfying the
equation
\begin{equation}\label{Intro-e3}
c\,\partial_{x}\big(x\,\bm{G}(x)\big) = \Q_{\gamma}(\bm{G},\bm{G})(x)\,.
\end{equation} 
For $\gamma = 0$, there is a non-trivial equilibrium with positive
energy only if $c = \frac{1}{4}$. The equilibria are known as
\emph{self-similar profiles}. Of course, $\bm{G}$ depends on the
choice of $c>0$; however, they are all related by a simple
rescaling. Moreover, the fact the $\bm{G}$ is a regular (smooth)
function is helpful for the technical analysis, for example to have a
standard linearisation referent.  Indeed, in this document we prove
regularity properties for $\bm{G}:=\bm{G}_{\gamma}$ and answer the
uniqueness question for the problem \eqref{Intro-e3}, at least in the
context of moderately hard potentials, that is, our results will be
valid for relatively small positive $\gamma$.  \medskip

The question of uniqueness for self-similar profiles of the model
\eqref{Intro-e3} is notoriously difficult for $\gamma > 0$. Since the
model conserves mass and momentum, a uniqueness result has to take
into account this fact. In other words, steady states should be unique
in a space with fixed mass and momentum. In the case of Maxwell
interactions ($\gamma = 0$ and $c=\frac{1}{4}$), energy is additionally
conserved, and it is known that self similar profiles are unique when
mass, momentum and energy are fixed \citep{bobcerc1}. This case is
less technical, and somehow critical, since the self-similar rescaling
is uniquely determined (by the initial mass and energy) as opposed to
the case $\gamma>0$ where one can choose any $c>0$ to perform the
rescaling.  For the Maxwell case the rescaling leads to the
conservation of energy which is an important help in the analysis,
together with a computable spectral gap for the linearised equation.
We refer to \cite{MR2355628} for a good account of the theory of the
Maxwell model in one and multiple dimensions.  \medskip

There is another type of uniqueness result.  In the context of
$3D$-dissipative particles it is possible to define a weakly inelastic
regime.  A big difference between the one-dimensional problem and the
three-dimensional problem is that in the latter the elastic limit of
the model is the classical Boltzmann equation whereas in the
one-dimensional problem the elastic limit $a=1$ is simply $\partial_t
f =0$.  This is the reason one can study weakly inelastic systems as a
perturbation of the classical Boltzmann equation in several dimensions
with powerful tools such as entropy-entropy dissipation methods
leading to a uniqueness result in this context, see
\cite{MM09,CMS,AL}.  And yet, the same strategy completely fails in
the $1D$-dissipative model where such tools are not available.
\medskip

Our analysis for small positive $\gamma$ will be also perturbative
taking as reference the one-dimensional Maxwell sticky particle model;
that is, our result covers the most extreme case of inelasticity
providing a strong indication that the steady inelastic self-similar
profiles should be unique in full generality, for all collision
kernels and degrees of inelasticity.  This perturbation is highly
singular in two respects: first, the Maxwell model conserves energy
{in self-similar variables} which is not the case for $\gamma>0$.
This is a major difficulty since the spectral gap of the linearised
Maxwell model depends crucially on this conservation law.  Second, the
tail of the self-similar profiles are completely different, for
Maxwell models the profile enjoys some few statistical moments only,
whereas for hard potentials the profile has exponential tails.
Fortunately, steady states will enjoy regularity for all
$\gamma\geq0$, a property that will be also proved in this paper.

 \subsection{The problem at stake}
 
The main concern of the present paper is, as said, the uniqueness of the steady solutions $\Gg$ to the equation
\begin{equation}
\label{eq:steadyg}
c\partial_{x}(x\Gg ) = \Q_{\g}(\Gg ,\Gg )\,,
\end{equation}
with unit mass and zero momentum where, for $\g \in [0,1]$ ,
$\Q_{\g}(f,g)$ reads in its weak form as 
\begin{equation}\label{eq:weakgamma}
\int_{\R}\Q_{\g}(f,g)(x)\varphi(x)\d x=\frac{1}{2}\int_{\R^{2}}f(x)g(y)\Delta\varphi(x,y)|x-y|^{\g}\d x\d y\end{equation}
with
$$\Delta \varphi(x,y)=2\varphi\left(\frac{x+y}{2}\right)-\varphi(x)-\varphi(y)$$
for any suitable test function $\varphi$.  {We can split $\Q_{\g}(f,g)$ into positive and negative parts
$$\Q_{\g}(f,g)=\Q^{+}_{\g}(f,g)-\Q^{-}_{\g}(f,g)$$
 where, in weak form,
$$\int_{\R}\Q_{\g}^{+}(f,g)(x)\varphi(x)\d x=\int_{\R^{2}}f(x)g(y)\varphi\left(\frac{x+y}{2}\right)|x-y|^{\g}\d x\d y$$
and
$$\int_{\R}\Q^{-}_{\g}(f,g)\varphi(x)\d x=\frac{1}{2}\int_{\R^{2}}f(x)g(y)\left(\varphi(x)+\varphi(y)\right)|x-y|^{\g}\d x\d y.$$}
The existence of a suitable solution $\Gg$ to \eqref{eq:steadyg} with
finite moments up to third order has been obtained in \cite{ABCL}. {We
  will always assume here that $\Gg \in L^1_3(\R)$ is nonnegative and
  satisfies
\begin{equation}\label{eq:mom}
\int_{\R}\Gg(x)\dx=1, \qquad \int_{\R}\Gg(x)x \dx=0, \qquad \g\in [0,1].\end{equation}
Notice that the energy 
$$\int_{\R}x^{2}\Gg(x)\d x=M_{2}(\Gg)$$
is not known \emph{a priori} since the non-conservation of kinetic
energy precludes any simple selection mechanism to determine it at
equilibrium.

The crucial point of our analysis lies in the fact that this problem
has a very well-understood answer in the degenerate case in which
$\g=0$. Indeed, in such a case, many computations are explicit and,
for instance, the evolution of kinetic energy for equation
\eqref{Intro-e2} is given in closed form as, according to
\eqref{eq:evolenerg}
\begin{equation*}
\dfrac{\d}{\d t}M_{2}(g(t))-2cM_{2}(g(t))=-\frac{1}{4}\int_{\R\times\R}g(t,x)g(t,y)|x-y|^{2}\d x\d y=-\frac{1}{2}M_{2}(g(t))
\end{equation*}
where we used \eqref{eq:conse} to compute the contribution of the collision operator. In particular, for $\g=0$, energy is conserved if and only if $c=\frac{1}{4}$ and, in such a case, we can prescribe the energy of the kinetic energy of the steady state $\bm{G}_{0}.$ \medskip

For this reason, in the sequel we will always assume that
$$c=\frac{1}{4}.$$

\medskip

Another important property of the Maxwell molecules case is that, due to explicit computations in Fourier variables, solutions to \eqref{eq:steadyg} are actually explicit in this case (and in this case only). More precisely, one has the following
\begin{theo}[\cite{bobcerc1}]\label{theo:bob} Let $\M_{2}(\R)$ denote the set of real Borel measures on $\R$ with finite second order moment. Any $\mu \in \M_{2}(\R)$ such that 
\begin{equation}\label{eq:weakmes}
-\frac{1}{4}\int_{\R}x\varphi'(x)\mu(\d x)=\frac{1}{2}\int_{\R\times\R}\Delta \varphi(x,y)\mu(\dx)\mu(\dy) \qquad \forall \varphi \in \mathcal{C}_{b}^{1}(\R)\end{equation}
and satisfying 
$$\int_{\R}\mu(\dx)=1,\qquad \int_{\R}xg(x)\mu(\dx)=0, \qquad \int_{\R}x^{2}\mu(\d x)=\frac{1}{\lambda^{2}} >0$$
is of the form
$$\mu(\dx)=H_{\lambda}(x)\d x=\lambda\bm{H}(\lambda x)\d x$$
where
\begin{equation*}
\bm{H}(x) = \frac{2}{\pi (1+x^2)^2}\,\qquad x \in \R.
\end{equation*}
\end{theo}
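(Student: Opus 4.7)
The plan is to recast the problem in Fourier variables, where the Maxwell collision operator collapses to a scalar quadratic expression and the whole statement reduces to uniqueness for a simple functional-differential equation. First I would introduce the characteristic function $\phi(\xi) := \int_{\R} e^{-i\xi x}\mu(\d x)$ and apply the weak identity \eqref{eq:weakmes} with $\varphi(x) = e^{-i\xi x}$. The computations
$$\int_{\R^2}\Delta\varphi(x,y)\,\mu(\d x)\mu(\d y) = 2\phi(\xi/2)^2 - 2\phi(\xi) \quad \text{and} \quad \int_{\R} x\,\varphi'(x)\,\mu(\d x) = \xi\,\phi'(\xi)$$
(the latter by one integration by parts) convert the stationary equation into the closed functional-differential equation
$$\frac{\xi}{4}\phi'(\xi) = \phi(\xi) - \phi(\xi/2)^2, \qquad \xi\in\R,$$
together with the three conditions $\phi(0) = 1$, $\phi'(0) = 0$, $\phi''(0) = -1/\lambda^2$ coming from the prescribed mass, momentum and variance, plus the a priori bound $|\phi| \leq 1$ on $\R$.

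Existence is immediate by direct substitution: a residue calculation shows that the Fourier transform of $H_\lambda(x)\d x$ is exactly
$$\phi_\lambda(\xi) := (1+|\xi|/\lambda)\,e^{-|\xi|/\lambda},$$
which satisfies both the ODE and the three initial conditions. Moreover the ODE is scale invariant under $\phi(\cdot) \mapsto \phi(a\cdot)$, so by rescaling one may assume $\lambda = 1$ without loss of generality.

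The crux is uniqueness. Given another solution $\psi$ with the same three moments, set $d := \psi - \phi_1$, so that $d(0) = d'(0) = d''(0) = 0$ and
$$\frac{\xi}{4}d'(\xi) - d(\xi) = -d(\xi/2)\bigl(\psi(\xi/2)+\phi_1(\xi/2)\bigr), \qquad |\psi+\phi_1| \leq 2.$$
I would work with the Bobylev--Toscani weighted seminorm $\|d\|_s := \sup_{\xi\neq 0}|d(\xi)|/|\xi|^s$, which is finite for $s$ slightly above $2$ thanks to the moment matching near $\xi = 0$ and the boundedness of $\psi,\phi_1$ at infinity. Multiplying the linear ODE for $d$ by the integrating factor $\xi^{-4}$ (the homogeneous solution being $\xi^4$) and integrating from $+\infty$ inward, so that the boundary term vanishes since $d$ is bounded, one obtains an explicit representation of $d(\xi)$ in terms of $d(\eta/2)$ for $\eta>\xi$. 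Inserting the bounds $|d(\eta/2)| \leq 2^{-s}|\eta|^s \|d\|_s$ and $|\psi+\phi_1|\leq 2$ and carrying out the resulting elementary integral produces an inequality of the form $\|d\|_s \leq \kappa(s)\,\|d\|_s$ with $\kappa(s) = 2^{3-s}/(4-s) < 1$ for every $s \in (2,3)$. Hence $d \equiv 0$, so $\psi = \phi_1$ and $\mu = H_\lambda(x)\d x$.

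The main obstacle is the delicacy of this contraction at the endpoints of the admissible range: at $s=2$ the naive constant equals $1$, and at $s=3$ the reference profile $\phi_1$ fails to have a third derivative at the origin because $\bm{H}$ has no third moment. Establishing \emph{strict} contraction in the open interval $(2,3)$ therefore requires a careful intermediate choice of $s$ and, more subtly, a separate verification that $\|d\|_s$ is indeed finite for such $s$ even though $\mu$ is only assumed to have moments of order two; this is where the detailed regularity of $\phi$ inherited from the ODE itself, rather than from the measure $\mu$, plays a decisive role.
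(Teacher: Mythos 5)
This theorem is quoted by the paper from \cite{bobcerc1} (and \cite{MR2355628}); the paper gives no proof of its own, so I am judging your argument on its merits. Your reduction to the functional equation $\tfrac{\xi}{4}\phi'(\xi)=\phi(\xi)-\phi(\xi/2)^2$ is correct (it is the steady version of \eqref{eq:selfsim-fourier}), the verification that $(1+|\xi|/\lambda)e^{-|\xi|/\lambda}$ solves it is correct, and your contraction constant $\kappa(s)=2^{3-s}/(4-s)$ is right: $\kappa(s)<1$ on $(2,3)$ is exactly the condition $\sigma_s=1-\tfrac{s}{4}-2^{1-s}>0$ that drives Theorem \ref{k-norm-cvgce}, so the quantitative core of your argument is sound and matches the machinery used elsewhere in the paper.

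The genuine gap is the finiteness of $\vertiii{d}_{s}$ for some $s\in(2,3)$, which you assert ``thanks to the moment matching near $\xi=0$'' and then, in your last paragraph, admit requires ``a separate verification'' that you never supply. It does not follow from the hypotheses. The theorem only assumes $\mu\in\M_2(\R)$, so $\phi$ is merely $\mathcal{C}^2$, and $d(0)=d'(0)=d''(0)=0$ gives only $d(\xi)=o(\xi^2)$ by Taylor--Peano; this is strictly weaker than $d(\xi)=O(|\xi|^{s})$ for $s>2$ (consider $d''(\xi)\sim 1/\log(1/|\xi|)$, which is compatible with finite second moments). This is precisely why Lemma \ref{lem:mukvertk} demands finiteness of the moment of order $k>2$ of the measure, not just of order $2$. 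Since your fixed-point inequality $\vertiii{d}_s\le\kappa(s)\vertiii{d}_s$ only yields $d\equiv0$ when $\vertiii{d}_s<\infty$, the proof is incomplete at its decisive step. The standard repair is an a priori moment-generation lemma for steady solutions of \eqref{eq:weakmes}: testing with truncations of $|x|^{2+\epsilon}$ and exploiting that $1-\tfrac{2+\epsilon}{4}-2^{-1-\epsilon}>0$ for small $\epsilon>0$ (the same convexity bookkeeping as in Lemma \ref{lem:momentsk}) shows $M_{2+\epsilon}(\mu)<\infty$, after which Lemma \ref{lem:mukvertk} gives $\vertiii{d}_{2+\epsilon}<\infty$ and your contraction closes. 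Without that lemma (or an equivalent argument at $s=2$ with a modulus-of-continuity refinement), the uniqueness claim is not established.
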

Here above of course \eqref{eq:weakmes} is a measure-valued version of the steady equation \eqref{eq:steadyg} and in particular one sees that $\bm{H}$ is the unique solution to 
$$\frac{1}{4}\partial_{x}\left(x\bm{H}(x)\right)=\Q_{0}(\bm{H},\bm{H})(x)$$
with unit mass and energy and zero momentum. The existence and uniqueness has been obtained, through Fourier transform methods, in \cite{bobcerc1} and extended to measure solutions in \cite{MR2355628}. 

We introduce the following set of equilibrium solutions
$$\mathscr{E}_{\g}=\left\{\Gg \in L^{1}_{3}(\R)\;;\;\Gg \text{ satisfying \eqref{eq:steadyg} and \eqref{eq:mom}}\,\right\}$$
for any $\g \in [0,1)$. The above Theorem ensures that elements of $\mathscr{E}_{0}$ are entirely described by their kinetic energy, i.e., given $E >0$, 
$$\left\{g \in \mathscr{E}_{0} \text{ and } M_{2}(g)=E\right\} \text{ reduces to a singleton}.$$
The main objective of the present contribution is to prove that, for moderately hard potentials, the situation is similar and more precisely, our main result can be summarized as
\begin{theo}\label{theo:mainUnique} There exists some explicit $\g^{\dagger} \in (0,1)$ such that, if $\g \in (0,\g^{\dagger})$ then $\mathscr{E}_{\g}$ reduces to a singleton.
\end{theo}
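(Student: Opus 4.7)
The overall strategy is perturbative from the Maxwell case $\g = 0$, where Theorem~\ref{theo:bob} tells us that $\mathscr{E}_0$ is the one-parameter family $\{\bm{H}_\lambda\}_{\lambda > 0}$ of explicit profiles indexed by their kinetic energy $1/\lambda^{2}$. For $\g > 0$ energy is no longer conserved by the self-similar flow, so this family must collapse to a single point, and the task is to identify the correct $\lambda = \lambda^\ast$ and show it is attained uniquely. The workhorse is the linearised Maxwell operator
\[
\Ls_0 h \coloneqq \tfrac{1}{4}\,\p_x(xh) - 2\,\Q_0(\bm{H}_{\lambda^\ast}, h)
\]
together with uniform-in-$\g$ regularity estimates for elements of $\mathscr{E}_\g$ that persist as $\g \to 0^+$.

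I would proceed in three preparatory steps. First, building on the existence and moment bounds of \cite{ABCL} and exploiting the smoothing/gain-of-integrability of $\Q_\g^+$ together with the drift $\tfrac{1}{4}\p_x(x\,\cdot)$, I would prove uniform-in-$\g$ bounds in a suitable weighted Sobolev space, strong enough that $\mathscr{E}_\g$ is precompact as $\g \to 0^+$ and every accumulation point lies in $\mathscr{E}_0$. Second, I would establish a spectral gap for $\Ls_0$: its kernel is three-dimensional, spanned by the infinitesimal mass, momentum, and energy perturbations of $\bm{H}_{\lambda^\ast}$, and on the orthogonal complement one obtains a coercivity estimate $\langle -\Ls_0 h, h\rangle_\ast \geq \nu\,\|h\|_\ast^{2}$. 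This is the Sobolev-space upgrade of the Maxwell decay theory of \cite{bobcerc1, MR2355628} that the abstract flags as a separate contribution of the paper. Third, to single out $\lambda^\ast$ I would use the self-consistent energy identity
\[
M_{2}(\Gg) = \tfrac{1}{2}\int_{\R^{2}}\Gg(x)\,\Gg(y)\,|x-y|^{\g+2}\,\d x\,\d y,
\]
obtained by testing the steady equation against $x^{2}$. It is tautological at $\g=0$ (using \eqref{eq:mom}) but, dividing by $\g$ and passing to the limit $\g \to 0^+$ along a convergent subsequence $\Gg_{\g_{n}} \to \bm{H}_{\lambda^\ast}$, one finds the scalar constraint
\[
\int_{\R^{2}}\bm{H}_{\lambda^\ast}(x)\,\bm{H}_{\lambda^\ast}(y)\,(x-y)^{2}\log|x-y|\,\d x\,\d y = 0,
\]
which by the scaling $\bm{H}_\lambda(x) = \lambda \bm{H}(\lambda x)$ determines $\lambda^\ast$ uniquely. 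Consequently any $\Gg \in \mathscr{E}_\g$ lies close to the \emph{same} profile $\bm{H}_{\lambda^\ast}$ once $\g$ is small.

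For the uniqueness statement itself, given $\Gg^{1}, \Gg^{2} \in \mathscr{E}_\g$ I set $h \coloneqq \Gg^{1} - \Gg^{2}$. By \eqref{eq:mom} the function $h$ has zero mass and zero momentum, and it satisfies the linear equation
\[
\tfrac{1}{4}\,\p_{x}(xh) = \Q_\g(\Gg^{1}, h) + \Q_\g(h, \Gg^{2}),
\]
which I rewrite as $\Ls_0 h = \mathcal{R}_\g h$, the remainder $\mathcal{R}_\g$ being small in operator norm because both $\Gg^{i}$ are close to $\bm{H}_{\lambda^\ast}$ and $\g$ is small. Decomposing $h$ along $\ker(\Ls_0)$ and its orthogonal complement, the spectral gap controls the complementary component by $\|\mathcal{R}_\g h\|_\ast$, while the kernel contribution reduces to its energy direction (mass and momentum of $h$ being zero) and is controlled by a linearised version of the self-consistent energy identity applied to $\Gg^{1}$ and $\Gg^{2}$ and subtracted, yielding an estimate of the type $|M_{2}(\Gg^{1}) - M_{2}(\Gg^{2})| \leq C\,\g\,\|h\|_\ast$. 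For $\g < \g^\dagger$ small enough this closes into $\|h\|_\ast \leq C\,\g\,\|h\|_\ast$ with $C\g < 1$, forcing $h = 0$.

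The main obstacle is precisely the energy direction of $\ker(\Ls_0)$: since energy is not conserved for $\g > 0$, it cannot be removed by a simple normalisation, and its control requires both the selection of $\lambda^\ast$ by the energy identity and a Sobolev-scale spectral theory for $\Ls_0$ strong enough that the $\g$-dependent remainder $\mathcal{R}_\g$ and the quadratic $\Q_0(h,h)$ are genuinely subordinate to $\Ls_0 h$. This is the reason why the paper must develop Maxwell trend-to-equilibrium in weighted Sobolev spaces rather than using only the Fourier/measure framework of \cite{bobcerc1, MR2355628}.
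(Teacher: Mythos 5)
Your proposal follows essentially the same route as the paper: uniform-in-$\g$ a posteriori estimates and convergence of $\Gg$ to the Maxwell profile selected by the limiting logarithmic energy identity, a spectral gap for the linearised Maxwell operator on the subspace of zero mass, momentum and energy, and a compensation of the energy defect of $h=\Gg^{1}-\Gg^{2}$ through the linearised dissipation functional, closed by a smallness-versus-spectral-gap contradiction. The one step you state more optimistically than it is actually proved is the passage from the subtracted energy identities to $|M_{2}(\Gg^{1})-M_{2}(\Gg^{2})|\leq C\,\g\,\|h\|_{\ast}$: what the subtraction yields directly is the constraint $\mathscr{I}_{\g}(h,\Gg^{1}+\Gg^{2})=0$, and converting this into control of the energy component of $h$ requires checking that the linearised dissipation $\mathscr{I}_{0}(\cdot,\bm{G}_{0})$ is nondegenerate on the energy direction of $\ker(\mathscr{L}_{0})$ — the paper verifies $\mathscr{I}_{0}(\varphi_{0},\bm{G}_{0})\neq 0$ by an explicit computation with the kernel element $\varphi_{0}$ — a finite but essential verification without which the kernel contribution could not be recovered.
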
 
Notice here the contrast between the case $\g >0$ where
$\mathscr{E}_{\g}$ is a singleton whereas, for $\g=0$,
$\mathscr{E}_{0}$ is an infinite one-dimensional set parametrised by the energy
of the steady solution. As we will see, it happens that, in the limit
$\g \to 0$, the steady equation \eqref{eq:steadyg} (with
$c=\frac{1}{4}$) somehow \emph{selects} the energy.

Before describing in details the main steps behind the proof of Theorem \ref{theo:mainUnique}, we need to introduce the notations that will be used in all the sequel.

\subsection{Notations}
\label{sec:notation}

For $s \in \R $ and $ p\geq 1$, we define the Lebesgue space
$L^{p}_{s}(\R)$ through the norm
$$\displaystyle \|f\|_{L^p_{s}} := \left(\int_{\R} \big|f(x)\big|^p \, 
(1+|x|)^{sp} \, \d x\right)^{1/p}, \qquad L^{p}_{s}(\R) :=\Big\{f :\R \to \R\;;\,\|f\|_{L^{p}_{s}} < \infty\Big\}\, .$$
 More generally, for any weight function $\varpi : \R \to \R^{+}$, we define, for any $p \geq 1$,
$$L^{p}(\varpi) :=\Big\{f : \R \to \R\;;\,\|f\|_{L^{p}(\varpi)}^{p}:=\int_{\R}\big|f\big|^{p}\,\varpi^p\,\d x  < \infty\Big\}\,.$$
A frequent choice will be the weight function
\begin{equation}\label{eq:weight}
\w_{s}(x)=\left(1+|x|\right)^{s}, \qquad s\geq 0, \qquad x \in \R. 
\end{equation}
With this notation, one can write for example $L^{p}_{s}(\R)=L^{p}(\w_s)$, for $p \geq 1,\,s \geq 0$.
\smallskip
\noindent
We define the weighted Sobolev spaces by 
$$W^{k,p}(\varpi) :=\Big\{f \in L^{p}(\varpi)\;;\;\partial_{x}^{\ell}f \in L^{p}(\varpi) \;\forall\, 0\le \ell \leq k\Big\}\,,\quad\text{with}\quad k \in \N\,,$$
with the standard norm
$$ \|f\|_{W^{k,p}(\varpi)} := \bigg( \sum_{\ell=0}^{k} 
\int_{\R} \big| \partial^{\ell}_x f(x)\big|^p \, \varpi(x)^p\, 
\d x\bigg)^{\frac{1}{p}}.$$
For $p=2$, we will simply write $H^{k}(\varpi)=W^{k,2}(\varpi)$, $k \in \N$. 
For general $s\ge 0$, the Sobolev space $H^s(\R)$ is defined thanks to the Fourier transform. 
$$H^s(\R):=\Big\{f \in L^{2}(\R)\;;\; \int_{\R} (1+|\xi|^2)^s |\hat{f}(\xi)|^2\d \xi<\infty\Big\}\,, $$
where 
$$ \hat{f}(\xi) =\int_{\R} f(x) e^{-ix\xi}\d x. $$
This space is endowed with the standard norm 
$$\|f\|_{H^s(\R)}=\left(\int_{\R} (1+|\xi|^2)^s |\hat{f}(\xi)|^2\d \xi\right)^{\frac{1}{2}}.$$
We shall also use an important shorthand for the moments of order $s \in \R$ of  measurable mapping $f : \R\to {\R}$, 
$$M_{s}(f) :=\int_{\R}f(x)\,  |x|^s \d x. $$
For $k \geq 0$ we define the norm (in Fourier variables)
\begin{equation}\label{eq:fourier:norm:1}
 \vertiii{\psi}_{k} := \sup_{\xi \in \R \setminus \{0\}} \frac{|\psi(\xi)|}{|\xi|^k}
\end{equation}
on the vector space of continuous functions
$\psi \: \R \to \C$ such that $\xi \mapsto \psi(\xi) / |\xi|^k$ is a
bounded continuous function (with a limit at $\xi = 0$). This norm
makes this vector space a Banach space. {Moreover, for $k\geq 0$ and $p\in(1,\infty)$, we define
\begin{equation}\label{def:normkp}
 \vertiii{\psi}_{k,p}^p := \ir \frac{|\psi(\xi)|^p}{|\xi|^{kp}} \d \xi,
\end{equation}
which makes sense if $|\psi(\xi)| \leq \min\{1 ,C|\xi|^3\}$ for some $C>0$  and
$\frac{1}{p} < k < 3 + \frac{1}{p}$.}

For $k>2$, we also define the following space of measures
{\begin{equation}\label{eq:X0:measure}
X_{0}:=  \left\{ \mu\in {\mathcal M}_{k}(\R) \;\Bigg| \Bigg. \begin{array}{l}
  \displaystyle \int_{\R} \mu(\d x) = \int_{\R} x\, \mu(\d x) = \int_{\R} x^2 \,\mu(\d x) = 0
\end{array}\right\}. 
\end{equation}
  Here above, ${\mathcal M}_{k}(\R)$ denotes the set of real Borel measures on $\R$ with finite total variation of order $k$ that are satisfying
  $$\int_{\R}  \w_{k}(x) \,|\mu|(\dx)<\infty.$$}
Then, by abuse of notation, for $\mu\in X_0$ and $0<k<3$, we define the norm 
\begin{equation}\label{eq:fourier:norm:2}
\vertiii{\mu}_{k} :=  \sup_{\xi\in\R\setminus \{0\}} \frac{|\hat{\mu}(\xi)|}{|\xi|^{k}}. 
\end{equation} 
{By \cite[Proposition 2.6]{MR2355628}, for any $0< k< 3$, if $\mu\in X_0$ then  {$\vertiii{\mu}_{k}$} is finite. Endowed with the norm  {$\vertiii{\cdot}_{k}$} with $2< k<3,$
the space $X_{0}$ is a Banach space, see Proposition 2.7 in \cite{MR2355628}.}

\subsection{Strategy and main intermediate
  results}\label{Sec:strategy} The idea to prove our main result
Theorem \ref{theo:mainUnique} is to adopt a \emph{perturbative
  approach} and to fully exploit the knowledge of the limiting case
$\g=0.$ This explains in particular why our result is valid for
\emph{moderately hard potentials} $\g \simeq 0.$ More precisely,
inspired by similar ideas developed in the context of the Smoluchowski
equation (see \cite{CanizoThrom} for a recent account and a source of
inspiration for the present work), the first step in our proof is to
show that, in some weak sense to be determined,
$$\lim_{\g \to 0}\Gg=\bm{G}_{0}$$
where $\bm{G}_{0}$ is a steady solution in the Maxwell case, i.e. a
solution to \eqref{eq:weakmes}, and $\bm{G}_{\gamma}$ is any steady
solution in $\mathscr{E}_{\gamma}$. Of course, such a limiting process
is very singular as for instance can be understood from the following
fundamental property of steady solutions.
\begin{prp}\label{prop:allmoments} For any $\g \in (0,1)$, one has
$$M_{k}(\Gg):=\int_{\R}\Gg(x)|x|^{k}\d x < \infty \qquad \text{ for any } k \geq0$$
whereas
$$M_{k}(\bm{G}_{0}) < \infty \iff k \in (-1,3).$$
\end{prp}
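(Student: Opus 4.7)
The claim splits into two essentially independent statements; I will treat them in turn.

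The case $\g=0$ is immediate from Theorem~\ref{theo:bob}. Any element of $\mathscr{E}_{0}$ takes the form $\bm{G}_{0}(x)=\lambda\,\bm{H}(\lambda x)$ with $\bm{H}(x)=\tfrac{2}{\pi(1+x^{2})^{2}}$, so a rescaling $x\mapsto\lambda x$ reduces matters to $\bm{H}$. Since $\bm{H}$ is continuous and strictly positive at the origin while $\bm{H}(x)\sim\tfrac{2}{\pi}|x|^{-4}$ as $|x|\to\infty$, the integral $\int_{\R}|x|^{k}\bm{H}(x)\d x$ converges at $0$ if and only if $k>-1$ and at infinity if and only if $k<3$, which is the announced equivalence.

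For $\g\in(0,1)$ the plan is to run a Povzner-type moment-creation bootstrap based on the weak formulation~\eqref{eq:weakgamma} of the steady equation~\eqref{eq:steadyg} with $c=\tfrac{1}{4}$. Testing against $\varphi(x)=|x|^{p}$ for $p\geq 2$ and using $x\,\varphi'(x)=p|x|^{p}$, one expects the identity
\begin{equation*}
  \frac{p}{4}\,M_{p}(\bm{G}_{\g})
  =\frac{1}{2}\int_{\R^{2}} \bm{G}_{\g}(x)\bm{G}_{\g}(y)\bigl(-\Delta\varphi(x,y)\bigr)\,|x-y|^{\g}\,\d x\d y,
\end{equation*}
with a nonnegative integrand on the right by convexity of $|x|^{p}$. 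A direct Taylor expansion of $2^{1-p}|x+y|^{p}$ around the large variable yields constants $c_{p},R_{p}>0$ such that
\begin{equation*}
  -\Delta\varphi(x,y) \geq c_{p}\,|x|^{p} \qquad \text{whenever } |y|\leq |x|/R_{p},
\end{equation*}
and on the same region $|x-y|^{\g}\geq(|x|/2)^{\g}$. Since $\int_{\R}\bm{G}_{\g}=1$, for $|x|$ sufficiently large one has $\int_{|y|\leq |x|/R_{p}}\bm{G}_{\g}(y)\d y\geq\tfrac{1}{2}$, so the right-hand side of the identity controls $M_{p+\g}(\bm{G}_{\g})$ up to an explicit polynomial correction. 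This would give the bootstrap step
\begin{equation*}
  M_{p}(\bm{G}_{\g})<\infty\ \Longrightarrow\ M_{p+\g}(\bm{G}_{\g})<\infty \qquad (p\geq 2),
\end{equation*}
and iterating it starting from $p=3$ (available from $\bm{G}_{\g}\in L^{1}_{3}$) produces $M_{3+n\g}(\bm{G}_{\g})<\infty$ for every $n\in\N$; since $\g>0$, every $k\geq 0$ is eventually reached, completing the first part.

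The main obstacle is that the identity above is only formally valid: the test function $|x|^{p}$ is unbounded and is not \emph{a priori} admissible when $M_{p}(\bm{G}_{\g})$ and $M_{p+\g}(\bm{G}_{\g})$ are not yet known to be finite. I plan to circumvent this by inserting a smooth cutoff $\varphi_{R}(x)=|x|^{p}\chi(|x|/R)$, which is bounded and Lipschitz (hence admissible in~\eqref{eq:weakgamma}), and then letting $R\to\infty$. The cutoff produces two spurious contributions, namely a drift remainder carried by $\chi'(|x|/R)$ and a sign-indefinite contribution in $\Delta\varphi_{R}$, both supported in $\{|x|\sim R\}$ and hence controlled by $M_{p}(\bm{G}_{\g})$ using the inductive hypothesis. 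Monotone convergence on the leading nonnegative piece $-\Delta\varphi$ together with dominated convergence on the error terms then recovers the identity in the limit and closes the bootstrap. This truncation procedure is essentially the standard one used in the existence proof of \cite{ABCL}, and is where the formal argument above becomes rigorous.
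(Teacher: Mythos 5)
Your treatment of $\bm{G}_{0}$ is correct, and the Povzner bootstrap for $\g>0$ is sound as a formal computation. The gap is precisely at the step you yourself flag as ``the main obstacle'', and your proposed resolution does not work as stated. With the multiplicative cutoff $\varphi_{R}(x)=|x|^{p}\chi(|x|/R)$ the truncated test function is no longer convex, so $\Delta\varphi_{R}$ has a genuinely positive part on the region where one of $|x|,|y|$ exceeds $R$: take $|x|\in(2R,4R]$ and $|y|$ small, so that $\varphi_{R}(x)=0$ while $\varphi_{R}\bigl(\tfrac{x+y}{2}\bigr)$ is of order $|x|^{p}$, whence $\Delta\varphi_{R}\gtrsim |x|^{p}>0$. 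Against the collision kernel this positive part contributes a term of order
$$\int_{2R<|x|\le 4R}\int_{\R}|x|^{p}\,|x-y|^{\g}\,\Gg(x)\Gg(y)\,\d x\,\d y\ \simeq\ \int_{2R<|x|\le 4R}|x|^{p+\g}\,\Gg(x)\,\d x,$$
which is a tail of the very moment $M_{p+\g}$ you are trying to bound and is \emph{not} controlled by $M_{p}$: from $M_{p}<\infty$ alone one only gets that it is $O(R^{\g})\cdot o(1)$, which need not vanish nor stay bounded. The inequality you obtain in the limit is of the form $m(R)\le C+C\,m(4R)$ for the truncated moment $m(\ell)=\int_{|x|\le\ell}|x|^{p+\g}\Gg\,\d x$, which is vacuous for an increasing function, so the bootstrap does not close.

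The step can be repaired, but with a different truncation: extend $|x|^{p}$ beyond $|x|=\ell$ by its tangent line, $\varphi_{\ell}(x)=\ell^{p}+p\ell^{p-1}(|x|-\ell)$ for $|x|>\ell$. This $\varphi_{\ell}$ is convex on $\R$, so $\Delta\varphi_{\ell}\le 0$ \emph{everywhere} and no sign-indefinite error appears; the only remainder sits in the drift term and equals $\tfrac{p}{4}\,\ell^{p-1}\int_{|x|>\ell}|x|\,\Gg\,\d x\le\tfrac{p}{4}\int_{|x|>\ell}|x|^{p}\,\Gg\,\d x$, which is controlled by the inductive hypothesis; your lower bound on the good region $\{|y|\le|x|/R_{p},\,|x|\le\ell\}$ then gives a bound on $\int_{|x|\le\ell}|x|^{p+\g}\Gg\,\d x$ uniform in $\ell$, and monotone convergence finishes the step. (Admissibility of a linearly growing test function with bounded derivative follows from \eqref{weak_form} by a routine approximation, using $M_{1+\g}(\Gg)<\infty$.) Note finally that the paper proves this proposition by an entirely different mechanism: it regularises $\Gg$ into initial data with all moments finite, invokes the moment-appearance estimates of \cite{ABCL} for the time-dependent equation, and identifies the limiting solution with $\Gg$ through a compactness and uniqueness argument. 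Your route, once corrected as above, is considerably more elementary, but as written the key justification fails.
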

Of course, because we are interested here in the behaviour of $\Gg$ for $\g \to 0$, the above Proposition will play only a marginal role for our analysis (we refer the reader to Appendix \ref{app:C3} for a full proof) but it highlights the fact that the limiting process we are interested in is highly singular. In particular, since steady solutions to \eqref{eq:steadyg} exist with any energy, a first important step is to derive the correct limiting temperature
$$\lim_{\g\to0}M_{2}(\Gg)=E_{0}=M_{2}(\bm{G}_{0})$$
since that single parameter, thanks to Theorem \ref{theo:bob}, would allow to completely characterize $\bm{G}_{0}$.
This first step in our proof can be summarized in the following which
characterises the limit temperature for $\g\to 0$ and provides
additional moment and $L^2$ bounds. Its proof will be given in
Section~\ref{Sec:limit:temp}. We recall that we are always setting
$c=\frac{1}{4}$ in Eq. \eqref{eq:steadyg}: \label{Intro-e3}.
 \begin{theo}\phantomsection\label{theo:Unique}
For any $\varphi \in \mathcal{C}_{b}(\R)$ and any choice of steady states
$\Gg \in \mathscr{E}_{\g}$ for $\gamma > 0$ one has 
$$\lim_{\g\to0^{+}}\int_{\R}\Gg(x)\varphi(x)\dx=\int_{\R}\bm{G}_{0}(x)\varphi(x)\dx, \qquad \bm{G}_{0}(x)=\frac{2\lambda_{0}}{\pi\left(1+\lambda_{0}^{2}x^{2}\right)^{2}}, \qquad x \in \R$$
with 
\begin{equation*}
 \lambda_{0}:=\exp\left(A_{0}\right)\qquad \text{and}\qquad A_{0}:=\frac{1}{2}\int_{\R}\int_{\R}\bm{H}(x)\bm{H}(y)|x-y|^{2}\log|x-y|\dx\dy >0.
\end{equation*}
 Moreover, for any $\delta \in (0,3)$ there exist $C_{0} >0$ and $\g_{\star}  \in (0,1)$, both depending on $\delta$ such that
\begin{equation}\label{eq:estimGg}
\|\Gg\|_{L^2} \leq C_{0}, \qquad \qquad M_{k}(\Gg) \leq C_{0},  \qquad \forall \, \g \in [0,\g_{\star} ), \qquad k \in (0,3-\delta).\end{equation}
 \end{theo}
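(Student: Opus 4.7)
The strategy is a two-stage argument. First, I would establish uniform moment and $L^{2}$ bounds on the family $\{\bm{G}_\gamma\}_{\gamma\in(0,\gamma_\star)}$, which guarantees compactness. Then, using the classification of Maxwell steady states (Theorem~\ref{theo:bob}) together with an expansion of the energy balance in powers of $\gamma$, I would identify the limiting temperature uniquely, so that the full family converges.

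For the moment bounds, testing the stationary equation \eqref{eq:steadyg} against $\varphi(x)=|x|^{k}$ gives, after integrating the drift by parts, $-ck M_{k}(\bm{G}_\gamma)=\int_\R \Q_\gamma(\bm{G}_\gamma,\bm{G}_\gamma)(x)|x|^{k}\d x$, while Povzner-type inequalities applied to $\Delta\varphi(x,y)=2|(x+y)/2|^{k}-|x|^{k}-|y|^{k}$ extract a coercive term of order $M_{k+\gamma}(\bm{G}_\gamma)$ plus lower-order contributions. Iterating up to $k=3-\delta$ produces $M_{k}(\bm{G}_\gamma)\leq C_{0}$ uniformly in $\gamma\in(0,\gamma_\star)$. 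The $L^{2}$ bound is then obtained by recasting \eqref{eq:steadyg} in ``mild'' form, writing $\bm{G}_\gamma=\Q_\gamma^{+}(\bm{G}_\gamma,\bm{G}_\gamma)/(L_\gamma(x)+\tfrac{c}{2})$ with $L_\gamma(x)=\int|x-y|^\gamma\bm{G}_\gamma(y)\,\d y$, and invoking the gain of integrability of $\Q_\gamma^{+}$ (via Young-type convolution estimates) combined with the moment bounds.

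Tightness from the moment bound allows, along any sequence $\gamma_{n}\to 0^{+}$, the extraction of a subsequence with $\bm{G}_{\gamma_{n}}\rightharpoonup \mu_\ast$ weakly as measures. Passing to the limit in the weak form \eqref{eq:weakgamma} of the stationary equation---using $|x-y|^{\gamma_n}\to 1$ and the moments to control tails---shows that $\mu_\ast$ satisfies \eqref{eq:weakmes}, hence by Theorem~\ref{theo:bob} $\mu_\ast(\d x)=\lambda\bm{H}(\lambda x)\d x$ for some $\lambda>0$ possibly depending on the subsequence. To pin $\lambda$ down, note that the steady energy balance $\frac{1}{2}M_{2}(\bm{G}_\gamma)=\frac{1}{4}\iint\bm{G}_\gamma(x)\bm{G}_\gamma(y)|x-y|^{\gamma+2}\d x\d y$, combined with $\iint\bm{G}_\gamma(x)\bm{G}_\gamma(y)(x-y)^{2}\d x\d y=2M_{2}(\bm{G}_\gamma)$ (from unit mass and zero momentum), rewrites as
\begin{equation*}
\int_{\R}\int_{\R}\bm{G}_\gamma(x)\bm{G}_\gamma(y)\,(x-y)^{2}\,\frac{|x-y|^{\gamma}-1}{\gamma}\,\d x\,\d y=0.
\end{equation*}
The integrand converges pointwise to $(x-y)^{2}\log|x-y|$ and is bounded by $C\bigl(1+|x-y|^{3-\delta}\bigr)$ uniformly in $\gamma$ small, so the moment bound permits dominated convergence and yields $\iint \lambda\bm{H}(\lambda x)\cdot\lambda\bm{H}(\lambda y)(x-y)^{2}\log|x-y|\,\d x\,\d y=0$. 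A change of variables $u=\lambda x,\,v=\lambda y$ together with $M_{2}(\bm{H})=1$ reduces this to $\tfrac{2}{\lambda^{2}}(A_{0}-\log\lambda)=0$, forcing $\lambda=\exp(A_{0})=\lambda_{0}$. Since the limit is independent of the subsequence, the whole family $\bm{G}_\gamma$ converges to $\lambda_{0}\bm{H}(\lambda_{0}\cdot)=\bm{G}_{0}$.

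The main obstacle is the uniform $L^{2}$ estimate: genuinely $\gamma$-independent smoothing of $\Q_\gamma^{+}$ near $\gamma=0$ is delicate since the collision kernel $|x-y|^\gamma$ loses its coercive contribution in that regime. A secondary technical point is the uniform integrability of $(x-y)^{2}\log|x-y|$ against $\bm{G}_\gamma\otimes\bm{G}_\gamma$, which crucially relies on moment control of order strictly above $2$---exactly what the first step provides.
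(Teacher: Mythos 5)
Your second stage (tightness, identification of any weak limit as $H_{\lambda}$ via Theorem~\ref{theo:bob}, and pinning down $\lambda$ by passing to the limit in $\iint\bm{G}_\gamma\bm{G}_\gamma|x-y|^{2}\frac{|x-y|^{\gamma}-1}{\gamma}=0$) is essentially the paper's argument, and your reduction to $\log\lambda=A_0$ is correct. The genuine gap is in your first stage, specifically the uniform $L^{2}$ bound. You propose to obtain it from a mild formulation together with ``the gain of integrability of $\Q_\gamma^{+}$ via Young-type convolution estimates.'' In dimension one there is no such gain: $\Q_{0}^{+}(f,f)(x)=2(f\ast f)(2x)$, so Young only gives $\|\Q_{0}^{+}(f,f)\|_{L^{2}}\le \sqrt{2}\,\|f\|_{L^{1}}\|f\|_{L^{2}}$, and dividing by $L_\gamma+c$ (bounded below only by a constant of order $c=\tfrac14$) yields an inequality of the form $\|\bm{G}_\gamma\|_{L^{2}}\le K\|\bm{G}_\gamma\|_{L^{2}}$ with $K>1$, which is vacuous. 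This is exactly the obstruction the paper emphasizes (no regularising effect of the gain operator for $d=1$), and you flag it as ``the main obstacle'' without resolving it. The paper's resolution is of a different nature: it first establishes weak-$\star$ convergence (along subsequences) to some explicit $H_{\lambda}$ using only the universal energy bound $M_{2}(\bm{G}_\gamma)\le\tfrac12$ and the pointwise bound $\bm{G}_\gamma(x)\le C/|x|$, and then uses the explicit boundedness of $H_\lambda$ near the origin to show $\int_{-\ell}^{\ell}|x|^{\gamma}\bm{G}_{\gamma_n}\,\d x\lesssim\lambda\,\ell^{1+\gamma}$ for $n$ large; inserting this into the self-referential inequality $\|\bm{G}_\gamma\|_{L^{2}}^{2}\le C_{0}\|\bm{G}_\gamma\|_{L^{2}}^{2}\int_{-\ell}^{\ell}|x|^{\gamma}\bm{G}_\gamma+C_{0}\ell^{\gamma-1}$ and choosing $\ell$ small makes the quadratic coefficient less than $\tfrac12$. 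So the logical order must be reversed relative to your plan: weak convergence comes \emph{before} the $L^{2}$ bound, not after.

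A related ordering problem affects your moment bounds. The Povzner argument for $2<k<3-\delta$ needs a lower bound on the collision frequency $\Sigma_\gamma(y)=\int|x-y|^{\gamma}\bm{G}_\gamma\,\d x$ of the form $\kappa_\gamma\w_\gamma(y)-\varepsilon$ with $\varepsilon$ small uniformly in $\gamma$; the error term there is controlled by $\sqrt{2\tilde\delta}\,\|\bm{G}_\gamma\|_{L^{2}}$ (Cauchy--Schwarz on the set $|x-y|<\tilde\delta$), so the higher moments also sit downstream of the $L^{2}$ estimate. Without that, the loss term cannot absorb the drift term $\tfrac{k}{4}M_{k}$ and the iteration does not close. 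Finally, in your limit identification, ``dominated convergence'' is not directly available since the reference measures $\bm{G}_{\gamma_n}\otimes\bm{G}_{\gamma_n}$ themselves vary and converge only weakly; one needs the three-region splitting ($|x-y|\le\tilde\delta$, compact middle range, $|x-y|\ge R$) with the uniform bound $-\tilde\delta^{2}\log\tilde\delta$ near the diagonal and the uniform moments of order $k+s<3$ for the tails, plus uniform convergence of $\Lambda_{\gamma_n}$ to $\log$ on compact sets away from the singularity. These are minor compared with the $L^{2}$ issue, but they are what makes the passage to the limit rigorous.
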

In fact it will be shown in Lemma~\ref{rmk:kern} that $A_0=\log2+\frac{1}{2}$ and thus $\lambda_0=2\sqrt{e}$.

Notice that, as documented in \cite{ABCL}, the derivation of $L^{2}$
bounds for solutions to the $1D$-Boltzmann equation is not an easy
task. This comes from the lack of regularizing effects induced by the
collision operator $\Q_{\g}$ in dimension $d=1$. Recall indeed that a
celebrated result in \cite{lions} asserts that, for very smooth
collision kernels, the Boltzmann collision operator (for elastic
interactions and in dimension $d$) maps, roughly speaking,
$L^{2}(\R^{d}) \times L^{2}(\R^{d})$ in the Sobolev space
$H^{\frac{d-1}{2}}(\R^{d})$, i.e. the collision operator induces a
gain of $\frac{d-1}{2}$ (fractional) derivative. One sees therefore
that no regularisation effect is expected in dimension $d=1$ whereas
gain of regularity is the fundamental tool for the derivation of
$L^{p}$-estimates for solutions to the Boltzmann equation (see
\cite{MoVi}). This simple heuristic consideration is also confirmed in
the related case of the Smoluchowski equation for which derivation of
suitable $L^{p}$-estimates $p >1$ is a notoriously difficult problem
(see \cite{bana, CanizoThrom}).

In the present paper, the derivation of $L^{2}$-bounds (uniformly with
respect to $\g > 0$) is deduced from quite technical arguments,
specific to the study of equilibrium solutions, and crucially exploits
the convergence of $\Gg$ towards $\bm{G}_{0}$ together with the fact
that $\bm{G}_{0}$ is completly explicit. In particular, our argument
does not seem to work for general solutions to \eqref{Intro-e2}.

A second step in our proof is then to be able to quantify the above convergence of $\Gg$ towards $\bm{G}_{0}$ and, in particular, to exploit the fact that $\bm{G}_{0}$ is a \emph{stable equilibrium solution} to \eqref{eq:steadyg} for $\g=0.$ To prove this, we need first to revisit several of the known results concerning the Maxwell molecules case and in particular the long time behaviour of the solutions to the evolution problem
\begin{equation}\label{introMax}
\partial_{t}g(t,x) + \frac{1}{4}\partial_{x}\left(xg(t,x)\right)=\Q_{0}(g,g)(t,x), \qquad x \in \R, t \geq 0\end{equation}
and show that any (suitably normalized) solution to this equation converges exponential fast towards $\bm{G}_{0}$ as $t \to \infty$ in Fourier norms $\vertiii{\cdot}_{k}$ and $\vertiii{\cdot}_{k,p}$ (see Theorem \ref{k-norm-cvgce}). Thanks to new regularity results regarding the above equation, we can also extend such an exponential convergence to more tractable Sobolev spaces. This careful analysis of the Maxwell equation together with new regularity bounds for the self-similar profile $\Gg$ allows to derive  the following stability estimate for self-similar profiles which is also a main ingredient in the proof of Theorem~\ref{theo:mainUnique} and whose proof will be given in Section~\ref{sec:upgrade}.
\begin{theo}[\textit{\textbf{Stability of profiles}}]\phantomsection\label{theo:sta} Let $2<a<3$. There exist $\g_\star\in(0,1)$ and an \emph{explicit} function $\overline{\eta}=\overline{\eta}(\gamma)$ depending on $a$, with $\lim_{\g\to0^+}\overline{\eta}(\gamma)= 0$, such that, for any $\g\in(0,\g_\star)$, for any $\Gg\in\mathscr{E}_{\g}$,   
$$ \|\Gg-\bm{G}_{0}\|_{L^1(\w_{a})} \le \overline{\eta}( \gamma).$$
\end{theo}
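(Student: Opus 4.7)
The overall strategy is to first obtain quantitative closeness of $\Gg$ to $\bm{G}_{0}$ in a Fourier-type norm $\vertiii{\cdot}_{k}$ for some $2<k<3$, and then upgrade this to the desired $L^{1}(\w_{a})$ norm by interpolation against the uniform $L^{2}$ and moment bounds provided by Theorem \ref{theo:Unique}. The underlying idea is that $\bm{G}_{0}$ is a \emph{dynamically stable} equilibrium for the Maxwell evolution, so that any self-similar profile $\Gg$ of the $\g$-equation, viewed as an initial datum for the Maxwell flow, must be close to $\bm{G}_{0}$ modulo the small defect introduced by replacing $\Q_{\g}$ by $\Q_{0}$.

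Concretely, let $g(t)$ denote the solution of the Maxwell evolution \eqref{introMax} with $g(0)=\Gg$. Since $\Gg$ is stationary for the $\g$-equation, integrating the Maxwell PDE in time yields
$$\Gg - g(t) \;=\; \int_{0}^{t}\bigl[\Q_{\g}(\Gg,\Gg)-\Q_{0}(g(s),g(s))\bigr]\,\d s,$$
and the integrand splits naturally into a ``perturbation piece'' $(\Q_{\g}-\Q_{0})(\Gg,\Gg)$ plus a ``bilinear piece'' $\Q_{0}(\Gg,\Gg)-\Q_{0}(g(s),g(s))$. The first piece is $O(\g)$ in $\vertiii{\cdot}_{k}$ thanks to the pointwise estimate $\bigl| |x-y|^{\g}-1\bigr|\leq \g\,\bigl|\log|x-y|\bigr|\bigl(1+|x-y|^{\g}\bigr)$ combined with the uniform moment bounds of Theorem \ref{theo:Unique}. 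The second piece is controlled by $\vertiii{g(s)-\Gg}_{k}$ through the continuity of $\Q_{0}$ in the Fourier norm, and a Gronwall argument then gives $\vertiii{\Gg-g(t)}_{k}\leq C(t)\,\g$ with at most mild growth in $t$. Combining with the exponential decay $\vertiii{g(t)-\bm{G}_{0}}_{k}\leq C\,e^{-\mu t}\vertiii{\Gg-\bm{G}_{0}}_{k}$ provided by the Maxwell convergence result and optimizing in $t\sim |\log\g|$ yields $\vertiii{\Gg-\bm{G}_{0}}_{k}\leq \overline{\eta}_{1}(\g)$ with an explicit power of $\g$.

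The upgrade to $L^{1}(\w_{a})$ then proceeds by interpolation. Setting $h_{\g}:=\Gg-\bm{G}_{0}$, the Fourier bound controls the low frequencies of $\hat{h}_{\g}$ (since $|\hat{h}_{\g}(\xi)|\leq \vertiii{h_{\g}}_{k}|\xi|^{k}$), while the high frequencies are controlled uniformly in $\g$ by the $L^{2}$ bound of Theorem \ref{theo:Unique} together with the explicit expression of $\bm{G}_{0}$. Cutting at a frequency $R=R(\g)$ and optimising gives a quantitative estimate $\|h_{\g}\|_{L^{2}}\leq \overline{\eta}_{2}(\g)$. Finally, using the uniform bound $M_{k}(\Gg)\leq C_{0}$ for $k<3-\delta$ (again Theorem \ref{theo:Unique}) and Cauchy--Schwarz, one transfers this $L^{2}$ smallness into $L^{1}(\w_{a})$ smallness for any $a<3-\delta$, producing the announced rate $\overline{\eta}(\g)$.

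The main obstacle is the highly singular nature of the limit $\g\to 0$. The profile $\bm{G}_{0}$ has only polynomial decay with $M_{k}(\bm{G}_{0})=+\infty$ for $k\geq 3$, so the $L^{1}(\w_{a})$ control for $a$ close to $3$ is delicate and forces the use of moments of order strictly below $3$; this is exactly the range $2<a<3$ in the statement. In addition, the conservation laws have to be respected: $h_{\g}$ has vanishing mass and momentum by construction, but its energy $M_{2}(\Gg)-M_{2}(\bm{G}_{0})$ is only \emph{approximately} zero, and this defect must be quantified (via a quantitative refinement of Theorem \ref{theo:Unique}) so that it does not obstruct the use of the Maxwell semigroup's exponential decay, whose spectral gap is available only on the subspace orthogonal to the collisional invariants, energy included. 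Tracking the dependence of all the constants on $\g$ and on $t$ yields the explicit expression of $\overline{\eta}(\g)$.
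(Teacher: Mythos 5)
Your first step reproduces the skeleton of the paper's argument (Proposition \ref{lem:stabil}): run the Maxwell flow $g(t)$ from the initial datum $\Gg$, control $\|\Gg-g(t)\|$ by a Gronwall estimate whose source term is the defect $\Q_{\g}-\Q_{0}$, use the exponential Fourier-norm convergence of the Maxwell flow, and optimise at $t\sim|\log\g|$. But there is a genuine gap in how you close the argument. The Maxwell evolution \emph{conserves energy}, so the flow started at $\Gg$ converges exponentially to the Maxwell equilibrium $\bm{h}_{\g}=H_{\lambda_{\g}}$ with $\lambda_{\g}=M_{2}(\Gg)^{-1/2}$ — \emph{not} to $\bm{G}_{0}$ (see Remark \ref{rem:scal}). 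Your displayed decay $\vertiii{g(t)-\bm{G}_{0}}_{k}\leq Ce^{-\mu t}\vertiii{\Gg-\bm{G}_{0}}_{k}$ is therefore false as stated: $\vertiii{g(t)-\bm{G}_{0}}_{k}$ saturates at the level $\vertiii{\bm{h}_{\g}-\bm{G}_{0}}_{k}\sim|\lambda_{\g}-\lambda_{0}|$. Your scheme thus proves only $\|\Gg-\bm{h}_{\g}\|_{L^1(\w_a)}\le\eta(\g)$, and the remaining piece $\|\bm{h}_{\g}-\bm{G}_{0}\|_{L^1(\w_a)}\sim|\lambda_\g-\lambda_0|$ is exactly what still has to be quantified. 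You acknowledge this at the end but dispose of it as ``a quantitative refinement of Theorem \ref{theo:Unique}''; that theorem is obtained by weak-$\star$ compactness and carries no rate, so there is nothing to refine — a new argument is needed, and it is the mathematical heart of the proof. The paper supplies it via the energy-dissipation identity $\mathscr{I}_{\g}(\Gg,\Gg)=0$ (the rigorous version of multiplying the steady equation by $x^2$), the explicit computation $\mathscr{I}_{0}(\bm{h}_{\g},\bm{h}_{\g})=2\lambda_{\g}^{-2}\log(\lambda_0/\lambda_\g)$, and the already-established bound $\|\Gg-\bm{h}_{\g}\|_{L^1(\w_a)}\le\eta(\g)$ together with the comparison $|\mathscr{I}_{0}-\mathscr{I}_{\g}|\lesssim\g^{s/(s+1)}|\log\g|$ (Lemmas \ref{lem:I0fg} and \ref{lem:IgfgI0}); this yields $|\lambda_{\g}-\lambda_{0}|\lesssim\tilde\eta(\g)+\g^{s/(s+1)}|\log\g|$ and closes the proof. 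Without some version of this selection mechanism your argument does not reach $\bm{G}_{0}$.

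Two smaller points. First, your Duhamel identity omits the drift $\tfrac14\partial_x(x(\Gg-g))$, which is part of the Maxwell PDE and of the semigroup one must Duhamel against; this is repairable but should be stated correctly. Second, your claimed $O(\g)$ rate for $(\Q_{\g}-\Q_{0})(\Gg,\Gg)$ is too optimistic: since $\Gg$ has uniformly bounded moments only up to order $<3$ (and $\bm{G}_0\notin L^1_3$), the region $|x-y|$ large forces a splitting that degrades the rate to $\g^{s/(s+1)}(1+|\log\g|)$ as in Proposition \ref{diff_Q}. Likewise, passing from an unweighted $L^2$ bound to $L^1(\w_a)$ by Cauchy–Schwarz requires weighted $L^2$ control of order $a+\tfrac12$, which the uniform estimates do not provide for $a$ close to $3$; the paper instead interpolates $L^1(\w_a)$ between $L^2$ and $L^1(\w_{a_*})$ with $a<a_*<3$ (Lemma \ref{L1-L2}), which covers the whole range $2<a<3$ but in turn requires the uniform Sobolev bounds on $g(t)$ of Theorem \ref{theo:Sobolev}.
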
 
We point out here that some suitable smoothness estimates for $\Gg$ in Sobolev spaces (uniformly with respect to $\g$) are required for the proof of Theorem \ref{theo:sta} (see Lemma \ref{lem:stabil}) and, as explained already  for $L^{2}$ estimates, the lack of regularization effect for the operator $\Q_{\g}$ induces severe technical obstacles in the proof of such Sobolev estimates for $\Gg$.

A final important tool for the proof of Theorem \ref{theo:mainUnique} is the \emph{quantitative stability} of the steady state $\bm{G}_{0}$ of \eqref{introMax} in the space $L^{1}(\w_{a})$. More precisely, let us introduce the following   linearisation $\mathscr{L}_{0}$ on $L^{1}(\w_{a})$.
\begin{defi}\label{def:spaces} We introduce, for $2 < a <3$ the functional spaces 
\begin{multline*}
\mathbb{X}_{a}=L^{1}(\w_{a}), \qquad {\mathbb{Y}}_{a}=\left\{f \in \mathbb{X}_{a}\;;\;\int_{\R}f(x)\d x=\int_{\R}f(x) x\d x=0\right\}\\
\text{ and }\qquad {\mathbb{Y}}_{a}^{0}=\left\{f \in {\mathbb{Y}_{a}}\;;\;\int_{\R}f(x) x^{2}\d x=0\right\}
\end{multline*}
with $\|\cdot\|_{\X_{a}}$ denoting the norm in $\X_{a}$.
We define then
$$\mathscr{L}_{0}:\mathscr{D}(\mathscr{L}_{0}) \subset \mathbb{X}_{a}\to \mathbb{X}_{a}$$
by
$$\mathscr{L}_{0}(h)=\Q_{0}(h,\bm{G}_{0})+\Q_{0}(\bm{G}_{0},h)-\frac{1}{4}\partial_{x}(xh), \qquad \forall h \in \mathscr{D}(\mathscr{L}_{0})$$
with 
$$\mathscr{D}(\mathscr{L}_{0})=\left\{f \in \mathbb{X}_{a}\;;\;\partial_{x}(x f) \in \mathbb{X}_{a}\right\}$$
and $\bm{G}_{0}$ given in Theorem~\ref{theo:Unique}.
\end{defi}

The stability of the profile $\bm{G}_{0}$ for the Maxwell molecules case is then established through the following result which provides a spectral gap for the linear operator $\mathscr{L}_{0}$ in the space $\mathbb{Y}_{a}^{0}$. The proof will be given in Section~\ref{sec:sg-small}.

\begin{prp}\label{restrict0}
  Let $2<a<3$. The operator $\left(\mathscr{L}_{0},\mathscr{D}(\mathscr{L}_{0})\right)$ on $\mathbb{X}_{a}$ is such that, for any $ {\nu \in(0,1-\frac{a}{4} -2^{1-a})}$, there exists $C(\nu)>0$ such that
\begin{equation}\label{eq:invert0} 
 \|\mathscr{L}_{0}h\|_{\X_{a}}\ge \frac{\nu}{C(\nu)} \|h\|_{\X_{a}}, \qquad \forall h \in \mathscr{D}(\mathscr{L}_{0}) \cap \mathbb{Y}_{a}^{0}.\end{equation}
In particular, the restriction $\widetilde{\mathscr{L}}_{0}$ of $\mathscr{L}_{0}$ to the space $\mathbb{Y}_{a}^{0}$  
is invertible with
\begin{equation}\label{eq:invert}\left\|\widetilde{\mathscr{L}}_{0}^{-1}g\right\|_{\X_{a}} \leq \frac{C(\nu)}{\nu}\|g\|_{\X_{a}}, \qquad \forall g \in \mathbb{Y}_{a}^{0}.\end{equation}
\end{prp}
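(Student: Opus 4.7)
My approach combines a direct dissipativity estimate in $\X_{a}$ with a contradiction/compactness argument that uses the three linear constraints defining $\mathbb{Y}_{a}^{0}$; the operator-norm bound \eqref{eq:invert} then follows from \eqref{eq:invert0} via a standard semigroup/open-mapping argument. For the dissipativity, given $h\in\mathscr{D}(\mathscr{L}_{0})$ I would pair $\mathscr{L}_{0}h$ with $-\sgn(h)\,\w_{a}$ and use the Maxwell symmetry $\Q_{0}^{+}(h,\bm{G}_{0})=\Q_{0}^{+}(\bm{G}_{0},h)$ to rewrite the operator as
$$\mathscr{L}_{0}h=2\Q_{0}^{+}(h,\bm{G}_{0})-h-\bm{G}_{0}\|h\|_{L^{1}}-\tfrac{1}{4}\partial_{x}(xh).$$
Three elementary computations then give: the loss contributes $-\|h\|_{\X_{a}}$ plus a remainder bounded by $\|h\|_{L^{1}}\|\bm{G}_{0}\|_{\X_{a}}$; integration by parts using $\partial_{x}\w_{a}=a\,\sgn(x)(1+|x|)^{a-1}$ and $|x|(1+|x|)^{a-1}\le\w_{a}(x)$ shows that the drift contributes at most $\tfrac{a}{4}\|h\|_{\X_{a}}$; and the crux is a Povzner-type inequality for the gain: for every $\eta>0$,
$$2\int_{\R^{2}}|h(x)|\bm{G}_{0}(y)\,\w_{a}\!\bigl(\tfrac{x+y}{2}\bigr)\,\d x\,\d y\le(2^{1-a}+\eta)\,\|h\|_{\X_{a}}+C_{\eta}\,\|h\|_{L^{1}}.$$
The leading constant $2^{1-a}$ reflects the asymptotic $\w_{a}(\tfrac{x+y}{2})\sim 2^{-a}\w_{a}(x)$ as $|x|\to\infty$ with $|y|$ bounded, multiplied by the factor $2$ coming from the two symmetric gain contributions; one proves it by splitting the $x$-integration at $|x|=R$ (absorbing $|x|\le R$ into $C_{\eta}\|h\|_{L^{1}}$) and sending $R\to\infty$. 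Combining the three bounds yields
$$\|\mathscr{L}_{0}h\|_{\X_{a}}\ge\bigl(1-\tfrac{a}{4}-2^{1-a}-\eta\bigr)\|h\|_{\X_{a}}-C_{\eta}'\|h\|_{L^{1}},\qquad h\in\mathscr{D}(\mathscr{L}_{0}).$$

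\textbf{Absorbing the $L^{1}$-remainder on $\mathbb{Y}_{a}^{0}$.} Given $\nu<1-\tfrac{a}{4}-2^{1-a}$, fix $\eta$ so that $\nu<1-\tfrac{a}{4}-2^{1-a}-\eta$ and argue by contradiction. If \eqref{eq:invert0} failed, a sequence $h_{n}\in\mathscr{D}(\mathscr{L}_{0})\cap\mathbb{Y}_{a}^{0}$ with $\|h_{n}\|_{\X_{a}}=1$ and $\|\mathscr{L}_{0}h_{n}\|_{\X_{a}}\to 0$ would have $\|h_{n}\|_{L^{1}}\ge c>0$ by the previous estimate. Since $\{h_{n}\}$ is uniformly bounded in $\X_{a}$ with $a>2$, it is tight and equi-integrable in $L^{1}(\w_{b})$ for any $b<a$; combined with the smoothing identity $2\Q_{0}^{+}(h,\bm{G}_{0})(z)=4\int h(x)\bm{G}_{0}(2z-x)\,\d x$ (a convolution with the smooth Cauchy-type density $\bm{G}_{0}$), a subsequence converges strongly in $L^{1}(\w_{b})$ to a limit $h_{\infty}\in\mathbb{Y}_{a}^{0}$ satisfying $\mathscr{L}_{0}h_{\infty}=0$ and $\|h_{\infty}\|_{L^{1}}\ge c$. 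To conclude I would identify $\ker\mathscr{L}_{0}\cap\X_{a}$ with the three-dimensional span of the infinitesimal variations of the Maxwell family $\bm{G}_{0}^{m,v_{0},\lambda}(x)=m\lambda\bm{H}(\lambda(x-v_{0}))$ with respect to $(m,v_{0},\lambda)$ at $(1,0,\lambda_{0})$: these three null modes encode mass, momentum and energy perturbations and are annihilated by the three constraints defining $\mathbb{Y}_{a}^{0}$, forcing $h_{\infty}=0$, a contradiction.

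\textbf{Main obstacle and passage to \eqref{eq:invert}.} The two genuine difficulties are (i) establishing the Povzner inequality with the sharp leading constant $2^{1-a}$, which is precisely what produces the explicit threshold $1-\tfrac{a}{4}-2^{1-a}$ in the statement, since any coarser splitting gives a strictly worse constant; and (ii) rigorously identifying $\ker\mathscr{L}_{0}\cap\X_{a}$ with exactly the three Maxwell null modes, a step delicate in the low-integrability space $\X_{a}$ but cleanly handled in Fourier variables via the explicit $\hat{\bm{H}}(\xi)=(1+|\xi|)\mathrm{e}^{-|\xi|}$ and the Bobylev--Cercignani spectral analysis underlying Theorem~\ref{k-norm-cvgce}. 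Finally, \eqref{eq:invert} follows from \eqref{eq:invert0}: the restriction $\widetilde{\mathscr{L}}_{0}$ is injective with closed range, and surjectivity onto $\mathbb{Y}_{a}^{0}$ is obtained either from a dual coercivity estimate on $\mathscr{L}_{0}^{\ast}$ or by representing $\widetilde{\mathscr{L}}_{0}^{-1}g=-\int_{0}^{\infty}\mathrm{e}^{t\mathscr{L}_{0}}g\,\d t$ using exponential decay of the linearised semigroup on $\mathbb{Y}_{a}^{0}$.
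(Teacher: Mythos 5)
Your dissipativity computation is essentially the one the paper performs: the constants $\tfrac a4$ for the drift, $-1$ for the loss and $2^{1-a}+\eta$ for the far-field gain are exactly those obtained in Section~\ref{sec:sg-small} for the operator $B$ in the splitting $\mathscr{L}=A+B$ (the paper works with the linearisation $\mathscr{L}$ around $\bm H$ and transfers to $\mathscr{L}_{0}$ by the scaling $\tau_{0}$). Where you genuinely diverge is in how the $\|h\|_{L^{1}}$ remainder is absorbed: the paper keeps everything quantitative, showing $A$ is bounded from the Fourier-norm space $X_{0}$ into $\mathbb{Y}_{a}^{0}$ and invoking the shrinkage theorem of \cite{CanizoThrom} together with the spectral gap of Theorem~\ref{specgap}, whereas you propose a compactness/contradiction argument. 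That route is viable in principle but has two real gaps as written. First, strong $L^{1}(\w_{b})$ convergence of the normalised sequence $h_{n}$ does \emph{not} follow from "tightness and equi-integrability": a bound in $L^{1}(\w_{a})$ gives tightness but no equi-integrability (approximate Dirac masses are admissible), so without more the limit could be a singular measure or could vanish, destroying the lower bound $\|h_{\infty}\|_{L^{1}}\ge c$. The convolution identity you mention is the right fix, but it must be organised as $h_{n}=\bigl(I+\tfrac14\partial_{x}(x\,\cdot)\bigr)^{-1}\bigl[2\Q_{0}^{+}(h_{n},\bm{G}_{0})-\mathscr{L}_{0}h_{n}\bigr]$, using boundedness of the drift resolvent on $L^{1}(\w_{b})$ and Arzel\`a--Ascoli plus tightness for the convolved term; this step is precisely what the paper's cutoffs $\rho_{R},\theta_{R}$ and the operator $A_{1}$ are designed to make rigorous and quantitative.

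Second, your identification of $\ker\mathscr{L}_{0}$ with the span of the three variations of the Maxwell family is incorrect. Rescaling the mass does not preserve the quadratic steady equation, and indeed $\mathscr{L}_{0}\bm{G}_{0}=\Q_{0}(\bm{G}_{0},\bm{G}_{0})=\tfrac14\partial_{x}(x\bm{G}_{0})\neq0$; moreover the translation mode is an eigenfunction with eigenvalue $+\tfrac14$, not $0$ (in Fourier variables, $\psi=\xi\bm{\Phi}$ satisfies $\tfrac14\xi\psi'+2\psi(\tfrac\xi2)\bm{\Phi}(\tfrac\xi2)-\psi=\tfrac14\psi$ by the steady equation for $\bm{\Phi}$). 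Only the dilation mode (the function $\varphi_{0}$ of Lemma~\ref{rmk:varphi0}) is a genuine null mode. The correct and much shorter way to exclude a nonzero limit $h_{\infty}\in\mathbb{Y}_{a}^{0}$ with $\mathscr{L}_{0}h_{\infty}=0$ is the Fourier spectral gap of Theorem~\ref{specgap} on $X_{0}\supset\mathbb{Y}_{a}^{0}$. Finally, note that \eqref{eq:invert} does not follow from \eqref{eq:invert0} alone: one needs surjectivity of $\widetilde{\mathscr{L}}_{0}$ on $\mathbb{Y}_{a}^{0}$, and the semigroup representation $\widetilde{\mathscr{L}}_{0}^{-1}=-\int_{0}^{\infty}S_{0}(t)\,\d t$ presupposes generation and exponential decay of the semigroup on $\mathbb{Y}_{a}^{0}$, which is exactly the content of the paper's Theorem~\ref{restrict} and must be established, not assumed.
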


The existence of a spectral gap for $\mathscr{L}_{0}$ in the space $X_{0}$ endowed with the Fourier norm $\vertiii{\cdot}_{k}$ is essentially well-known (see e.g. \cite{MR2355628}) but we revisit the arguments in Section \ref{sec:sg-small}. To derive a similar spectral gap estimate in the more tractable space $\mathbb{Y}_{a}^{0}$ (for some $2<a <3$), we rely on recent results from \cite{CanizoThrom} and  \cite{MM} using a suitable splitting
$$\mathscr{L}_{0}=A+B,$$ with $A: X_0 \to \mathbb{Y}_{a}^{0}$ bounded
and $B$ enjoying some dissipative properties (we refer to Section \ref{sec:sg-small} for more details, in particular, we point out already that the results of Section \ref{sec:sg-small} are given for the linearised operator around $\bm{H}$ but that the results therein translate to $\mathscr{L}_{0}$ by simple scaling arguments). As a consequence, we will finally deduce that $\mathscr{L}_{0}|_{\mathbb{Y}_{a}^{0}}$ has a spectral gap as well. Notice that, according to  \cite[Lemma 2.5 and Proposition 2.6]{MR2355628} for $\mu\in X_0$, we have 
$$\vertiii{\mu}_{k}\leq C \int_{\R} \w_{k}(x) \,|\mu|(dx) \qquad \mbox{ for any  }\; 2< k<3,$$ 
with $\w_{s}(x)=(1+|x|)^{s}$ for $s \geq0.$
Hence, for $a\geq k$, we have 
$$\mathbb{Y}_{a}^{0}\subset X_0.$$ 
Therefore, our scope here is to deduce the spectral property of the linearised operator on a \emph{small space} from those well-established on a \emph{large space}: it is a \emph{shrinkage} argument (see \cite{MM} for pioneering work) in contrast with the \emph{enlargement} techniques introduced in \cite{GMM}.  

Combining Proposition \ref{restrict0} together with Theorem \ref{theo:sta} yields then, in a non straightforward way, a full proof of Theorem \ref{theo:mainUnique}. Roughly speaking, the idea is to apply the above quantitative stability estimate to the difference $g_{\g}=\Gg^{1}-\Gg^{2}$ of two elements of $\mathscr{E}_{\g}$. Let us explain our main strategy in the simplified situation in which both profile share the same energy. In this case, if  $\Gg^{1},\Gg^{2} \in \X_{a}$ with $2 < a <3$ are two solutions of \eqref{eq:steadyg} and $g_{\g}=\Gg^{1}-\Gg^{2}$ then,  
$$M_{2}(\Gg^{1})=M_{2}(\Gg^{2}) \implies g_{\g} \in \mathbb{Y}_{a}^{0}.$$
Moreover, it can also be shown that there exists a mapping $\tilde{\eta}\;:\;[0,1] \to \R^{+}$ with 
$$\lim_{\g\to0^{+}}\tilde{\eta}(\g)=0$$
and such that
\begin{equation*}\label{eq:L0dif-intro}
\|\mathscr{L}_{0}\left(\Gg^{1}-\Gg^{2}\right)\|_{\X_{a}} \leq \tilde{\eta}(\g)\left\|\Gg^{1}-\Gg^{2}\right\|_{\X_{a}}, \qquad \g >0.\end{equation*}
Combining this with \eqref{eq:invert0}  one gets
$$\frac{\nu}{C(\nu)} \|g_{\g}\|_{\X_{a}} \leq \|\mathscr{L}_{0}g_{\g}\|_{\X_{a}} \leq \tilde{\eta}(\g)\left\|g_{\g}\right\|_{\X_{a}}\,.$$
Since $\lim_{{\g\to0}}\tilde{\eta}(\g)=0$, one can choose $\g^{\dagger} \in (0,1)$ such that
$$\frac{C(\nu)}{\nu}\tilde{\eta}(\g) < 1, \qquad \forall  \g \in (0,\g^{\dagger}),$$
from which
$$\|g_{\g}\|_{\X_{a}} < \|g_{\g}\|_{\X_{a}} \qquad \forall \g \in (0,\g^{\dagger}).$$
This shows that $g_{\g}=0$ for all $\g \in (0,\g^{\dagger})$ and gives a simplified version of Theorem \ref{theo:mainUnique} in the special case in which $\Gg^{1}$ and $\Gg^{2}$ share the same energy. To prove the uniqueness result (without any restriction on the energy), we need therefore, in some rough sense, to be able to control the fluctuation of kinetic energy introducing a kind of selection principle which allows to compensate the discrepancy of energies to apply a variant of \eqref{eq:invert0}. This is done in Section \ref{sec:uniqueness} to which we refer for technical details regarding such a procedure. 
 
\subsection{Main features of our contribution} 

A first important novelty and main interest of the present contribution is that, to our knowledge, it presents the \emph{first and only} uniqueness result for self-similar profiles associated to an inelastic Boltzmann equation for hard potentials in a regime of \emph{large inelasticity}. Indeed, the only uniqueness result available in the literature is the one in the $3D$ case obtained in \cite{MM} in a \emph{weakly inelastic regime} corresponding to a restitution coefficient $e \simeq 1$. Our analysis here is the first one dealing with highly inelastic interactions (the most inelastic one actually since, as said, $a=\frac{1}{2}$ corresponds to sticky particles) and we strongly believe that our approach can be adapted to the study of $3D$ models for arbitrary restitution coefficient $e \in (0,1)$ (of course, still in a regime of moderately hard potentials). 

Second, one of the main interests of our analysis is that it provides
a first step towards the equivalent of the so-called scaling
hypothesis which, in the study of Smoluchowski's equation, asserts
that self-similar profiles are unique and attract all solutions to the
associated evolution equation (see \cite{CanizoThrom} for a first
proof of the scaling hypothesis for non-explicitly solvable
kernels). In the present contribution, we proved, as in
\cite{CanizoThrom}, that, for singular perturbation of the explicitly
solvable case of Maxwell molecules (i.e. for $\g \simeq 0$), the
self-similar profile $\Gg$ is unique. Some additional work should be
undertaken to prove now that such a unique solution attracts all
solution to \eqref{Intro-e2} with some explicit (exponential) rate. We
strongly believe that the perturbative framework introduced in the
present contribution is also the right approach to the study of the
long-time behavior of solutions to \eqref{Intro-e2}, exploiting now
the fact that convergence in Fourier norm $\vertiii{\cdot}_{k}$ is
indeed exponential in the limit case $\g=0$ (see Theorem
\ref{k-norm-cvgce}). Combining this with a careful spectral analysis
of the linearization of $\Q_{\g}$ around the self-similar profile
$\Gg$ should provide important insights on this important question and
pave the way to a full mathematical justification of a conjecture in
\cite{ernst} about the long-time behaviour of granular gases, allowing
in particular for strong inelasticity.

 \subsection{Organisation of the paper} After  this Introduction, the rest of the paper is organised as follows. Section \ref{sec:apost} derives the main a posteriori estimates on the self-similar profile $\Gg \in \mathscr{E}_{\g}$, focusing mainly on estimates which are uniform with respect to the parameter $\g \simeq 0$. We also establish in this Section the proof of Theorem \ref{theo:Unique}. The two main results, Theorem \ref{theo:sta} and also our main result Theorem \ref{theo:mainUnique}, are proven in Section \ref{sec:sec3} in which we take for granted many of the results regarding the Maxwell equation \eqref{introMax}. The final Section \ref{sec:exp} is devoted to a comprehensive study of the special case $\g=0$, i.e. a careful analysis of solutions to \eqref{introMax}. We revisit the exponential convergence to equilibrium in Fourier norm $\vertiii{\cdot}_{k}$ and extend it to more tractable Sobolev spaces by a carefuly study of the regularising effects of \eqref{introMax}. We also establish the proof of the stability estimate Proposition \ref{restrict0}. The paper ends with three Appendices containing various technical results of independent interest. In Appendix \ref{app:fourier}, we recall some properties of the Fourier norm $\vertiii{\cdot}_{k}$ as well as some useful interpolation inequalities. Appendix \ref{app:QgQ0}
 is devoted to some functional estimates of the collision operator $\Q_{\g}$ and its linearised counterpart. Finally, Appendix \ref{app:rigor}  provides rigorous justifications of several results whose proofs given in the text are only formal. Indeed, we believe that the core text should contain the main technical ideas underlying some of the results and decided to postpone their rigorous justifications  to Appendix \ref{app:rigor} which also contains the full proof of the above Proposition \ref{prop:allmoments}.

\subsection*{Acknowledgments}

R.~Alonso gratefully acknowledges the support from O Conselho Nacional de Desenvolvimento Científico e Tecnológico, Bolsa de Produtividade em Pesquisa - CNPq (303325/2019-4). J.~Cañizo acknowledges support from grant
PID2020-117846GB-I00, the research network RED2018-102650-T, and the
María de Maeztu grant CEX2020-001105-M from the Spanish
government. B.~Lods gratefully acknowledges the financial support from the Italian
Ministry of Education, University and Research (MIUR), ``Dipartimenti
di Eccellenza'' grant 2018-2022 as well as the support from the
\textit{de Castro Statistics Initiative}, Collegio Carlo Alberto
(Torino).   The authours would like to acknowledge the support of the
Hausdorff Institute for Mathematics where this work started during
their stay at the 2019 Junior Trimester Program on Kinetic Theory.
\emph{Data sharing not applicable to this article as no datasets were generated or analysed during the current study.}

\section{Uniform a posteriori estimates in the limiting process}
\label{sec:apost}

As explained in the Introduction, our proof of the uniqueness of
solutions to \eqref{eq:steadyg} is based upon a perturbative approach
around the pivot case $\g=0$ corresponding to Maxwell molecules
interactions. To undertake this perturbative approach we need first
to establish \emph{uniform} estimates for the self-similar profile
$\Gg$ to \eqref{eq:steadyg} for $\g \in (0,1)$ small enough. We begin
with the control of the energy $M_{2}(\Gg)$.

\subsection{Uniform energy control}

As far as the energy is concerned, one has the following estimate:
\begin{lem}\phantomsection\label{lem:energy}
  The following universal bound holds true: for any $\g \in (0,1)$ and
  any $\Gg \in \mathscr{E}_{\g}$ one has
  \begin{equation*}
    M_2(\Gg)  \leq \frac{1}{2}\,\,.
  \end{equation*}
  As a consequence, there exists some universal constant $\tilde{C} >
  0$ such that, for any $\g \in (0,1)$ and
  any $\Gg \in \mathscr{E}_{\g}$,
  \begin{equation}\label{eq:tildeC}
    \left\|\Gg\right\|_{L^{1}_{s}} \leq \tilde{C}
    \qquad \forall s \in [0,2].
  \end{equation}
\end{lem}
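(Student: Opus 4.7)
The plan is to derive an identity for $M_{2}(\Gg)$ by testing the steady equation \eqref{eq:steadyg} against $\varphi(x)=x^{2}$, and then invoke Jensen's inequality. Since $\Gg\in L^{1}_{3}(\R)$, the moment $M_{\g+2}(\Gg)$ is finite (as $\g\le 1$), so all integrals below are convergent; the use of the unbounded test function $x^{2}$ in the weak form \eqref{eq:weakgamma} is handled by a routine truncation (for instance $\varphi_{R}(x)=\min(x^{2},R^{2})$, for which $|\Delta\varphi_{R}(x,y)|\le\tfrac{1}{2}|x-y|^{2}$ uniformly in $R$) together with dominated convergence.

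Integrating \eqref{eq:steadyg} against $x^{2}$ with $c=\tfrac{1}{4}$ yields $-\tfrac{1}{2}M_{2}(\Gg)$ on the left after integration by parts, while the right-hand side is computed as in \eqref{eq:ener} with $a=\tfrac{1}{2}$ (so $ab=\tfrac{1}{4}$) to give $-\tfrac{1}{4}\int_{\R\times\R}\Gg(u)\Gg(v)|u-v|^{\g+2}\,\d u\d v$. This produces the energy identity
$$M_{2}(\Gg)=\frac{1}{2}\int_{\R\times\R}\Gg(u)\Gg(v)|u-v|^{\g+2}\,\d u\d v.$$
Since $\Gg(u)\Gg(v)\,\d u\d v$ is a probability measure on $\R^{2}$ by \eqref{eq:mom} and $t\mapsto t^{(\g+2)/2}$ is convex for $\g\ge 0$, Jensen's inequality applied to the nonnegative function $|u-v|^{2}$ gives
$$\int_{\R\times\R}\Gg(u)\Gg(v)|u-v|^{\g+2}\,\d u\d v\ge\Bigl(\int_{\R\times\R}\Gg(u)\Gg(v)|u-v|^{2}\,\d u\d v\Bigr)^{(\g+2)/2}.$$
Expanding $|u-v|^{2}=u^{2}-2uv+v^{2}$ and using $\int_{\R} x\,\Gg(x)\,\d x=0$ from \eqref{eq:mom}, the inner integral equals $2M_{2}(\Gg)$. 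Inserting this into the energy identity yields $M_{2}(\Gg)\ge 2^{\g/2}M_{2}(\Gg)^{(\g+2)/2}$, and since $M_{2}(\Gg)>0$ (otherwise $\Gg$ would be a Dirac mass, contradicting $\Gg\in L^{1}$), this rearranges to $M_{2}(\Gg)^{\g/2}\le 2^{-\g/2}$, that is $M_{2}(\Gg)\le\tfrac{1}{2}$.

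The bound \eqref{eq:tildeC} on $\|\Gg\|_{L^{1}_{s}}$ then follows by interpolation: for $s\in[0,2]$, H\"older's inequality together with $M_{0}(\Gg)=1$ and $M_{2}(\Gg)\le\tfrac{1}{2}$ yields $M_{s}(\Gg)\le M_{0}(\Gg)^{1-s/2}M_{2}(\Gg)^{s/2}\le 1$, and the elementary inequality $(1+|x|)^{s}\le 2^{\max(0,s-1)}(1+|x|^{s})$ gives $\|\Gg\|_{L^{1}_{s}}\le 4$, uniformly in $\g\in(0,1)$. I do not anticipate any substantive obstacle: the argument is driven entirely by the combination of the energy identity with the convexity of $t\mapsto t^{(\g+2)/2}$, which directly forces $M_{2}(\Gg)$ below the Maxwell-case threshold $\tfrac{1}{2}$.
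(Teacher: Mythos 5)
Your argument is correct in substance and starts from the same point as the paper's proof, namely the energy identity obtained by testing \eqref{eq:steadyg} with $x^{2}$ (this is exactly identity \eqref{eq1}, which the paper writes as $\int_{\R^2}\Gg(x)\Gg(y)|x-y|^{2}(|x-y|^{\g}-1)\dx\dy=0$). Where you diverge is the final step: the paper applies the two elementary inequalities $u-1\geq\log u$ (with $u=|x-y|^{\g}$) and $r\log r\geq r-1$ (with $r=|x-y|^{2}$) to conclude $\int\int\Gg\Gg(|x-y|^{2}-1)\leq 0$, whereas you apply Jensen's inequality with the convex map $t\mapsto t^{(\g+2)/2}$ to get $2M_{2}(\Gg)\geq(2M_{2}(\Gg))^{(\g+2)/2}$ directly. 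Both yield exactly the threshold $\tfrac12$; the Jensen route is arguably more direct, and your $M_{2}>0$ caveat is harmless since the bound is trivial when $M_{2}=0$. The interpolation giving \eqref{eq:tildeC} is fine.

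One concrete error in the rigor remark: the claimed uniform domination $|\Delta\varphi_{R}(x,y)|\leq\tfrac12|x-y|^{2}$ for $\varphi_{R}(x)=\min(x^{2},R^{2})$ is false. Take $R=1$, $x=1.1$, $y=0.9$: then $\varphi_R(x)=1$, $\varphi_R(y)=0.81$, $\varphi_R(\tfrac{x+y}{2})=1$, so $\Delta\varphi_{R}=0.19$, while $\tfrac12|x-y|^{2}=0.02$. The truncation argument is still salvageable: one has $|\Delta\varphi_{R}(x,y)|\leq C\,|x-y|\,(1+|x|+|y|)$ uniformly in $R$, and since $\Gg\in L^{1}_{3}(\R)$ this provides an $R$-independent integrable majorant for $|\Delta\varphi_{R}(x,y)|\,|x-y|^{\g}\,\Gg(x)\Gg(y)$, so dominated convergence applies (one must also smooth $\varphi_R$ near $\pm R$ to make it an admissible $\mathcal{C}^{1}_{b}$ test function). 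The paper's own proof instead uses a $\mathcal{C}^{1}_{b}$ truncation $\phi_{\ell}$ and explicitly tracks the remainder $R_{\ell}\to 0$; either way the identity is justified, so this slip does not affect the conclusion.
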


\begin{proof}
  Multiplying the equation \eqref{eq:steadyg} by $|x|^{2}$ and
  integrating in $x$, one obtains formally
  \begin{equation}\label{eq1}
    \frac{1}{4}\int_{\R}\int_{\R}\Gg(x)\Gg(y)|x-y|^{2}\big(|x-y|^{\gamma}-1\big)\dx\dy =0\,.
  \end{equation}
  To prove this rigorously we note that $\Gg$ satisfies \eqref{eq:steadyg} in a weak sense, that is
  \begin{multline}\label{weak_form}
    -\frac14 \int_\R x \phi'(x) \Gg(x) \d x =\\ \frac12 \int_{\R^2}|x-y|^\gamma \left(2 \phi\left(\frac{x+y}2\right) -\phi(x)-\phi(y)\right) \Gg(x)  \Gg(y) \d x\d y
  \end{multline} 
for any $\phi\in\mathcal{C}_{b}^{1}(\R)$. Since $x \mapsto x^{2}$ does not belong to $\mathcal{C}_{b}^{1}(\R)$, one cannot take $\phi(x)=x^2$ but one considers a sequence of approximating functions $\left\{\phi_{\ell}\right\}_{\ell\geq0}\subset \mathcal{C}_{b}^{1}(\R)$ satisfying 
  $$\phi_\ell(x)=\left\{\begin{array}{lcl}x^2 & \mbox{ for }&  |x|\le \ell,\\
  \ell^2+1 & \mbox{ for } & |x|\ge \ell+1 \end{array}\right.\qquad \mbox{ and }\qquad  |\phi'_\ell(x)|\le 2\ell \quad\mbox{ for } x\in\R. $$
  Plugging $\phi_\ell$ in \eqref{weak_form}, one has for any $\ell >0$,
\begin{multline}\label{eq:weakL}
-\frac{1}{2}\int_{-\ell}^{\ell}x^{2}\Gg(x)\dx +\frac{1}{4}\int_{[-\ell,\ell]^{2}}|x-y|^{\g+2}\Gg(x)\Gg(y)\dx\dy\\
=\int_{\R^2\setminus [-\ell,\ell]^{2}}|x-y|^{\g}\left(\phi_{\ell}\left(\frac{x+y}2\right) -\frac{1}{2}\phi_{\ell}(x)-\frac{1}{2}\phi_{\ell}(y)\right) \Gg(x)  \Gg(y) \dx\dy\\
+\frac{1}{4} \int_{\ell < |x| \leq \ell+1}x\phi'_{\ell}(x)\Gg(x)\dx
\end{multline}
where we used that $|\frac{x+y}{2}|^{2} - \frac{1}{2}|x|^{2} - \frac{1}{2}|y|^{2} = -\frac{1}{4}|x-y|^{2}$ in the particle-particle collisional term for $(x,y) \in [-\ell,\ell]^{2}$.
Rewriting, we have moreover
\begin{multline*}
 -\int_{-\ell}^{\ell}x^2\Gg(x)\dx=-\int_{[-\ell,\ell]^2}x^2\Gg(x)\Gg(y)\dx\dy\\*
 +\int_{[-\ell,\ell]^2}x^2\Gg(x)\Gg(y)\dx\dy-\int_{-\ell}^{\ell}x^2\Gg(x)\dx\\*
 =-\frac{1}{2}\int_{[-\ell,\ell]^2}|x-y|^2\Gg(x)\Gg(y)\dx\dy-\int_{[-\ell,\ell]^2}xy\Gg(x)\Gg(y)\dx\dy\\*
 +\int_{-\ell}^{\ell}x^2\Gg(x)\dx\biggl(\int_{-\ell}^{\ell}\Gg(y)\dy-1\biggr).
\end{multline*}
Thus, we get from \eqref{eq:weakL}  
\begin{equation}\label{eq:weakL2}
\frac{1}{4}\int_{[-\ell,\ell]^{2}}|x-y|^2\Gg(x)\Gg(y)\bigl(|x-y|^{\g}-1\bigr)\dx\dy=R_{\ell}\end{equation}
with
\begin{multline}\label{eq:weakL-2}
 R_\ell:=\int_{\R^2\setminus [-\ell,\ell]^{2}}|x-y|^{\g}\left(\phi_{\ell}\left(\frac{x+y}2\right) -\frac{1}{2}\phi_{\ell}(x)-\frac{1}{2}\phi_{\ell}(y)\right) \Gg(x)  \Gg(y) \dx\dy\\
+\frac{1}{4} \int_{\ell < |x| \leq \ell+1}x\phi'_{\ell}(x)\Gg(x)\dx+ {\frac{1}{2}}\int_{[-\ell,\ell]^2}xy\Gg(x)\Gg(y)\dx\dy\\*
 - {\frac{1}{2}}\int_{-\ell}^{\ell}x^2\Gg(x)\dx\biggl(\int_{-\ell}^{\ell}\Gg(y)\dy-1\biggr).\end{multline}
Letting $\ell \to \infty$ one deduces easily that $R_\ell$ is converging to zero.  {This justifies \eqref{eq1}}. Now, using the elementary inequality $u-1\geq \log u,$ $(u >0)$ with $u=|x-y|^{\g}$ one deduces from \eqref{eq1} that
\begin{multline*}
0\geq\frac{1}{4}\int_{\R^{2}}\Gg(x)\Gg(y)|x-y|^{2}\log|x-y|\dx\dy\\
=\frac{1}{8}\int_{\R^{2}}\Gg(x)\Gg(y)|x-y|^{2}\log|x-y|^{2}\dx\dy.\end{multline*}
Applying the elementary inequality $r\log r \geq r-1$ $(r >0)$ with $r=|x-y|^{2}$ we deduce that
$$0 \geq \int_{\R^{2}}\Gg(x)\Gg(y)\left(|x-y|^{2}-1\right)\dx\dy.$$
Since  
$$\int_{\R}\int_{\R}\Gg(x)\Gg(y)|x-y|^{2} \dx \dy=2M_{2}(\Gg), \qquad \int_{\R}\int_{\R}\Gg(x)\Gg(y)\dx\dy=1$$
we deduce the result.
\end{proof} 

\subsection{Weak convergence}

A first consequence of the above energy estimate \eqref{eq:tildeC} is
that, for any choice of equilibria $\Gg \in \mathscr{E}_{\g}$,
$$\left\{\Gg(x)\d x\right\}_{\gamma\in(0,1)} \quad \text{ is a tight set of probability measures}.$$ 
From Prokhorov's compactness Theorem (see \cite[Theorem 1.7.6, p. 41]{kolo}),  there exist some probability measure $\mu(\d x)$ and some sequence $(\gamma_n)_{n\in\N}$ tending to $0$ such that $\left\{\bm{G}_{\gamma_n}(x)\dx\right\}_{n\in\N}$ converges narrowly to $\mu(\dx)$, that is
  $$\int_{\R} \varphi(x) \,\bm{G}_{\gamma_n}(x)\dx \underset{n\to\infty}{\longrightarrow} \int_{\R}\varphi(x)  \mu(\dx)\, \qquad \forall \varphi \in \mathcal{C}_{b}(\R).$$
  Let $\phi\in\mathcal{C}_{b}^{1}(\R)$. Set $\psi_n(x,y)=|x-y|^{\gamma_n} \left(2\phi\left(\frac{x+y}{2}\right)-\phi(x)-\phi(y)\right)$ and $\psi(x,y)=2\phi\left(\frac{x+y}{2}\right)-\phi(x)-\phi(y)$.  On the first hand, as already observed, one has 
$$\left|  |x-y|^{\gamma_n}-1\right|\le \gamma_n |\log(|x-y|)| \quad \mbox{ for } \quad |x-y|\le 1$$  and 
$$  \left| 2\phi\left(\frac{x+y}{2}\right)-\phi(x)-\phi(y) \right|\le |x-y|\, \|\phi'\|_{L^\infty}.$$ 
Therefore, for  $|x-y|\le 1$,
$$\left| \psi_n(x,y)-\psi(x,y)\right|\le \gamma_n |\log(|x-y|)|\,  |x-y|\, \|\phi'\|_{L^\infty} \le \frac{1}{e}  \gamma_n \|\phi'\|_{L^\infty}.$$
On the other hand, one has for any $R >1$,
$$\left|  |x-y|^{\gamma_n}-1\right|\le \gamma_n\,R \log R\quad \mbox{ for } \quad |x-y|\ge 1 \mbox{ and } |x|+|y|\le R$$ and
$$ \left| 2\phi\left(\frac{x+y}{2}\right)-\phi(x)-\phi(y) \right|\le 4 \|\phi\|_{L^\infty}.$$
Consequently, for  $|x-y|\ge 1$ and $|x|+|y|\le R$, $$\left| \psi_n(x,y)-\psi(x,y)\right|\le 4 \gamma_n R\log R  \|\phi\|_{L^\infty}.$$
We may thus conclude that 
$$\lim_{n\to\infty}\sup_{|x|+|y|\le R} |\psi_n(x,y)-\psi(x,y)|=0. $$
Now, for any $n\in\N$, $|\psi_n(x,y)|\le 4 (2+|x|+|y|) \|\phi\|_{L^\infty}$, which implies
$$\lim_{|x|+|y|\to \infty} \sup_{n\ge 1 }\frac{|\psi_n(x,y)|}{2+|x|^2+|y|^2}=0. $$
The uniform convergence in compact sets and the above control of the
tails of $\psi_n$ imply that 
$$\lim_{n\to \infty} \int_{\R^2} \psi_n(x,y) \bm{G}_{\gamma_n}(x) \bm{G}_{\gamma_n}(y)\d x \d y =  \int_{\R^2} \psi(x,y) \mu(\dx) \mu(\dy).$$
We refer to \cite[Proposition 2.2]{LM} for details on the argument
leading to this. We thereby obtain that $\mu(\d x)$ is a steady
solution to \eqref{eq:IB-selfsim}.
 
Notice that the energy of $\mu(\dx)$ is not explicit but, from Theorem \ref{theo:bob}, there exists $\lambda >0$ such that
$$\mu(\dx)=H_{\lambda}(x)\dx=\lambda\bm{H}(\lambda x)\dx$$
satisfying
$$\int_{\R} H_\lambda(x)\d x =1, \qquad \int_{\R} H_{\lambda}(x)x\d x =0, \qquad \int_{\R}H_\lambda(x) x^2\d x =\frac{1}{\lambda^2}.$$
We need here to identify the possible value(s) of the parameter $\lambda.$ Thus, \emph{any limiting point} (as $\g \to0$) of the family $\{\Gg(x)\dx\}_{\g\in (0,1)}$ is a steady solution to \eqref{eq:IB-selfsim}. If we are able to identify a \emph{unique} possible limiting positive energy, we would have a \emph{unique} possible limiting point and the whole net $\{\Gg(x)\dx\}_{\g}$ would converge to it.

\subsection{Pointwise control}

A second observation is the following uniform \emph{pointwise} upper
bound.
\begin{lem}\phantomsection There exists $C >0$ \emph{independent of $\g$} such that
\begin{equation}\label{eq:pointX}
\Gg(x) \leq \frac{C}{|x|} \qquad  {\mbox{ for  a.e. } x \in \R},\end{equation}
holds true for any $\g\in[0,1]$ and any $\Gg \in \mathscr{E}_{\g}$.
\end{lem}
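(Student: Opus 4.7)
The plan is to use the self-similar equation directly in conservation-law form. Writing $\mathcal{Q}_\gamma(\bm{G}_\gamma,\bm{G}_\gamma)=\mathcal{Q}_\gamma^+(\bm{G}_\gamma,\bm{G}_\gamma)-\sigma_\gamma\bm{G}_\gamma$ with the collision frequency
$$\sigma_\gamma(x):=\int_\R\bm{G}_\gamma(y)\,|x-y|^\gamma\,\d y,$$
equation \eqref{eq:steadyg} becomes
$$\partial_x\bigl(x\bm{G}_\gamma(x)\bigr)=4\,\mathcal{Q}_\gamma^+(\bm{G}_\gamma,\bm{G}_\gamma)(x)-4\,\sigma_\gamma(x)\bm{G}_\gamma(x),$$
which is a difference of two nonnegative functions. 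The elementary estimate $|x-y|^\gamma\leq|x|^\gamma+|y|^\gamma$ together with $\|\bm{G}_\gamma\|_{L^1}=1$ and Lemma~\ref{lem:energy} yield
$$\int_\R\sigma_\gamma(s)\bm{G}_\gamma(s)\d s=\int_\R\mathcal{Q}_\gamma^+(\bm{G}_\gamma,\bm{G}_\gamma)(s)\d s=\int_{\R^2}\bm{G}_\gamma(s)\bm{G}_\gamma(y)|s-y|^\gamma\d s\d y\leq 2\tilde C,$$
uniformly in $\gamma\in[0,1]$, so both terms on the right-hand side of the equation are in $L^1(\R)$.

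Next, since $x\bm{G}_\gamma(x)$ is an absolutely continuous function whose derivative lies in $L^1(\R)$, it admits finite limits at $\pm\infty$; these limits must vanish because $\bm{G}_\gamma\in L^1_3(\R)$. Integrating the identity above from $x>0$ to $+\infty$ yields
$$x\bm{G}_\gamma(x)=4\int_x^{\infty}\sigma_\gamma(s)\bm{G}_\gamma(s)\d s-4\int_x^{\infty}\mathcal{Q}_\gamma^+(\bm{G}_\gamma,\bm{G}_\gamma)(s)\d s.$$
Since the left-hand side and both integrals on the right are nonnegative, we may drop the subtracted term to obtain
$$0\leq x\bm{G}_\gamma(x)\leq 4\int_x^{\infty}\sigma_\gamma(s)\bm{G}_\gamma(s)\d s\leq 8\tilde C.$$
A symmetric argument for $x<0$, integrating from $-\infty$ to $x$ and using the nonpositivity of $x\bm{G}_\gamma(x)$, gives the analogous bound on $|x|\bm{G}_\gamma(x)$. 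Taking $C:=8\tilde C$ concludes the proof.

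The one subtle point is the rigorous justification that the conservation-law identity actually holds in the integrated form used above: \eqref{eq:steadyg} is a priori only a weak equation, and one needs to ensure both the absolute continuity of $x\mapsto x\bm{G}_\gamma(x)$ and the vanishing of its boundary values at $\pm\infty$. This can be handled exactly as in Lemma~\ref{lem:energy} by testing against cutoff approximations of the indicator $\mathbf{1}_{(x,\infty)}$ and letting the cutoff tend to infinity, the remainder terms vanishing thanks to the $L^1_3$ control of $\bm{G}_\gamma$ provided by \eqref{eq:tildeC} and the integrability of $\mathcal{Q}_\gamma^\pm(\bm{G}_\gamma,\bm{G}_\gamma)$ established above.
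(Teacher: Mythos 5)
Your proof is correct and follows essentially the same route as the paper: integrate the steady equation one-sidedly, use the sign of $x\Gg(x)$ to drop one part of the collision operator, and bound the remaining part by $\int_{\R^2}\Gg(x)\Gg(y)|x-y|^{\g}\dx\dy\leq 2\tilde C$ uniformly in $\g$ via \eqref{eq:tildeC}, with the rigorous version obtained by testing against mollified indicator functions exactly as you indicate in your final paragraph. The only cosmetic difference is that you integrate from $x$ to $+\infty$ and keep the loss term, whereas the paper integrates from $0$ to $x$ and keeps the gain term; both yield the same constant.
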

\begin{proof} Formally,  integrating equation for $\Gg$ in $(0,x)$, with $x>0$, one has
\begin{equation*}
x\,\Gg(x) = 4\int^{x}_{0} \Q_{\g}(\Gg,\Gg) \dy \leq 4\| \Q^{+}_{\g}(\Gg,\Gg) \|_{L^1}\leq C\| \Gg \|_{L^{1}_{\g}} \| \Gg \|_{L^1}\,
\end{equation*}
and the uniform control provided by \eqref{eq:tildeC}  yields the result.
 
To justify rigorously this inequality, for $x >0$, one considers some nonnegative mollifying sequence $(\varrho_n)_{n\in\N}$ and define
$$\phi_n(y)=\int_{-\infty}^y\varrho_n(x-z)\d z, \qquad y \in \R, \qquad n \in \N.$$
Choosing the test-function $\phi_{n}$ in \eqref{weak_form}, one obtains
\begin{multline*}
-\frac14 \int_{\R} y\varrho_n(x-y)\Gg(y)\d y =\\
 \frac12 \int_{\R^2}|y-z|^\gamma \Gg(y)\Gg(z)  \left( 2\phi_n\left(\frac{y+z}{2}\right)-\phi_n(y)-\phi_n(z) \right) \d y \d z,\end{multline*}
  for any $n \in \N$. Since $0\le \phi_n(y)\le \int_{\R}\varrho_n(z)\d z=1$, we get
  \begin{align*}
    -\frac14 \int_{\R} y\varrho_n(x-y)\Gg(y)\d y & \ge - \frac12 \int_{\R^2}|y-z|^\gamma \Gg(y)\Gg(z)  \left(\phi_n(y)+\phi_n(z) \right) \d y \d z
    \\
    & \ge - 2 \|\Gg |\cdot|^\gamma\|_{L^1} \|\Gg\|_{L^1} \ge -2\tilde{C}
    \end{align*}
    where $\tilde{C}$ is defined in \eqref{eq:tildeC}. Letting $n\to\infty$, we get $x \Gg(x)\leq 8\tilde{C}$, for a.e. $x>0$. For $x<0$, we bound from above the above integral in order to obtain in the end  $-x \Gg(x)\leq 8\tilde{C}$, for a.e. $x<0$. This proves the result.
\end{proof}

\subsection{$L^{2}$-estimates on the profile}
\label{sec:L2est}

We deduce from this the following technical estimate regarding the control of $L^{2}$ norms.
\begin{lem}\phantomsection \label{lem:boundL2L} There exists some
  universal numerical constant $C_0 >0$ such that the inequality
  \begin{equation}
    \label{eq:boundL2L}
    \|\Gg\|_{L^2}^{2} \leq
    C_{0} \|\Gg\|_{L^2}^{2}\int_{-\ell}^{\ell}|x|^{\g}\Gg(x)\dx
    + C_0\,\ell^{\g-1}
  \end{equation}
  holds true for any $\g >0$, $\Gg \in \mathscr{E}_{\g}$ and any
  $\ell >0$.
\end{lem}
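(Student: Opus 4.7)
The plan is to multiply the steady equation \eqref{eq:steadyg} (with $c=\tfrac14$) by $\Gg$ and integrate over $\R$; as in the proof of Lemma~\ref{lem:energy}, a rigorous justification proceeds via a truncation/mollification argument. On the left, two integrations by parts yield $\tfrac18\|\Gg\|_{L^2}^2$, while on the right, applying the weak form \eqref{eq:weakgamma} with $\varphi=\Gg$ together with the decomposition $\Q_\g=\Q_\g^+-\Q_\g^-$ produces
\begin{equation*}
 \frac18\|\Gg\|_{L^2}^2+\int_\R\Gg(x)^2\,L_\g(x)\,\d x\;=\;\int_\R\Q_\g^+(\Gg,\Gg)(x)\,\Gg(x)\,\d x\;=:I,
\end{equation*}
with $L_\g(x):=\int_\R\Gg(y)|x-y|^\g\,\d y\geq 0$. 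Dropping the nonnegative second term on the left reduces the task to $\|\Gg\|_{L^2}^2\leq 8I$.

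Expanding $I$ via \eqref{eq:weakgamma}, the subadditivity $|x-y|^\g\leq|x|^\g+|y|^\g$ (valid for $\g\in[0,1]$) combined with the $x\leftrightarrow y$ symmetry of $\Gg(x)\Gg(y)\Gg(\tfrac{x+y}{2})$ gives $I\leq 2\int_\R\Gg(x)|x|^\g\,K(x)\,\d x$, where $K(x):=\int_\R\Gg(y)\Gg(\tfrac{x+y}{2})\,\d y$. The change of variable $z=(x+y)/2$ recasts $K(x)=2\int_\R\Gg(2z-x)\Gg(z)\,\d z$; since $\|\Gg(2\,\cdot\,-x)\|_{L^2}=\|\Gg\|_{L^2}/\sqrt 2$, the Cauchy--Schwarz inequality provides the crucial uniform bound $K(x)\leq\sqrt2\,\|\Gg\|_{L^2}^2$, while Fubini yields $\int_\R K(x)\,\d x=2$. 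Splitting the outer integration at $|x|=\ell$, the near region $\{|x|\leq\ell\}$ produces exactly the advertised truncated moment, contributing at most $2\sqrt 2\,\|\Gg\|_{L^2}^2\int_{-\ell}^\ell|x|^\g\Gg(x)\,\d x$. On the far region $\{|x|>\ell\}$, we invoke the uniform pointwise estimate \eqref{eq:pointX} in the form $\Gg(x)|x|^\g\leq C|x|^{\g-1}\leq C\ell^{\g-1}$, valid since $\g-1\leq 0$; the remaining double integral is then controlled by $\int_{|x|>\ell}K(x)\,\d x\leq 2$, so the tail contribution is at most $4C\ell^{\g-1}$. Plugging both pieces into $\|\Gg\|_{L^2}^2\leq 8I$ yields \eqref{eq:boundL2L} with a constant $C_0$ depending only on the constant $C$ in \eqref{eq:pointX}.

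The main obstacle lies in calibrating the tail bound to the sharp exponent $\ell^{\g-1}$. A naive Young-type estimate on the convolution structure of $\Q_\g^+$ (e.g.\ via $\|\Gg_\g*\Gg\|_{L^2}\leq\|\Gg_\g\|_{L^2}\|\Gg\|_{L^1}$ with $\Gg_\g(u)=|u|^\g\Gg(u)\mathbf 1_{|u|>\ell}$), or alternatively a purely $M_2$-moment-based estimate $\int_{|x|>\ell}|x|^\g\Gg\,\d x\leq\ell^{\g-2}M_2(\Gg)$, only produces errors of the weaker form $\ell^{2\g-1}$ or $\ell^{\g-2}$ (after absorbing factors of $\|\Gg\|_{L^2}$). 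The decisive ingredient is instead the combination of the weak-$L^\infty$ estimate $\Gg(x)|x|\leq C$ from \eqref{eq:pointX} with the monotonicity of $|x|^{\g-1}$ for $\g\in[0,1]$, which yields the correct $\ell^{\g-1}$ decay. The secondary technical point---the rigorous justification of testing against $\Gg$, which requires smuggling a smooth cutoff and controlling boundary contributions---parallels the argument already used in the proof of Lemma~\ref{lem:energy}.
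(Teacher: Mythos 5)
Your proposal is correct and follows essentially the same route as the paper: testing the steady equation against $\Gg$ to get $\tfrac18\|\Gg\|_{L^2}^2\le\int_{\R}\Q_\g^+(\Gg,\Gg)\Gg\,\d x$, using subadditivity $|x-y|^\g\le|x|^\g+|y|^\g$ with symmetrisation, the Cauchy--Schwarz bound $\int_{\R}\Gg(y)\Gg(\tfrac{x+y}{2})\,\d y\le\sqrt2\,\|\Gg\|_{L^2}^2$ on $\{|x|\le\ell\}$, and the pointwise bound \eqref{eq:pointX} giving $|x|^\g\Gg(x)\le C\ell^{\g-1}$ on the tail. Your identification of \eqref{eq:pointX} as the decisive ingredient for the $\ell^{\g-1}$ exponent, and of the mollification argument of Lemma~\ref{lem:energy} as the route to rigour, both match the paper (the latter is carried out in Appendix~\ref{sec:justif-L2}).
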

 
\begin{proof}
  We provide here a formal proof which provides the main ideas
  underlying the result. We refer to Appendix \ref{sec:justif-L2} for
  a rigorous justification of the formal argument that follows.  For
  any generic solution $\Gg$ to \eqref{eq:steadyg}, after multiplying
  \eqref{eq:steadyg} with $\Gg$ and integrating over $\R$ one sees
  that
\begin{equation}\label{eq:L2Gg}
\frac{1}{8}\|\Gg \|^{2}_{L^2} \leq \int_{\R}\Q^{+}_{\g}(\Gg,\Gg)\,\Gg\dx.\end{equation}
One sees that
\begin{multline*}
\int_{\R}\Q^{+}_{\g}(\Gg,\Gg)\,\Gg\dx=\int_{\R}\int_{\R}|x-y|^{\g}\Gg(x)\Gg(y)\Gg\left(\frac{x+y}{2}\right)\dx\dy\\
\leq \int_{\R}\int_{\R}\left(|x|^{\g}+|y|^{\g}\right)\Gg(x)\Gg(y)\Gg\left(\frac{x+y}{2}\right)\dx\dy\\
=2\int_{\R}|x|^{\g}\Gg(x)\dx\int_{\R}\Gg(y)\Gg\left(\frac{x+y}{2}\right)\dy.
\end{multline*}
Notice that such an inequality actually means that
\begin{equation}\label{eq:QgQ0}
\int_{\R}\Q^{+}_{\g}(\Gg,\Gg)\,\Gg\dx \leq 2\int_{\R}\Q^{+}_{0}(|\cdot|^{\g}\Gg,\Gg)\Gg\d x.\end{equation}
Given $\ell >0$, splitting the integral with respect to $x$ according to $|x| >\ell$ and $|x| \leq \ell$, one has
\begin{equation*}\begin{split}
\int_{\R}\Q^{+}_{\g}(\Gg,\Gg)\,\Gg\dx &\leq 2\int_{-\ell}^{\ell}|x|^{\g}\Gg(x)\dx\int_{\R}\Gg(y)\Gg\left(\frac{x+y}{2}\right)\dy\\
&\phantom{+++} +2\int_{|x|>\ell}|x|^{\g}\Gg(x)\dx\int_{\R}\Gg(y)\Gg\left(\frac{x+y}{2}\right)\dy\\
&\leq  2\int_{-\ell}^{\ell}|x|^{\g}\Gg(x)\dx\int_{\R}\Gg(y)\Gg\left(\frac{x+y}{2}\right)\dy\\
&\phantom{++++} + 2C\ell^{\g-1}\int_{\R}\dx\int_{\R}\Gg(y)\Gg\left(\frac{x+y}{2}\right)\dy\,,
\end{split}\end{equation*}
where we used \eqref{eq:pointX} in the last step. Clearly, the last integral can be estimated as
$$\int_{\R}\dx\int_{\R}\Gg(y)\Gg\left(\frac{x+y}{2}\right)\dy=2\|\Gg\|_{L^1}^2=2$$
whereas, for any given $x \in [-\ell,\ell]$, one has from Cauchy-Schwarz inequality
$$\int_{\R}\Gg(y)\Gg\left(\frac{x+y}{2}\right)\dy \leq \|\Gg\|_{L^2}\,\left\|\Gg\left(\frac{x+\cdot}{2}\right)\right\|_{L^2}=\sqrt{2}\|\Gg\|_{L^2}^{2}.$$
Combining these estimates, we deduce that
$$\int_{\R}\Q^{+}_{\g}(\Gg,\Gg)\,\Gg\dx\leq 2\sqrt{2}\|\Gg\|_{L^2}^{2}\,\int_{-\ell}^{\ell}\,|x|^{\g}\Gg(x)\dx+4C\ell^{\g-1}.$$
This gives the desired result thanks to \eqref{eq:L2Gg}.\end{proof}

A trivial bound for the integral is the following
$$\int_{-\ell}^{\ell}|x|^{\g}\Gg(x)\dx \leq \ell^\g\,\|\Gg\|_{L^1}=\ell^{\g}$$
which gives a bound like
$$\|\Gg\|_{L^2}^2 \leq C_0 \ell^{\g}\|\Gg\|_{L^2}^{2} + C_0 \ell^{\g-1}$$
and cannot provide a bound on $\|\Gg\|_{L^2}$ uniform with respect to $\g$. If one assumes say 
\begin{equation}\label{eq:controlL}
  C_0\int_{-\ell}^{\ell}|x|^{\g}\Gg(x)\dx
  \leq
  C_1\ell^{\g+1}
\end{equation}
for some universal (independent of $\g$) constant $C_1$, then picking
$\ell$ small enough would yield a uniform bound on $\|\Gg\|_{L^{2}}$
\emph{uniform with respect to $\g$ small enough}.  We are actually not
able to establish the bound \eqref{eq:controlL} for any
$\Gg \in \mathscr{E}_\g$ but will provide a similar estimate for any
sequence $\{\bm{G}_{\g_n}\}$ converging weakly-$\star$. Recall that
such a sequence always exists.  One has then the following
\begin{lem}\phantomsection\label{lem:L2bound}
  Let $\left(\g_{n}\right)_{n}$ be a sequence going to zero,
  $\bm{G}_{\g_n} \in \mathscr{E}_{\g_n}$ an equilibrium for each $n$,
  and $\lambda >0$ such that
  $$\lim_{n\to\infty}\int_{\R}\bm{G}_{\g_{n}}(x)\varphi(x)\dx=\int_{\R}H_{\lambda}(x)\varphi(x)\dx, \qquad \forall \varphi \in \mathcal{C}_{b}(\R).$$

  Then there exists $C=C(\lambda)$ depending only on $\lambda$ and
  $N \geq 1$ such that
  $$\sup_{n \geq N}\|\bm{G}_{\g_{n}}\|_{L^2} \leq C.$$
\end{lem}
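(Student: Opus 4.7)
The plan is to combine the conditional bound of Lemma \ref{lem:boundL2L} with a quantitative version of the heuristic estimate \eqref{eq:controlL} that becomes available precisely because $\bm{G}_{\g_n}(x)\dx$ converges narrowly to the regular measure $H_{\lambda}(x)\dx$. The intuition is that $H_\lambda$ has a bounded continuous density with $\|H_\lambda\|_{L^\infty}=\frac{2\lambda}{\pi}$, so the mass it puts on $[-\ell,\ell]$ is $O(\ell)$; narrow convergence should transfer this fact to $\bm{G}_{\g_n}$ for $n$ large, upgrading the trivial bound $\int_{-\ell}^{\ell}|x|^{\g_n}\bm{G}_{\g_n}(x)\dx \leq \ell^{\g_n}$ to the sharper $O(\ell^{\g_n+1})$ needed to absorb the quadratic term in Lemma \ref{lem:boundL2L}.

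Concretely, I would first apply the Portmanteau characterisation of narrow convergence to the closed set $[-\ell,\ell]$, which yields
$$\limsup_{n\to\infty}\int_{-\ell}^{\ell}\bm{G}_{\g_n}(x)\dx \leq \int_{-\ell}^{\ell}H_{\lambda}(x)\dx \leq \frac{4\lambda}{\pi}\,\ell.$$
Hence, there exists $N_{0}=N_{0}(\ell,\lambda)\in\N$ such that $\int_{-\ell}^{\ell}\bm{G}_{\g_n}(x)\dx \leq \frac{8\lambda}{\pi}\,\ell$ for every $n\geq N_{0}$. Bounding $|x|^{\g_n}\leq \ell^{\g_n}$ on $[-\ell,\ell]$, this gives
$$\int_{-\ell}^{\ell}|x|^{\g_n}\bm{G}_{\g_n}(x)\dx \leq \frac{8\lambda}{\pi}\,\ell^{\g_n+1}, \qquad n\geq N_{0}.$$
Inserting this into the bound from Lemma \ref{lem:boundL2L} produces
$$\left(1-\tfrac{8 C_{0}\lambda}{\pi}\,\ell^{\g_n+1}\right)\|\bm{G}_{\g_n}\|_{L^2}^{2} \leq C_{0}\,\ell^{\g_n-1}, \qquad n\geq N_{0}.$$

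Finally I would fix $\ell\in(0,1)$ depending only on $\lambda$ so that $\tfrac{16 C_{0}\lambda}{\pi}\,\ell\leq \tfrac12$; since $\g_n\to 0$, one has $\ell^{\g_n+1}\leq 2\ell$ as soon as $\g_n$ is small enough, i.e. for all $n\geq N$ with $N\geq N_{0}$ sufficiently large, and the prefactor on the left above is then at least $\tfrac12$. One thus concludes
$$\|\bm{G}_{\g_n}\|_{L^2}^{2} \leq 2 C_{0}\,\ell^{\g_n-1}, \qquad n\geq N,$$
which is bounded uniformly in $n$ by a constant $C=C(\lambda)$, as required. The only non-trivial point is that $\g_n$ varies with $n$, but this is handled by selecting $\ell$ first and $N$ afterwards using $\ell^{\g_n+1}\to\ell$; the substantial analytic work has already been carried out in Lemma \ref{lem:boundL2L}, which crucially relied on the universal pointwise bound \eqref{eq:pointX}.
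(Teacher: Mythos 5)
Your proposal is correct and follows essentially the same route as the paper: both insert an $O(\ell^{\g_n+1})$ bound on $\int_{-\ell}^{\ell}|x|^{\g_n}\bm{G}_{\g_n}\dx$, obtained from the narrow convergence to the bounded density $H_\lambda$, into Lemma \ref{lem:boundL2L} and then fix $\ell=\ell(\lambda)$ small to absorb the quadratic term. The only cosmetic difference is that you invoke the Portmanteau upper bound on the closed set $[-\ell,\ell]$ where the paper tests against a smooth cutoff supported in $[-2\ell,2\ell]$; both are valid (and your observation that $\ell^{\g_n+1}\leq\ell$ for $\ell<1$ makes the extra enlargement of $N$ at that step unnecessary).
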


\begin{proof}
  From the weak-$\star$ convergence, for any $\ell >0$, one can choose
  a smooth cutoff function $\varphi_{L}\geq0$ equal to one in
  $[-\ell,\ell]$ and vanishing on $\R \setminus [-2\ell,2\ell]$ to
  deduce that there exists $N >1$ such that
$$\int_{-\ell}^{\ell}\bm{G}_{\g_{n}}(x)\dx \leq 2\int_{-2\ell}^{2\ell}H_{\lambda}(x)\dx=2\int_{-2\lambda\,\ell}^{2\lambda\,\ell}\bm{H}(x)\dx \qquad \forall n \geq N.$$
Direct computations show that
$$\int_{-2\lambda\,\ell}^{2\lambda\,\ell}\bm{H}(x)\dx=\frac{2}{\pi}\left[\arctan\left(2\lambda\,\ell\right)+\frac{{2}\lambda\,\ell}{1+4\lambda^{2}\ell^{2}}\right] \leq \frac{8}{\pi}\lambda\,\ell \qquad \forall \ell >0, \lambda >0.$$
Thus, for any $n \geq N$, one has
$$\int_{-\ell}^{\ell}|x|^{\g_n}\bm{G}_{\g_{n}}(x)\dx \leq \ell^{\g_n}\,\int_{-\ell}^{\ell}\bm{G}_{\g_{n}}(x)\dx \leq \frac{16\lambda}{\pi}\ell^{\g_n+1}.$$
Arguing as described previously, plugging this into \eqref{eq:boundL2L} we get
$$\|\bm{G}_{\g_n}\|_{L^2}^{2} \leq \frac{16\,C_0\,\lambda}{\pi}\ell^{\g_n+1}\|\bm{G}_{\g_n}\|_{L^2}^{2} + C_0\,\ell^{\g_n-1} \qquad \forall n \geq N, \qquad \forall \ell >0.$$
Picking then $\ell\leq 1$ (depending on $\lambda$) such that
$$ \frac{16\,C_0\,\lambda}{\pi}\ell^{\g_n+1}\leq \frac{16\,C_0\,\lambda}{\pi}\ell \leq\frac{1}{2}, \qquad \text{ i.e. } \qquad \ell=\min\left\{\left(\frac{\pi}{{32}\lambda\,C_0}\right),1\right\}$$
we deduce that
$$\|\bm{G}_{\g_n}\|_{L^2}^{2} \leq 2C_0\,\ell^{\g_n-1}$$
and, since $\ell \leq 1$ and $\g_{n}\ge0$, $\ell^{\g_{n}} \leq 1$ so that
$$\|\bm{G}_{\g_{n}}\|_{L^{2}}^{2} \leq 2\frac{C_{0}}{\ell}=2C_{0}\max\left\{\frac{32}{\pi}C_{0}\lambda,1\right\}$$
which gives the result.\end{proof}

\subsection{Lower control of the collision frequency} We introduce the collision frequency
\begin{equation}\label{eq:coll:freq}
\Sigma_{\g}(y)=\int_{\R}|x-y|^{\g}\Gg(x)\dx 
\end{equation}
and recall also the notation $\w_{s}$ introduced in \eqref{eq:weight}. One has then the following
\begin{lem}\phantomsection\phantomsection\label{lem:Sigmag} Given $\g \in (0,1)$ and $\Gg \in \mathscr{E}_{\g}$, there exists $\kappa_{\g} >0$ such that the following holds
\begin{equation}\label{eq:sigma_g}\Sigma_{\g}(y) \geq \kappa_{\g}\,\w_{\g}(y) - (1-\tilde{\delta}^{\g})-\sqrt{2\tilde{\delta}}\|\Gg\|_{L^2}, \qquad \qquad \forall \tilde{\delta} \in (0,1).\end{equation}
Moreover, $\lim_{\g\to0}\kappa_{\g}=1.$
\end{lem}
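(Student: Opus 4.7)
My plan is to split the integral defining $\Sigma_\gamma(y)$ according to whether $|x-y| \geq \tilde\delta$ or $|x-y| < \tilde\delta$ and to control each piece by a different elementary mechanism. On the region $\{|x-y|<\tilde\delta\}$ the integrand $|x-y|^\gamma \Gg(x)$ is nonnegative and can simply be discarded; the mass lost there is controlled via Cauchy--Schwarz by $\int_{|x-y|<\tilde\delta}\Gg(x)\,dx \leq |B(y,\tilde\delta)|^{1/2}\|\Gg\|_{L^{2}} = \sqrt{2\tilde\delta}\,\|\Gg\|_{L^{2}}$, which accounts for the last error term in the claim.

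On the complementary region $\{|x-y|\geq\tilde\delta\}$ I would combine two pointwise lower bounds on $|x-y|^\gamma$: first the trivial bound $|x-y|^\gamma \geq \tilde\delta^\gamma$, and second the triangle--subadditivity estimate $|x-y|^\gamma \geq \w_\gamma(y) - \w_\gamma(x)$, which follows from raising $1+|y| \leq (1+|x|) + |x-y|$ to the power $\gamma\in[0,1]$ and applying the subadditivity $(a+b)^\gamma \leq a^\gamma + b^\gamma$. Integrating the second bound against $\Gg(x)\,dx$ produces the main term of the form $\kappa_\gamma \w_\gamma(y)$, with $\kappa_\gamma$ arising as an intrinsic $\gamma$-dependent average such as $\int \Gg(x)\,\w_\gamma(x)^{-1}\,dx$, while the first bound together with the normalisation $\int \Gg\,dx=1$ is used to absorb the residual into the claimed constant.

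The limit $\lim_{\gamma\to 0^+}\kappa_\gamma = 1$ then follows from dominated convergence: the weight $\w_\gamma(x)^{-1} = (1+|x|)^{-\gamma}$ converges pointwise to $1$ as $\gamma\to 0^+$, is uniformly dominated by $1$, and the uniform $L^{1}_{s}$-moment control of $\Gg$ furnished by Lemma~\ref{lem:energy} legitimises passing to the limit to obtain $\int \Gg\,dx = 1$.

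The main technical subtlety I anticipate is the bookkeeping that produces exactly the constant $(1-\tilde\delta^\gamma)$: a naive combination of the two pointwise bounds gives a remainder of order $(1+\tilde\delta)^\gamma - \tilde\delta^\gamma$, which is strictly larger than $1-\tilde\delta^\gamma$, so the sharper form requires exploiting cancellations between the $\tilde\delta^\gamma$ lower bound and the discrepancy $\tilde\delta^\gamma \int_{|x-y|<\tilde\delta}\Gg\,dx$ lost on the small region, using $\int\Gg\,dx = 1$ to rewrite $\tilde\delta^\gamma = 1 - (1-\tilde\delta^\gamma)$ inside the estimate.
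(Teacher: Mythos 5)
Your handling of the near-diagonal region $\{|x-y|<\tilde\delta\}$ by Cauchy--Schwarz is exactly the paper's first step and correctly produces the term $\sqrt{2\tilde\delta}\,\|\Gg\|_{L^2}$. The gap is in how you generate the main term. The additive estimate $|x-y|^\gamma\geq \w_\gamma(y)-\w_\gamma(x)$, integrated against $\Gg(x)\,\dx$, gives
\begin{equation*}
\int_{\R}|x-y|^{\gamma}\Gg(x)\,\dx \;\geq\; \w_{\gamma}(y)\int_{\R}\Gg(x)\,\dx \;-\;\int_{\R}\w_{\gamma}(x)\Gg(x)\,\dx\,,
\end{equation*}
i.e.\ coefficient $1$ in front of $\w_\gamma(y)$ (not $\int\Gg\,\w_\gamma^{-1}\dx$, so your identification of $\kappa_\gamma$ does not come from this bound), together with an additive loss $\int\w_\gamma\Gg\,\dx\geq\|\Gg\|_{L^1}=1$. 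That loss is of order one \emph{uniformly} in $\gamma$ and $\tilde\delta$, and cannot be absorbed into $(1-\tilde\delta^\gamma)+\sqrt{2\tilde\delta}\|\Gg\|_{L^2}$ — which is the entire point of the lemma: in its applications (Lemma~\ref{lem:momentsk}, Corollary~\ref{L2-weighted}) one takes $\tilde\delta=\gamma^2$ and needs the error to be $O(\gamma|\log\gamma|)$. Concretely, a bound of the form $\Sigma_\gamma(y)\geq\w_\gamma(y)-1-o(1)$ is strictly weaker than the statement: for $\w_\gamma(y)=2$ it yields only $\Sigma_\gamma(y)\gtrsim 1$ instead of $\gtrsim 2$, so it cannot be recast as $\kappa_\gamma\w_\gamma(y)-\varepsilon$ with $\kappa_\gamma$ near $1$ and $\varepsilon$ small. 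Your proposed cancellation $\tilde\delta^\gamma=1-(1-\tilde\delta^\gamma)$ addresses only the constant coming from the trivial bound, not this $-\int\w_\gamma\Gg\,\dx$ deficit, so the argument does not close.

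The paper avoids the additive loss by using a \emph{multiplicative} comparison on a decomposition of $x$-space rather than of $|x-y|$: on $I=\{x:\ 1+|x|\leq\eta^{-1}(1+|y|)\}$ the reverse triangle inequality gives $|x-y|^\gamma\geq(1-\eta^{-1})^\gamma\w_\gamma(y)$ outright, with no subtraction; on $I^c$ one has $\w_\gamma(y)\leq\eta^\gamma\w_\gamma(x)$, so the constant lower bound $\tilde\delta^\gamma$ dominates $\eta^{-\gamma}\w_\gamma(y)/\w_\gamma(x)$ up to exactly the loss $1-\tilde\delta^\gamma$. This is where $\kappa_\gamma=2^{-\gamma}\int\Gg\,\w_\gamma^{-1}\dx$ actually comes from (the paper also adds and subtracts $\ind_{|x-y|<\tilde\delta}$ so that $|x-y|^\gamma+\ind_{|x-y|<\tilde\delta}\geq\tilde\delta^\gamma$ holds on all of $\R$, which spares the case analysis on $|x-y|$). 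Finally, for $\lim_{\gamma\to0}\kappa_\gamma=1$, note that dominated convergence does not apply as stated because $\Gg$ itself varies with $\gamma$; one instead uses Chebyshev with the uniform bound $M_2(\Gg)\leq\tfrac12$ to get a lower bound $\int\Gg\,\w_\gamma^{-1}\dx\geq(1+r)^{-\gamma}(1-Cr^{-2})$ uniformly in $\gamma$, as in the paper.
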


\begin{proof} Let $\g \in (0,1)$ and $\Gg \in \mathscr{E}_{\g}$ be given. First, notice that, for any $y \in \R$ and any $\tilde{\delta} \in (0,1)$,
\begin{multline*}
\Sigma_{\g}(y)= \int_{\R}\left(|x-y|^{\g}+\ind_{|x-y|<\tilde{\delta}}\right)\,\Gg (x)\,\dx - \int_{\R}\ind_{|x-y|<\tilde{\delta}}\,\Gg (x)\,\dx\\
\geq \int_{\mathbb{R}}\left(|x-y|^{\g}+\ind_{|x-y|<\tilde{\delta}}\right)\,\Gg (x)\,\dx  - \sqrt{2\tilde{\delta}}\| \Gg  \|_{L^2}=:\Sigma_{\g}^{(\tilde{\delta})}(y)-\sqrt{2\tilde{\delta}}\|\Gg\|_{L^2}
\end{multline*}
thanks to Cauchy-Schwarz inequality. We need only to estimate the first term. To do so, for any $\eta >1$, we introduce the set 
$$I=I(y,\,\eta)=\left\{x \in \R\;;\;\w_{1}(x) \leq \eta^{-1}\w_{1}(y)\right\},$$ and write
$$
\Sigma_{\g}^{(\tilde{\delta})}(y)
= \int_{I}\left(|x-y|^{\g}+\ind_{|x-y|<\tilde{\delta}}\right)\,\Gg (x)\,\dx + \int_{I^{c}}\left(|x-y|^{\g}+\ind_{|x-y|<\tilde{\delta}}\right)\,\Gg (x)\,\dx .
$$
On the set $I$, one has  
$$|x-y|^{\g}\geq \big( (1+|y|) - (1+ |x|) \big)^{\g}\geq   \big( 1-\eta^{-1} \big)^{\g}\w_{\g}(y).$$
Therefore,
\begin{equation*}\begin{split}
\int_{I}\left(|x-y|^{\g}+\ind_{|x-y|<\tilde{\delta}}\right)\,\Gg (x)\,\dx  &\geq   \left(\frac{\eta-1}{\eta}\right)^{\g}\w_{\g}(y)\int_{I}\,\Gg (x)\dx \\
&\geq  \left(\frac{\eta-1}{\eta}\right)^{\g}\w_{\g}(y)\int_{I}\,\frac{\Gg (x)}{\w_{\g}(x)}\dx  \,.
\end{split}\end{equation*}
Now, observing that $|x-y|^{\g}+\ind_{|x-y|<\tilde{\delta}}\geq \tilde{\delta}^{\g}\,,$ $(\tilde{\delta} <1)$,
one has
\begin{equation*}\begin{split}
\int_{I^{c}}\left(|x-y|^{\g}+\ind_{|x-y|<\tilde{\delta}}\right)\,&\Gg (x)\,\dx 
\geq \tilde{\delta}^{\g}\int_{I^{c}}\Gg(x)\dx 
\geq \int_{I^{c}}\Gg (x)\,\dx- (1-\tilde{\delta}^{\gamma})\|\Gg \|_{L^1}\\
&\geq  \frac{\w_{\g}(y)}{\eta^{\gamma}}\int_{I^{c}}\frac{\Gg (x)}{\w_{\g}(x)}\dx - (1-\tilde{\delta}^{\gamma})\,,
\end{split}\end{equation*}
since $1 \geq \frac{1+|y|}{\eta(1+|x|)}$ for any $x \notin I$.   Choosing  then $\eta=2$ one sees that
$$\Sigma_{\g}^{(\tilde{\delta})}(y)\geq \frac{\w_{\g}(y)}{{2}^{\g}}\int_{\R}\frac{\Gg(x)}{\w_{\g}(x)}\dx - (1-\tilde{\delta}^{\g})\,,$$
which gives \eqref{eq:sigma_g} with
$$\kappa_{\g}:=\frac{1}{2^{\g}}\int_{\R}\frac{\Gg(x)}{\w_{\g}(x)}\dx, \qquad \g \in (0,1).$$
Let us now prove that $\lim_{\g\to0^{+}}\kappa_{\g}=1.$ Obviously, since $\w_{\g}(x) \geq 1$ and $\Gg$ has unit mass, one has
$$0 \leq \kappa_{\g} \leq 2^{-\g}.$$
One just needs to bound $\kappa_{\g}$ from below. For any $\g \in (0,1)$ and $r >0$
\begin{multline*}
\int_{\R}\frac{\Gg(x)}{\w_{\g}(x)}\dx\geq \int_{|x|\leq r}\frac{\Gg(x)}{\w_{\g}(x)}\dx \geq \frac{1}{\left(1+r\right)^{\g}}\int_{|x|\leq r}\Gg(x)\dx\\
\geq \frac{1}{\left(1+r\right)^{\g}}\left(1-\int_{|x|>r}\Gg(x)\dx\right) \geq \frac{1}{\left(1+r\right)^{\g}}\left(1-\frac{M_{2}(\Gg)}{r^{2}}\right) \geq \frac{1}{\left(1+r\right)^{\g}}\left(1-\frac{C}{r^{2}}\right)\end{multline*}
where we used that $\sup_{\g\in (0,1)}M_{2}(\Gg) \leq  {C}$. For any $\varepsilon >0$, one can first pick $r >1$ independent of $\g$ and large enough so that 
$$\int_{\R}\frac{\Gg(x)}{\w_{\g}(x)}\dx \geq \frac{1-\varepsilon}{\left(1+r\right)^{\g}}$$
so that
$$\frac{1-\varepsilon}{2^{\g}(1+r)^{\g}}\leq \kappa_{\g} \leq 2^{-\g}$$
and the result then follows letting $\g\to 0$. \end{proof}

\subsection{Uniform estimates for higher moments} We investigate here
some uniform estimates for higher moments $M_{k+\g}(\Gg)$. Of course,
since one expects $\Gg \to \bm{G}_{0}$ where $\bm{G}_{0}$ is a steady
state to \eqref{introMax} with $\bm{G}_{0} \in L^{1}_{3}(\R) \setminus
L^{1}_{4}(\R)$, for $k >3,$ it should hold that
$$\limsup_{\g\to0}M_{k}(\Gg)=\infty$$
however, one can expect, for $2 < k < 3$,
$$\limsup_{\g\to0}M_{k+\g}(\Gg) < \infty.$$
This is the object of the following
\begin{lem}\phantomsection\phantomsection\label{lem:momentsk}
  Let $\left(\g_{n}\right)_{n}$ be a sequence going to zero,
  $\bm{G}_{\g_n} \in \mathscr{E}_{\g_n}$ an equilibrium for each $n$,
  and $\lambda >0$ such that
  $$\lim_{n\to\infty}\int_{\R}\bm{G}_{\g_{n}}(x)\varphi(x)\dx=\int_{\R}H_{\lambda}(x)\varphi(x)\dx, \qquad \forall \varphi \in \mathcal{C}_{b}(\R).$$
  For any $\delta\in(0,1/2)$, there exists $C >0$ and $\bar{N} >1$
  such that
  $$M_{k+\g_{n}}(\bm{G}_{\g_{n}}) \leq C, \qquad \forall 2+\delta < k < 3-\delta, \quad\forall n \geq \bar{N}.$$
\end{lem}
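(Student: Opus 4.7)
The plan is to propagate moments by testing the weak form \eqref{eq:weakgamma} of the steady equation \eqref{eq:steadyg} against $\varphi(x)=|x|^k$ for $k$ in a range slightly above $2$. Integration by parts of the drift term yields the moment identity
\[
\frac{k}{4}M_k(\bm{G}_{\g_n})=\frac{1}{2}\int_{\R^{2}}\bm{G}_{\g_n}(x)\bm{G}_{\g_n}(y)\,\Psi_k(x,y)\,|x-y|^{\g_n}\dx\dy,
\]
where $\Psi_k(x,y):=|x|^k+|y|^k-2\bigl|\tfrac{x+y}{2}\bigr|^k\ge 0$ by convexity of $t\mapsto |t|^k$ for $k\ge 1$. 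The goal is to extract a lower bound involving $M_{k+\g_n}(\bm{G}_{\g_n})$ on the right-hand side.

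I first derive a Povzner-type lower bound by restricting the integration to the region $|y|\le \eta|x|$ with a fixed $\eta\in(0,\sqrt{2}-1)$ small enough so that $c(k,\eta):=1-(1+\eta)^k/2^{k-1}>0$ uniformly for $k\in[2,3]$; on that region $\Psi_k(x,y)\ge c(k,\eta)|x|^k$ and $|x-y|^{\g_n}\ge (1-\eta)^{\g_n}|x|^{\g_n}$. The narrow convergence $\bm{G}_{\g_n}\to H_\lambda$, with $H_\lambda$ a probability measure, guarantees that $\int_{|y|\le \eta|x|}\bm{G}_{\g_n}(y)\dy\to 1$ as $|x|\to\infty$, so that for $R_0$ large enough and $n\ge\bar N_0$ one has $\int_{|y|\le \eta|x|}\bm{G}_{\g_n}(y)\dy\ge 1/2$ whenever $|x|\ge R_0$. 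Combining these ingredients with the moment identity gives
\[
M_{k+\g_n}(\bm{G}_{\g_n})\le \tilde C(k)\,M_k(\bm{G}_{\g_n})+R_0^{k+\g_n}, \qquad n\ge \bar N_0,
\]
for an explicit constant $\tilde C(k)$.

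Starting from the uniform bound $M_2(\bm{G}_\g)\le 1/2$ from Lemma \ref{lem:energy}, I iterate this estimate along the arithmetic sequence $k_j=2+j\g_n$ until $k_j+\g_n$ exceeds $3-\delta$, which requires $m\sim 1/\g_n$ steps. The main obstacle is to control the accumulated multiplier $\prod_j \tilde C(k_j)$: a naive Povzner constant would give $\tilde C(k_j)=\mathcal{O}(1)>1$ and yield an exponential blow-up $\tilde C^{1/\g_n}$ as $\g_n\to 0$. To close the bootstrap uniformly, one has to sharpen the Povzner estimate by exploiting the exact second-order Taylor expansion $\Psi_k(x,y)\ge \tfrac{k(k-1)}{2^k}(|x|+|y|)^{k-2}(|x|-|y|)^{2}$ together with the additional dissipation coming from opposite-sign collisions (for which $|x-y|=|x|+|y|$ is enhanced); combined with the uniform $L^2$ bound from Lemma \ref{lem:L2bound}, this improves the iteration ratio to $\tilde C(k_j)\le 1+C_\star \g_n$, so that $\prod_j \tilde C(k_j)\le (1+C_\star \g_n)^{1/\g_n}\le e^{C_\star}$ remains bounded uniformly in $n$. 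This yields the desired uniform estimate on $M_{k+\g_n}(\bm{G}_{\g_n})$ for $k\in(2+\delta,3-\delta)$ and $n\ge\bar N$.
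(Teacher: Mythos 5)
There is a genuine gap, and it sits exactly where you located the difficulty: the iteration cannot be closed. Your one-step estimate $M_{k+\g_n}(\bm{G}_{\g_n})\le \tilde C(k)\,M_k(\bm{G}_{\g_n})+R_0^{k+\g_n}$ is correctly derived, but its constant is $\tilde C(k)=k/(c(k,\eta)(1-\eta)^{\g_n})$ with $c(k,\eta)\le 1-2^{1-k}\le \tfrac34$, hence $\tilde C(k)\ge \tfrac{4k}{3}>\tfrac83$, and this cannot be reduced to $1+C_\star\g_n$ by any sharpening of the pointwise lower bound on $\Psi_k$: restricting to the single cone $\{|y|\le\eta|x|\}$ discards the symmetric contribution (a factor $2$), your ``half the mass'' bound discards another factor $2$, and the drift contributes the factor $k>2$ on the other side, so the ratio stays bounded away from $1$ uniformly in $\g_n$. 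Even granting the claimed ratio $1+C_\star\g_n$, the scheme fails for a second, independent reason: the additive constants $R_0^{k_j+\g_n}$ accumulate over the $m\sim(k-2)/\g_n$ steps to a quantity of order $m\,R_0^{3}e^{C_\star}\to\infty$ as $\g_n\to0$. So the bootstrap as described does not produce a uniform bound.

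The mechanism that actually works is to make the coefficient ratio \emph{strictly less than one}, so that no iteration is needed at all. The paper's proof splits $\Q_{\g}$ into gain and loss, bounds the gain by $2^{1-k}$ times the loss plus cross moments of strictly lower order ($M_{\frac{2k}{3}+\g}M_{\frac{k}{3}}$ etc., via $|\tfrac{x+y}{2}|^{k}=2^{-k}|x^3+3x^2y+3xy^2+y^3|^{k/3}$ and subadditivity of $t\mapsto t^{k/3}$), and bounds the loss below by $\kappa_\g M_{k+\g}$ minus small multiples of $M_k$ using the collision-frequency estimate of Lemma \ref{lem:Sigmag} and the uniform $L^2$ bound. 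The net coefficient of $M_{k+\g_n}$ is then $\sigma_k=1-2^{1-k}-\tfrac{k}{4}$, which is positive precisely on $(2,3)$ and bounded below on $(2+\delta,3-\delta)$; after absorbing $M_{\frac{2k}{3}+\g}$ by Young's inequality this gives $M_{k+\g_n}\le C$ in a single step. Equivalently, the sharp comparison yields $M_{k+\g}\le\theta_kM_k+C$ with $\theta_k=\tfrac{k}{4(1-2^{1-k})}<1$, which closes immediately since $M_k\le M_{k+\g}+M_0$. Your cone idea could in principle reach this threshold (use both cones by symmetry, let the captured mass tend to $1$, and take $\eta\to0$), but as written each of your losses pushes the ratio above $1$, and the target of your fix ($1+C_\star\g_n$ plus iteration) is the wrong one.
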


\begin{proof} Formally, for any $k \geq0$ and any solution $\Gg$ to \eqref{eq:steadyg}, 
$$-\frac{k}{4}\int_{\R}\Gg(x)|x|^{k}\dx=\int_{\R}\Q_{\g}(\Gg,\Gg)(x)|x|^{k}\dx$$
with
\begin{equation*}\begin{split}
\int_{\R}\Q_{\g}(\Gg,\Gg)|x|^{k}\dx&=\int_{\R^{2}}\Gg(x)\Gg(y)|x-y|^{\g}\,\left(\left|\frac{x+y}{2}\right|^{k}-|y|^{k}\right)\dx\dy\\
&=\int_{\R^{2}}\Gg(x)\Gg(y)|x-y|^{\g}\,\left|\frac{x+y}{2}\right|^{k}\dx\dy - \int_{\R}\Gg(y)|y|^{k}\Sigma_{\g}(y)\dy\,,\end{split}\end{equation*}
where $\Sigma_{\g}(y)$ is the collision frequency defined in \eqref{eq:coll:freq}.
The above identity  holds formally and can be proved rigorously along the lines of the proof of \eqref{eq1}. Notice that, for $k<3$ it holds that
\begin{equation}\begin{split}
\Big| \frac{x+y}{2} \Big|^{k}&= 2^{-k}| x^3 + 3x^{2}y + 3xy^2 + y^3 |^{\frac{k}{3}}\\
&\leq 2^{-k}\big( | x |^k + 3|x|^{\frac{2k}{3}}|y|^{\frac{k}{3}} + 3|x|^{\frac{k}{3}}|y|^{\frac{2k}{3}} + |y|^k\big) \,\qquad \forall (x,y) \in \R^{2}. \label{ineqxplusy}
\end{split}\end{equation}
Then, with this inequality and a simple symmetry argument, one deduces that
\begin{equation*}\begin{split}
\int_{\R^{2}}\Gg(x)&\Gg(y)|x-y|^{\g}\,\left|\frac{x+y}{2}\right|^{k}\dx\dy\\
&\leq 3\cdot 2^{-k} \int_{\R^{2}}\Gg(x)\Gg(y)\left(|x|^{\g}+|y|^{\g}\right)\,\left(|x|^{\frac{2k}{3}}|y|^{\frac{k}{3}} + |x|^{\frac{k}{3}}|y|^{\frac{2k}{3}}\right)\dx\dy\\
&\phantom{++++} +2^{1-k}\int_{\R}\Gg(y)\Sigma_{\g}(y)|y|^{k}\dy\\
&\leq 6\cdot 2^{-k}\left[M_{\frac{2k}{3}+\g}(\Gg)M_{\frac{k}{3}}(\Gg)+M_{\frac{2k}{3}}(\Gg)M_{\frac{k}{3}+\g}(\Gg)\right]\\
&\phantom{+++++} +2^{1-k}\int_{\R}\Gg(y)\Sigma_{\g}(y)|y|^{k}\dy\,
\end{split}\end{equation*}
from which we obtain
\begin{multline*}
-\frac{k}{4}M_{k}(\Gg) +\left(1-2^{1-k}\right)\int_{\R}\Gg(y)\Sigma_{\g}(y)|y|^{k}\dy\\
\leq 6\cdot 2^{-k}\left[M_{\frac{2k}{3}+\g}(\Gg)M_{\frac{k}{3}}(\Gg)+M_{\frac{2k}{3}}(\Gg)M_{\frac{k}{3}+\g}(\Gg)\right]\,.
\end{multline*}
Notice that, with the condition $2+\delta<k<3-\delta$, one has that
\begin{equation*}
\max\left\{ M_{\frac{k}{3}}(\Gg), M_{\frac{2k}{3}}(\Gg), M_{\frac{k}{3}+\g}(\Gg)\right\}\leq M_0(\Gg)+M_2(\Gg) \leq \frac{3}{2}\,,
\end{equation*}
where we used that $\sup_{\g\in (0,1)}M_{2}(\Gg) \leq \frac{1}{2}.$ Moreover, using Young's inequality, one sees that, for any $\eta >0$,
\begin{equation*}
M_{\frac{2k}{3}+\g }(\Gg)\leq  {\frac{2k+3\g}{3k+3\g} \eta M_{k+\g }(\Gg) + \eta^{-\frac{2k+3\g}{k}}\frac{k}{3k+3\g}M_0(\Gg)}\leq\eta M_{k+\g}(\Gg)+\eta^{-2-\frac{3\g}{k}}\,.
\end{equation*}
With this, we deduce that there is $C>0$ (independent of $\g$ and $k$) such that
\begin{multline*}
-\frac{k}{4}M_{k}(\Gg) +\left(1-2^{1-k}\right)\int_{\R}\Gg(y)\Sigma_{\g}(y)|y|^{k}\dy\\
\leq C \left(1+\eta^{-2-\frac{3\g}{k}}+\eta\,M_{k+\g}(\Gg)\right), \qquad \quad \forall \eta >0\,.
\end{multline*}
We use now Lemma \ref{lem:Sigmag} to deal with the term involving the collision frequency. Precisely, considering now a converging sequence $\{\bm{G}_{\g_{n}}\}_{n}$ towards $H_{\lambda}$, we deduce from Lemma \ref{lem:L2bound}  that $\|\bm{G}_{\g_{n}}\|_{L^2} \leq \tilde{C}$ for $n \geq N$ and therefore  for $n \geq N$,
\begin{multline*}
\kappa_{\g_{n}}\left(1-2^{1-k}\right)\int_{\R}\bm{G}_{\g_{n}}(y)|y|^{k}\,\w_{\g_{n}}(y)\dy 
- \left(\frac{k}{4} + \left(1-\tilde{\delta}^{\g_{n}}\right)+\tilde{C}\sqrt{2\tilde{\delta}}\right)M_{k}(\bm{G}_{\g_{n}})\\
\leq C\left(1+\eta^{-2-\frac{3\g_{n}}{k}}+\eta\,M_{k+\g_{n}}(\bm{G}_{\g_{n}})\right), \qquad \quad \forall \eta >0\,.
\end{multline*}
Let $\varepsilon >0$ be fixed. Picking $\tilde{\delta} >0$ such that $\tilde{C}\sqrt{2\tilde{\delta}}=\frac{\varepsilon}{2}$, one can find $N'\geq N$ large enough so that
$$\left(1-\tilde{\delta}^{\g_{n}}\right)+\tilde{C}\sqrt{2\tilde{\delta}} \leq \varepsilon, \qquad \forall n \geq N'$$
and we deduce that
\begin{multline*}
\kappa_{\g_{n}}\left(1-2^{1-k}-C\frac{\eta}{\kappa_{\g_{n}}}\right)\int_{\R}\bm{G}_{\g_{n}}(y)|y|^{k}\,\w_{\g_{n}}(y)\dy 
- \left(\frac{k}{4} + \varepsilon\right)M_{k}(\bm{G}_{\g_{n}})\\
\leq  C\left(1+\eta^{-2-\frac{3\g_{n}}{k}}\right)\,.\end{multline*}
for any $n \geq N'$. Then
\begin{equation*}
\kappa_{\g_{n}}\left(1-2^{1-k}-C\frac{\eta}{\kappa_{\g_{n}}}-\frac{k}{4\kappa_{\g_{n}}}-\frac{\varepsilon}{\kappa_{\g_{n}}}\right)\int_{\R}\bm{G}_{\g_{n}}(y)|y|^{k}\,\w_{\g_{n}}(y)\dy
\leq C\left(1+\eta^{-2-\frac{3\g_{n}}{k}}\right)\,.\end{equation*}
There exists $\bar{\sigma}$ (independent of $k$) such that, for any $2+\delta < k < 3-\delta$,  ${\sigma_{k}:=1-2^{1-k}-\frac{k}{4}>\bar{\sigma} >0}$. One has
\begin{eqnarray*}1-2^{1-k}-C\frac{\eta}{\kappa_{\g_{n}}}-\frac{k}{4\kappa_{\g_{n}}}-\frac{\varepsilon}{\kappa_{\g_{n}}}& =&  \sigma_k - \left(\frac1{\kappa_{\g_{n}}}-1\right)\frac{k}4 -C\frac{\eta}{\kappa_{\g_{n}}}-\frac{\varepsilon}{\kappa_{\g_{n}}}\\
& \ge &  \bar{\sigma} - \left(\frac1{\kappa_{\g_{n}}}-1\right)\frac{k}4 -C\frac{\eta}{\kappa_{\g_{n}}}-\frac{\varepsilon}{\kappa_{\g_{n}}}.
\end{eqnarray*}
Recalling that $\lim_{n\to\infty}\kappa_{\g_n}=1$, one can choose $\bar{N}\geq N'$ large enough such that, for $n\ge \bar{N}$ 
$$\kappa_{\g_{n}}\ge \frac34 \qquad \mbox { and } \qquad \frac{k}{4}\left| \frac1{\kappa_{\g_{n}}}-1\right| \le \frac34\left| \frac1{\kappa_{\g_{n}}}-1\right|\le \frac{\bar{\sigma}}{9}.$$
One then chooses $\varepsilon,\eta$ small enough so that 
$$C\frac{\eta}{\kappa_{\g_{n}}} \le C\frac{4\eta}{3}\le  \frac{\bar{\sigma}}{9}\qquad \mbox { and }\qquad\frac{\varepsilon}{\kappa_{\g_{n}}} \le \frac{4\varepsilon}{3}\le \frac{\bar{\sigma}}{9}$$
for any $n \geq \bar{N}$ and gets that
$$\kappa_{\g_{n}}\left(1-2^{1-k}-C\frac{\eta}{\kappa_{\g_{n}}}-\frac{k}{4\kappa_{\g_{n}}}-\frac{\varepsilon}{\kappa_{\g_{n}}}\right) \geq \frac{\bar{\sigma}}{2} >0 \qquad \forall n \geq \bar{N}.$$
We then deduce the result noticing that, for $\eta\in(0,1)$, $\eta^{-2-\frac{3\g_{n}}{k}} \le \eta^{-2-\frac{3\g_{n}}{2}}$ and $\lim_{n\to\infty}\eta^{-2-\frac{3\g_{n}}{2}}=\eta^{-2}$.
\end{proof}
  
\subsection{Limiting temperature and proof of Theorem~\ref{theo:Unique}}
\label{Sec:limit:temp}
Recall that any converging sequence $\{\bm{G}_{\g_{n}}\}_{n}$ (with
$\lim_{n\to \infty}\g_{n}=0$) admits as a weak limit a function of the
form
$$H_{\lambda}(x)=\lambda\,\bm{H}(\lambda\,x), \qquad \lambda >0.$$
We prove here that $\lambda$ is actually uniquely determined, yielding the uniqueness of the possible limit point. Namely, we prove the following
\begin{lem}\phantomsection\phantomsection\label{lem:unique}
Let $\left(\g_{n}\right)_{n}$ be a sequence going to zero and $\lambda >0$ such that
$$\lim_{n\to\infty}\int_{\R}\bm{G}_{\g_{n}}(x)\varphi(x)\dx=\int_{\R}H_{\lambda}(x)\varphi(x)\dx, \qquad \forall \varphi \in \mathcal{C}_{b}(\R).$$
Then, 
$$\lambda=\lambda_{0}:=\exp\left(A_{0}\right)$$
where
\begin{equation}\label{eq:A0}
 A_{0}:=\frac{1}{2}\int_{\R}\int_{\R}\bm{H}(x)\bm{H}(y)|x-y|^{2}\log|x-y|\dx\dy >0.
\end{equation}
\end{lem}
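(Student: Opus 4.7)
The plan is to exploit the moment identity \eqref{eq1}, which reads
\begin{equation*}
 \int_{\R^2}\bm{G}_{\gamma_n}(x)\bm{G}_{\gamma_n}(y)\,|x-y|^2\bigl(|x-y|^{\gamma_n}-1\bigr)\,\d x\,\d y=0
\end{equation*}
for every $n\ge 1$. Setting $\Phi_n(z):=(|z|^{\gamma_n}-1)/\gamma_n$ and dividing by $\gamma_n>0$ gives
\begin{equation*}
 J_n:=\int_{\R^2}\bm{G}_{\gamma_n}(x)\bm{G}_{\gamma_n}(y)\,|x-y|^2\,\Phi_n(x-y)\,\d x\,\d y=0.
\end{equation*}
Since $\Phi_n(z)\to\log|z|$ pointwise, the formal limit is $J_\infty:=\int_{\R^2}H_\lambda(x)H_\lambda(y)|x-y|^2\log|x-y|\,\d x\,\d y$, so the proof reduces to (i) justifying $J_n\to J_\infty$ and then (ii) solving $J_\infty=0$ for $\lambda$.

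For (i), I would split $\R^2$ into the central region $\Omega_{R,\epsilon}:=\{(x,y):|x|\vee|y|\le R,\ |x-y|\ge\epsilon\}$ and its complement. On $\Omega_{R,\epsilon}$ the convergence $\Phi_n\to\log|\cdot|$ is uniform on $\{\epsilon\le|z|\le 2R\}$, so against a suitable continuous bounded cutoff the narrow convergence $\bm{G}_{\gamma_n}(x)\bm{G}_{\gamma_n}(y)\,\d x\,\d y\to H_\lambda(x)H_\lambda(y)\,\d x\,\d y$ provided by hypothesis yields the limit of the corresponding integral. For the complement one relies on the elementary bounds $|\Phi_n(z)|\le|\log|z||$ for $|z|\le 1$ (which follows from the convexity of $t\mapsto e^{\gamma t}$) and $0\le \Phi_n(z)\le |z|^{\gamma_n}\log|z|\le C_\delta |z|^{\gamma_n+\delta}$ for $|z|\ge 1$ (by the mean value theorem). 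Combined with $|x-y|^{\gamma_n+\delta}\le C(|x|^{\gamma_n+\delta}+|y|^{\gamma_n+\delta})$ and a Chebyshev-type argument, these show that the contribution of $\{|x-y|\le\epsilon\}$ is $O(\epsilon^2|\log\epsilon|)$, while that of $\{|x|>R\}\cup\{|y|>R\}$ is bounded by $CR^{-\delta/2}\sup_n M_{2+\gamma_n+\delta}(\bm{G}_{\gamma_n})$, a finite quantity by Lemma~\ref{lem:momentsk} (applied with $\delta>0$ small enough that $2+\delta<3-\delta$). Passing first $R\to\infty$ and then $\epsilon\to 0$ produces the analogous smallness for $J_\infty$, so $J_\infty=\lim_n J_n=0$.

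For (ii), writing $H_\lambda(x)=\lambda\,\bm{H}(\lambda x)$ and substituting $u=\lambda x,\ v=\lambda y$ in the identity $J_\infty=0$ gives
\begin{equation*}
 \int_{\R^2}\bm{H}(u)\bm{H}(v)\,|u-v|^2\bigl(\log|u-v|-\log\lambda\bigr)\,\d u\,\d v=0.
\end{equation*}
Because $\bm{H}$ is even with unit mass, $\int_{\R^2}\bm{H}(u)\bm{H}(v)|u-v|^2\,\d u\,\d v=2M_2(\bm{H})$, and the direct computation $M_2(\bm{H})=\tfrac{2}{\pi}\int_\R x^2(1+x^2)^{-2}\,\d x=1$ reduces the displayed identity to $2A_0-2\log\lambda=0$, whence $\lambda=\exp(A_0)=\lambda_0$, as claimed.

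The main obstacle is the passage $J_n\to J_\infty$ in step (i): for each $n$ the integrand $|x-y|^2\Phi_n(x-y)$ is unbounded at infinity, so narrow convergence of the product measures cannot be invoked directly. The decisive input is the uniform higher moment bound of Lemma~\ref{lem:momentsk}, without which one could not secure the uniform integrability needed to truncate the tails; the local logarithmic singularity at $|x-y|=0$ is harmless since $|z|^2|\log|z||$ is integrable at the origin.
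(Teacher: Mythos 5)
Your proposal is correct and follows essentially the same approach as the paper's proof: start from the identity $\int\int\bm{G}_{\gamma_n}\bm{G}_{\gamma_n}|x-y|^2\Lambda_{\gamma_n}(|x-y|)=0$, split the domain into a compact region where $\Lambda_{\gamma_n}\to\log$ uniformly and narrow convergence applies, a near-diagonal region controlled by $|\Lambda_{\gamma_n}(r)|\leq|\log r|$ for $r<1$, and a far-out region controlled by the uniform higher moment bounds of Lemma~\ref{lem:momentsk}, then rescale to solve for $\lambda$. The only cosmetic difference is the truncation: the paper truncates on the single variable $|x-y|$ (via $|x-y|\leq\tilde\delta$, $\tilde\delta\leq|x-y|\leq R$, $|x-y|>R$, and uses the monotonicity of $\gamma\mapsto\Lambda_\gamma(r)$ for $r>1$), while you truncate on $|x|\vee|y|$ and $|x-y|$ separately and bound $\Lambda_{\gamma_n}$ by the mean-value theorem; both are routed through the same moment estimates and serve the same purpose.
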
 
{\begin{rmk} We will see later on that $A_{0}$ can be made explicit and, according to Lemma \ref{rmk:kern}
$A_{0}=\log2+\frac{1}{2}$ from which
$\lambda_{0}=2\sqrt{e}.$\end{rmk}
}
\begin{proof} We consider a sequence $\{\bm{G}_{\g_{n}}\}_{n}$ and $\lambda >0$ such that
$$\lim_{n\to\infty}\int_{\R}\bm{G}_{\g_{n}}(x)\varphi(x)\dx=\int_{\R}H_{\lambda}(x)\varphi(x)\dx, \qquad \forall \varphi \in \mathcal{C}_{b}(\R).$$
Let $\delta>0$. Let us fix $k\in(2,3)$ and $s>0$ small enough such that $k+s\in(2+\delta,3-\delta)$. We consider $N\in\N$ large enough such that the conclusions of Lemma \ref{lem:L2bound}  and Lemma \ref{lem:momentsk} hold true and  $\g_{n}<k-2$ for any  $n\ge N$. Then, Lemma \ref{lem:momentsk} and Young's inequality imply that
\begin{equation}\label{Mk+s} 
M_{k+s}(\bm{G}_{\g_{n}}) \le M_{k+s+\gamma_n}(\bm{G}_{\g_{n}})+M_{0}(\bm{G}_{\g_{n}}) \le C+1=:\bar{C},
\end {equation}
 for any $n \geq N$. Introducing 
$$\Lambda_{\g}(r)=\frac{r^{\g}-1}{\g}, \qquad \forall r >0, \qquad \g >0$$
we recall from \eqref{eq1} that
\begin{equation}\label{eq1-gn}
\int_{\R}\int_{\R}\bm{G}_{\g_{n}}(x)\bm{G}_{\g_{n}}(y)|x-y|^{2}\Lambda_{\g_{n}}(|x-y|)\dx\dy=0\,\qquad \qquad \forall n\geq N.
\end{equation}
On the one hand, let $\tilde{\delta} \in \left(0,\frac{1}{e}\right)$ to be determined later. Using the elementary inequality {$|\Lambda_{\g}(r)| \leq -\log r$} for any $r \in (0,1)$, $\g >0$, we have
\begin{multline*}
\int_{\R}\int_{|x-y|\leq \tilde{\delta}}\bm{G}_{\g_{n}}(x)\bm{G}_{\g_{n}}(y)|x-y|^{2} {|\Lambda_{\g_{n}}(|x-y|)|}\dx\dy\\
\leq -\int_{\R}\int_{|x-y|\leq\tilde{\delta}}\bm{G}_{\g_{n}}(x)\bm{G}_{\g_{n}}(y)|x-y|^{2}\log(|x-y|)\dx\dy \leq -\tilde{\delta}^{2}\log \tilde{\delta}\int_{\R}\int_{\R}\bm{G}_{\g_{n}}(x)\bm{G}_{\g_{n}}(y)\dy\dx
\end{multline*}
from which
\begin{equation}\label{eq:bound-small}
\sup_{n\geq N}\int_{\R}\int_{|x-y|\leq \tilde{\delta}}\bm{G}_{\g_{n}}(x)\bm{G}_{\g_{n}}(y)|x-y|^{2} {|\Lambda_{\g_{n}}(|x-y|)|}\dx\dy \leq -\tilde{\delta}^{2}\log\tilde{\delta}, \qquad \forall \tilde{\delta} \in \left(0,\frac{1}{e}\right).\end{equation}
On the other hand, for $R >1$ to be determined later,  {since $\g_{n} < k-2$ for any $n \geq N$ one has}
\begin{multline*}
\int_{\R}\int_{|x-y|>R}\bm{G}_{\g_{n}}(x)\bm{G}_{\g_{n}}(y)|x-y|^{2}\Lambda_{\g_{n}}(|x-y|)\dx\dy\\
\leq \int_{\R}\int_{|x-y|>R}\bm{G}_{\g_{n}}(x)\bm{G}_{\g_{n}}(y)|x-y|^{2}\Lambda_{k-2}(|x-y|)\dx\dy\end{multline*}
where we used that the mapping $\g \mapsto \Lambda_{\g}(r)$ is non-decreasing for any $r>1$. Then, 
$$|x-y|^{2}\Lambda_{k-2}(|x-y|) \leq \frac{1}{k-2}|x-y|^{k} \leq \frac{1}{(k-2)R^{s}}|x-y|^{k+s}, \qquad |x-y| >R,\,\,s >0$$
where we recall that $s >0$ has been chosen small enough so that $k+s \in (2+\delta,3-\delta)$. Therefore,
\begin{equation*}\begin{split}
\int_{\R}\int_{|x-y|>R}&\bm{G}_{\g_{n}}(x)\bm{G}_{\g_{n}}(y)|x-y|^{2}\Lambda_{\g_{n}}(|x-y|)\dx\dy\\
&\leq \frac{2^{k+s-1}}{(k-2)R^{s}}\int_{\R}\int_{|x-y|>R}\bm{G}_{\g_{n}}(x)\bm{G}_{\g_{n}}(y)\left(|x|^{k+s}+|y|^{k+s}\right)\dx\dy\\
&\leq \frac{2^{k+s}}{(k-2)R^{s}}M_{k+s}(\bm{G}_{\g_{n}})\,.\end{split}\end{equation*}
{We  then deduce from \eqref{Mk+s} that}
\begin{equation}\label{eq:bound-large}
\sup_{n\geq N}\int_{\R}\int_{|x-y|>R}\bm{G}_{\g_{n}}(x)\bm{G}_{\g_{n}}(y)|x-y|^{2}\Lambda_{\g_{n}}(|x-y|)\dx\dy \leq \bar{C}R^{-s} \qquad \forall R >1.\end{equation}
Since, for any fixed $\tilde{\delta} >0,R>1$, $\left(\Lambda_{\g_{n}}(r)\right)_{n}$ converges to $\log r$ \emph{uniformly on the set} $\{\tilde{\delta} \leq r \leq R\}$, we deduce that
\begin{multline}\label{eq:limLog}
\lim_{n\to\infty}\int_{\R}\int_{\tilde{\delta} \leq |x-y|\leq R}\bm{G}_{\g_{n}}(x)\bm{G}_{\g_{n}}(y)|x-y|^{2}\Lambda_{\g_{n}}(|x-y|)\dx\dy\\
=\int_{\R}\int_{\tilde{\delta}\leq|x-y|\leq R}H_{\lambda}(x)H_{\lambda}(y)|x-y|^{2}\log|x-y|\dx\dy.\end{multline}
Combining \eqref{eq1-gn} with \eqref{eq:bound-small}--\eqref{eq:bound-large} and \eqref{eq:limLog}, for any $\varepsilon >0$, picking $\tilde{\delta} >0$ small enough so that $-\tilde{\delta}^{2}\log\tilde{\delta}\leq \varepsilon,$ and $R >1$ large enough so that $\bar{C}R^{-s} \leq \varepsilon,$ one can take $N >1$ large enough so that
\begin{equation*}
\left|\int_{\R}\int_{\tilde{\delta} \leq |x-y|\leq R}H_{\lambda}(x)H_{\lambda}(y)|x-y|^{2}\log|x-y|\dx\dy \right|\leq {3\varepsilon}\end{equation*}
from which we deduce easily that
\begin{equation}\label{eq1Hl}
\int_{\R}\int_{\R}H_{\lambda}(x)H_{\lambda}(y)|x-y|^{2}\log|x-y|\dx\dy=0.\end{equation}
Now, recalling that $H_{\lambda}(x)=\lambda\bm{H}(\lambda x)$ for any $x \in \R$, with the change of variables $u=\lambda\,x$, $v=\lambda\,y$, \eqref{eq1Hl} becomes
$$\frac{1}{\lambda^{2}}\int_{\R}\int_{\R}\bm{H}(u)\bm{H}(v)|u-v|^{2}\log\left(\frac{|u-v|}{\lambda}\right)\d u\d v=0$$
from which
$$\log \lambda \int_{\R}\int_{\R}\bm{H}(u)\bm{H}(v)|u-v|^{2}\d u\d v=\int_{\R}\int_{\R}\bm{H}(u)\bm{H}(v)|u-v|^{2}\log |u-v|\d u\d v=2A_{0}.$$
Since 
$$\int_{\R}\int_{\R}\bm{H}(u)\bm{H}(v)|u-v|^{2}\d u\d v=2\int_{\R}|u|^{2}\bm{H}(u)\bm{H}(v)\d u\d v=2$$
we deduce the result. 
 \end{proof}

 \begin{proof}[Proof of Theorem~\ref{theo:Unique}]
  Lemma~\ref{lem:unique} proves that the weakly-$\star$ compact family $\left\{\Gg\right\}_{\g\in (0,1)}$ admits a \emph{unique} possible limit (as $\g \to0$) given by
$$\bm{G}_{0}(x):=\lambda_{0}\bm{H}(\lambda_{0}x), \qquad \lambda_{0}=\exp\left(A_{0}\right)$$
with $A_{0}$ defined {in \eqref{eq:A0}}. In particular, the whole net $\left\{\Gg\right\}_{\g \in (0,1)}$ is converging (in the weak-$\star$ topology) towards $\bm{G}_{0}$. We can then resume the arguments of  
Lemma \ref{lem:L2bound} and Lemma \ref{lem:momentsk} to deduce  \eqref{eq:estimGg}. \end{proof}

We can complement the estimates \eqref{eq:estimGg} in
Theorem~\ref{theo:Unique} with $L^{2}$-moments estimates.
\begin{cor}\label{L2-weighted}
  For any $\delta>0$ there exists $\g_{\star} \in (0,1)$ and $C >0$ such that
  \begin{equation}\label{eq:L2-weighted}
    \|\Gg \|_{L^2(\w_{k})} \leq C
  \end{equation}
  for all $\g \in [0,\g_{\star})$ with $k+\g \in (0,3-\delta)$ and all $\Gg \in \mathscr{E}_\gamma$.
\end{cor}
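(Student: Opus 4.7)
The plan is to derive a weighted $L^{2}$ energy identity directly from the stationary equation \eqref{eq:steadyg}. For $k=0$ the bound is part of Theorem~\ref{theo:Unique}. For $k>0$ with $k+\g<3-\delta$, I would multiply \eqref{eq:steadyg} by $\Gg\,\w_{2k}$ and integrate over $\R$; the rigorous justification (since $\Gg\,\w_{2k}$ is not a bounded $\mathcal{C}^{1}$ test-function) follows the mollification-and-truncation argument used in Lemma \ref{lem:energy}, which I would defer to the Appendix. Setting $I_{s}:=\int_{\R}\Gg^{2}\w_{s}\dx$, so that $\|\Gg\|_{L^{2}(\w_{k})}^{2}=I_{2k}$, an integration by parts combined with $x\,\partial_{x}\w_{2k}(x)=2k\bigl(\w_{2k}(x)-\w_{2k-1}(x)\bigr)$ yields
\[
\frac{1}{4}\int_{\R}\partial_{x}(x\,\Gg)\,\Gg\,\w_{2k}\dx=-\frac{2k-1}{8}\,I_{2k}+\frac{k}{4}\,I_{2k-1}.
\]

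For the loss contribution I would use Lemma \ref{lem:Sigmag} with the uniform $L^{2}$ bound of Theorem~\ref{theo:Unique}: given $\varepsilon>0$ small, first choose $\tilde{\delta}\in(0,1)$ such that $\sqrt{2\tilde{\delta}}\,C_{0}\le\varepsilon/2$, then take $\g_{\star}$ small enough that $(1-\tilde{\delta}^{\g})\le\varepsilon/2$ and $\kappa_{\g}\ge 1-\varepsilon$ for all $\g\in[0,\g_{\star})$, so that
\[
\int_{\R}\Q_{\g}^{-}(\Gg,\Gg)\Gg\w_{2k}\dx=\int_{\R}\Sigma_{\g}\,\Gg^{2}\,\w_{2k}\dx\ge (1-\varepsilon)\,I_{2k+\g}-\varepsilon\,I_{2k}.
\]
For the gain term, the elementary inequality $\w_{k}((x+y)/2)\le\w_{k}(x)+\w_{k}(y)$ (by convexity for $k\ge 1$ and by subadditivity for $0<k<1$) gives $\w_{2k}((x+y)/2)\le\w_{k}((x+y)/2)(\w_{k}(x)+\w_{k}(y))$, and by symmetry in $(x,y)$,
\[
\int_{\R}\Q_{\g}^{+}(\Gg,\Gg)\Gg\w_{2k}\dx\le 2\int_{\R}\Gg(x)\w_{k}(x)\left[\int_{\R}\Gg(y)\Gg\!\left(\tfrac{x+y}{2}\right)\w_{k}\!\left(\tfrac{x+y}{2}\right)|x-y|^{\g}\dy\right]\dx.
\]
A Cauchy-Schwarz inequality in the inner integral (splitting as $\Gg(y)|x-y|^{\g}$ times $\Gg((x+y)/2)\w_{k}((x+y)/2)$), the change of variable $z=(x+y)/2$ providing $\sqrt{2I_{2k}}$ for one factor, and the auxiliary bound $\int_{\R}\Gg(y)^{2}|x-y|^{2\g}\dy\le C\w_{2\g}(x)$ (which for $\g<1/2$ follows from $|x-y|^{2\g}\le|x|^{2\g}+|y|^{2\g}$, $\|\Gg\|_{L^{2}}\le C_{0}$, and the pointwise estimate \eqref{eq:pointX}) give
\[
\int_{\R}\Q_{\g}^{+}(\Gg,\Gg)\Gg\w_{2k}\dx\le C\sqrt{I_{2k}}\int_{\R}\Gg(x)\w_{k+\g}(x)\dx\le C'\bigl(1+M_{k+\g}(\Gg)\bigr)\sqrt{I_{2k}},
\]
and since $k+\g<3-\delta$, Theorem~\ref{theo:Unique} yields $M_{k+\g}(\Gg)\le C_{0}$ uniformly in $\g\in[0,\g_{\star})$.

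Assembling the three pieces in the identity $\frac{1}{4}\int\partial_{x}(x\Gg)\Gg\w_{2k}\dx=\int\Q_{\g}^{+}(\Gg,\Gg)\Gg\w_{2k}\dx-\int\Q_{\g}^{-}(\Gg,\Gg)\Gg\w_{2k}\dx$, using $I_{2k}\le I_{2k+\g}$ and dropping the nonpositive term $-\frac{k}{4}I_{2k-1}$ on the right, one arrives at
\[
\left(1-2\varepsilon-\frac{2k-1}{8}\right)I_{2k+\g}\le C''\sqrt{I_{2k+\g}}.
\]
Since $k+\g<3-\delta$ forces $\frac{2k-1}{8}<\frac{5-2\delta}{8}$, the leading coefficient is at least $\frac{3+2\delta}{8}-2\varepsilon$, strictly positive for $\varepsilon$ sufficiently small; squaring then yields a uniform bound on $I_{2k+\g}$, hence on $\|\Gg\|_{L^{2}(\w_{k})}^{2}=I_{2k}\le I_{2k+\g}$. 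The main obstacle is precisely maintaining this positivity of the top-order coefficient uniformly in $\g$ small: the argument succeeds because the restriction $k+\g<3-\delta$ keeps $k$ well below the critical threshold $9/2$ at which the coefficient would vanish, while Lemma \ref{lem:Sigmag} delivers the near-isometric lower bound $\kappa_{\g}\to 1$ needed to balance the transport-induced term $\frac{2k-1}{8}I_{2k}$.
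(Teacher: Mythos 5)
Your argument is correct and follows essentially the same strategy as the paper: multiply the stationary equation by $\Gg$ times a weight of order $2k$, bound the loss term from below via Lemma \ref{lem:Sigmag} together with the uniform $L^{2}$ bound of Theorem \ref{theo:Unique}, bound the gain term by $C\sqrt{I_{2k}}$ using the uniform moment bounds, and absorb, with the rigorous justification deferred to a mollification argument as in the paper. The only cosmetic differences are the use of $\w_{2k}$ in place of $|x|^{2k}$ and your Cauchy--Schwarz treatment of the gain term (via the pointwise bound \eqref{eq:pointX}) where the paper instead reduces to $\Q_{0}^{+}(\Gg|\cdot|^{k+\g},\Gg)$ and applies Lemma \ref{lem:Q+0}.
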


\begin{proof}
  We give a formal proof here which presents the argument to obtain a
  uniform bound. A complete justification can be found in
  Appendix~\ref{sec:justif-L2}. Let $\g_\star\in(0,1)$ be such that
  the conclusion of Theorem \ref{theo:Unique} holds true. For any
  $k>0$, setting
$$G_{k}(x)=\Gg(x)\,|x|^{k}$$
one notices that
\begin{equation}\label{eq:Gk}
\int_{\R}\Q_{\g}(\Gg,\Gg)\Gg\,|x|^{2k}\dx=\frac{1}{4}\int_{\R}\partial_x (x\Gg)\, \Gg\,|x|^{2k} \dx = \Big( \frac{1}{8} - \frac{k}{4}\Big)\|G_{k} \|^{2}_{L^2}\,.
\end{equation}
Also, thanks to Lemma \ref{lem:Sigmag} (with $\tilde{\delta} = \gamma^2$) and Theorem \ref{theo:Unique}
\begin{align*}
\int_{\R}\Q^{-}_{\gamma}(\Gg,\Gg)\,&\Gg\,|x|^{2k} \dx =  \int_{\R}G^{2}_{k}(x)\Sigma_{\g}(x)\,\dx \\
& \geq  \kappa_{\gamma}\|G_{k}\w_{\frac{\g}{2}}\|^2_{L^2} - {\gamma|\log\g|}\,C\,\|G_{k}\|^2_{L^2}\geq \big(\kappa_\gamma - {\gamma|\log\g|}\,C\big)\|G_{k}\|^2_{L^2}\,,
\end{align*} 
where we used that, for $\tilde{\delta}=\g^{2}$, $-(1-\tilde{\delta}^{\g}) \simeq 2\g\log\g$. For the positive part, using that  $|x+y|^{k}\leq 2^{k-1}\big(|x|^{k} + |y|^{k}\big)$ while $(|x|^{k}+|y|^{k})|x-y|^{\g} \leq 2\big(|x|^{k+\g} + |y|^{k+\g}\big)$, we can argue as in the derivation of \eqref{eq:QgQ0} to conclude that
\begin{align*}
\int_{\R}\Q^{+}_{\gamma}(\Gg,\Gg)\,\Gg\,|x|^{2k} \dx &\leq 2\int_{\R}\Q^{+}_{0}(\Gg |x|^{k+\gamma},\Gg)\, G_k \dx \\
& \leq  {2\sqrt{2}}M_{k+\gamma}(\Gg)\| \Gg\|_{L^2} \|G_{k}\|_{L^2}\,.
\end{align*}
Therefore,  one deduces from Theorem \ref{theo:Unique}  that there exists some positive constant $C_{0}$ depending neither on $k$, nor on $\g$ such that for $k+\g\in(0,3-\delta)$, 
$$ \int_{\R}\Q^{+}_{\gamma}(\Gg,\Gg)\,\Gg\,|x|^{2k} \dx \leq C_{0}\|G_{k}\|_{L^2}.$$
Gathering these estimates with \eqref{eq:Gk}, one deduces that
\begin{equation*}
\Big(\kappa_\gamma  + \frac{1}{8} - \frac{k}{4} - {\gamma|\log\g|}\,C\Big)\| G_{k} \|^{2}_{L^2} \leq C_{0}\| G_{k}\|_{L^2}\,.
\end{equation*}
Since $\kappa_\gamma\rightarrow1$ as $\gamma \to 0^{+}$, one easily concludes that for some explicit $\gamma_{\star} >0$ (independent of $k$), it holds $\kappa_\gamma  + \frac{1}{8} - \frac{k}{4} - \gamma\,{|\log \g|}\,C \geq \kappa_{\g} +\frac{1}{8}-\frac{3}{4}-\g|\log\g|C \geq \frac{1}{8}$ for any $\gamma\in[0,\gamma_{\star})$ which proves the result since then $\|G_{k}\|_{L^2} \leq 8C_{0}.$
\end{proof}

\subsection{Higher regularity}\label{sec:weighted}
In this section we prove Sobolev regularity of $\Gg$ uniformly with respect to $\g$. Parts of the arguments are formal while a full justification is given in Appendix~\ref{sec:justification:sobolev}.
 From equation \eqref{eq:steadyg} we write
\begin{equation*}
x\partial_{x}\Gg = 4\Q_{\g}(\Gg ,\Gg ) - \Gg\,.
\end{equation*}
Consequently, taking the $L^{2}(\w_{k})$ norm,   one has
\begin{align*}
\begin{split}
\| x\partial_{x}\Gg \|_{L^{2}(\w_{k})}  &\leq  4\| \Q_{\g}(\Gg ,\Gg )\|_{ L^{2}(\w_{k}) } + \| \Gg \|_{L^{2}(\w_{k})}\\
&\leq  {C\|\Gg\w_{k+\g}\|_{L^2}\,(\|\Gg\w_{k}\|_{L^1}+ \|\Gg\w_{\g}\|_{L^1})} + \| \Gg \|_{L^{2}(\w_{k})}\,,
\end{split}
\end{align*}
thanks to Proposition \ref{prop:QgL2}. Let $\delta>0$. Using now the uniform estimates obtained in Theorem \ref{theo:Unique} and Corollary \ref{L2-weighted}, we see that
\begin{equation}\label{eq:init-gradient-regularity}
  \sup_{\g \in (0,\g_{\star})}\|x\partial_{x}\Gg\|_{L^{2}(\w_{k})}=C_{1} < \infty, \qquad \forall   k+2\g <3-\delta.\end{equation}
In order to deduce from this some $L^{2}$-estimate for $\partial_{x}\Gg$, we need to handle the small values of $x$. Introducing now
$$\Gg'(x)=\partial_{x}\Gg(x)$$
one differentiate \eqref{eq:steadyg} to obtain that
\begin{equation}\label{gradient-equation}
\frac{1}{4}\partial_x(x\Gg') + \frac{1}{4}\Gg' = \Q_{\g}(\Gg,\Gg') + \Q_{\g}(\Gg',\Gg).
\end{equation}
Let us estimate each of the four terms in the right side in the following lemmata.
\begin{lem}[\textit{\textbf{Gain part estimate}}]\phantomsection\label{collision+}
 Let $\delta>0$ and $\g_\star\in(0,1)$ given by Corollary \ref{L2-weighted}. For any $\tilde{\delta}>0$, $\g \in(0,\g_\star)$ and  {$0\le k<3-\frac{5\gamma}{2}-\delta$} it holds that
\begin{equation}\label{eq:L2-weighted-q+}
\int_{\R}\Big(\Q^{+}_{\g}(\Gg,|\Gg'|) + \Q^{+}_{\g}(|\Gg'|,\Gg)\Big) \,|\Gg'|\,\w_{2k} \dx \leq C\sqrt{\tilde{\delta}}\| \Gg'\w_{k+\frac{\g}{2}}\|^2_{L^2} + \frac{C}{\tilde{\delta}^{\frac{5}{2}}}\,,
\end{equation}
for some explicit $C>0$.
\end{lem}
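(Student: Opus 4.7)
The plan is to use the weak form of $\Q^+_\g$ to rewrite
$$
I:=\int_\R\Q^+_\g(\Gg,|\Gg'|)\,|\Gg'|\,\w_{2k}\dx=\iint_{\R^2}\Gg(x)\,|\Gg'(y)|\,|\Gg'(z)|\,|x-y|^\g\,\w_{2k}(z)\dx\dy,\qquad z=\tfrac{x+y}{2},
$$
and to observe that by the $x\leftrightarrow y$ symmetry of $\Q^+_\g$ one has $\Q^+_\g(\Gg,|\Gg'|)=\Q^+_\g(|\Gg'|,\Gg)$, so the lemma's left-hand side equals $2I$. I then split the integration region according to whether $|x-y|<\tilde\delta$ or $|x-y|\ge\tilde\delta$.

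For the short-range contribution $|x-y|<\tilde\delta$, one has $|x-y|^\g\le 1$ and, since $|y-z|=|x-y|/2<\tilde\delta/2$, also $\w_{2k}(z)\le C\w_{2k}(y)$. A Cauchy-Schwarz in $(x,y)$, followed by the change of variables $z=(x+y)/2$, reduces this contribution to two slab integrals of the form $\int|\Gg'(\cdot)|^2\w_{2k}\cdot\bigl(\int_{|x-\cdot|<\tilde\delta}\Gg(x)\dx\bigr)$. The uniform bound $\int_{|x-w|<\tilde\delta}\Gg(x)\dx\le\sqrt{2\tilde\delta}\,\|\Gg\|_{L^2}\le C\sqrt{\tilde\delta}$ from Corollary~\ref{L2-weighted} then produces $I^{<\tilde\delta}\le C\sqrt{\tilde\delta}\,\|\Gg'\w_{k+\g/2}\|^2_{L^2}$, which fits the first term in the lemma.

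For the long-range contribution $|x-y|\ge\tilde\delta$ the key observation is that $|y-z|=|x-y|/2\ge\tilde\delta/2$, which forces $\max(|y|,|z|)\ge\tilde\delta/4$. This triggers the use of the a~priori bound \eqref{eq:init-gradient-regularity} applied with index $k+\g/2$, valid exactly when $k+\tfrac{5\g}{2}<3-\delta$, which gives $\int\xi^2|\Gg'(\xi)|^2\w_{2k+\g}(\xi)\d\xi\le C$, hence
$$
\bigl\|\Gg'\,\w_{k+\g/2}\,\mathbf{1}_{|\xi|\ge\tilde\delta/4}\bigr\|_{L^2}\le \frac{C}{\tilde\delta}.
$$
On the subregion $|y|\ge\tilde\delta/4$, I perform a Cauchy-Schwarz in $(x,y)$ so that one factor is $\int_{|y|\ge\tilde\delta/4}|\Gg'(y)|^2\w_{2k+\g}(y)\dy\le C/\tilde\delta^2$ (paired with $\|\Gg\|_{L^1}=1$) and the other, after the change $z=(x+y)/2$, is the target norm $\|\Gg'\w_{k+\g/2}\|^2_{L^2}$ (paired with the collision-frequency bound $\Sigma_\g(y)\le C\w_\g(y)$). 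This yields $I^{\ge\tilde\delta,A}\le (C/\tilde\delta)\,\|\Gg'\w_{k+\g/2}\|_{L^2}$; the complementary region $|y|<\tilde\delta/4$ forces $|z|\ge\tilde\delta/4$ and is handled identically by taking $z$ as the active integration variable. Young's inequality $ab\le\sqrt{\tilde\delta}\,a^2+b^2/(4\sqrt{\tilde\delta})$ with $a=\|\Gg'\w_{k+\g/2}\|_{L^2}$ and $b=C/\tilde\delta$ converts this into the desired $\sqrt{\tilde\delta}\|\Gg'\w_{k+\g/2}\|^2_{L^2}+C/\tilde\delta^{5/2}$.

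The main technical obstacle is the weight bookkeeping in the Cauchy-Schwarz for the long-range piece: squaring naively produces $\w_{2k}(z)^2=\w_{4k}(z)$, which would force $L^1$-moments of $\Gg$ of order $\sim 4k$, far beyond the moment bound $M_s(\Gg)<\infty$ for $s<3-\delta$ from Theorem~\ref{theo:Unique}. The remedy is to apportion this weight as $\w_{2k+\g}(y)\cdot\w_{2k+\g}(z)$ with a cross-factor $\w_{2k-\g}(z)/\w_{2k+\g}(y)$ absorbed together with an appropriate power of $|x-y|^\g$ on the $\Gg$-side, so that only the uniformly controlled quantities $\int\Gg\dx=1$ and $\Sigma_\g(y)/\w_\g(y)$ enter as $x$-integrals. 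This is also the reason for the sharp threshold $k<3-\tfrac{5\g}{2}-\delta$: it is precisely the condition under which \eqref{eq:init-gradient-regularity} at index $k+\g/2$ holds, and hence under which the crucial bound on $\|\Gg'\w_{k+\g/2}\mathbf{1}_{|\xi|\ge\tilde\delta/4}\|_{L^2}$ is available.
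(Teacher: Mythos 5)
Your overall architecture (split the collision integral by $|x-y|<\tilde\delta$ versus $|x-y|\ge\tilde\delta$, use $|y-z|=|x-y|/2\ge\tilde\delta/2\Rightarrow\max(|y|,|z|)\ge\tilde\delta/4$ on the far region, invoke \eqref{eq:init-gradient-regularity} at index $k+\tfrac{\g}{2}$, then Young) is a legitimate alternative to the paper's route, which instead keeps the full collision integral and splits the \emph{function} $\Gg'\w_{k+\g/2}$ into $\ind_{[-\tilde\delta,\tilde\delta]}$ and $\ind_{|x|>\tilde\delta}$ pieces before applying the bilinear bound of Lemma~\ref{lem:Q+0}. Your short-range estimate is correct, and you identify the origin of the threshold $k<3-\tfrac{5\g}{2}-\delta$ accurately.

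However, the long-range estimate as you describe it has a genuine gap, and it sits exactly at the point you flag as "the main technical obstacle." Your joint Cauchy--Schwarz in $(x,y)$ must factor $|x-y|^{2\g}\w_{4k}(z)$ between the two squares, and your proposed apportionment puts $\w_{2k+\g}(y)$ in the first factor and $\w_{2k+\g}(z)$ in the second, leaving the cross-factor $|x-y|^{2\g}\,\w_{2k-\g}(z)/\w_{2k+\g}(y)$ to be "absorbed on the $\Gg$-side." But in the regime where $|y|$ is of order one and $|z|\sim|x|/2$ is large, this cross-factor behaves like $|x|^{2k+\g}$, so the $x$-integral you are left with is $M_{2k+\g}(\Gg)$, not $\|\Gg\|_{L^1}=1$ or $\Sigma_\g(y)/\w_\g(y)$. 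Since $k$ may be as large as $3-\tfrac{5\g}{2}-\delta$, this requires moments of $\Gg$ of order up to $\approx 6$, which are \emph{not} uniformly bounded as $\g\to0$ (Theorem~\ref{theo:Unique} and Lemma~\ref{lem:momentsk} only give uniform control of $M_s(\Gg)$ for $s<3-\delta$; indeed $M_s(\bm G_0)=\infty$ for $s\ge3$). The fix is to distribute the weight \emph{multiplicatively before} squaring, via $\w_{2k}(z)=\w_k(z)^2\le\w_k(z)\w_k(x)\w_k(y)$ and $|x|^{\g}+|y|^{\g}\le2\w_{\g/2}(x)\w_{\g/2}(y)\w_{\g/2}(z)$, so that the integrand is dominated by $2\,\tilde f(x)\tilde g(y)\tilde h(z)$ with $\tilde f=\Gg\w_{k+\g/2}$, $\tilde g=\tilde h=|\Gg'|\w_{k+\g/2}$; a Cauchy--Schwarz applied to this product only ever charges $\Gg$ with the weight $\w_{k+\g/2}$, and this is precisely what the paper's reduction to $\Q_0^+(\Gg\w_{k+\g/2},|\Gg'|\w_{k+\g/2})$ together with Lemma~\ref{lem:Q+0} accomplishes. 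With that correction your far-field argument closes and yields $(C/\tilde\delta)\|\Gg'\w_{k+\g/2}\|_{L^2}$ as claimed; without it, the argument fails for $k\gtrsim 3/2$, i.e.\ on most of the range needed in Theorem~\ref{theo:gradient}.
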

\begin{proof}
Both terms are estimated similarly, so we only focus on the first. As in the proof of \eqref{eq:QgQ0}, one first observes that
\begin{multline*}
\int_{\R}\Q^{+}_{\g}(\Gg,|\Gg'|) \,|\Gg'|\,\w_{2k} \dx\\
\leq \int_{\R^{2}}\left(|x|^{\g}+|y|^{\g}\right)\Gg(x)|\Gg'(y)|\,\left|\Gg'\left(\frac{x+y}{2}\right)\right|\w_{2k}\left(\frac{x+y}{2}\right)\dx\dy\\
\leq \int_{\R^{2}}\left(|x|^{\g}+|y|^{\g}\right)\Gg(x)|\Gg'(y)|\,\left|\Gg'\left(\frac{x+y}{2}\right)\right|\w_{k}\left(\frac{x+y}{2}\right) \w_{k}(x)\w_{k}(y) \dx\dy
\end{multline*}
where we used first 
 that $\w_{2k}(\cdot) {=} \w_{k}(\cdot)^{2}$ and then that 
$\w_{k}\left(\frac{x+y}{2}\right)\leq \w_{k}(x)\w_{k}(y).$ Consequently, using the fact that {$r \mapsto r^{\g}$} is concave, one checks that 
$$|x|^{\g}+|y|^{\g} \leq 2\w_{\frac{\g}{2}}(x)\w_{\frac{\g}{2}}(y)\w_{{\frac{\g}{2}}}\left(\frac{x+y}{2}\right)$$
from which we deduce that
\begin{multline*}
\int_{\R}\Q^{+}_{\g}(\Gg,|\Gg'|) \,|\Gg'|\,\w_{2k} \dx\\
\leq 2\int_{\R}\left(\w_{k+\frac{\g}{2}}(x)\Gg(x)\right)\left(\w_{k+\frac{\g}{2}}(y)|\Gg'(y)|\right)\w_{k+\frac{\g}{2}}\left(\frac{x+y}{2}\right)\left|\Gg'\left(\frac{x+y}{2}\right)\right|\dx\dy\\
={2}\int_{\R}\Q_{0}^{+}\left(\w_{k+\frac{\g}{2}}\Gg,\w_{k+\frac{\g}{2}}|\Gg'|\right)\w_{k+\frac{\g}{2}}|\Gg'|\dx.
\end{multline*}
Therefore, for any $\tilde{\delta} >0$, one has
\begin{align*}
\int_{\R}\Q^{+}_{\g}(\Gg,|\Gg'|) &\,|\Gg'|\,\w_{2k} \dx\\
&\leq {2}\int_{\R}\Q^{+}_{0}(\Gg\w_{k+\frac{\g}{2}},\big[\ind_{[-\tilde{\delta},\tilde{\delta}]} + \ind_{|x|>\tilde{\delta}}\big]|\Gg'|\w_{k+\frac{\g}{2}}) \,|\Gg'|\,\w_{k+\frac{\g}{2}} \dx \\
&\leq {4} \Big(\| \Gg\w_{k+\frac{\g}{2}}\|_{L^2}\|\ind_{[-\tilde{\delta},\tilde{\delta}]} \Gg'\w_{k+\frac{\g}{2}} \|_{L^1} \\
&\qquad\qquad + \| \Gg\w_{k+\frac{\g}{2}}\|_{L^1}\|\ind_{|x| > \tilde{\delta}} \Gg'\w_{k+\frac{\g}{2}} \|_{L^2} \Big)\| \Gg'\,\w_{k+\frac{\g}{2}}\|_{L^2}\,\,
\end{align*}
where we used the known estimates for $\Q^{+}_{0}$ (see Lemma \ref{lem:Q+0}). Consequently, using again Theorem \ref{theo:Unique} and Corollary \ref{L2-weighted} one deduces that there exists $C >0$ such that
$$\int_{\R}\Q^{+}_{\g}(\Gg,|\Gg'|) \,|\Gg'|\,\w_{2k} \dx \leq C\| \Gg'\,\w_{k+\frac{\g}{2}}\|_{L^2}\left(\|\ind_{[-\tilde{\delta},\tilde{\delta}]} \Gg'\w_{k+\frac{\g}{2}} \|_{L^1}  +\|\ind_{|x| > \tilde{\delta}} \Gg'\w_{k+\frac{\g}{2}} \|_{L^2}\right)$$
as soon as $\g \in (0,\g_{\star})$ and $k+\frac{3\g}{2} < 3-\delta$ where we applied Corollary \ref{L2-weighted} to $\| \Gg\w_{k+\frac{\g}{2}}\|_{L^2}$. Now, one has
\begin{equation*}
\|\ind_{[-\tilde{\delta},\tilde{\delta}]} \Gg'\w_{k+\frac{\g}{2}} \|_{L^1} \leq \sqrt{\tilde{\delta}}\,\| \Gg'\w_{k+\frac{\g}{2}}\|_{L^2}\,,
\end{equation*}
whereas, thanks to \eqref{eq:init-gradient-regularity}
\begin{equation*}
  \|\ind_{|x|>\tilde{\delta}} \Gg'\w_{k+\frac{\g}{2}} \|_{L^2} \leq \frac{1}{\tilde{\delta}}\,\| x\Gg'\|_{L^{2}(\w_{k+\frac{\g}{2}})} \leq \frac{C_{1}}{\tilde{\delta}},\qquad  0 \le k<3-\frac{5\gamma}{2}-\delta\,.
\end{equation*}
Thus
\begin{equation*}
\int_{\R}\Q^{+}_{\g}(\Gg,|\Gg'|) \,|\Gg'|\,\w_{2k} \dx\leq C\sqrt{\tilde{\delta}}\| \Gg'\w_{k+\frac{\g}{2}}\|^2_{L^2} + \frac{C_{1}}{\tilde{\delta}}\| \Gg'\w_{k+\frac{\g}{2}}\|_{L^2}\,.
\end{equation*}
The result follows from here using Young's inequality.
\end{proof}
The loss operator is estimated in the following
\begin{lem}[\textit{\textbf{Loss part estimate}}]\phantomsection\label{collision-} Let $\delta>0$ and $\g_\star\in(0,1)$ given by Corollary \ref{L2-weighted}. There exists some positive constant $C >0$, such that, for any $\tilde{\delta}>0$, $\g \in(0,\g_{\star})$ and  $0\le k <3-\gamma-\delta$ it holds that
\begin{multline}\label{eq:L2-weighted-q-}
\int_{\R}\left[\Q^{-}_{\g}(\Gg',\Gg)+\Q_{\g}^{-}(\Gg,\Gg')\right]\,\Gg'\,\w_{2k}\dx \\
\geq \left(\kappa_{\g}-C\sqrt{\tilde{\delta}}\right)\|\Gg'\w_{k+\frac{\g}{2}}\|_{L^2}^{2}-\frac{C}{\tilde{\delta}^{\frac{5}{2}}}- {C\g |\log\g|}\|\Gg'\w_{k}\|_{L^2}^{2}\,\end{multline}
where $\kappa_{\g}$ has been defined in Lemma \ref{lem:Sigmag}.
\end{lem}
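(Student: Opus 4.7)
The plan is to expand the loss operator using its bilinearity and symmetry, extract the dissipative diagonal term (which gives the main contribution through $\Sigma_\gamma$), and control the remaining cross term by Cauchy–Schwarz and Young's inequality.

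\emph{Step 1: Structural decomposition.} Since $\Q_\g^-(f,g)$ is symmetric in $(f,g)$, one has $\Q_\g^-(\Gg',\Gg)+\Q_\g^-(\Gg,\Gg')=2\Q_\g^-(\Gg',\Gg)$, and the pointwise formula
$$2\Q_\g^-(\Gg',\Gg)(x)=\Gg'(x)\,\Sigma_\g(x)+\Gg(x)\int_{\R}\Gg'(y)|x-y|^\g\dy$$
yields
$$\int_\R[\Q_\g^-(\Gg',\Gg)+\Q_\g^-(\Gg,\Gg')]\,\Gg'\,\w_{2k}\dx=I_1+I_2,$$
where $I_1:=\int_\R\Gg'(x)^2\Sigma_\g(x)\w_{2k}(x)\dx$ is a positive diagonal term and
$I_2:=\int_\R \Gg(x)\Gg'(x)(|\cdot|^\g\ast\Gg')(x)\,\w_{2k}(x)\dx$ is a cross term of indefinite sign.

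\emph{Step 2: Dissipative estimate for $I_1$.} Apply Lemma~\ref{lem:Sigmag} with its parameter set to $\g^2$. Since $1-\g^{2\g}=-2\g\log\g+o(\g|\log\g|)$ and $\sqrt{2}\,\g\,\|\Gg\|_{L^2}=O(\g)$ (bounded uniformly thanks to Theorem~\ref{theo:Unique}), one obtains
$$\Sigma_\g(y)\ge\kappa_\g\,\w_\g(y)-C\g|\log\g| \qquad \text{for all small } \g>0.$$
Using the identity $\w_{2k}\w_\g=\w_{k+\g/2}^2$, this gives
$$I_1\ge \kappa_\g\,\|\Gg'\w_{k+\g/2}\|_{L^2}^2-C\g|\log\g|\,\|\Gg'\w_k\|_{L^2}^2.$$

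\emph{Step 3: Control of the cross term $I_2$.} Using the subadditivity bound $|x-y|^\g\le\w_\g(x)\w_\g(y)$ (which follows from $\log(1+|x-y|)\le\log(1+|x|)+\log(1+|y|)$ and $\g\le 1$), one factorises
$$|I_2|\le\left(\int_\R\Gg(x)\,|\Gg'(x)|\,\w_{2k+\g}(x)\dx\right)\left(\int_\R\w_\g(y)|\Gg'(y)|\dy\right).$$
The first factor is bounded by Cauchy–Schwarz, splitting $\w_{2k+\g}=\w_{k+\g/2}^2$, and applying the uniform bound of Corollary~\ref{L2-weighted} on $\|\Gg\w_{k+\g/2}\|_{L^2}$, giving $\le C\|\Gg'\w_{k+\g/2}\|_{L^2}$. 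The second factor is split at $|y|=\tilde\delta$: on $|y|\le\tilde\delta$, Cauchy–Schwarz with measure $\tilde\delta$ gives a contribution $\le C\sqrt{\tilde\delta}\|\Gg'\w_{k+\g/2}\|_{L^2}$; on $|y|>\tilde\delta$, the elementary inequality $1\le|y|/\tilde\delta$ together with the a priori bound \eqref{eq:init-gradient-regularity} applied at weight $\w_1$ (allowed provided $\g<1/2-\delta/2$, so that $\w_{\g-1}\in L^2$) yields a contribution $\le C/\tilde\delta$. Combining,
$$|I_2|\le C\sqrt{\tilde\delta}\,\|\Gg'\w_{k+\g/2}\|_{L^2}^2+\frac{C}{\tilde\delta}\|\Gg'\w_{k+\g/2}\|_{L^2}.$$
A final application of Young's inequality $ab\le\tfrac{\sqrt{\tilde\delta}}{2}a^2+\tfrac{1}{2\sqrt{\tilde\delta}}b^2$ with $a=\|\Gg'\w_{k+\g/2}\|_{L^2}$ and $b=C/\tilde\delta$ produces the announced $\tilde\delta^{-5/2}$:
$$|I_2|\le C\sqrt{\tilde\delta}\,\|\Gg'\w_{k+\g/2}\|_{L^2}^2+\frac{C}{\tilde\delta^{5/2}}.$$

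\emph{Conclusion and main obstacle.} Combining $I_1+I_2\ge I_1-|I_2|$ with the bounds of Steps 2 and 3 yields exactly \eqref{eq:L2-weighted-q-}. The principal difficulty lies in Step 3: the integrand in $I_2$ is not sign-definite, and one has to control it without appealing to spectral information, relying only on the uniform $L^1$- and $L^2$-moment bounds from Theorem~\ref{theo:Unique} and Corollary~\ref{L2-weighted} together with the a priori Sobolev estimate \eqref{eq:init-gradient-regularity}. Balancing the weight distributions so that the main prefactor appears with $\sqrt{\tilde\delta}$ while the remainder picks up only an inverse polynomial power of $\tilde\delta$ is what dictates the exponent $5/2$.
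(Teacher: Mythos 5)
Your decomposition is exactly the paper's: write the loss contribution as $\mathcal{J}_1+\mathcal{J}_2$ with $\mathcal{J}_1=\int(\Gg')^2\Sigma_\g\,\w_{2k}$ and $\mathcal{J}_2=\int\Gg\,\Gg'\,(|\cdot|^\g\ast\Gg')\,\w_{2k}$, and your treatment of $\mathcal{J}_1$ via Lemma~\ref{lem:Sigmag} with $\tilde\delta=\g^2$ is identical. Where you diverge is the cross term $\mathcal{J}_2$. The paper keeps the weight $\w_{2k}$ on the $x$-integral, introduces a smooth cutoff $\chi_{\tilde\delta}$ in the $y$-convolution, splits at $\abs{x-y}=\tilde\delta$, and integrates by parts in $y$ on the far piece so that only $\|\Gg\|_{L^1}$ enters there; the near piece is controlled by $\sqrt{\tilde\delta}\|\Gg'\|_{L^2}$. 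You instead factorise the kernel via $\abs{x-y}^\g\le\w_\g(x)\w_\g(y)$, which splits $\mathcal{J}_2$ into a product of two one-dimensional integrals, then split the $y$-integral at $\abs{y}=\tilde\delta$. Both routes yield the same $\tilde\delta^{-5/2}$ from the same final Young inequality, so the structure of your argument is genuinely correct.

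There is, however, a real (if minor) gap: the factorisation $\abs{x-y}^\g\le\w_\g(x)\w_\g(y)$ transfers a factor $\w_\g(x)$ onto the $x$-integral, so your first factor is $\int\Gg\abs{\Gg'}\w_{2k+\g}\dx$, which after Cauchy--Schwarz requires $\|\Gg\,\w_{k+\frac{\g}{2}}\|_{L^2}$ to be bounded. Corollary~\ref{L2-weighted} gives this uniform bound only when $(k+\tfrac{\g}{2})+\g<3-\delta$, i.e.\ $k+\tfrac{3\g}{2}<3-\delta$, whereas the lemma you are proving asserts the estimate for the larger range $k<3-\g-\delta$ (i.e.\ $k+\g<3-\delta$). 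For $k$ near the top of that range your application of Corollary~\ref{L2-weighted} is out of scope. The paper sidesteps this by bounding $\int\Gg\abs{\Gg'}\w_{2k}\dx\le\|\Gg\w_k\|_{L^2}\|\Gg'\w_k\|_{L^2}$ without adding weight in $x$, which needs exactly $k+\g<3-\delta$. Your gap is fixable --- invoke Corollary~\ref{L2-weighted} for a strictly smaller $\delta'$, at the cost of shrinking $\g_\star$ --- but as written the weight bookkeeping does not cover the stated range of $k$. You also implicitly impose $\g<\tfrac12$ to have $\w_{\g-1}\in L^2$ in the tail estimate; this is harmless given that $\g_\star$ is small, but it should be said since the paper's argument does not need it.
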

\begin{proof} One has
\begin{multline*}
\mathcal{J}:=\int_{\R}\left[\Q^{-}_{\g}(\Gg',\Gg)+\Q_{\g}^{-}(\Gg,\Gg')\right]\,\Gg'\,\w_{2k}\dx=\int_{\R}\left(\Gg'(x)\right)^{2}\Sigma_{\g}(x)\w_{2k}(x)\dx\\
+\int_{\R^{2}}\Gg'(y)\Gg(x)|x-y|^{\g}\Gg'(x)\w_{2k}(x)\dx\dy=\mathcal{J}_{1}+\mathcal{J}_{2}.\end{multline*}
The first term $\mathcal{J}_{1}$ is easily estimated using Lemma \ref{lem:Sigmag} {(with $\tilde{\delta}=\gamma^2$)} and Theorem  \ref{theo:Unique}, as in the proof of Corollary \ref{L2-weighted}.  For $\g_{\star}$ given by Corollary \ref{L2-weighted}, one has for any $\g\in(0,\g_\star)$,
$$\mathcal{J}_{1} \geq  \kappa_{\gamma}\| \Gg' \w_{k+\frac{\g}{2}}\|^2_{L^2} - {C\g |\log \g|} \| \Gg'\w_{k}\|^2_{L^2}\,$$
for some positive constant $C>0$ independent of $\g$ and $k$. To estimate $\mathcal{J}_{2}$, we introduce a smooth cutoff function $0 \leq \chi(x) \leq 1$ with support in the unitary interval $[-1,1]$ and set $\chi_{\tilde{\delta}}(x)=\chi(\tilde{\delta}^{-1}x)$. For any $x \in \R,$ one has then
\begin{multline*}
\int_{\R}\Gg'(y) |x-y|^{\gamma}\dy = \int_{\R}\Gg'(y) \chi_{\tilde{\delta}}(x-y)|x-y|^{\gamma}\dy +   \int_{\R}\Gg'(y) (1-\chi_{\tilde{\delta}}(x-y))|x-y|^{\gamma}\dy\\
=\int_{\R}\Gg'(y) \chi_{\tilde{\delta}}(x-y)|x-y|^{\gamma}\dy  \\
-  \int_{\R} \Gg(y) \partial_y\big[ (1-\chi_{\tilde{\delta}}(x-y))|x-y|^{\gamma} \big]\dy.
\end{multline*}
Notice that, for $\tilde{\delta} \in (0,1)$,
$$\left|\int_{\R}\Gg'(y) \chi_{\tilde{\delta}}(x-y)|x-y|^{\gamma}\dy\right| \leq \tilde{\delta}^{\g}\int_{|x-y|<\tilde{\delta}}|\Gg'(y)|\dy \leq {\sqrt{2\tilde{\delta}}}\|\Gg'\|_{L^2}\,,$$
while
$$\left|\int_{\R} \Gg(y) \partial_y\big[ (1-\chi_{\tilde{\delta}}(x-y))|x-y|^{\gamma} \big]\dy\right| \leq \frac{C}{\tilde{\delta}}\|\Gg\|_{L^1}=\frac{C}{\tilde{\delta}}$$
where we used the fact that
\begin{align*}
\left|\partial_y\big[ (1-\chi_{\tilde{\delta}}(x-y))|x-y|^{\gamma}\right|&=\left|\chi'_{\tilde{\delta}}(x-y)|x-y|^{\g} {-}\g\left(1-\chi_{\tilde{\delta}}(x-y)\right)(x-y)|x-y|^{\g-2}\right|\\
&\leq \|\chi_{\tilde{\delta}}'\|_{\infty}|x-y|^{\g}{\ind_{|x-y|\leq \tilde{\delta}}}+\g\left(1-\chi_{\tilde{\delta}}(x-y)\right)|x-y|^{\g-1}\\
&\leq \left(\|\chi'\|_{\infty}+\g\right)\tilde{\delta}^{\g-1}, \qquad \tilde{\delta} \in (0,1).\end{align*}
Consequently,
$$\left|\int_{\R}\Gg'(y) |x-y|^{\gamma}\dy\right| \leq  \sqrt{2\tilde{\delta}}\|\Gg'\w_{k}\|_{L^2}+\frac{C}{\tilde{\delta}}\,,$$
and
\begin{multline*}
|\mathcal{J}_{2}| \leq \left( {\sqrt{2\tilde{\delta}}\|\Gg'\w_{k}\|_{L^2}}+\frac{C}{\tilde{\delta}}\right)\int_{\R}\Gg(x)|\Gg'(x)|\w_{2k}(x)\dx \\
\leq \left( {\sqrt{2\tilde{\delta}}}\|\Gg'\w_{k}\|_{L^2}+\frac{C}{\tilde{\delta}}\right)\|\Gg \w_{k}\|_{L^2}\,\|\Gg'\w_{k}\|_{L^2}.\end{multline*}
Using again Corollary \ref{L2-weighted} to estimate $\|\Gg\w_{k}\|_{L^2}$ for $k+\g<3-\delta$, we deduce using Young's inequality that there exists $C >0$ such that
$$|\mathcal{J}_{2}| \leq C\sqrt{\tilde{\delta}}\| \Gg'\w_{k}\|^2_{L^2} + \frac{C}{\tilde{\delta}^{\frac{5}{2}}}.$$
Since $\|\Gg'\w_{k}\|_{L^2}^{2} \leq \| \Gg'\w_{k+\frac{\g}{2}}\|^2_{L^2}$, we deduce then the Lemma from the bound on $\mathcal{J}_{1}$ and $|\mathcal{J}_{2}|$.
\end{proof} 
We have all in hands, starting from \eqref{gradient-equation} to deduce the following
\begin{theo}\label{theo:gradient}
Let $\delta>0$. There exists $\g_\star\in(0,1)$ such that, for any  $\g \in(0,\g_\star)$ and $0\leq k<3-\frac{5\gamma}{2}-\delta$ it holds that
\begin{align}\label{eq:gradient}
\| \Gg\|_{H^{1}(\w_k)} + \| \Gg\|_{W^{1,1}}\leq C\,,
\end{align}
for some explicit $C>0$ depending on $\g_{\star}$ but not $\g$ and $k$.  In particular, as a consequence of the $W^{1,1}$ control of $\Gg$, it holds that
\begin{equation*}
\big| \widehat\Gg(\xi)\big|\leq \frac{C}{1+|\xi|}\,.
\end{equation*}
\end{theo}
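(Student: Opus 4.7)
The plan is to test the gradient equation \eqref{gradient-equation} against $\Gg'\w_{2k}$ and use Lemmas \ref{collision+} and \ref{collision-}. More precisely, multiplying \eqref{gradient-equation} by $\Gg'\w_{2k}$, integrating over $\R$, and rewriting the collision operator as $\Q_\g = \Q^+_\g - \Q^-_\g$, we get
\begin{equation*}
 \frac{1}{4}\int_{\R}\bigl[\partial_{x}(x\Gg')+\Gg'\bigr]\Gg'\w_{2k}\dx + \int_{\R}\bigl[\Q^-_\g(\Gg,\Gg')+\Q^-_\g(\Gg',\Gg)\bigr]\Gg'\w_{2k}\dx = \int_{\R}\bigl[\Q^+_\g(\Gg,\Gg')+\Q^+_\g(\Gg',\Gg)\bigr]\Gg'\w_{2k}\dx.
\end{equation*}
For the drift part, a straightforward integration by parts (using $\frac{1}{2}x\partial_{x}((\Gg')^{2})$) yields
\begin{equation*}
 \frac{1}{4}\int_{\R}\bigl[\partial_{x}(x\Gg')+\Gg'\bigr]\Gg'\w_{2k}\dx = \frac{3}{8}\|\Gg'\w_{k}\|_{L^{2}}^{2} - \frac{k}{4}\int_{\R}\frac{|x|}{1+|x|}(\Gg')^{2}\w_{2k}\dx \geq \Bigl(\frac{3}{8}-\frac{k}{4}\Bigr)\|\Gg'\w_{k}\|_{L^{2}}^{2}.
\end{equation*}

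The key step is then to combine this with the lower bound from Lemma~\ref{collision-} for the loss term, and the upper bound from Lemma~\ref{collision+} for the gain part (noting that $|\int \Q^+_\g(\Gg,\Gg')\Gg'\w_{2k}\dx|$ is dominated by $\int \Q^+_\g(\Gg,|\Gg'|)|\Gg'|\w_{2k}\dx$ since $\Gg\geq 0$). Bounding $\|\Gg'\w_{k}\|_{L^{2}}\leq \|\Gg'\w_{k+\g/2}\|_{L^{2}}$ and gathering all terms, one obtains an estimate of the form
\begin{equation*}
 \Bigl(\frac{3}{8}-\frac{k}{4}+\kappa_{\g}-C\g|\log\g|-2C\sqrt{\tilde{\delta}}\Bigr)\|\Gg'\w_{k+\g/2}\|_{L^{2}}^{2} \leq \frac{C}{\tilde{\delta}^{5/2}}.
\end{equation*}
Since $\kappa_\g \to 1$ as $\g \to 0$ (Lemma~\ref{lem:Sigmag}), for $k<3$ and $\g$ small enough the coefficient $\frac{3}{8}-\frac{k}{4}+\kappa_{\g}-C\g|\log\g|$ is bounded below by a positive constant (say $\frac{11}{8}-\frac{k}{4}-o(1)>0$), and one then chooses $\tilde{\delta}$ small and independent of $\g$ so that $2C\sqrt{\tilde{\delta}}$ is absorbed. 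This yields the uniform bound $\|\Gg'\w_{k}\|_{L^{2}}\leq C$, which combined with Corollary~\ref{L2-weighted} gives the $H^{1}(\w_{k})$ bound claimed.

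The $W^{1,1}$ bound then follows by Cauchy--Schwarz: $\|\Gg'\|_{L^{1}}\leq \|\Gg'\w_{k}\|_{L^{2}}\|\w_{-k}\|_{L^{2}}$, with $\|\w_{-k}\|_{L^{2}}$ finite as soon as $k>\tfrac{1}{2}$ (which is compatible with the constraint $k<3-\tfrac{5\g}{2}-\delta$ for $\g,\delta$ small). For the Fourier bound, the identity $i\xi\hat{\Gg}(\xi)=\widehat{\partial_{x}\Gg}(\xi)$ combined with $|\widehat{\partial_{x}\Gg}(\xi)|\leq\|\Gg'\|_{L^{1}}$ and the trivial bound $|\hat{\Gg}(\xi)|\leq\|\Gg\|_{L^{1}}=1$ gives $(1+|\xi|)|\hat{\Gg}(\xi)|\leq 1+\|\Gg'\|_{L^{1}}\leq C$, as required.

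The main obstacle is verifying the positivity of the coercivity coefficient uniformly in $\g$ and $k$; this relies crucially on having $\kappa_{\g}\to 1$, which compensates the lack of coercivity of the pure drift term when $k>\tfrac{3}{2}$. A secondary difficulty, deferred to Appendix~\ref{sec:justification:sobolev}, is the rigorous justification of the formal use of $\Gg'$ as a test function (which requires a regularisation/truncation procedure to ensure $\Gg\in H^{1}$ a priori so that the various integrations by parts are legitimate, and to control the boundary terms when taking the truncation parameter to infinity).
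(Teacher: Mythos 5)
Your argument for the $H^{1}(\w_{k})$ bound is the same as the paper's: test \eqref{gradient-equation} against $\Gg'\w_{2k}$, integrate by parts in the drift term to produce $\frac38\|\Gg'\w_k\|_{L^2}^2-\frac k4\int|x|\w_{2k-1}(\Gg')^2$, and absorb the gain/loss contributions via Lemmas \ref{collision+} and \ref{collision-}, using $\kappa_\g\to1$ to make the coercivity constant (the paper gets $\frac98-\frac k4-C\g|\log\g|$, you get the slightly different but equally positive $\frac{11}{8}-\frac k4-o(1)$; the discrepancy only reflects whether one keeps $2C\sqrt{\tilde\delta}$ inside or outside the limit) strictly positive for all $k<3$ and $\g$ small. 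The rigorous justification via regularisation is indeed deferred to Appendix~\ref{sec:justification:sobolev} in the paper as well, exactly as you indicate.

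Where you genuinely diverge is the $W^{1,1}$ bound. The paper multiplies \eqref{gradient-equation} by $\mathrm{sign}(\Gg')$, bounds the collision terms by $3\int\Sigma_\g|\Gg'|$ via the weak form, uses Jensen's inequality to get $\Sigma_\g(y)\le|y|^\g$, and only then applies Cauchy--Schwarz against $\|\Gg'\|_{L^2(\w_1)}$ (which forces the restriction $\g<\tfrac13$ for the weight integral to be uniformly finite). You instead deduce $\|\Gg'\|_{L^1}\le\|\Gg'\w_k\|_{L^2}\|\w_{-k}\|_{L^2}$ directly from the already-established weighted $L^2$ bound with any fixed $k>\tfrac12$ in the admissible range. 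Your route is shorter and avoids the extra manipulation of the collision operator entirely; it is correct and delivers the same uniform constant. The Fourier-decay consequence is handled identically in both arguments. No gaps.
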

\begin{proof} Let us fix  $\g \in(0,\g_\star)$ and $0\leq k<3-\frac{5\gamma}{2}-\delta$, where $\g_\star\in(0,1)$ is given by Corollary \ref{L2-weighted}.
Multiply equation \eqref{gradient-equation} by $\Gg'\w_{2k}$ and integrate to obtain 
\begin{equation*}
 {\frac38\|\Gg'\w_{k}\|^2_{L^2}-\frac k4 \int_\R|x| \w_{2k-1}(x)(\Gg'(x))^2\dx}
  =\int_{\R}\big( \Q_{\g}(\Gg,\Gg') + \Q_{\g}(\Gg',\Gg) \big) \Gg' w_{2k}\dx\,,
\end{equation*}
where we used integration by parts and the fact  that $x\partial_x\w_{2k}(x)=2k|x|\w_{2k-1}(x)$ to show that
$$\int_{\R}\partial_x\left(x\Gg'(x)\right) {\Gg'(x)}\w_{2k}(x)\dx= {\frac{1}{2}\int_{\R}\left[\Gg'(x)\right]^{2}\w_{2k}(x)\dx -k\int_{\R}\left[\Gg'(x)\right]^{2}|x|\w_{2k-1}(x)\dx}.$$
Consequently, using Lemmata \ref{collision+} and \ref{collision-}, there is $C >0$ such that, for any $\tilde{\delta} >0$,  it holds that
\begin{equation*}
\left( \frac38-\frac k4 - C {\g |\log\g| }\right) \big\| \Gg'\w_{k} \|^{2}_{L^2} \leq -\big(\kappa_\gamma - 2C\sqrt{\tilde{\delta}}\big)\| \Gg'\w_{k+\frac{\g}{2}}\|^2_{L^2} + \frac{2C}{\tilde{\delta}^{\frac{5}{2}}}\,.
\end{equation*}
Recalling that $\lim_{\g\to0}\kappa_{\g}=1$, we can fix $\g_{\star}$ small enough (up to reducing our previous $\g_{\star}$) and $\tilde{\delta}>0$ such that $\kappa_\gamma - 2C\sqrt{\tilde{\delta}}>\frac{3}{4}$ for any $\g\in (0,\g_{\star})$ and conclude that
  $$\left( \frac98-\frac k4 - C\g |\log\g| \right) \big\| \Gg'\w_{k} \|^{2}_{L^2} \leq \frac{2C}{\tilde{\delta}^{\frac{5}{2}}}\,. $$
Finally, since $\frac98-\frac k4>\frac98-\frac34>0$, up to taking $\g_{\star}$ still smaller, one has $\frac98-\frac k4 - C \g |\log\g|>\frac98-\frac 34 - C \g |\log\g|>0$ for any $\g\in (0,\g_{\star})$ and thus 
$$\sup_{\g\in [0,\g_{\star})}\|{\Gg'}\|_{L^{2}(\w_k)}\leq \bar{C}\,, $$
  for some constant $\bar{C}$ independent of $k$.
  
For the $L^{1}$ estimate on the gradient, return to equation \eqref{gradient-equation}, multiply it by $\mathrm{sign}(\Gg')$ and integrate to obtain that
\begin{equation*}
\frac{1}{4}\| \Gg'\|_{L^1} = \int_{\R}\big( \Q_{\g}(\Gg,\Gg') + \Q_{\g}(\Gg',\Gg) \big) \mathrm{sign}(\Gg')\dx\,.
\end{equation*}
To estimate the right-hand side, one simply notices from the weak-form  \eqref{eq:weakgamma} that
$$\int_{\R}\big( \Q_{\g}(\Gg,\Gg') + \Q_{\g}(\Gg',\Gg) \big) \mathrm{sign}(\Gg')\dx 
\leq 3\int_{\R}\Sigma_{\g}(y)\,|\Gg'(y)|\dy.$$
According to Jensen's inequality
\begin{equation}\label{eq:frequency}
\Sigma_{\g}(y)=\int_{\R}|x-y|^{\g}\Gg(x)\dx \leq \left|y-\int_{\R}x\Gg(x)\dx\right|^{\g}\leq |y|^{\g}, \qquad \forall y \in \R, 
\end{equation}
from which
\begin{equation*}\begin{split}
\int_{\R}\big( \Q_{\g}(\Gg,\Gg') + \Q_{\g}(\Gg',\Gg) \big) \mathrm{sign}(\Gg')\dx &\leq 3\int_{\R}|y|^{\g}|\Gg'(y)|\dy\\
&\leq 3\| \Gg' \|_{L^{2}(\w_1)}\left(\int_{\R}\frac{|y|^{2\g}}{\left(1+|y|\right)^{2}}\dy\right)^{\frac{1}{2}}\,.
\end{split}\end{equation*}
The last integral is finite for  {$\g \in [0,\frac{1}{2})$} and can be estimated uniformly with respect to $\g$ for, say,  {$\g \in [0,\frac{1}{3})$}. Then, from the first part of the proof, since $\sup_{\g \in (0,\g_{\star})}\|\Gg'\|_{L^{2}(\w_{1})} < \infty$, we deduce that
$$\frac{1}{4}\|\Gg'\|_{L^1} \leq C_0, \qquad \forall \g \in (0,\g_{\star})$$
which proves the $W^{1,1}$ estimate.\end{proof}

Since, according to Theorem \ref{theo:gradient}, the family
$\{\Gg\}_{\g\in (0,\g_{\star})}$ is bounded in $H^1(\R)$, we get immediately the following corollary.

\begin{cor}\label{cor:hoelder}
 Under the assumption of Theorem~\ref{theo:gradient} there exists some positive constant $C >0$ such that
\begin{equation}\label{eq:Holder}
\sup_{\g \in (0,\g_{\star})}\|\Gg\|_{L^\infty} \leq C\, \qquad \quad \left|\Gg(x)-\Gg(y)\right| \leq C\,|x-y|^{\frac{1}{2}}, \qquad \forall x,y \in \R.\end{equation}
\end{cor}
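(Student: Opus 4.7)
The plan is to invoke the classical Sobolev embedding in one dimension, namely $H^1(\R)\hookrightarrow \mathcal{C}^{0,1/2}(\R)\cap L^\infty(\R)$, applied with constants independent of $\g$. Both bounds follow from the uniform estimate $\sup_{\g\in(0,\g_\star)}\|\Gg\|_{H^1}\leq C$ provided by Theorem~\ref{theo:gradient} (with $k=0$).

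For the H\"{o}lder bound, I would simply write, for any $x,y\in\R$ with $x\ne y$,
\begin{equation*}
\Gg(x)-\Gg(y)=\int_y^x \Gg'(t)\,\d t,
\end{equation*}
and apply the Cauchy--Schwarz inequality to deduce
\begin{equation*}
|\Gg(x)-\Gg(y)|\leq |x-y|^{\frac12}\,\|\Gg'\|_{L^2}.
\end{equation*}
Since $\sup_{\g\in(0,\g_\star)}\|\Gg'\|_{L^2}\leq C$ by Theorem~\ref{theo:gradient} (taking $k=0$), this yields the H\"{o}lder estimate with a constant uniform in $\g$.

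For the $L^\infty$ bound, the most direct route is to exploit the identity
\begin{equation*}
\Gg(x)^2=2\int_{-\infty}^{x}\Gg(t)\Gg'(t)\,\d t,
\end{equation*}
valid because $\Gg\in H^1(\R)$ vanishes at infinity. Cauchy--Schwarz then gives
\begin{equation*}
\|\Gg\|_{L^\infty}^2\leq 2\|\Gg\|_{L^2}\|\Gg'\|_{L^2}\leq \|\Gg\|_{H^1}^2,
\end{equation*}
so $\|\Gg\|_{L^\infty}\leq\|\Gg\|_{H^1}$, and again Theorem~\ref{theo:gradient} provides the uniform control. Alternatively, one can note that $\Gg\in W^{1,1}(\R)\cap L^1(\R)$, so $\Gg$ admits a continuous representative which vanishes at infinity, and then combine the $W^{1,1}$ bound with the H\"{o}lder bound above.

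There is no substantive obstacle here: the statement is essentially the Morrey/Sobolev embedding in dimension one, and the only point to check is that the embedding constant is absolute and the bounds obtained in Theorem~\ref{theo:gradient} are uniform in $\g\in(0,\g_\star)$. This is indeed the case, so the corollary follows.
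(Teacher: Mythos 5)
Your proposal is correct and is exactly the route the paper takes: the paper simply notes that Theorem~\ref{theo:gradient} gives a uniform bound on $\|\Gg\|_{H^1(\R)}$ and invokes the one-dimensional Sobolev (Morrey) embedding $H^1(\R)\hookrightarrow L^\infty(\R)\cap \mathcal{C}^{0,1/2}(\R)$, whose standard proof is the two Cauchy--Schwarz computations you wrote out. Nothing further is needed.
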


One has the following estimate for differences of two equilibrium solutions.
\begin{lem}\phantomsection\label{lem:Diffmom} Let $\delta>0$ and $\g_\star\in(0,1)$ given by Corollary \ref{L2-weighted}. Let $\g \in (0,\g_{\star})$ and $\Gg^{1},\Gg^{2}\in \mathscr{E}_{\g}$ be given. For any $2 < k<3-\gamma-\delta$, there exists $\g_{\star}(k) >0$ and  $C_{k} >0$ such that
$$\|\Gg^{1}-\Gg^{2}\|_{L^{1}(\w_{k+\g})} \leq C_{k}\|\Gg^{1}-\Gg^{2}\|_{L^{1}(\w_{\g+\frac{2k}{3}})} \qquad \forall \g \in (0,\g_{\star}(k)).$$
\end{lem}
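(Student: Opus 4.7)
The approach adapts the Povzner-type moment propagation argument of Lemma~\ref{lem:momentsk} to the difference $g := \Gg^{1}-\Gg^{2}$. By bilinearity and symmetry of $\Q_\g$, the difference satisfies the linearized equation
\[
\frac{1}{4}\partial_{x}(xg) = \Q_\g(g, h), \qquad h := \Gg^{1} + \Gg^{2},
\]
where the background $h$ inherits the uniform estimates of Theorem~\ref{theo:Unique}, in particular $M_\beta(h) \le 2 C_{0}$ for every $\beta \in (0, 3-\delta)$. The plan is to test this equation against $\mathrm{sgn}(g)|x|^{k}$, using the identity $\mathrm{sgn}(g)\partial_x(xg) = \partial_x(x|g|)$ so that after integration by parts the left-hand side produces $-\tfrac{k}{4} M_{k}(|g|)$. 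The non-smooth sign function is handled rigorously via smooth cutoff approximations, as in the proofs of Lemmata~\ref{lem:energy} and \ref{lem:momentsk}, exploiting the $W^{1,1}$-regularity of $\Gg^i$ provided by Theorem~\ref{theo:gradient}.

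Expanding the right-hand side via the symmetric weak form of $\Q_\g(g,h)$ and applying the Povzner inequality \eqref{ineqxplusy} (valid since $k<3$) together with $|u-v|^\g \le 2^\g(|u|^\g + |v|^\g)$, the gain contribution $\int \Q^+_\g(|g|,h)|x|^k\dx$ decomposes into a sum of products $M_\alpha(|g|)\,M_\beta(h)$ with $\alpha+\beta = k+\g$. Using the uniform bounds on $M_\beta(h)$ for $\beta<3-\delta$, these products reduce to multiples of $M_\alpha(|g|)$, the largest exponents being $k+\g$, $k$, $2k/3+\g$ and $2k/3$. For the loss, Lemma~\ref{lem:Sigmag} applied to $h$ yields $\Sigma_{\g,h}(y) \ge 2\kappa_\g \w_\g(y) - $ error, generating a coercive contribution $\kappa_\g(1 - 2^{1-k})M_{k+\g}(|g|)$. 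The cross term produced by the $\mathrm{sgn}(g(v))$ factor in $\Delta\psi$ is controlled through $|\Sigma_{\g,g}(y)| \le \Sigma_{\g,|g|}(y) \le C(1+|y|^\g)(\|g\|_{L^1} + M_\g(|g|))$ combined with the uniform moment bounds on $h$, giving a bound by $C_k \|g\|_{L^{1}(\w_{2k/3+\g})}$.

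Assembling these estimates yields a Povzner-type inequality analogous to that of Lemma~\ref{lem:momentsk}:
\[
\kappa_\g(1-2^{1-k})\,M_{k+\g}(|g|) \le \tfrac{k}{4}\,M_{k}(|g|) + C_k\bigl[M_{2k/3+\g}(|g|) + \text{lower order moments of } |g|\bigr] + C_k\,\|g\|_{L^{1}(\w_{2k/3+\g})}.
\]
The crucial arithmetic fact, already central to Lemma~\ref{lem:momentsk}, is that $1-2^{1-k} > k/4$ for every $k\in(2,3)$. Interpolating $M_k(|g|)$ and $M_{2k/3+\g}(|g|)$ between $\|g\|_{L^1}$ and $M_{k+\g}(|g|)$ via Young's inequality, and taking $\g_\star(k)$ and the Young parameter small enough that the combined coefficient of $M_{k+\g}(|g|)$ on the right stays strictly below $\kappa_\g(1-2^{1-k})$, one absorbs these terms into the left side to deduce $M_{k+\g}(|g|) \le C_k \|g\|_{L^{1}(\w_{2k/3+\g})}$. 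The stated bound then follows from $\|g\|_{L^{1}(\w_{k+\g})} \simeq M_{k+\g}(|g|) + \|g\|_{L^{1}}$.

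The main technical obstacle is the absorption bookkeeping: one must simultaneously pick $\g_\star(k)$ small and the interpolation parameters small so that the coefficient of $M_{k+\g}(|g|)$ on the right remains strictly below the coercivity constant, which becomes delicate as $k$ approaches $3-\g-\delta$. A secondary difficulty is the rigorous justification of the sign-function test, relying on smooth approximation together with the regularity and moment bounds from Theorems~\ref{theo:Unique} and \ref{theo:gradient}.
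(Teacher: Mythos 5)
Your proposal follows essentially the same route as the paper's proof: write the difference equation as $\tfrac14\partial_x(xg_\g)=\Q_\g(g_\g,\Gg^2)+\Q_\g(\Gg^1,g_\g)$, test against $\mathrm{sgn}(g_\g)|x|^k$, apply the Povzner inequality \eqref{ineqxplusy} together with $|x-y|^\g\le|x|^\g+|y|^\g$ and the lower bound of Lemma \ref{lem:Sigmag} for the collision frequency of $\bm{S}_\g=\tfrac12(\Gg^1+\Gg^2)$, and absorb using $1-2^{1-k}-\tfrac k4>0$ for $k\in(2,3)$. The only (harmless) deviation is your extra Young interpolation of the intermediate moments $M_k(|g_\g|)$ and $M_{2k/3+\g}(|g_\g|)$ against $M_{k+\g}(|g_\g|)$, where the paper simply majorises all such moments by $\|g_\g\|_{L^1(\w_{\g+2k/3})}$ and folds $M_k(|g_\g|)$ into the coercive term.
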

\begin{proof} The proof follows the argument of the proof of Lemma \ref{lem:momentsk}. Let $\g_\star\in(0,1)$ given in Theorem \ref{theo:Unique} and $\g\in(0,\g_\star)$. We introduce $g_{\g}=\Gg^{2}-\Gg^{1}$ and observes that
\begin{equation}\label{eq:diffe}
\frac{1}{4}\partial_{x}\left(xg_\g(x)\right)=\Q_{\g}(g_\g,\Gg^{2})+\Q_{\g}(\Gg^{1},g_{\g}).\end{equation}
We multiply then 
\eqref{eq:diffe} by $\mathrm{sign}(g_{\g})|x|^{k}$ and integrate over $\R$ to deduce
\begin{equation*}\begin{split}
-\frac{k}{4}M_{k}\left(|g_{\g}|\right)&=\int_{\R}\left[\Q_{\g}(g_{\g},\Gg^{2})+\Q_{\g}(\Gg^{1},g_{\g})\right]\mathrm{sign}(g_{\g}(x))|x|^{k}\dx\\
&=\int_{\R^{2}}g_{\g}(x)\bm{S}_{\g}(y)|x-y|^{\g}
\left[{2}\,\mathrm{sign}\left(g_{\g}\left(\frac{x+y}{2}\right)\right)\left|\frac{x+y}{2}\right|^{k}\right.\\
&\phantom{+++++}\left.-\mathrm{sign}(g_{\g}(x))|x|^{k}-\mathrm{sign}(g_{\g}(y))|y|^{k}\right]\dx\dy\\
&\leq -\int_{\R}\sigma_{\g}(x)|g_{\g}(x)|\,|x|^{k}\dx + \int_{\R^{2}}|g_{\g}(x)|\,\bm{S}_{\g}(y)|x-y|^{\g}|y|^{k}\dy\dx\\
&\phantom{+++}+ {2}\int_{\R^{2}}|g_{\g}(x)|\,\bm{S}_{\g}(y)|x-y|^{\g}\left|\frac{x+y}{2}\right|^{k}\dx\dy
\end{split}\end{equation*}
where
$$\bm{S}_{\g}=\frac{1}{2}\left(\Gg^{2}+\Gg^{1}\right), \qquad \sigma_{\g}(x)=\int_{\R}\bm{S}_{\g}(y)|x-y|^{\g}\dy, \qquad x \in \R.$$
Arguing exactly as in Lemma \ref{lem:momentsk}, one deduces {without} difficulty that
\begin{multline*}
-\frac{k}{4}M_{k}\left(|g_{\g}|\right) \leq  {-(1-2^{1-k})}\int_{\R}\sigma_{\g}(x)|g_{\g}(x)|\,|x|^{k}\dx \\
+\left(1+ {2^{1-k}}\right)\,\left[M_{\g}(|g_{\g}|)M_{k}(\bm{S}_{\g})+M_{0}(|g_{\g}|)M_{k+\g}(\bm{S}_{\g})\right]\\
+ {6}\left[M_{\g+\frac{2k}{3}}(|g_{\g}|)M_{\frac{k}{3}}(\bm{S}_{\g})+M_{\frac{2k}{3}}(|g_{\g}|)M_{\frac{k}{3}+\g}(\bm{S}_{\g})\right.\\
\phantom{++++}\left.+M_{\frac{k}{3}+\g}(|g_{\g}|)M_{\frac{2k}{3}}(\bm{S}_{\g}) + M_{\frac{k}{3}}(|g_{\g}|)M_{\frac{2k}{3}+\g}(\bm{S}_{\g})\right].
\end{multline*}
One deduces from Theorem \ref{theo:Unique} that there exists $C>0$ independent of $k$ such that  $\|\bm{S}_{\g}\|_{L^{1}(\w_{k+\g})} \le C$ for any $\g\in (0,\g_{\star})$ and $k+\gamma\in(0,3-\delta)$. Using this bound and estimating every moment of $|g_{\g}|$ by $\|g_{\g}\|_{L^{1}(\w_{\frac{2k}{3}+\g})}$, yields
$$-\frac{k}{4}M_{k}\left(|g_{\g}|\right)
 + {(1-2^{1-k})}\int_{\R}\sigma_{\g}(x)|g_{\g}(x)|\,|x|^{k}\dx  \leq C'\|g_{\g}\|_{L^{1}(\w_{\g+\frac{2k}{3}})}$$
 for some suitable $C' >0$ depending neither on $k$ nor on $\g$. Of course, one checks easily that $\sigma_{\g}$ satisfies a bound as in Lemma \ref{lem:Sigmag}, i.e.
\begin{equation*}\sigma_{\g}(y) \geq \overline{\kappa}_{\g}\,\w_{\g}(y) - (1-\tilde{\delta}^{\g})-\sqrt{2\tilde{\delta}}\|\bm{S}_{\g}\|_{L^2}, \qquad \qquad \forall \tilde{\delta} \in (0,1)\end{equation*}
for some explicit $\overline{\kappa}_{\g}$ with 
$\lim_{\g\to0}\overline{\kappa}_{\g}=1.$ Of course, according to Theorem \ref{theo:Unique}, $\sup_{\g\in (0,\g_{\star})}\|\bm{S}_{\g}\|_{L^{2}} \leq C.$ We can then, as in Lemma \ref{lem:momentsk}, fix $\varepsilon >0$ and choose $\g$ small enough and $\tilde{\delta}$ small enough so that $1-\tilde{\delta}^{\g}+\sqrt{2\tilde{\delta}}\|\bm{S}_{\g}\|_{L^{2}}\leq \varepsilon$ and then, for a suitable choice of $\g_{\star}(k)$ such that 
$$\overline{\kappa}_{\g}\left(1-{2^{1-k}}-\frac{k}{4\overline{\kappa}_{\g}}-\frac{\varepsilon}{\overline{\kappa}_{\g}}\right) \geq \frac{\sigma_{k}}{2} \qquad \forall \g \in (0,\g_{\star}(k))$$ 
with ${\sigma_{k}:=1-2^{1-k}-\frac{k}{4} >0}.$ This gives then, as in Lemma \ref{lem:momentsk}, 
$$\|g_{\g}\|_{L^{1}(\w_{\g+k})} \leq \frac{2C'_{k}}{\sigma_{k}}\|g_{\g}\|_{L^{1}(\w_{\g+\frac{2k}{3}})} \qquad \qquad \forall \g \in (0,\g_{\star}(k))$$
which is the desired estimate with $C_{k}=\frac{2C'_{k}}{\sigma_{k}}.$
\end{proof}

\section{Stability and uniqueness}\label{sec:sec3}

We are now in position to quantify first the stability of the profile $\bm{G}_{0}$ in the limit $\g \to 0^{+}$ and deduce from this the uniqueness of the steady profile $\Gg \in \mathscr{E}_{\g}$ for $\g$ small enough. 

\subsection{Stability of the profile -- upgrading the convergence}\label{sec:upgrade}

The results of Section \ref{sec:apost} ensure the convergence (in a weak-$\star$ sense) of $\bm{G}_{\g}$ towards $\bm{G}_{0}$ as $\g \to 0.$ We upgrade here the convergence to the (strong) $L^{1}(\w_{a})$ topology and, more importantly, provide also a \emph{quantitative} estimate of $\|\Gg-\bm{G}_{0}\|_{L^{1}(\w_{a})}.$ To do so, we {will resort to a comparison between the collision operator $\Q_{\g}$ and the operator $\Q_{0}$ (corresponding to Maxwellian interactions) given in Proposition \ref{diff_Q} in  Appendix \ref{app:QgQ0}.} 

Let us denote by ${\mathcal N}_0(f)$ the self-similar operator associated to the Maxwellian case $\gamma=0$, that is
$$  {\mathcal N}_0(f) = -\frac14 \partial_x(xf)+\Q_{0}(f,f).$$
Let us denote by ${\mathcal N}_\gamma(f)$ the self-similar operator associated to the general case $\gamma>0$, that is
$$  {\mathcal N}_\gamma(f) = -\frac{1}{4} \partial_x(xf)+\Q_\gamma(f,f).$$

\begin{lem}\phantomsection\label{N0_Ggamma}
  Let $2<a<3$ and $\delta>0$ such that $a<3-\delta$. Let $\g_{\star} \in (0,1)$ be defined in Corollary \ref{L2-weighted} (notice $\g_{\star}$ depends on $\delta$ and thus on $a$). For any $\gamma \in (0,\g_{\star}),s>0$ satisfying  $s+\gamma+a<3-\delta$, there exists $C_{0} >0$ depending only on $s$  such that, for any profile $\Gg \in \mathscr{E}_{\g}$,
 $$  \|{\mathcal N}_0(\Gg)\|_{L^1(\w_{a})}\le  C_{0}\g^{\frac{s}{s+1}}\left(1+|\log\g|\right).$$
 \end{lem}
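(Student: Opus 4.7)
The starting observation is that any $\Gg\in\mathscr{E}_{\g}$ satisfies the steady equation $\mathcal{N}_{\g}(\Gg)=0$, so that
$$\mathcal{N}_{0}(\Gg)=\mathcal{N}_{0}(\Gg)-\mathcal{N}_{\g}(\Gg)=\Q_{0}(\Gg,\Gg)-\Q_{\g}(\Gg,\Gg).$$
Thus the whole problem reduces to estimating the difference $\Q_{0}(\Gg,\Gg)-\Q_{\g}(\Gg,\Gg)$ in $L^{1}(\w_{a})$, which is precisely the object addressed by Proposition~\ref{diff_Q} in Appendix~\ref{app:QgQ0}. So the plan is: first apply that Proposition, then close the estimate using the uniform bounds on $\Gg$ already established in Theorem~\ref{theo:Unique} and Corollary~\ref{L2-weighted}.

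At the mechanical level, the difference $\Q_{0}(\Gg,\Gg)-\Q_{\g}(\Gg,\Gg)$ involves the factor $|x-y|^{\g}-1$, which is naturally handled by a dichotomy in $|x-y|$. For $|x-y|\leq R$ with some $R\in(0,1]$ to be optimized, one uses the elementary bound $\bigl||x-y|^{\g}-1\bigr|\leq \g\,|\log|x-y||$, whose logarithmic singularity is integrable against $\Gg(x)\Gg(y)\w_{a}(x)$ uniformly in $\g$ thanks to the $L^{2}$ and $L^{\infty}$ bounds from Corollary~\ref{L2-weighted} and Corollary~\ref{cor:hoelder}. For $|x-y|>R$, one writes instead
$$\bigl||x-y|^{\g}-1\bigr|\leq C\,|x-y|^{s+\g}R^{-s},$$
which trades the difference against a higher moment of order $s+\g+a$ of $\Gg$; this is exactly why the condition $s+\g+a<3-\delta$ appears, as it places the required moment inside the range for which Theorem~\ref{theo:Unique} delivers a \emph{uniform} bound as $\g\to 0^{+}$. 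Putting both pieces together, Proposition~\ref{diff_Q} produces schematically
$$\|\Q_{0}(\Gg,\Gg)-\Q_{\g}(\Gg,\Gg)\|_{L^{1}(\w_{a})}\leq C(s)\bigl[\,\g\,(1+|\log R|)+R^{s+\g}\bigr],$$
and balancing the two terms at $R\sim\g^{1/(s+1)}$ yields exactly the announced rate $C_{0}\,\g^{s/(s+1)}(1+|\log\g|)$.

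The main obstacle is the gain operator, in which the weight $\w_{a}(x)$ has to be dispatched between the two arguments of $\Q_{\g}^{+}(\Gg,\Gg)(x)-\Q_{0}^{+}(\Gg,\Gg)(x)$. The standard device is the sub-multiplicative bound $\w_{a}\bigl(\tfrac{x+y}{2}\bigr)\leq C\,\w_{a}(x)\w_{a}(y)$ combined with the concavity estimate $|x-y|^{\g}\leq 2\,\w_{\g}(x)\w_{\g}(y)$ already used in Lemma~\ref{collision+}; one then recognises the resulting bilinear form as a $\Q_{0}^{+}$-type integral acting on weighted versions of $\Gg$, which can be estimated via Lemma~\ref{lem:Q+0} and the uniform moment and $L^{2}$ bounds of Theorem~\ref{theo:Unique} and Corollary~\ref{L2-weighted}. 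The loss part is significantly easier since the factor $\Gg(x)\w_{a}(x)$ factors out and only a one-dimensional integral in $y$ remains to be estimated by the same dichotomy. In both cases the restriction $s+\g+a<3-\delta$ is what guarantees that every weighted norm of $\Gg$ encountered stays bounded uniformly in $\g\in(0,\g_{\star})$, which is the crucial point allowing one to pass from a pointwise-in-$\g$ estimate to the quantitative rate stated in the lemma.
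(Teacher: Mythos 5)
Your proposal is correct and follows exactly the paper's argument: since $\mathcal{N}_{\g}(\Gg)=0$, one writes $\mathcal{N}_{0}(\Gg)=\Q_{0}(\Gg,\Gg)-\Q_{\g}(\Gg,\Gg)$, invokes Proposition~\ref{diff_Q}, and closes with the uniform bounds on $\|\Gg\|_{L^1(\w_{a+s+\g})}$ and $\|\Gg\|_{L^2(\w_{a})}$ from Theorem~\ref{theo:Unique} and Corollary~\ref{L2-weighted}. Your sketch of the internals of Proposition~\ref{diff_Q} differs in inessential details (the paper uses a three-region split with a separate small region $|x-y|\le\delta$ handled by H\"older against the $L^p$ norm, rather than the logarithmic bound near the diagonal), but since that proposition is cited as a black box this does not affect the proof.
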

\begin{proof}
  Since ${\mathcal N}_\gamma(\Gg)=0$, one has
  \begin{align*}
    \|{\mathcal N}_0(\Gg)\|_{L^1(\w_{a})} & = \|{\mathcal N}_0(\Gg)-{\mathcal N}_\gamma(\Gg)\|_{L^1(\w_{a})} \\
    & \le   \|\Q_{0}(\Gg,\Gg)-\Q_\gamma(\Gg,\Gg)\|_{L^1(\w_{a})}
\end{align*}
Noticing that, according to \eqref{eq:estimGg} and \eqref{eq:L2-weighted}, there exists $C>0$ such that, for any $\gamma \in (0,\g_{\star}),s>0$ satisfying  $s+\gamma+a<3-\delta$,
  $$\max\left(\|\Gg\|_{L^1(\w_{a+s+\g})},\|\Gg\|_{L^1(\w_{a})}\right) \le C,\qquad \text{ and } \quad \|\Gg\|_{L^2(\w_{a})}\le C,$$
the result then follows from  Proposition \ref{diff_Q}.\end{proof}

We introduce here the following steady state of $\mathcal{N}_{0}$ with the same mass, momentum and energy of $\Gg$, namely
$${\bm{h}_{\g}}(x)=H_{\lambda_{\g}}(x)=\lambda_{\g} {\bm{H}}(\lambda_{\g}x), \qquad \qquad \lambda_{\g}=\frac{1}{\sqrt{M_{2}(\Gg)}}, \qquad \g \in (0,1).$$
Since
$$\lim_{\g \to0^{+}}M_{2}(\Gg)=\int_{\R}\bm{G}_0(x)|x|^{2}\dx$$
we have
$$\lim_{\g \to 0^{+}}\lambda_{\g}=\lambda_{0}$$
and, noticing that
$$\left|\bm{h}_{\g}(x)-\bm{G}_{0}(x)\right| \leq C\left|\lambda_{\g}-\lambda_{0}\right|\bm{G}_{0}(x),$$
for some $C$ that can be made independent on $\g$, we have
\begin{equation}\label{stab} \|\bm{h}_{\g}-\bm{G}_{0}\|_{L^{1}(\w_{a})} \leq C_{a}\left|\lambda_{\g}-\lambda_{0}\right| \qquad \forall a \in (0,3).
\end{equation}
To compare then $\Gg$ to $\bm{G}_{0}$, it is enough to compare $\Gg$ to $\bm{h}_{\g}.$ This is the object of the following
\begin{prp}\phantomsection\label{lem:stabil}
Let $2<a<3$. There exist $\g_\star\in(0,1)$  and  an \emph{explicit} function $\eta=\eta(\gamma)$ depending on $a$, with $\lim_{\g\to0^+}\eta(\gamma)= 0$, such that, for any $\g\in(0,\g_\star)$, any $\Gg\in\mathscr{E}_{\g}$, $$ \|\Gg-\bm{h}_{\g}\|_{L^1(\w_{a})} \le \eta( \gamma).$$
 \end{prp}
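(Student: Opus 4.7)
The plan is to exploit the fact that, by Lemma~\ref{N0_Ggamma}, the stationary profile $\Gg$ is an \emph{approximate} equilibrium of the Maxwell self-similar operator $\mathcal{N}_0$, and to combine this with the exponential convergence to equilibrium for the evolution driven by $\mathcal{N}_0$ which is established in Section~\ref{sec:exp}. Since $\bm{h}_{\g}$ is, by construction, the unique Maxwell steady state having the same mass, momentum and energy as $\Gg$, the two results together must force $\Gg$ and $\bm{h}_\gamma$ to be close.

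\textbf{Step 1 (Introduce a Maxwell flow).} Let $g(t,x)$ be the solution of the Maxwell self-similar equation
\[
 \partial_t g + \tfrac14 \partial_x(x g) = \Q_0(g,g), \qquad g(0,\cdot)=\Gg.
\]
Because the Maxwell evolution preserves mass, momentum and energy, $g(t)$ shares these three moments with $\Gg$, and hence with $\bm{h}_\gamma$. The uniform estimates on $\Gg$ from Section~\ref{sec:apost} (moments, $L^2$, $H^1$) together with propagation of moments/Sobolev norms along \eqref{introMax} provide bounds on $g(t)$ that are uniform in both $t\ge 0$ and $\g\in(0,\g_\star)$.

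\textbf{Step 2 (Exponential decay toward $\bm{h}_\g$).} Using the convergence to equilibrium in Fourier norm $\vertiii{\cdot}_k$ from Theorem~\ref{k-norm-cvgce} and its extension to the tractable Sobolev / $L^1(\w_a)$ setting developed in Section~\ref{sec:exp}, I would obtain
\[
 \|g(t)-\bm{h}_\g\|_{L^1(\w_a)} \le C \, e^{-\mu t}, \qquad t\ge 0,
\]
with $C,\mu>0$ independent of $\g\in(0,\g_\star)$ (thanks to the uniform bounds from Step~1, which control the initial discrepancy $g(0)-\bm{h}_\g=\Gg-\bm{h}_\g$ and the moments of $g$).

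\textbf{Step 3 (Approximate stationarity controls $g(t)-\Gg$).} Writing
\[
 g(t)-\Gg = \int_0^t \mathcal{N}_0(g(s))\, \d s = \int_0^t \bigl(\mathcal{N}_0(g(s))-\mathcal{N}_0(\Gg)\bigr)\,\d s + t\,\mathcal{N}_0(\Gg),
\]
I use Lemma~\ref{N0_Ggamma} to bound $\|\mathcal{N}_0(\Gg)\|_{L^1(\w_a)} \le C_0\, \eta_0(\g)$ with $\eta_0(\g):=\gamma^{s/(s+1)}(1+|\log\g|)$, together with the local Lipschitz property of $\mathcal{N}_0$ on $L^1(\w_a)$ (which follows from the bilinear estimates for $\Q_0$ and the $L^1(\w_a)$-boundedness of the drift, using the uniform moment/regularity controls on $g(s)$ and $\Gg$). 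A Gronwall argument then yields
\[
 \|g(t)-\Gg\|_{L^1(\w_a)} \le C\, \eta_0(\g)\, \frac{e^{L t}-1}{L},
\]
for some $L>0$ independent of $\g$.

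\textbf{Step 4 (Triangle inequality and optimisation in $t$).} Combining Steps~2 and~3,
\[
 \|\Gg-\bm{h}_\g\|_{L^1(\w_a)} \le C\, \eta_0(\g)\, e^{Lt} + C\, e^{-\mu t}, \qquad t\ge 0.
\]
Optimising by choosing $t=\frac{1}{\mu+L}\log(1/\eta_0(\g))$ gives an explicit
\[
 \eta(\g)= C\, \eta_0(\g)^{\mu/(\mu+L)},
\]
which tends to $0$ as $\g\to 0^+$, establishing the claim.

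\textbf{Main obstacle.} The delicate point is Step~2: transferring the exponential convergence from the Fourier norm $\vertiii{\cdot}_k$ (in which it is naturally obtained for Maxwell molecules) to the Sobolev-type space $L^1(\w_a)$, with a rate $\mu$ and prefactor $C$ that are \emph{uniform} in $\g$. This requires the shrinkage/enlargement machinery alluded to in Section~\ref{Sec:strategy}, combined with the uniform moment, $L^2$ and $H^1$ bounds on $\Gg$ from Section~\ref{sec:apost} (to ensure good initial data for the Maxwell semigroup), and with the regularising effects of \eqref{introMax} to be analysed in Section~\ref{sec:exp}. Step~3 is technically lighter but still requires that the constants in the bilinear $L^1(\w_a)$ estimates for $\Q_0$ depend only on norms that are propagated uniformly by the Maxwell flow.
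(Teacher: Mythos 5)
Your proposal follows essentially the same route as the paper's proof: run the Maxwell flow $g(t)$ from the initial datum $\Gg$, combine the exponential relaxation of $g(t)$ to $\bm{h}_\g$ (upgraded from the Fourier norm of Theorem~\ref{k-norm-cvgce} to $L^1(\w_a)$ via the interpolation Lemmas~\ref{L1-L2} and~\ref{L2-knorm} and the uniform moment/Sobolev bounds on $\Gg$ and $g(t)$) with a Gronwall estimate on $\|g(t)-\Gg\|_{L^1(\w_a)}$ driven by $\|\mathcal{N}_0(\Gg)\|_{L^1(\w_a)}\le C\g^{\frac{s}{s+1}}(1+|\log\g|)$, and then optimise over $t$. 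One caveat in your Step~3: $\mathcal{N}_0$ is \emph{not} locally Lipschitz on $L^1(\w_a)$, since the drift $f\mapsto\partial_x(xf)$ is unbounded there, so the mild formulation $g(t)-\Gg=\int_0^t\mathcal{N}_0(g(s))\,\d s$ with a Lipschitz bound cannot be run as written. The paper instead derives the differential inequality by multiplying the equation for $\Gg-g$ by $\sgn(\Gg-g)\,\w_a$ and integrating by parts, so that the transport term contributes only $\frac{a}{4}\int_\R|x|\,\w_{a-1}(x)\,|\Gg(x)-g(t,x)|\,\dx\le\frac{a}{4}\|\Gg-g\|_{L^1(\w_a)}$; this routine fix preserves your Gronwall conclusion and the rest of your argument.
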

 
\begin{proof} Let us denote by $g(t,x)$ the solution to \eqref{eq:IB-selfsim} with initial condition $\Gg$. Then, for every $t\ge 0$, 
\begin{equation}\label{diff_G}
  \|\Gg-\bm{h}_{\g}\|_{L^1(\w_{a})} \le \|\Gg-g(t)\|_{L^1(\w_{a})}+\|g(t)-\bm{h}_{\g}\|_{L^1(\w_{a})}.
  \end{equation}
In order to obtain a bound for $\|g(t)-\bm{h}_{\g}\|_{L^1(\w_{a})}$, we shall use the convergence of $g(t)$ towards $\bm{h}_{\g}$ as $t\to \infty$ given in Fourier norm by Theorem \ref{k-norm-cvgce} (see also Remark \ref{rem:scal}). Choosing 
$$a_{*} > a, \qquad 0 < \alpha < \frac{2(a_*-a)}{2a_*+1}\,,$$
it follows from Lemmas \ref{L1-L2} and  \ref{L2-knorm} that, for any $\beta >0$ and $0 < r <1$, 
\begin{align}\phantomsection
\|g(t)-\bm{h}_{\g}\|_{L^1(\w_{a})} &\le  C \|g(t)-\bm{h}_{\g}\|^\alpha_{L^2} \left(\|g(t)\|^{1-\alpha}_{L^1(\w_{a_*})} + \|\bm{h}_{\g}\|^{1-\alpha}_{L^1(\w_{a_*})} \right) \nonumber\\
& \le  C_{r,\beta,\alpha} \vertiii{\widehat{g}(t)-\widehat{\bm{h}_{\g}}}_{{a}}^{\alpha(1-r)}\left(\|g(t)\|^{1-\alpha}_{L^1(\w_{a_*})} + \|\bm{h}_{\g}\|^{1-\alpha}_{L^1(\w_{a_*})} \right) \nonumber\\
& \times \left(\|g(t)\|_{H^M}^{r} +\|\bm{h}_{\g}\|_{H^M}^{r} +\|g(t)\|_{H^N}^{r}+\|\bm{h}_{\g}\|_{H^N}^{r}\right)^\alpha \label{eq:alignL2HM}
\end{align}
for some explicit constant $C_{r,\beta,\alpha}$ depending on $\alpha,r,\beta$ and where $M=a\frac{(1-r)}{r},$ $N=M+\frac{(1-r)(\beta+1)}{2r} \geq M$. {Notice that, choosing $r$ as close as desired from $1$, we can assume $N \leq 1$.}
Observing  that
$$\|\bm{h}_{\g}\|_{H^{m}}^{2}=\int_{\R}\left(1+|\xi|^{2}\right)^{m}\left(1+\lambda_{\g}^{-1}|\xi|\right)^{2}\exp\left(-\frac{2}{\lambda_{\g}}|\xi|\right)\d\xi$$
where $\lambda_{\g}$ is bounded from below for $\g$ small enough (recall that $\lim_{\g\to0^{+}}\lambda_{\g}=\lambda_{0}$), one easily checks that
$$\sup_{\g \in (0,\g_{\star})}\|\bm{h}_{\g}\|_{H^{m}} < \infty$$
for any $m \in \R^{+}$ whereas, for $a_{*} < 3$, $\sup_{\g\in (0,\g_{\star})}\|\bm{h}_{\g}\|_{L^{1}(\w_{a_{*}})} < \infty$. So that there is $C_{0} >0$ (depending on $r,\alpha,\beta$ and $a_{*}$ but not on $\g$) such that
\begin{equation}\label{eq:gtHg}
\|g(t)-\bm{h}_{\g}\|_{L^{1}(\w_{a})} \leq C_{0} \vertiii{\widehat{g}(t)-\widehat{\bm{h}_{\g}}}_{{a}}^{\alpha(1-r)}\left(1+\|g(t)\|_{L^{1}(\w_{a_{*}})}^{1-\alpha}\right)\left(1+\|g(t)\|_{H^{N}}^{ r}\right)\end{equation}
where we recall that $N \geq M$. Let $\delta>0$ such that $a_{*}<3-\delta$. Let $\g_\star$ be such that the results of Theorem \ref{theo:Unique}, Corollary \ref{L2-weighted} and Theorem \ref{theo:gradient} hold and such that $a+\g_\star<3-\delta$. Now, by virtue of Theorem \ref{theo:gradient}, the initial datum $g(0)=\Gg$ is such that (recall that $N \leq 1$),
\begin{equation*}\label{eq:GgSobolev}
\sup_{\g \in (0,\g_\star)}\|\Gg\|_{H^N} < \infty \qquad \text{ as well as } \qquad    \left|\widehat{\Gg}(\xi)\right | \leq (1+|\xi|)^{-1} \qquad \xi \in \R\end{equation*}
which, according to Theorem \ref{theo:Sobolev} implies that  {there exists  $C >0$ such that
\begin{equation*}
  \|g(t)\|_{H^{N}} \leq C, \qquad \qquad \forall t\geq0, \qquad \g \in (0,\g_{\star})\,.\end{equation*}
Let us now show that we also have a uniform bound with respect to $t$ and $\g$ for  $\|g(t)\|_{L^1(\w_{a_{*}})}$ .
First, for $2<k<3$, one has
$$  \frac{\d}{\d t}\int_{\R}|x|^k g(t,x)\dx + \left(1-\frac{k}{4}\right) \int_{\R}|x|^k g(t,x)\dx =  \int_{\R^2}g(t,x)g(t,y) \,\left|\frac{x+y}{2}\right|^k \dx \dy.$$
We then deduce from \eqref{ineqxplusy} that
\begin{multline*} \frac{\d}{\d t}\int_{\R}|x|^k g(t,x)\dx + \left(1-\frac{k}{4}-2^{1-k}\right) \int_{\R}|x|^k g(t,x)\dx \\
  \le  3 \times 2^{1-k} \int_{\R}g(t,x) |x|^{\frac{2k}3}\dx  \int_{\R} g(t,y) |y|^{\frac{k}{3}}\ \dy 
    \le 3 \times 2^{1-k} M_2(\Gg)^2,
\end{multline*}
since $k<3$. It thus follows that, for any $t\ge 0$, 
$$\int_{\R}|x|^k g(t,x)\dx \le \min\left( \int_{\R}|x|^k \Gg(x)\dx, \frac{3 \times 2^{1-k} M_2(\Gg)^2}{1-\frac{k}{4}-2^{1-k}}\right).$$
Consequently, it follows from Theorem \ref{theo:Unique} that there exists  $C >0$ such that, for $2<a<a_{*}<3-\delta$
\begin{equation*}\label{eq:Cgt}
  \|g(t)\|_{L^1(\w_{a_{*}})} \leq C, \qquad \qquad \forall t\geq0,\qquad \g \in (0,\g_{\star})\,.\end{equation*}}
We deduce then from \eqref{eq:gtHg} and  {Theorem} \ref{k-norm-cvgce}, that, for any $t\geq0$
\begin{multline}\label{L1a-conv}
\|g(t)-\bm{h}_{\g}\|_{L^1(\w_{a})} \le C_{1} e^{- \alpha\sigma(1-r)t} \vertiii{\widehat{\Gg}-\widehat{\bm{h}_{\g}}}_{a}^{ \alpha(1-r)} \\
\leq  \tilde{C}_{1} e^{- \alpha\sigma(1-r)t} \|\Gg-\bm{h}_{\g}\|_{L^1(\w_{a})}^{ \alpha(1-r)}\,,\end{multline}
 for some positive constants $C_{1},\tilde{C}_{1}$ independent of $\g \in (0,\g_{\star})$ and where we used Lemma \ref{lem:mukvertk} to bound $\vertiii{\widehat{\Gg}-\widehat{\bm{h}_{\g}}}_{a}$ by $\|\Gg-\bm{h}_{\g}\|_{L^1(\w_{a})}.$

Let us now look for a bound of $\|\Gg-g(t)\|_{L^1(\w_{a})}$. We deduce from \eqref{eq:IB-selfsim}   and \eqref{eq:steadyg} that 
$$
\partial_t(\Gg- g)  {+}\frac14 \partial_x(x(\Gg-g)) 
=\Q_\gamma(\Gg,\Gg)-\Q_0(g,g).$$
Multiplying the above equation with $\sgn(\Gg- g)\,\w_{a}$ and integrating over $\R$ we obtain 
\begin{multline*}
  \frac{\d }{\d t} \|\Gg- g\|_{L^1(\w_{a})} 
    -\frac{a}{4} \int_\R |x| \w_{a-1}(x) |\Gg(x)-g(t,x)| \d x \\
\le \|\Q_\gamma(\Gg,\Gg)- \Q_0(\Gg,\Gg)\|_{L^1(\w_{a})}+ \|\Q_{0}(\Gg,\Gg)-\Q_{0}(g,g)\|_{L^1(\w_{a})}.
\end{multline*}
Now, 
\begin{align*}
  \|\Q_{0}(\Gg,\Gg)-\Q_{0}(g,g)\|_{L^1(\w_{a})} & =
  \|\Q_{0}(\Gg-g,\Gg+g)\|_{L^1(\w_{a})} \\
  &\le 2 \|\Gg-g\|_{L^1(\w_{a})} \|\Gg+g\|_{L^1(\w_{a})},
\end{align*}
and with Proposition \ref{diff_Q} together with Theorem \ref{theo:Unique} and Corollary \ref{L2-weighted}, it implies that, for $s>0$ such that $s+\g_\star+a<3-\delta$,
$$\frac{\d }{\d t} \|\Gg- g\|_{L^1(\w_{a})}  \le C_1  \|\Gg-g\|_{L^1(\w_{a})}  {+ C_{2}\gamma^{\frac{s}{s+1}}(1+|\log\gamma|)}, \qquad \g\in(0,\g_\star),$$
with $C_1>0$  and $C_2>0$.
Finally, the Gronwall inequality would lead to
\begin{equation}\label{toto}\|\Gg- g(t)\|_{L^1(\w_{a})}  \le \frac{ {C_{2}\gamma^{\frac{s}{s+1}}(1+|\log\gamma|)}}{C_1} e^{C_1t},\qquad t\ge 0.
\end{equation}
Finally, \eqref{diff_G} together with \eqref{L1a-conv} and \eqref{toto} gives, for any $t\ge 0$,
\begin{equation}\label{eq:difGhg}
\|\Gg-\bm{h}_{\g}\|_{L^1(\w_{a})}\le \frac{{C_{2}\gamma^{\frac{s}{s+1}}(1+|\log\gamma|)}}{C_1} e^{C_1t} +  \tilde{C}_1 e^{- \alpha\sigma(1-r)t} {\|\Gg-\bm{h}_{\g}\|_{L^1(\w_{a})}^{ \alpha(1-r)}} .\end{equation}
Theorem \ref{theo:Unique} further implies
 \begin{equation*}
\|\Gg-\bm{h}_{\g}\|_{L^1(\w_{a})}\le \frac{{C_{2}\gamma^{\frac{s}{s+1}}(1+|\log\gamma|)}}{C_1} e^{C_1t} +  C_{a,\alpha,r} e^{-\alpha\sigma(1-r)t}.\end{equation*}
Choosing $t=(C_1+ \alpha\sigma(1-r))^{-1}\log\left(\frac{C_{a,\alpha,r}C_1}{C_2\gamma^{\frac{s}{s+1}}}\right)$ we get
 \begin{equation*}
\|\Gg-\bm{h}_{\g}\|_{L^1(\w_{a})}\le C_{a,\alpha,r}\left(\frac{C_2 \gamma^{\frac{s}{s+1}}}{C_{a,\alpha,r} C_1}\right)^{\frac{\alpha \sigma (1-r)}{ {C_1}+\alpha \sigma(1-r)}}(2+|\log\gamma|):=\eta(\gamma)\end{equation*}
which proves the result.
\end{proof}  
 
We have now everything in hands to give the proof of Theorem~\ref{theo:sta}.
\begin{proof}[Proof of Theorem~\ref{theo:sta}]  Let $\delta$ and $\g_\star$ be defined as in the proof of Proposition \ref{lem:stabil}. For such $\delta$ and $\g_\star$, the results of Theorem \ref{theo:Unique} and Corollary \ref{L2-weighted} hold and $a+\g_\star<3-\delta$. From the estimate $\|\Gg-\bm{G}_{0}\|_{L^{1}(\w_{a})} \leq \|\Gg-\bm{h}_{\g}\|_{L^{1}(\w_{a})}+\|\bm{h}_{\g}-\bm{G}_{0}\|_{L^{1}(\w_{a})}$ and using Proposition \ref{lem:stabil}  {and \eqref{stab}}, we deduce that
\begin{equation}\label{eq:Ca}
\|\Gg-\bm{G}_{0}\|_{L^{1}(\w_{a})} \leq \eta(\g) + C_{a}\left|\lambda_{\g}-\lambda_{0}\right|\,,\end{equation}
where we recall that $\bm{G}_{0}(x)=\lambda_{0}\bm{H}(\lambda_{0}x),$ $\bm{h}_{\g}(x)=\lambda_{\g}\bm{H}(\lambda_{\g}x)$ where $\lambda_{\g}=M_{2}(\Gg)^{-\frac{1}{2}}$ is such that $\int_{\R}x^{2}\bm{h}_{\g}(x)\dx=M_{2}(\Gg)$. It is therefore enough to quantify the rate of convergence of $\lambda_{\g}$ to $\lambda_{0}.$ Resuming the computations of Lemma \ref{lem:unique}, we see that
\begin{equation*}
\mathscr{I}_{0}(\bm{h}_{\g},\bm{h}_{\g})=\frac{1}{\lambda_{\g}^{2}}\int_{\R^{2}}\bm{H}(x)\bm{H}(y)|x-y|^{2}\log\frac{|x-y|}{\lambda_{\g}}\dx\dy
=\frac{2}{\lambda_{\g}^{2}} {\log\frac{\lambda_{0}}{\lambda_{\g}}}\end{equation*}
where we introduced the notation
\begin{equation}\label{eq:mathIo}
\mathscr{I}_{0}(f,g)=\int_{\R^{2}}f(x)g(y)|x-y|^{2}\log|x-y|\dx\dy, \qquad f,g \in L^{1}(\w_{s}), \qquad s>2,\end{equation}
and used that $\mathscr{I}_{0}(\bm{H},\bm{H})=2\log \lambda_{0}$ and $\int_{\R^{2}}\bm{H}(x)\bm{H}(y)|x-y|^{2}\dx\dy=2$ as established in Lemma \ref{lem:unique}. We introduce also the notation
$$\mathscr{I}_{\g}(f,g)=\g^{-1}\int_{\R^{2}}f(x)g(y)|x-y|^{2}\left(|x-y|^{\g}-1\right)\dx\dy, \qquad \qquad f,g \in L^{1}(\w_{2+\g})$$ 
and recall (see \eqref{eq1}) that $\mathscr{I}_{\g}(\Gg,\Gg)=0$. One has then the following
\begin{equation*}\begin{split}
\frac{2}{\lambda_{\g}^{2}} {\log\frac{\lambda_{0}}{\lambda_{\g}}}&=\mathscr{I}_{0}(\bm{h}_{\g},\bm{h}_{\g})\\
&=\mathscr{I}_{0}(\bm{h}_{\g}-\Gg,\bm{h}_{\g}-\Gg)+2\mathscr{I}_{0}(\bm{h}_{\g}-\Gg,\Gg) {-} \mathscr{I}_{0}(\Gg,\Gg).
\end{split}\end{equation*}
{ Hence, 
$$\left|\frac{2}{\lambda_{\g}^{2}}\log\frac{\lambda_{0}}{\lambda_{\g}} \right|\leq C_{a}\|\bm{h}_{\g}-\Gg\|_{L^{1}(\w_{a})}\left(2\|\Gg\|_{L^{1}(\w_{a})}+\|\bm{h}_{\g}-\Gg\|_{L^{1}(\w_{a})}\right) + {|\mathscr{I}_{0}(\Gg,\Gg)|}\,,$$}
for $2 < a <3$ and where we used Lemma \ref{lem:I0fg}. We deduce then from Proposition \ref{lem:stabil} and the fact that $\sup_{\g\in(0,\g_{\star})}\|\Gg\|_{L^{1}(\w_{a})} {<\infty} $ that
$$\left|\lambda_{\g}^{-2}\log\left(\frac{\lambda_{\g}}{\lambda_{0}}\right)^{2}\right| \leq \tilde{\eta}(\g) + \left|\mathscr{I}_{0}(\Gg,\Gg)\right|$$
where $\tilde{\eta}(\g)=C_{a}\eta(\g)\left(2\sup_{\g}\|\Gg\|_{L^{1}(\w_{a})}+\eta(\g)\right) \to 0$ as $\g \to 0^{+}$ is an explicit function. Since $\mathscr{I}_{\g}(\Gg,\Gg)=0$,
$$\left|\lambda_{\g}^{-2}\log\left(\frac{\lambda_{\g}}{\lambda_{0}}\right)^{2}\right| \leq \tilde{\eta}(\g) + \left|\mathscr{I}_{0}(\Gg,\Gg)-\mathscr{I}_{\g}(\Gg,\Gg)\right|$$
and, using Lemma \ref{lem:IgfgI0} together with the estimates in Theorem \ref{theo:Unique} and Corollary \ref{L2-weighted}, we obtain that, for $2<a<3-\delta$ and $s>0$ such that $a+s+\g_\star<3-\delta$,
$$\left|\lambda_{\g}^{-2}\log\left(\frac{\lambda_{\g}}{\lambda_{0}}\right)^{2}\right| \leq \tilde{\eta}(\g) + \bm{C}\g^{\frac{s}{s+1}}|\log\g|, \qquad \forall \g \in (0,\g_{\star})\,,$$
for some positive constant $\bm{C}$ depending on $a,s$.  {Noticing that $\lambda_{\g} \to \lambda_{0}$ as $\g \to 0$, it is bounded both from above and below for $\g$ small enough, we get that there is $\bm{C}_{0}$} such that
$$\left|\log \frac{\lambda_{\g}}{\lambda_{0}} \right| \leq \bm{C}_{0}\left(\tilde{\eta}(\g)+ \g^{\frac{s}{s+1}}|\log\g|\right).$$
Since $|\log x| \geq \frac{|1-x|}{\max(1,x)}$, there exists $\bm{C}_{1} >0$  such that
$$\left|\lambda_{\g}-\lambda_{0}\right| \leq \bm{C}_{1}\left(\tilde{\eta}(\g)+ \g^{\frac{s}{s+1}}|\log\g|\right), \qquad \g \in (0,\g_{\star}).$$
Introducing the explicit  function $\overline{\eta}(\g)=C_{a}\bm{C}_{1}\left(\tilde{\eta}(\g)+ \g^{\frac{s}{s+1}}|\log\g|\right)+\eta(\g)$, this, together with \eqref{eq:Ca}, proves the result. \end{proof} 
\begin{rem}\label{rem:nonexplicit} Notice that the constants $\bm{C}_{0}$ and $\bm{C}_{1}$ in the above proof depend on upper and lower bounds on $\lambda_{\g}=\left(M_{2}(\Gg)\right)^{-\frac{1}{2}}$ and $M_{2}(\Gg) \leq \frac{1}{2}$. We describe in Section \ref{sec:quantg} a procedure which allows to make the function $\overline{\eta}( \gamma)$ \emph{completely} explicit.
\end{rem}
\subsection{Uniqueness}\label{sec:uniqueness}  
 {We now establish some stability result for $\mathscr{L}_{0}$.}
 \begin{lem}\label{lem:bareta} Let $2 < a < 3$. There exist
     $\g_\star\in(0,1)$ and a mapping $\tilde{\eta}\;:\;[0,\g_\star] \to \R^{+}$ with 
$$\lim_{\g\to0^{+}}\tilde{\eta}(\g)=0$$
and such that, for any $\g\in(0,\g_\star)$, any $\Gg^{1},\Gg^{2} \in \mathscr{E}_{\g}$,
\begin{equation}\label{eq:L0dif}
\|\mathscr{L}_{0}\left(\Gg^{1}-\Gg^{2}\right)\|_{\X_{a}} \leq \tilde{\eta}(\g)\left\|\Gg^{1}-\Gg^{2}\right\|_{\X_{a}}.\end{equation}
\end{lem}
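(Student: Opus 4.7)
Set $g_{\g} := \Gg^{1} - \Gg^{2} \in \X_{a}$. Subtracting the two copies of \eqref{eq:steadyg} and using bilinearity of $\Q_{\g}$ yields
$$\tfrac{1}{4}\partial_{x}(x g_{\g}) = \Q_{\g}(g_{\g}, \Gg^{1}) + \Q_{\g}(\Gg^{2}, g_{\g}),$$
so plugging this into the definition of $\mathscr{L}_{0}$ and inserting the Maxwellian pairs $\pm\Q_{\g}(g_{\g}, \bm{G}_{0})$ and $\pm\Q_{\g}(\bm{G}_{0}, g_{\g})$ produces the four-piece decomposition
\begin{align*}
\mathscr{L}_{0}(g_{\g}) &= [\Q_{0} - \Q_{\g}](g_{\g}, \bm{G}_{0}) + [\Q_{0} - \Q_{\g}](\bm{G}_{0}, g_{\g}) \\
&\quad + \Q_{\g}(g_{\g}, \bm{G}_{0} - \Gg^{1}) + \Q_{\g}(\bm{G}_{0} - \Gg^{2}, g_{\g}).
\end{align*}
The whole strategy is to bound each of these four summands in $\X_{a}=L^{1}(\w_{a})$ by a factor that is $o_{\g}(1)$ times $\|g_{\g}\|_{\X_{a}}$.

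For the first two summands I would invoke Proposition \ref{diff_Q}, which (as exploited in Lemma \ref{N0_Ggamma}) controls the bilinear difference $\Q_{\g}-\Q_{0}$ by a factor $\g^{s/(s+1)}(1+|\log\g|)$ times a product of weighted $L^{1}$- and $L^{2}$-norms, provided $a+\g+s<3-\delta$. The $\bm{G}_{0}$-slot is harmless since $\bm{G}_{0}$ is explicit and enjoys all the required bounds, while the $g_{\g}$-slot involves $\|g_{\g}\|_{L^{1}(\w_{a+\g+s})}$ and a $L^{2}(\w_{a})$ norm; the key point is that Lemma \ref{lem:Diffmom} applied with $k=a+s$ (after choosing $s$ so that $\g+\tfrac{2}{3}(a+s)<a$, i.e.\ $s<(a-3\g)/2$) upgrades $\|g_{\g}\|_{L^{1}(\w_{a+s+\g})} \leq C_{a}\|g_{\g}\|_{\X_{a}}$ with a constant independent of $\g$, and any weighted $L^{2}$ norm of $g_{\g}$ is uniformly controlled by Corollary \ref{L2-weighted} applied to $\Gg^{1},\Gg^{2}$, so that interpolation against $\|g_{\g}\|_{\X_{a}}$ produces the multiplicative structure. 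The outcome is
$$\|[\Q_{0}-\Q_{\g}](g_{\g},\bm{G}_{0})\|_{\X_{a}} + \|[\Q_{0}-\Q_{\g}](\bm{G}_{0},g_{\g})\|_{\X_{a}} \leq C\,\g^{s/(s+1)}(1+|\log\g|)\,\|g_{\g}\|_{\X_{a}}.$$

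For the remaining summands $\Q_{\g}(g_{\g},\bm{G}_{0}-\Gg^{1})$ and $\Q_{\g}(\bm{G}_{0}-\Gg^{2},g_{\g})$ I would use a standard weighted bilinear continuity estimate (cf.\ Appendix \ref{app:QgQ0}) of the form
$$\|\Q_{\g}(f,h)\|_{L^{1}(\w_{a})} \leq C\bigl(\|f\|_{L^{1}(\w_{a})}\|h\|_{L^{1}(\w_{a+\g})} + \|f\|_{L^{1}(\w_{a+\g})}\|h\|_{L^{1}(\w_{a})}\bigr),$$
together with Theorem \ref{theo:sta}, which furnishes $\|\bm{G}_{0}-\Gg^{i}\|_{L^{1}(\w_{a})}\leq\overline{\eta}(\g)$. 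To promote this smallness to the higher weight $L^{1}(\w_{a+\g})$ I would interpolate between $\w_{a}$ and $\w_{b}$ for some $b\in(a,3-\delta)$, exploiting the uniform bound $\|\bm{G}_{0}-\Gg^{i}\|_{L^{1}(\w_{b})}\leq C$ coming from Theorem \ref{theo:Unique} and the explicit tail of $\bm{G}_{0}$: this yields $\|\bm{G}_{0}-\Gg^{i}\|_{L^{1}(\w_{a+\g})}\leq C\,\overline{\eta}(\g)^{1-\theta}$ with $\theta=\g/(b-a)\to 0$, while Lemma \ref{lem:Diffmom} handles $\|g_{\g}\|_{L^{1}(\w_{a+\g})}\leq C_{a}\|g_{\g}\|_{\X_{a}}$ as before. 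Summing the four contributions then yields \eqref{eq:L0dif} with
$$\tilde{\eta}(\g) := C\bigl(\g^{s/(s+1)}(1+|\log\g|) + \overline{\eta}(\g)^{1-\theta}\bigr),$$
which vanishes as $\g\to 0^{+}$ by Theorem \ref{theo:sta}. The main obstacle is not any single estimate but the joint bookkeeping of the compatibility constraints $a+\g+s<3-\delta$, $s<(a-3\g)/2$ and $a<b<3-\delta$, which must be simultaneously satisfiable on $(0,\g_{\star})$; once a compatible quadruple $(a,s,b,\delta)$ and a corresponding $\g_{\star}$ are fixed, the rest of the argument is a routine assembly of the estimates above.
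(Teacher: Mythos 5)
Your decomposition is, up to sign conventions, exactly the one in the paper: the paper writes $-\mathscr{L}_{0}(g_{\g})=\mathcal{A}_{\g}+\mathcal{B}_{\g}+\mathcal{C}_{\g}$ with $\mathcal{A}_{\g}$ the two $\Q_{\g}(\cdot,\bm{G}_{0}-\Gg^{i})$-type terms and $\mathcal{B}_{\g},\mathcal{C}_{\g}$ the two $[\Q_{\g}-\Q_{0}]$ commutator terms, and it closes the estimate with the same three ingredients you cite (Proposition \ref{diff_Q}, Theorem \ref{theo:sta} for the smallness of $\|\Gg^{i}-\bm{G}_{0}\|$, and Lemma \ref{lem:Diffmom} to pull the weight $a+s+\g$ back down to $a$). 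Your weight-interpolation between $\w_{a}$ and $\w_{b}$ to promote Theorem \ref{theo:sta} to the exponent $a+\g$ is a legitimate (if anything, more careful) variant of what the paper does by simply invoking the stability theorem at a slightly larger exponent.

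There is one step as written that would fail and needs the fix the paper implicitly uses. If you apply Proposition \ref{diff_Q} to $[\Q_{0}-\Q_{\g}](g_{\g},\bm{G}_{0})$ with the $L^{p}$ norm landing on the $g_{\g}$ slot, you pick up the term $\g^{\frac{s}{s+1}}\|g_{\g}\|_{L^{2}(\w_{a})}\|\bm{G}_{0}\|_{L^{1}(\w_{a})}$. Corollary \ref{L2-weighted} only gives $\|g_{\g}\|_{L^{2}(\w_{a})}\leq C$ uniformly; it does not give $\|g_{\g}\|_{L^{2}(\w_{a})}\leq C\|g_{\g}\|_{\X_{a}}$, and no interpolation between the (merely bounded) $L^{2}$ norm and $\|g_{\g}\|_{\X_{a}}$ produces a bound that is a pure multiple of $\|g_{\g}\|_{\X_{a}}$ — at best you would get an additive remainder $O(\g^{\frac{s}{s+1}})$, which destroys the multiplicative structure \eqref{eq:L0dif} that the uniqueness argument needs (it would no longer let you conclude $g_{\g}=0$). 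The remedy is to place the $L^{p}$ norm on the \emph{fixed} argument $\bm{G}_{0}$: the Hölder step in the proof of Proposition \ref{diff_Q} (the estimate of $I_{1}$ on the diagonal set $|x-y|\leq\delta$) can be performed in either variable, so for a single ordering one may retain only $\|\bm{G}_{0}\|_{L^{2}(\w_{a})}\|g_{\g}\|_{L^{1}(\w_{a})}$, which is what the paper's displayed application of the proposition does. With that one adjustment your argument matches the paper's proof.
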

\begin{proof} Let $\delta$ and $\g_\star$ be defined as in the proof of Theorem \ref{theo:sta}. For such $\delta$ and $\g_\star$, the results of Theorem \ref{theo:Unique} and Corollary \ref{L2-weighted} hold and $a+\g_\star<3-\delta$. Let $\g\in(0,\g_\star)$. Let us consider $\Gg^{1},\Gg^{2} \in \mathscr{E}_{\g}$. We introduce the difference 
$$g_{\g}=\Gg^{2}-\Gg^{1}$$
which satisfies \eqref{eq:diffe}. We write this last identity in an equivalent way:
\begin{multline*}
\frac{1}{4}\partial_{x}\left(xg_{\g}(x)\right)=\left[\Q_{\g}(g_{\g},\Gg^{2}-\bm{G}_{0})+\Q_{\g}(\Gg^{1}-\bm{G}_{0},g_{\g})\right]\\
+\left[\Q_{\g}(g_{\g},\bm{G}_{0})-\Q_{0}(g_{\g},\bm{G}_{0})\right]
+\left[\Q_{\g}(\bm{G}_{0},g_{\g})-\Q_{0}(\bm{G}_{0},g_{\g})\right]\\
+\Q_{0}(g_{\g},\bm{G}_{0})+\Q_{0}(\bm{G}_{0},g_{\g})
\end{multline*}
which can be written as
$$-\mathscr{L}_{0}(g_{\g})=\mathcal{A}_{\g}+\mathcal{B}_{\g}+\mathcal{C}_{\g}$$
where
$$\mathcal{A}_{\g}=\left[\Q_{\g}(g_{\g},\Gg^{2}-\bm{G}_{0})+\Q_{\g}(\Gg^{1}-\bm{G}_{0},g_{\g})\right], \qquad \mathcal{B}_{\g}=\left[\Q_{\g}(g_{\g},\bm{G}_{0})-\Q_{0}(g_{\g},\bm{G}_{0})\right]$$
and
$$\mathcal{C}_{\g}=\left[\Q_{\g}(\bm{G}_{0},g_{\g})-\Q_{0}(\bm{G}_{0},g_{\g})\right].$$

Therefore,
$$\|\mathscr{L}_{0}(g_{\g})\|_{L^{1}(\w_{a})}\leq \|\mathcal{A}_{\g}\|_{L^{1}(\w_{a})}+\|\mathcal{B}_{\g}\|_{L^{1}(\w_{a})}
+\|\mathcal{C}_{\g}\|_{L^{1}(\w_{a})}.$$
One estimates separately the norms $\|\mathcal{A}_{\g}\|_{L^{1}(\w_{a})}$, $\|\mathcal{B}_{\g}\|_{L^{1}(\w_{a})}$ and $\|\mathcal{C}_{\g}\|_{L^{1}(\w_{a})}.$ Clearly
\begin{equation*}\begin{split}
\|\mathcal{A}_{\g}\|_{L^{1}(\w_{a})} &\leq C_{0}\|g_{\g}\|_{L^{1}(\w_{a+\g})}\,\left(\|\Gg^{1}-\bm{G}_{0}\|_{L^{1}(\w_{a+\g})}+\|\Gg^{2}-\bm{G}_{0}\|_{L^{1}(\w_{a+\g})}\right)\\
&\leq \eta_{1}(\g)\|g_{\g}\|_{L^{1}(\w_{a+\g})}
\end{split}\end{equation*}
with 
$$\eta_{1}(\g)=C_{0}\left(\|\Gg^{1}-\bm{G}_{0}\|_{L^{1}(\w_{a+\g})}+\|\Gg^{2}-\bm{G}_{0}\|_{L^{1}(\w_{a+\g})}\right).$$
According to {Theorem \ref{theo:sta}}, the mapping  $\eta_{1}:[0,\g_\star] \to \R^{+}$ is such that 
$$\lim_{\g\to0}\eta_{1}(\g)=0.$$
One deduces then from Proposition \ref{diff_Q}, with $s >0$  such that $a+\g_\star+s<3-\delta$ and $p=2$, that
\begin{multline*}
\|\mathcal{B}_{\g}\|_{L^{1}(\w_{a})} \leq C_{s,2}\g^{\frac{s}{s+1}}|\log \g| \|g_{\g}\|_{L^1(\w_{a})}\|\bm{G}_{0}\|_{L^1(\w_{a})}\\
    +  {24}\,\g^{\frac{s}{s+1}} \left(\|\bm{G}_{0}\|_{L^1(\w_{s+\gamma+a})}\|g_{\g}\|_{L^1(\w_{a})}+\|g_{\g}\|_{L^1(\w_{s+\gamma+a})}\|\bm{G}_{0}\|_{L^1(\w_{a})}+\|\bm{G}_{0}\|_{ L^{2}(\w_{a})}\|g_{\g}\|_{L^1(\w_{a})}\right).
\end{multline*}
Using the known bounds on $\bm{G}_{0}$ (in particular Theorem \ref{theo:Unique} and Corollary \ref{L2-weighted}), one deduces that there exists $C_{s} >0$ (independent of $\g$) such that
$$\|\mathcal{B}_{\g}\|_{L^{1}(\w_{a})} \leq C_{s}\g^{\frac{s}{s+1}}{(1+|\log\g|)}\,\|g_{\g}\|_{L^{1}(\w_{a+\g+s})}, \qquad \forall \g \in (0,\g_{\star}).$$
In the same way
$$\|\mathcal{C}_{\g}\|_{L^{1}(\w_{a})} \leq C_{s}\g^{\frac{s}{s+1}} {(1+|\log\g|)}\,\|g_{\g}\|_{L^{1}(\w_{a+\g+s})}, \qquad \forall \g \in (0,\g_{\star}).$$
Gathering all these estimates, we obtain
\begin{multline*}
\|\mathscr{L}_{0}(g_{\g})\|_{L^{1}(\w_{a})}\leq \left(\eta_{1}(\g)+2C_{s}\g^{\frac{s}{s+1}} {(1+|\log\g|)}\right)\,\|g_{\g}\|_{L^{1}(\w_{a+s+\g})}\\
\leq C_{a,s}\left(\eta_{1}(\g)+2C_{s}\g^{\frac{s}{s+1}} {(1+|\log\g|)}\right)\,\|g_{\g}\|_{L^{1}(\w_{a})}
\end{multline*}
for some suitable choice of $s$ (small enough) where the estimate $\|g_{\g}\|_{L^{1}(\w_{a+s+\g})} \leq C_{a,s}\|g_{\g}\|_{L^{1}(\w_{a})}$ is a consequence of Lemma \ref{lem:Diffmom} (for $\g \in (0,{\frac{a}{3}})$). This gives the result.
\end{proof}

Combining the above result with Proposition \ref{restrict0} allows to show directly that two solutions to \eqref{eq:steadyg} \emph{with same energy} coincide as already explained in the introduction. 

In order to extend this line of reasoning to general solutions to \eqref{eq:steadyg} with different energy, one somehow follows the same approach but needs a way to compensate the discrepancy of energies to apply a variant of \eqref{eq:invert0}. Typically, let us now consider two solutions $\Gg^{1},\Gg^{2} \in \mathscr{E}_{\g}$ and let $g_{\g}=\Gg^{1}-\Gg^{2}.$
If one is able to construct $\tilde{g}_{\g} \in \mathbb{Y}_{a}$ such that
\begin{equation}\label{eq:kernL0g}
\mathscr{L}_{0}(\tilde{g}_{\g})=\mathscr{L}_{0}(g_{\g}) \qquad \text{ and } \quad M_{2}(\tilde{g}_{\g})=0 \quad  (\text{i.e. } {\tilde{g}_\g} \in \mathbb{Y}_{a}^{0})\end{equation}
then, as before, one would have
\begin{equation}\label{eq:tildeGg}
\frac{\nu}{C(\nu)} \|\tilde{g}_{\g}\|_{\X_{a}} \leq \|\mathscr{L}_{0}(\tilde{g}_{\g})\|_{\X_{a}}=\|\mathscr{L}_{0}({g}_{\g})\|_{\X_{a}}  \leq \tilde{\eta}(\g)\left\|g_{\g}\right\|_{\X_{a}}\,.\end{equation}
To conclude as before, we also need to check that there is $C >0$ (\emph{independent of $\g$}) such that
\begin{equation}\label{eq:tildeg}
\|g_{\g}\|_{\X_{a}} \leq C \|\tilde{g}_{\g}\|_{\X_{a}}\end{equation}
from which the identity $g_{\g}=0$ would follow  easily, as in the introduction (see end of Section~\ref{Sec:strategy}) for solutions with same energy.

Of course, constructing $\tilde{g}_{\g}$ satisfying \eqref{eq:kernL0g} is easy since $\mathscr{L}_{0}$ is invertible on $\mathbb{Y}_{a}^{0}$. The difficulty is to check \eqref{eq:tildeg}. The main tool to achieve this scope is the \emph{``linearised dissipation of energy''} functional 
$$\mathscr{I}_{0}(f,\bm{G}_{0})=\int_{\R^{2}}f(x)\bm{G}_{0}(y)|x-y|^{2}\log|x-y|\dx\dy, \qquad f \in L^{1}(\w_{s}), \qquad s>2.$$
First, one has the following observations
\begin{lem}\label{rmk:kern} The function defined by
$$g_{0}(x)=\frac{2}{\pi}\frac{1-3x^{2}}{(1+x^{2})^{3}}, \quad x \in \R$$
belongs to $\mathbb{Y}_{a}$ and 
is such that
$$\mathscr{L}(g_{0})=0 \qquad \text{ and }\qquad  M_{2}(g_{0})=-2.$$
Moreover,
\begin{equation}\label{eq:g0H}
\mathscr{I}_{0}(g_{0},\bm{H})= -2\log 2-2.\end{equation}
Finally, it holds 
$$\mathscr{I}_{0}(\bm{H},\bm{H})=2\log 2+1.$$
\end{lem}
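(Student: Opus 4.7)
The plan is to notice first that $g_0$ is the derivative with respect to a scaling parameter of the one-parameter family of self-similar profiles. A direct computation gives
$$\partial_x\bigl(x\bm{H}(x)\bigr)=\bm{H}(x)+x\bm{H}'(x)=\frac{2(1+x^2)-8x^2}{\pi(1+x^2)^3}=g_0(x).$$
From this representation, the membership $g_0\in\mathbb{Y}_a$ and the identity $M_2(g_0)=-2$ follow by integration by parts: $\int g_0=0$ by decay at infinity, $\int xg_0\,\dx=-\int\bm{H}\,\dx\cdot 0=0$ (by oddness) and $M_2(g_0)=-\int 2x\cdot x\bm{H}\,\dx=-2M_2(\bm{H})=-2$.

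Next I would prove $\mathscr{L}(g_0)=0$ by exploiting the scaling invariance of the Maxwellian self-similar equation. A short computation shows that $\bm{H}_\lambda(x):=\lambda\bm{H}(\lambda x)$ satisfies
$$\tfrac{1}{4}\partial_x\bigl(x\bm{H}_\lambda\bigr)(x)=\Q_{0}(\bm{H}_\lambda,\bm{H}_\lambda)(x)$$
for every $\lambda>0$, since both sides transform covariantly ($\Q_0(\bm{H}_\lambda,\bm{H}_\lambda)(x)=\lambda\Q_0(\bm{H},\bm{H})(\lambda x)$ by the substitution in the weak form, and the drift term transforms identically). Differentiating this identity in $\lambda$ at $\lambda=1$ and using $\partial_\lambda \bm{H}_\lambda|_{\lambda=1}=\partial_x(x\bm{H})=g_0$ gives precisely $\mathscr{L}(g_0)=0$.

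For the scaling identity linking the two values of $\mathscr{I}_0$, I would change variables $u=\lambda x$, $v=\lambda y$ to obtain
$$\mathscr{I}_0(\bm{H}_\lambda,\bm{H}_\lambda)=\frac{1}{\lambda^2}\bigl[\mathscr{I}_0(\bm{H},\bm{H})-2\log\lambda\bigr],$$
using $\int\int\bm{H}(u)\bm{H}(v)(u-v)^2\,\dy\,\dx=2M_2(\bm{H})=2$. Differentiating this equality in $\lambda$ at $\lambda=1$, the symmetry $\mathscr{I}_0(f,g)=\mathscr{I}_0(g,f)$ turns the left-hand side into $2\mathscr{I}_0(g_0,\bm{H})$ while the right-hand side equals $-2\mathscr{I}_0(\bm{H},\bm{H})-2$, yielding $\mathscr{I}_0(g_0,\bm{H})=-\mathscr{I}_0(\bm{H},\bm{H})-1$. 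Thus the statement reduces entirely to establishing $\mathscr{I}_0(\bm{H},\bm{H})=2\log 2+1$.

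The main obstacle is precisely this last explicit computation. I would introduce the analytic function $J(s):=\int_\R|u|^s(\bm{H}\ast\bm{H})(u)\,\d u$ (defined on the strip where it converges), so that $\mathscr{I}_0(\bm{H},\bm{H})=J'(2)$ by oddness and symmetry. Using $\widehat{\bm{H}}(\xi)=(1+|\xi|)e^{-|\xi|}$ and the standard inverse Fourier formulas for $e^{-2|\xi|}$, $|\xi|e^{-2|\xi|}$, $\xi^2 e^{-2|\xi|}$, one computes
$$\bigl(\bm{H}\ast\bm{H}\bigr)(u)=\frac{4(20+u^2)}{\pi(4+u^2)^3}.$$
After the substitution $u=2t$, the integral $J(s)$ becomes a combination of two Beta integrals
$$J(s)=\frac{2^s}{4\pi}\Bigl[5\,\Gamma\!\bigl(\tfrac{s+1}{2}\bigr)\Gamma\!\bigl(\tfrac{5-s}{2}\bigr)+\Gamma\!\bigl(\tfrac{s+3}{2}\bigr)\Gamma\!\bigl(\tfrac{3-s}{2}\bigr)\Bigr],$$
and a check at $s=2$ recovers $J(2)=2$. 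Differentiating with the digamma function and using $\psi(\tfrac{5}{2})-\psi(\tfrac{1}{2})=2+\tfrac{2}{3}=\tfrac{8}{3}$, the symmetric contribution at $s=2$ vanishes and the remaining one gives $J'(2)=2\log 2+1$, which concludes the proof. The key numerical point—and the only delicate one—is matching the $\pi$-constants in the explicit convolution formula with the digamma evaluation.
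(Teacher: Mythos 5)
Your proof is correct, and it takes a genuinely different route from the paper's. The paper finds $g_{0}$ by working in Fourier variables: it spots $\psi_{0}(\xi)=|\xi|^{2}e^{-|\xi|}$ as an explicit solution of the linearised equation \eqref{eq:linearised-fourier} and inverts, whereas you identify $g_{0}=\partial_{\lambda}\bm{H}_{\lambda}\vert_{\lambda=1}=\partial_{x}(x\bm{H})$ and obtain $\mathscr{L}(g_{0})=0$ by differentiating the one-parameter family of steady states — the same object, but your derivation explains \emph{a priori} why it lies in the kernel. For the two values of $\mathscr{I}_{0}$ the divergence is sharper: the paper tests the weak forms of the linearised and steady equations against $\phi(x)=x^{2}\log|x|$, which reduces everything to the elementary facts $\int_{\R}G(x)\log|x|\dx=0$ and $\int_{\R}\bm{H}(x)x^{2}\log|x|\dx=1$; you instead differentiate the scaling identity $\mathscr{I}_{0}(\bm{H}_{\lambda},\bm{H}_{\lambda})=\lambda^{-2}\left[\mathscr{I}_{0}(\bm{H},\bm{H})-2\log\lambda\right]$ (which the paper itself uses in the proofs of Lemma \ref{lem:unique} and Theorem \ref{theo:sta}) to get the clean relation $\mathscr{I}_{0}(g_{0},\bm{H})=-\mathscr{I}_{0}(\bm{H},\bm{H})-1$, and then compute $\mathscr{I}_{0}(\bm{H},\bm{H})$ head-on via the convolution and a Beta/digamma evaluation. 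I checked your explicit formula $(\bm{H}\ast\bm{H})(u)=\frac{4(20+u^{2})}{\pi(4+u^{2})^{3}}$ (it has the right mass $1$ and energy $2$), the Beta representation of $J(s)$ with $J(2)=2$, and the value $J'(2)=2\log 2+1$ using $\psi(\tfrac{5}{2})-\psi(\tfrac{1}{2})=\tfrac{8}{3}$ and $A'(2)=0$; all are correct. Your approach buys a conceptually uniform treatment (every statement of the lemma follows from scaling plus one explicit integral) at the price of a special-function computation; the paper's test-function argument avoids special functions but requires guessing $\phi(x)=x^{2}\log|x|$ and silently uses $\int_{\R}\bm{H}x^{2}\log|x|\dx=1$. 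The only points worth making explicit in a written version are the justification of differentiating the steady equation and the functional $\mathscr{I}_{0}$ in $\lambda$ (routine here, since everything is explicit and smooth in $\lambda$, and $J$ is analytic on the strip $-1<s<3$ where the integral converges).
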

\begin{proof} Let $g \in L^{1}(\w_{a})$ be such that $\mathscr{L}(g)=0$ and $\int_{\R} g(x)\d x=0.$ Setting
$$\psi(\xi)=\int_{\R}e^{-i\xi x}g(x)\d x$$
one checks without too many difficulties that {(see also \eqref{eq:linearised-fourier})}
$$-\frac{1}{4}\xi \frac{\d}{\d \xi}\psi(\xi)=2\psi\left(\frac{\xi}{2}\right)\bm{\Phi}\left(\frac{\xi}{2}\right)-\psi(\xi).$$
Direct inspection shows that
$$\psi_{0}(\xi)=|\xi|^{2}e^{-|\xi|}$$
is a solution to the above equation, with 
\begin{equation}\label{eq:psi0}
\psi_{0}(0)=\psi'_{0}(0)=0, \quad \psi''_{0}(0)=2 \neq 0.\end{equation} Since moreover
$e^{-|\xi|}$ is the Fourier transform of $G(x)=\frac{1}{\pi(1+x^{2})}$, one deduces that $\psi_{0}$ is the Fourier transform of 
$$g_{0}(x)=-\dfrac{\d^{2}}{\d x^{2}}G(x)= {\frac{2}{\pi}}\frac{1-3x^{2}}{(1+x^{2})^{3}}.$$
Notice that $g_{0} \in L^{1}(\w_{a})$ for any $2<a<3$ and \eqref{eq:psi0} shows that $g_{0} \in \mathbb{Y}_{a}$ with $M_{2}(g_{0})=-2.$ Let us now prove \eqref{eq:g0H}. Observe that, if $g$ is an eigenfunction of $\mathscr{L}$ with zero mass, then using the weak form of the linearised operator $\mathscr{L}$, $$ \frac14 \int_\R g(x) x \p_x \phi\,\dx
  + 2 \int_\R \int_\R g(x) \bm{H}(y) \left(
    \phi\Big(\frac{x-y}{2}\Big) - \frac12\,\phi(x) - \frac12\,{\phi(-y)}
  \right) \dy \d x=0$$
where we used also that $\bm{H}$ is even. Taking $\phi(x)=x^{2}\log|x|=\frac{1}{2}x^{2}\log x^{2}$ as a test-function we get
\begin{multline*}
\frac{1}{8}\int_{\R}g(x)x\p_x(x^{2}\log x^{2})\,\dx+2\int_{\R^{2}}g(x)\bm{H}(y)\frac{|x-y|^{2}}{4}\log\frac{|x-y|}{2}\d x\d y\\
-\int_{\R}g(x)x^{2}\log |x|\d x=0\end{multline*}
where we used that $\int_{\R}g(x)\d x=0$ while $\int_{\R}\bm{H}(y)\d y=1.$ Thus one obtains that any eigenfunction of $\mathscr{L}$ with zero mass is such that
\begin{equation}\label{eq:IoGG}\begin{split}
\mathscr{I}_{0}(g,\bm{H})&:=\int_{\R^{2}}g(x)\bm{H}(y)|x-y|^{2}\log|x-y|\dx\dy\\
&=\left(\log 2-\frac{1}{2}\right)\int_{\R}g(x)x^{2}\d x  { + } \int_{\R}g(x)x^{2}\log |x|\d x.\end{split}\end{equation} 
In particular, for $g=g_{0}=-\frac{\d^{2}}{\d x^{2}}G$ as defined previously, it holds that
\begin{equation*}\begin{split}
\int_{\R}g_{0}(x)x^{2}\log|x|\d x&=-\int_{\R}G(x)\dfrac{\d^{2}}{\d x^{2}}\left[x^{2}\log|x|\right]\dx=-\frac{1}{2}\int_{\R}G(x)\dfrac{\d^{2}}{\d x^{2}}\left[x^{2}\log x^{2}\right]\dx\\
&=-2\int_{\R}G(x)\log|x|\dx-3\int_{\R}G(x)\dx=-3\end{split}\end{equation*}
using $\int_{\R}G(x)\dx=1$ and
$$\int_{\R}\frac{\log|x|}{1+x^{2}}\dx=2\int_{0}^{\infty}\frac{\log x}{1+x^{2}}\dx=0.$$
Therefore, recalling that $M_{2}(g_{0})= {-2}$, we deduce \eqref{eq:g0H}. The same idea gives also the expression of $\mathscr{I}_{0}(\bm{H},\bm{H})$. Indeed, by definition
\begin{equation*}\begin{split}
-\frac{1}{4}\int_{\R}x\bm{H}(x)\p_{x}\phi(x)\d x&=\int_{\R}\Q_{0}(\bm{H},\bm{H})\phi \dx\\
&=\int_{\R^{2}}\bm{H}(x)\bm{H}(y)\left[\phi\left(\frac{x+y}{2}\right)-\phi(x)\right]\dx \dy.\end{split}\end{equation*}
With $\phi(x)=|x|^{2}\log |x|$, this gives, since $\int_{\R}\bm{H}(x)\dx=\int_{\R}\bm{H}x^{2}\dx=1$,
\begin{equation*}\begin{split}
-\frac{1}{2}\int_{\R}x^{2}\bm{H}(x)\log|x|\d x-\frac{1}{4}&=\frac{1}{4}\int_{\R^{2}}\bm{H}(x)\bm{H}(y)|x+y|^{2}\log|x+y|\dx\dy\\
&\phantom{+++++}-\frac{\log 2}{4}\int_{\R^{2}}\bm{H}(x)\bm{H}(y)|x+y|^{2}\dx\dy-\int_{\R}\bm{H}(x)x^{2}\log|x|\dx\\
&=\frac{1}{4}\mathscr{I}_{0}(\bm{H},\bm{H})-\frac{\log2}{2}-\int_{\R}\bm{H}(x)x^{2}\log|x|\dx
\end{split}\end{equation*}
i.e.
$$\mathscr{I}_{0}(\bm{H},\bm{H})=2\log 2-1+2\int_{\R}\bm{H}(x)x^{2}\log |x|\dx.$$
Using that
$$\int_{\R}\bm{H}(x)x^{2}\log|x|\dx=1$$
we deduce the result.
\end{proof}
Thanks to the above observations, we deduce the following
\begin{lem}\label{rmk:varphi0} Let $2 < a < 3$. There exists $\varphi_{0} \in \mathrm{Ker}(\mathscr{L}_{0}) \cap \mathbb{Y}_{a}$ such that
$$M_{2}(\varphi_{0}) \neq 0 \qquad \text{ and } \quad \mathscr{I}_{0}(\varphi_{0},\bm{G}_{0}) \neq 0.$$
\end{lem}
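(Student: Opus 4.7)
The plan is to obtain $\varphi_0$ by an explicit rescaling of the function $g_0$ produced in Lemma~\ref{rmk:kern}. Recall that $\bm{G}_0(x)=\lambda_0\bm{H}(\lambda_0 x)$ with $\lambda_0=2\sqrt{e}$. The natural candidate is
\begin{equation*}
\varphi_0(x):=\lambda_0\,g_0(\lambda_0 x),
\end{equation*}
and the goal is to check that it lies in $\mathrm{Ker}(\mathscr{L}_0)\cap\mathbb{Y}_a$ and that the two non-vanishing conditions hold.

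First I would verify the scaling invariance $\mathscr{L}_0(\varphi_0)=0$. A direct change of variables in the Maxwellian collision integral shows that, whenever $h(x)=\lambda_0\tilde h(\lambda_0 x)$,
\begin{equation*}
\Q_0(h,\bm{G}_0)(x)=\lambda_0\,\Q_0(\tilde h,\bm{H})(\lambda_0 x),\qquad \Q_0(\bm{G}_0,h)(x)=\lambda_0\,\Q_0(\bm{H},\tilde h)(\lambda_0 x),
\end{equation*}
and similarly $\tfrac14\partial_x(xh)(x)=\tfrac{\lambda_0}{4}\partial_u(u\tilde h(u))\big|_{u=\lambda_0 x}$. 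Hence $\mathscr{L}_0(h)(x)=\lambda_0\,\mathscr{L}(\tilde h)(\lambda_0 x)$ with $\mathscr{L}$ the linearised operator around $\bm{H}$. Applying this to $\tilde h=g_0$ and using $\mathscr{L}(g_0)=0$ from Lemma~\ref{rmk:kern} gives $\mathscr{L}_0(\varphi_0)=0$. The inclusion $\varphi_0\in \mathbb{Y}_a$ is immediate: a change of variable $u=\lambda_0 x$ yields $\int\varphi_0=\int g_0=0$ and $\int x\,\varphi_0\,\d x=\lambda_0^{-1}\int u\,g_0(u)\d u=0$, while $\varphi_0\in L^1(\w_a)$ for any $2<a<3$ because $g_0$ decays like $|x|^{-4}$.

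Next I would compute the two functionals. The same rescaling gives
\begin{equation*}
M_2(\varphi_0)=\lambda_0^{-2}\,M_2(g_0)=-2\lambda_0^{-2}\neq 0.
\end{equation*}
For $\mathscr{I}_0(\varphi_0,\bm{G}_0)$ the only nontrivial point is the logarithm, which under $u=\lambda_0 x$, $v=\lambda_0 y$ produces an extra $-\log\lambda_0$ term:
\begin{equation*}
\mathscr{I}_0(\varphi_0,\bm{G}_0)=\lambda_0^{-2}\!\left[\mathscr{I}_0(g_0,\bm{H})-\log\lambda_0\int_{\R^2}g_0(u)\bm{H}(v)|u-v|^2\d u\d v\right].
\end{equation*}
Expanding $|u-v|^2=u^2-2uv+v^2$ and using $\int g_0=0$, $\int ug_0=0$, $M_2(g_0)=-2$, together with $\int\bm{H}=1$, $\int v\bm{H}=0$, one gets $\int g_0(u)\bm{H}(v)|u-v|^2\d u\d v=-2$. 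Combined with the identity $\mathscr{I}_0(g_0,\bm{H})=-2\log 2-2$ from Lemma~\ref{rmk:kern} and $\log\lambda_0=\log 2+\tfrac12$, this yields
\begin{equation*}
\mathscr{I}_0(\varphi_0,\bm{G}_0)=\lambda_0^{-2}\bigl(-2\log 2-2+2\log 2+1\bigr)=-\lambda_0^{-2}\neq 0,
\end{equation*}
which finishes the proof. There is no substantive obstacle here; the only thing to be careful about is the correct bookkeeping of the $\log\lambda_0$ arising from the fact that $\mathscr{I}_0$ is not scale invariant, and the explicit value $\lambda_0=2\sqrt{e}$ from Lemma~\ref{rmk:kern} is exactly what makes the two logarithms cancel while leaving a nonzero remainder.
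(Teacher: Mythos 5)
Your proof is correct and follows essentially the same route as the paper: define $\varphi_0$ as a rescaling of the kernel element $g_0$ from Lemma~\ref{rmk:kern}, transfer the kernel and moment properties by the scaling relation $\mathscr{L}_0(h)=\tau_0^{-1}\mathscr{L}(\tau_0 h)$, and compute $\mathscr{I}_0(\varphi_0,\bm{G}_0)$ via the change of variables that produces the $-\log\lambda_0\,M_2(g_0)=2A_0=\mathscr{I}_0(\bm{H},\bm{H})$ correction. The only (immaterial) differences are your extra normalising factor $\lambda_0$ in the definition of $\varphi_0$, which changes the prefactor from $\lambda_0^{-3}$ to $\lambda_0^{-2}$, and the fact that your arithmetic consistently uses $\mathscr{I}_0(g_0,\bm{H})=-2\log 2-2$ as stated in Lemma~\ref{rmk:kern}, yielding $-\lambda_0^{-2}$ rather than the paper's $-4/\lambda_0^{3}$; in either case the value is nonzero, which is all the lemma requires.
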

\begin{proof} Since the function $g_{0}$ defined in Lemma \ref{rmk:kern} belongs to the kernel of $\mathscr{L}$, one has
$$\varphi_{0}(x)=g_{0}(\lambda_{0}x) \in \mathbb{Y}_{a} \cap \mathrm{Ker}(\mathscr{L}_{0}).$$
Moreover, recalling the definition of $\mathscr{I}_{0}$ in \eqref{eq:mathIo} and since $\bm{G}_{0}(x)=\lambda_{0}\bm{H}(\lambda_{0}x)$, one checks easily that
\begin{multline*}
\mathscr{I}_{0}(\varphi_{0},\bm{G}_{0})=\frac{1}{\lambda_{0}^{3}}\left(\mathscr{I}_{0}(g_{0},\bm{H})-\log \lambda_{0}\,\int_{\R^{2}}g_{0}(x)\bm{H}(y)|x-y|^{2}\d x\d y\right)\\
=\frac{1}{\lambda_{0}^{3}}\left(\mathscr{I}_{0}(g_{0},\bm{H})-\log \lambda_{0}M_{2}(g_{0})\right)\end{multline*}
where we used that $g_{0} \in \mathbb{Y}_{a}.$ In particular, since $M_{2}(g_{0})= {-2}$, we deduce that
$$\mathscr{I}_{0}(\varphi_{0},\bm{G}_{0})=\frac{1}{\lambda_{0}^{3}}\mathscr{I}_{0}\left( {g_{0}+\bm{H}},\bm{H}\right)=-\frac{4}{\lambda^{3}_{0}} \neq 0$$
where we used that $\mathscr{I}_{0}(g_{0},\bm{H})= -2\log 2-5$ and $\mathscr{I}_{0}(\bm{H},\bm{H}) = 2\log 2+1$.\end{proof}
The existence of the above function $\varphi_{0}$ implies the following fundamental property of the linearised dissipation of energy 
\begin{lem}\label{lem:roleI0}
Let $2 < a < 3$. If $\varphi \in \mathrm{Ker}(\mathscr{L}_{0}) \cap \mathbb{Y}_{a}$ then 
$$\mathscr{I}_{0}(\varphi,\bm{G}_{0})=0 \implies  M_{2}(\varphi)=0.$$
In particular, in such a case, $\varphi=0.$
\end{lem}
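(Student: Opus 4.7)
The plan is to show that $\mathrm{Ker}(\mathscr{L}_{0}) \cap \mathbb{Y}_{a}$ is at most one-dimensional and, more precisely, spanned (modulo the zero function) by the explicit element $\varphi_{0}$ furnished in Lemma~\ref{rmk:varphi0}. Once this is established, the two linear functionals $\mathscr{I}_{0}(\,\cdot\,,\bm{G}_{0})$ and $M_{2}(\,\cdot\,)$, when restricted to this one-dimensional line, are both nonzero (by the two properties of $\varphi_{0}$ recorded in Lemma~\ref{rmk:varphi0}) and therefore have the same zero set. The implication $\mathscr{I}_{0}(\varphi,\bm{G}_{0})=0 \Rightarrow M_{2}(\varphi)=0$ is then immediate, as is $\varphi=0$.

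The main tool is Proposition~\ref{restrict0}, which provides the spectral gap
$$\|\mathscr{L}_{0}h\|_{\X_{a}}\;\geq\;\frac{\nu}{C(\nu)}\|h\|_{\X_{a}},\qquad h\in \mathscr{D}(\mathscr{L}_{0})\cap \mathbb{Y}_{a}^{0},$$
and in particular forces $\mathrm{Ker}(\mathscr{L}_{0})\cap \mathbb{Y}_{a}^{0}=\{0\}$. Given $\varphi \in \mathrm{Ker}(\mathscr{L}_{0})\cap \mathbb{Y}_{a}$, I will define
$$\widetilde{\varphi}\;:=\;\varphi-\frac{M_{2}(\varphi)}{M_{2}(\varphi_{0})}\,\varphi_{0},$$
which makes sense because $M_{2}(\varphi_{0})\neq 0$. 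By linearity of $\mathscr{L}_{0}$, $\widetilde{\varphi}\in \mathrm{Ker}(\mathscr{L}_{0})$; by linearity of the mass and momentum functionals (vanishing on both $\varphi$ and $\varphi_{0}$), $\widetilde{\varphi}\in \mathbb{Y}_{a}$; and by construction $M_{2}(\widetilde{\varphi})=0$, so $\widetilde{\varphi}\in \mathbb{Y}_{a}^{0}$. The spectral gap above then forces $\widetilde{\varphi}=0$, whence
$$\varphi=\frac{M_{2}(\varphi)}{M_{2}(\varphi_{0})}\,\varphi_{0}.$$

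To conclude, I will evaluate $\mathscr{I}_{0}(\,\cdot\,,\bm{G}_{0})$ on this identity. Since $\mathscr{I}_{0}$ is bilinear (and well-defined on $\mathbb{Y}_{a}$ by Lemma~\ref{lem:I0fg}), we obtain
$$\mathscr{I}_{0}(\varphi,\bm{G}_{0})\;=\;\frac{M_{2}(\varphi)}{M_{2}(\varphi_{0})}\,\mathscr{I}_{0}(\varphi_{0},\bm{G}_{0}).$$
The hypothesis $\mathscr{I}_{0}(\varphi,\bm{G}_{0})=0$, combined with $\mathscr{I}_{0}(\varphi_{0},\bm{G}_{0})\neq 0$ from Lemma~\ref{rmk:varphi0}, yields $M_{2}(\varphi)=0$. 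Substituting back gives $\varphi=0$, which proves both assertions of the lemma.

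The only potential obstacle is conceptual rather than technical: the argument rests on the fact that within $\mathbb{Y}_{a}$ the kernel of $\mathscr{L}_{0}$ captures exactly the one remaining self-similar direction (the variation of the energy parameter $\lambda$ of the Maxwell profile), which is precisely what Lemma~\ref{rmk:kern}, Lemma~\ref{rmk:varphi0} and Proposition~\ref{restrict0} have been set up to encode. No further estimate is required for the present lemma; it is a clean consequence of those upstream results.
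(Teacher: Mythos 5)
Your proof is correct and follows essentially the same route as the paper's: the same shifted function $\varphi-\frac{M_{2}(\varphi)}{M_{2}(\varphi_{0})}\varphi_{0}\in\mathbb{Y}_{a}^{0}\cap\mathrm{Ker}(\mathscr{L}_{0})$, killed by Proposition~\ref{restrict0}, followed by evaluating $\mathscr{I}_{0}(\cdot,\bm{G}_{0})$ on the resulting identity $\varphi=\frac{M_{2}(\varphi)}{M_{2}(\varphi_{0})}\varphi_{0}$ and using $\mathscr{I}_{0}(\varphi_{0},\bm{G}_{0})\neq0$. No gaps.
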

\begin{proof} Let $\varphi \in \mathrm{Ker}(\mathscr{L}_{0}) \cap \mathbb{Y}_{a}$ be such that $\mathscr{I}_{0}(\varphi,\bm{G}_{0})=0$. Let 
$$\varphi^{\perp}=\varphi-\frac{M_{2}(\varphi)}{M_{2}(\varphi_{0})}\varphi_{0}.$$
One has of course $M_{2}(\varphi^{\perp})=0$ (i.e. $\varphi^{\perp} \in \mathbb{Y}_{a}^{0}$) and $\mathscr{L}_{0}(\varphi^{\perp})=0$ since both $\varphi$ and $\varphi_{0}$ belong to $\mathrm{Ker}(\mathscr{L}_{0})$. According to Proposition \ref{restrict0}, one has $\varphi^{\perp}=0$. Therefore,
$\varphi=\frac{M_{2}(\varphi)}{M_{2}(\varphi_{0})}\varphi_{0}$,
so that 
$$\mathscr{I}_{0}(\varphi,\bm{G}_{0})=\frac{M_{2}(\varphi)}{M_{2}(\varphi_{0})}\mathscr{I}_{0}(\varphi_{0},\bm{G}_{0}).$$
Since, by assumption $\mathscr{I}_{0}(\varphi,\bm{G}_{0})=0$ while $\mathscr{I}_{0}(\varphi_{0},\bm{G}_{0}) \neq 0$, it must hold that $M_{2}(\varphi)=0.$ In particular, $\varphi \in \mathbb{Y}_{a}^{0}$ and, using Proposition \ref{restrict0} again, we deduce that $\varphi=0.$ 
\end{proof}
A final technical Lemma regards the smallness of the linearised energy dissipation functional for differences of solutions to \eqref{eq:steadyg}
\begin{lem}\label{lem:lastI0} Let $2 < a <3$. There exist $\g_\star\in(0,1)$ and $\bar{\eta}_{0}(\g)$ with 
$$\lim_{\g\to0}\bar{\eta}_{0}(\g)=0$$
 such that, for any $\g\in(0,\g_\star)$, any $\Gg^{1},\Gg^{2} \in \mathscr{E}_{\g}$,
\begin{equation}\label{eq:diffGg1-2}
\left|\mathscr{I}_{0}\left(\Gg^{1}-\Gg^{2},\bm{G}_{0}\right)\right| \leq \bar{\eta}_{0}(\g)\,\|\Gg^{1}-\Gg^{2}\|_{\X_{a}}.\end{equation}
\end{lem}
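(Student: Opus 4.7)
The plan is to exploit the vanishing relation $\mathscr{I}_{\g}(\Gg^{i},\Gg^{i})=0$ (already used to derive \eqref{eq1}) and combine it with the proximity of $\Gg^{i}$ to $\bm{G}_{0}$ provided by Theorem \ref{theo:sta}. Set $g_{\g}=\Gg^{1}-\Gg^{2}$ and $\bm{S}_{\g}=\tfrac{1}{2}(\Gg^{1}+\Gg^{2})$. Since $\mathscr{I}_{\g}$ is bilinear and symmetric in its two arguments (the kernel $|x-y|^{2}(|x-y|^{\g}-1)/\g$ is symmetric in $x,y$), the polarisation identity yields
\begin{equation*}
0=\mathscr{I}_{\g}(\Gg^{1},\Gg^{1})-\mathscr{I}_{\g}(\Gg^{2},\Gg^{2})=2\mathscr{I}_{\g}(g_{\g},\Gg^{2})+\mathscr{I}_{\g}(g_{\g},g_{\g})=2\mathscr{I}_{\g}(g_{\g},\bm{S}_{\g}),
\end{equation*}
so that $\mathscr{I}_{\g}(g_{\g},\bm{S}_{\g})=0$. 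This gives a free replacement that I will insert into $\mathscr{I}_{0}(g_{\g},\bm{G}_{0})$.

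The next step is the decomposition
\begin{equation*}
\mathscr{I}_{0}(g_{\g},\bm{G}_{0})=\mathscr{I}_{0}(g_{\g},\bm{G}_{0})-\mathscr{I}_{\g}(g_{\g},\bm{S}_{\g})=\mathscr{I}_{0}(g_{\g},\bm{G}_{0}-\bm{S}_{\g})+\bigl(\mathscr{I}_{0}-\mathscr{I}_{\g}\bigr)(g_{\g},\bm{S}_{\g}).
\end{equation*}
For the first summand I will apply Lemma \ref{lem:I0fg} to bound
$|\mathscr{I}_{0}(g_{\g},\bm{G}_{0}-\bm{S}_{\g})|\leq C_{a}\,\|g_{\g}\|_{L^{1}(\w_{a})}\,\|\bm{G}_{0}-\bm{S}_{\g}\|_{L^{1}(\w_{a})}$, and then use Theorem~\ref{theo:sta} applied to each of $\Gg^{1}$ and $\Gg^{2}$ to control $\|\bm{G}_{0}-\bm{S}_{\g}\|_{L^{1}(\w_{a})}\leq \overline{\eta}(\g)$. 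For the second summand I will invoke Lemma~\ref{lem:IgfgI0}, picking $s>0$ small enough that $a+s+\g_{\star}<3-\delta$, to obtain a bound of order $\g^{s/(s+1)}|\log\g|\,\|g_{\g}\|_{L^{1}(\w_{a+s+\g})}\,\|\bm{S}_{\g}\|_{L^{1}(\w_{a+s+\g})}$; the uniform moment bound of Theorem~\ref{theo:Unique} controls $\|\bm{S}_{\g}\|_{L^{1}(\w_{a+s+\g})}$, and Lemma~\ref{lem:Diffmom} applied to $g_{\g}$ lets me absorb the extra $\g+s$ weights into $\|g_{\g}\|_{\X_{a}}$ at the cost of a constant independent of $\g$.

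Putting these pieces together gives
\begin{equation*}
|\mathscr{I}_{0}(g_{\g},\bm{G}_{0})|\leq \Bigl(C_{a}\overline{\eta}(\g)+\bm{C}\,\g^{\frac{s}{s+1}}|\log\g|\Bigr)\|g_{\g}\|_{\X_{a}},
\end{equation*}
which is the claimed estimate with $\bar{\eta}_{0}(\g):=C_{a}\overline{\eta}(\g)+\bm{C}\,\g^{s/(s+1)}|\log\g|\to 0$ as $\g\to 0^{+}$, after choosing $\g_{\star}$ small enough for Theorem~\ref{theo:sta}, Corollary~\ref{L2-weighted} and Lemma~\ref{lem:Diffmom} to apply simultaneously. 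The only potential obstacle is guaranteeing that the weighted $L^{1}$ norm of $g_{\g}$ that appears in Lemma~\ref{lem:IgfgI0} can indeed be controlled by $\|g_{\g}\|_{\X_{a}}$: this is precisely the role played by Lemma~\ref{lem:Diffmom}, applied to $\Gg^{1},\Gg^{2}$, which requires that $a+s+\g$ be admissible — a constraint that will be compatible with the constraint $2<a<3$ for any sufficiently small $s>0$ and $\g\in(0,\g_{\star}(a,s))$.
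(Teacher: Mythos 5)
Your proof is correct and follows essentially the same route as the paper: the paper also writes $2\mathscr{I}_{0}(g_{\g},\bm{G}_{0})=\mathscr{I}_{0}(g_{\g},\bm{G}_{0}-\Gg^{1})+\mathscr{I}_{0}(g_{\g},\bm{G}_{0}-\Gg^{2})+\bigl(\mathscr{I}_{0}-\mathscr{I}_{\g}\bigr)(g_{\g},\Gg^{1}+\Gg^{2})$ using $\mathscr{I}_{\g}(g_{\g},\Gg^{1}+\Gg^{2})=0$, which is exactly your decomposition after multiplying by $2$ and expanding $\bm{S}_{\g}$, and it then invokes the same four ingredients (Lemma \ref{lem:I0fg}, Lemma \ref{lem:IgfgI0}, Theorem \ref{theo:sta}, Lemma \ref{lem:Diffmom}) in the same way. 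Your use of the symmetrised notation $\bm{S}_{\g}$ is a minor cosmetic difference.
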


\begin{proof}  Let $\delta$ and $\g_\star$ be defined as in the proof of Theorem \ref{theo:sta}. For such $\delta$ and $\g_\star$, the results of Theorem \ref{theo:Unique} and Corollary \ref{L2-weighted} hold and $a+\g_\star<3-\delta$. For $\g\in(0,\g_\star)$,  $\Gg^{1},\Gg^{2} \in \mathscr{E}_{\g}$, let $g_{\g}=\Gg^{1}-\Gg^{2}.$ One notices that
\begin{multline*}
2\mathscr{I}_{0}({g_{\g}},\bm{G}_{0})=\mathscr{I}_{0}(g_{\g},\bm{G}_{0}-\Gg^{1})+\mathscr{I}_{0}(g_{\g},\bm{G}_{0}-\Gg^{2})+
\mathscr{I}_{0}(g_{\g},\Gg^{1}+\Gg^{2})\\
=\mathscr{I}_{0}(g_{\g},\bm{G}_{0}-\Gg^{1})+\mathscr{I}_{0}(g_{\g},\bm{G}_{0}-\Gg^{2})\\
+\left(\mathscr{I}_{0}(g_{\g},\Gg^{1}+\Gg^{2})-\mathscr{I}_{\g}(g_{\g},\Gg^{1}+\Gg^{2})\right)
\end{multline*}
since 
$$\mathscr{I}_{\g}(\Gg^{1}-\Gg^{2},\Gg^{1}+\Gg^{2})= {\mathscr{I}_{\g}(\Gg^{1},\Gg^{1})-\mathscr{I}_{\g}(\Gg^{2},\Gg^{2})}=0$$
for $\Gg^{1},\Gg^{2} \in \mathscr{E}_{\g}.$ One invokes then Lemma \ref{lem:I0fg} and \ref{lem:IgfgI0} to deduce that, for any $s >0$ such that  $s+\g_\star+a<3-\delta$, there are $C_{a} >0,C_{a,s,2} >0$ such that
\begin{multline*}
\left|\mathscr{I}_{0}( {g_{\g}},\bm{G}_{0})\right| \leq C_{a}\left(\left\|\bm{G}_{0}-\Gg^{1}\right\|_{\X_{a}}+\left\|\bm{G}_{0}-\Gg^{2}\right\|_{\X_{a}}\right)\|g_{\g}\|_{\X_{a}}\\
+C_{a,s,2}\g^{\frac{s}{s+1}}|\log \g|\,\|\Gg^{1}+\Gg^{2}\|_{\X_{a}}\|g_{\g}\|_{\X_{a}}\\
+  {12}\,\g^{\frac{s}{s+1}} \bigg(2\|g_{\g}\|_{\X_{s+\gamma+a}}\|\Gg^{1}+\Gg^{2}\|_{\X_{a+s+\g}}
     +\|\Gg^{1}+\Gg^{2}\|_{ L^2(\w_{a})}\|g_{\g}\|_{\X_{a}}\bigg).
\end{multline*} 
Using Lemma \ref{lem:Diffmom} again, for $s>0$ small enough  and $\g$ small enough so that $\g+\frac23(a+s)\le a$ (that is $\g+\frac{2s}3\le \frac{a}3$), one has $\|g_{\g}\|_{\X_{a+s+\g}} \leq C_{a,s}\|g_{\g}\|_{\X_{a}}$ and, thanks to the uniform bounds on $\|\Gg^{i}\|_{L^{2}(\w_{a})}$ and $\|\Gg^{i}\|_{\X_{a+s+\g}}$ $(i=1,2)$  given by Theorem \ref{theo:Unique} and Corollary \ref{L2-weighted}  together with Theorem \ref{theo:sta}, we deduce the result.
\end{proof}
 
We are in position to prove our main result regarding the steady solution to \eqref{eq:steadyg} following the strategy described before.
\begin{proof}[Proof of Theorem \ref{theo:mainUnique}]  Let $\delta$ and $\g_\star$ be defined as in the proof of Lemma \ref{lem:lastI0}. For $\g\in(0,\g_\star)$,  $\Gg^{1},\Gg^{2} \in \mathscr{E}_{\g}$, let $g_{\g}=\Gg^{1}-\Gg^{2}.$ 
Since $\mathscr{L}_{0}$ is invertible on $\mathbb{Y}_{a}^{0}$, there exists a \emph{unique} $\tilde{g}_{\g} \in \mathbb{Y}^{0}_{a}$ such that
$$\mathscr{L}_{0}(\tilde{g}_{\g})=\mathscr{L}_{0}(g_{\g}).$$
It remains to prove the estimate \eqref{eq:tildeg} 
between $\|\tilde{g}_{\g}\|_{\X_{a}}$ and $\|g_{\g}\|_{\X_{a}}$. To do so, we actually prove that $\tilde{g}_{\g}-g_{\g} \in \mathrm{Span}(\varphi_{0})$, more precisely
\begin{equation}\label{eq:exp}
g_{\g}=\tilde{g}_{\g}+z_{0}\varphi_{0}, \qquad z_{0}=\frac{1}{p_{0}}\mathscr{I}_{0}(g_{\g}-\tilde{g}_{\g},\bm{G}_{0})\end{equation}
where $p_{0}=\mathscr{I}_{0}(\varphi_{0},\bm{G}_{0}).$ Indeed, writing $\bar{g}_{\g}=\tilde{g}_{\g}+z_{0}\varphi_{0}$ one sees that, since $\varphi_{0} \in \mathrm{Ker}(\mathscr{L}_{0})$
$$\mathscr{L}_{0}(\bar{g}_{\g})=\mathscr{L}_{0}(\tilde{g}_{\g})=\mathscr{L}_{0}(g_{\g})$$
while, obviously, the choice of $z_{0}$ implies that
$$\mathscr{I}_{0}(\bar{g}_{\g},\bm{G}_{0})=\mathscr{I}_{0}(g_{\g},\bm{G}_{0}).$$
From Lemma \ref{lem:roleI0}, this implies that $M_{2}(\bar{g}_{\g}-g_{\g})=0$ and $\bar{g}_{\g}-g_{\g}=0$. This proves \eqref{eq:exp}. Consequently,
\begin{equation*}\begin{split}
\|g_{\g}\|_{\X_{a}} \leq \|\tilde{g}_{\g}\|_{\X_{a}} + |z_{0}|\,\|\varphi_{0}\|_{\X_{a}} &\leq \|\tilde{g}_{\g}\|_{\X_{a}}+ \frac{\|\varphi_{0}\|_{\X_{a}}}{|p_{0}|}\left|\mathscr{I}_{0}(g_{\g}-\tilde{g}_{\g},\bm{G}_{0})\right|\\
&\leq \|\tilde{g}_{\g}\|_{\X_{a}}+ \frac{\|\varphi_{0}\|_{\X_{a}}}{|p_{0}|}\left(|\mathscr{I}_{0}(g_{\g},\bm{G}_{0})|+|\mathscr{I}_{0}(\tilde{g}_{\g},\bm{G}_{0})|\right)\end{split}\end{equation*}
by definition of $z_{0}.$ According to Lemma  \ref{lem:I0fg}, there is $C_{0} >0$ such that 
$$|\mathscr{I}_{0}(\tilde{g}_{\g},\bm{G}_{0})| \leq C_{0}\|\tilde{g}_{\g}\|_{\X_{a}}.$$
Therefore, there are $C_{1}, C_{2}>0$ (independent of $\g$) such that
\begin{equation}\label{eq:ggI0}
\|g_{\g}\|_{\X_{a}} \leq C_{1}\|\tilde{g}_{\g}\|_{\X_{a}}+ C_{2} \left|\mathscr{I}_{0}(g_{\g},\bm{G}_{0})\right|.\end{equation}
Using now \eqref{eq:diffGg1-2}, we deduce that
$$\|g_{\g}\|_{\X_{a}} \leq C_{1}\|\tilde{g}_{\g}\|_{\X_{a}}+C_{2}\bar{\eta}_{0}(\g)\|g_{\g}\|_{\X_{a}}$$
and, since $\lim_{\g\to0}\bar{\eta}_{0}(\g)=0$, we can choose $\g^{\star} >0$ small enough so that $C_{2}\bar{\eta}_{0}(\g) \leq \frac{1}{2}$ for any $\g \in (0,\g^{\star})$ so that
$$\frac{1}{2}\|g_{\g}\|_{\X_{a}} \leq C_{1}\|\tilde{g}_{\g}\|_{\X_{a}}, \qquad \forall \g \in (0,\g^{\star}).$$
With the strategy described before, we deduce that the function $\tilde{g}_{\g}$ and $g_{\g}$ satisfies \eqref{eq:kernL0g}--\eqref{eq:tildeGg} and \eqref{eq:tildeg} with $C=2C_{1}$. In particular, we deduce from \eqref{eq:tildeGg} that
$$\frac{\nu}{C(\nu)}\,\frac{1}{2C_{1}}\|g_{\g}\|_{\X_{a}} \leq \tilde{\eta}(\g)\,\|g_{\g}\|_{\X_{a}}$$
and, since $\lim_{\g\to0}\tilde{\eta}(\g)=0$, there exists $\g^{\dagger} >0$ small enough so that
$$\|g_{\g}\|_{\X_{a}} < \|g_{\g}\|_{\X_{a}}$$
which implies that $g_{\g}=0$ and proves the result. 
 \end{proof}
 
 \subsection{Quantitative estimate on $\g^{\dagger}$}\label{sec:quantg}
In order to make Theorem \ref{theo:mainUnique} fully exploitable, we need to be able to quantitatively estimate the threshold parameter $\g^{\dagger}$. From the above proof, this amounts to some quantitative estimate on the mapping $\tilde{\eta}(\g)$. As already observed in Remark \ref{rem:nonexplicit}, the only non fully quantitative estimate in the definition of $\tilde{\eta}(\g)$ comes from the mapping $\overline{\eta}(\g)$ in Theorem \ref{theo:sta}. In this subsection, we briefly explain how it is possible to derive such a quantitative estimate. We keep the presentation slighlty informal here just to stress out the main steps of the estimates. The crucial point is then to estimate the rate of convergence of 
$$\|\Gg-\bm{G}_{0}\|_{L^{1}(\w_{a})}$$
to zero as $\g \to 0.$
To do so, we briefly resume the main steps in our proof of uniqueness and introduce, for $\Gg \in \mathscr{E}_{\g}$,
$$h_{\g}=\bm{G}_{0}-\Gg.$$
One sees easily that
$$\mathscr{L}_{0}(h_{\g})=\Q_{0}(h_{\g},h_{\g})+\left[\Q_{\g}(\Gg,\Gg)-\Q_{0}(\Gg,\Gg)\right]$$
which results in
\begin{equation*}\label{eq:L0h}
\|\mathscr{L}_{0}(h_{\g})\|_{\X_{a}} \leq C_{0}\|h_{\g}\|_{\X_{a}}^{2}+C_{0}\g^{\frac{s}{s+1}}\left(1+|\log\g|\right)\end{equation*}
for some positive $C_{0}$ independent of $\g$ (see Lemma \ref{N0_Ggamma} for a similar reasoning). Now, as before, there exists $\tilde{h}_{\g} \in \mathbb{Y}^{0}_{a}$ such that
$$\frac{\nu}{C(\nu)}\|\tilde{h}_{\g}\|_{\X_{a}} \leq \|\mathscr{L}_{0}(\tilde{h}_{\g})\|_{\X_{a}}=\|\mathscr{L}_{0}h_{\g}\|_{\X_{a}}.$$
Therefore, there is $C >0$ independent of $\g$ such that
\begin{equation}\label{eq:tildeh}
\|\tilde{h}_{\g}\|_{\X_{a}} \leq C\|h_{\g}\|_{\X_{a}}^{2}+C\g^{\frac{s}{s+1}}\left(1+|\log \g|\right)\end{equation}
and we need to compare again $\|\tilde{h}_{\g}\|_{\X_{a}}$ to $\|h_{\g}\|_{\X_{a}}.$ As in Eq. \eqref{eq:ggI0}
\begin{equation}\label{eq:hgtildeh}
\|h_{\g}\|_{\X_{a}} \leq C_{1}\|\tilde{h}_{\g}\|_{\X_{a}}+C_{2}\left|\mathscr{I}_{0}(h_{\g},\bm{G}_{0})\right|\end{equation}
for $C_{1},C_{2}$ independent of $\g.$ Now, one checks without major difficulty that
$$2\mathscr{I}_{0}(h_{\g},\bm{G}_{0})=\mathscr{I}_{0}(h_{\g},h_{\g})+\left[\mathscr{I}_{\g}(\Gg,\Gg)-\mathscr{I}_{0}(\Gg,\Gg)\right]$$
where we used that $\mathscr{I}_{0}(\bm{G}_{0},\bm{G}_{0})=\mathscr{I}_{\g}(\Gg,\Gg)=0.$ Thus,{ with Lemmas \ref{lem:I0fg}, \ref{lem:IgfgI0}, Theorem \ref{theo:Unique} and Corollary \ref{L2-weighted}}, we deduce that
$$\left|\mathscr{I}_{0}(h_{\g},\bm{G}_{0})\right| \leq C_{3}\|h_{\g}\|_{\X_{a}}^{2}+C_{3}\g^{\frac{s}{s+1}}{(1+|\log\g|)}$$
for some $C_{3} >0$ independent of $\g$. Summing up this estimate with \eqref{eq:tildeh} and \eqref{eq:hgtildeh} one sees that there exists a positive constant $\bm{c}_{0} >0$ independent of $\g$ such that
$$\|h_{\g}\|_{\X_{a}} \leq \bm{c}_{0}\|h_{\g}\|_{\X_{a}}^{2} +\bm{c}_{0}\g^{\frac{s}{s+1}}\left(1+|\log\g|\right).$$
Now, since we know that $\lim_{\g\to0}\|h_{\g}\|_{\X_{a}}=0$ (without an explicit rate at this stage), there exists $\g_{0} >0$ (non explicit) such that
$$\bm{c}_{0}\|h_{\g}\|_{\X_{a}} \leq \frac{1}{2} \qquad \forall \g \in (0,\g_{0})$$
and therefore
$$\|h_{\g}\|_{\X_{a}} \leq 2\bm{c}_{0}\g^{\frac{s}{s+1}}\left(1+|\log\g|\right) \qquad \forall \g \in (0,\g_{0}).$$
Such an estimate provides actually an explicit estimate for $\g_{0}$ since the optimal parameter becomes clearly the one for which the two last estimates are identity yielding
$$\g_{0}^{\frac{s}{s+1}}\left(1+{|\log\g_{0}|}\right)=\frac{1}{4\bm{c}_{0}^{2}}.$$
This provides then an explicit rate of convergence of $\Gg$ to $\bm{G}_{0}$ as 
$$\|\Gg-\bm{G}_{0}\|_{\X_{a}} \leq 2\bm{c}_{0}\g^{\frac{s}{s+1}}\left(1+|\log\g|\right) \qquad \forall \g \in (0,\g_{0})$$
for some explicit $\g_{0}.$ This makes explicit the mapping $\overline{\eta}(\g)$ in Theorem \ref{theo:sta} and, in turns, provides some quantitative estimates on the parameter $\g^{\dagger}$ in Theorem \ref{theo:mainUnique}.

\section{The case of Maxwell molecules revisited}
\label{sec:exp}

This whole Section is devoted to the special case of Maxwell molecules, corresponding to $\g=0$, which as already observed, is the pivot case around which our analysis revolves for our perturbation analysis. We collect here several results, some of them of broader interest than the mere use we make of them in the previous part of the paper. We begin with revisiting the exponential convergence to equilibrium obtained in  \cite{MR2355628}. Let us recall that, generally speaking, the analysis of  Boltzmann-like models with Maxwellian interaction essentially renders explicit formulas that allow for a very precise analysis (we refer to \cite{boby} for an extensive study).

More precisely, we consider the following equation already in self-similar variables  \begin{equation}
  \label{eq:IB-selfsim}
  \p_t g = -\frac14 \p_x (xg) + \Q_{0}(g,g),
\end{equation}
with initial condition $g(0,x)=f_{0}(x)$ which, using Galilean invariance, we will always assume to be such that
\begin{equation}
  \label{eq:normalisation-f0}
  \int_{\R} f_0(x) \dx = 1,
  \qquad
  \int_{\R} xf_0(x) \dx = 0,
  \qquad
  \int_{\R} x^2 f_0(x) \dx = 1.
\end{equation}

Notice that, as said in the introduction, we chose in \eqref{eq:IB-selfsim} the parameter $c=\frac{1}{4}$ which is, in the special case of Maxwell molecules, the only one which provides energy conservation and, as such, it holds at least formally that\begin{equation}
  \label{eq:normalisation-g}
  \int_{\R} g(t,x) \dx = 1,
  \qquad
  \int_{\R} xg(t,x) \dx = 0,
  \qquad
  \int_{\R} x^2g(t,x) \dx = 1.
\end{equation}
The collision operator for Maxwell molecules is given by \begin{align*}
  \Q_{0}(f,g)(x)
  &= \int_\R f\left(x + \frac{y}{2}\right) g\left(x - \frac{y}{2}\right) \dy
  - \frac12 f(x) \int_\R g(y) \dy
  - \frac12 g(x) \int_\R f(y) \dy
  \\
  &=: \Q_{0}^+(f,g) - \Q_{0}^-(f,g)
\end{align*}
Notice that $\Q_{0}^+$ can be written as
\begin{equation*}\begin{split}
  \Q_{0}^+(f,g)(x) &= \int_\R f\left(x + \frac{y}{2}\right) g\left(x - \frac{y}{2}\right) \dy
  = 2 \int_\R f(x + y) g(x - y) \dy
  \\
 & = 2 \int_\R f(y) g(2x - y) \dy
  = 2 (f\ast g)(2x).
\end{split}\end{equation*}
Alternatively, in weak form we have
\begin{equation}
  \label{eq:weak}
  \int_\R \Q_{0}(f,g)(x) \phi(x) \dx
  =
  \int_\R \int_\R f(x) g(y) \left(
    \phi\Big(\frac{x+y}{2}\Big) - \frac12\phi(x) - \frac12\phi(y)
  \right) \dx \dy.
\end{equation}

We will refer to equation \eqref{eq:IB-selfsim} as the
\emph{self-similar equation} for Maxwell molecules. If we define the Fourier transform of
$g$ as
\begin{equation*}
  \varphi(t, \xi) := \int_{\R} g(t,x) e^{-ix \xi} \dx,
\end{equation*}
then $\varphi(t,\xi)$ satisfies
\begin{equation}
  \label{eq:selfsim-fourier}
  \p_t \varphi(t,\xi) = \frac14 \,\xi  \,\p_\xi \varphi(t,\xi)
  + \varphi\Big(t, \frac{\xi}{2} \Big)^2 - \varphi(t,\xi), 
\end{equation}
with the initial condition $\varphi(0,\cdot)= \int_{\R} f_0(x) e^{-ix \xi} \dx=:\varphi_0$.
Due to \eqref{eq:normalisation-g}, $\varphi$ satisfies for all
$t \geq 0$ that
\begin{equation}
  \label{eq:normalisation-varphi}
  \varphi(t,0) = 1,
  \qquad
  \p_\xi \varphi(t,0) = 0,
  \qquad
  \p_\xi^2 \varphi(t,0) = -1.
\end{equation}
In particular, 
\begin{equation}\label{eq:Phi}
  \bm{\Phi}(\xi) = (1 + |\xi|) e^{-|\xi|}
\end{equation}
is a steady solution to \eqref{eq:selfsim-fourier} and this is exactly the Fourier transform of the steady solution $\bm{H}$ defined in Theorem \ref{theo:bob}.

\subsection{Exponential convergence to equilibrium}\label{Sec:conv:eq:fourier} We investigate here the convergence to equilibrium for solutions to \eqref{eq:IB-selfsim} and show the following
 \begin{theo}\phantomsection\label{k-norm-cvgce}
  Assume that $g = g(t,x)$ is a nonnegative solution to
  \eqref{eq:IB-selfsim} with the normalisation
  \eqref{eq:normalisation-g}, and call $\varphi = \varphi(t,\xi)$ its
  Fourier transform in the $x$ variable. Then, for $0 \leq k < 3$, and
  for all $t \geq 0$,
  \begin{equation*}
   \vertiii{\varphi(t) - \bm{\Phi}}_{k} \leq e^{-\sigma_{k} t} \vertiii{\varphi_0 - \bm{\Phi}}_{k}
    \qquad
    \text{with}  \qquad \sigma_{k} := 1 - \frac14 k - 2^{1-k}\,.
  \end{equation*}
  In particular, $g(t)$ converges exponentially to $\bm{H}$ in the $k$-Fourier
  norm for any $2 < k < 3$.

More generally, for  $p\ge1$,  $\frac{1}{p} < k < 3+\frac1p$, and
  for all $t \geq 0$,
  \begin{equation*}
    \vertiii{\varphi(t) - \bm{\Phi}}_{k,p} \leq e^{-\sigma_{k}(p) t} \vertiii{\varphi_0 - \bm{\Phi}}_{k,p}
    \qquad
    \text{with} \qquad \sigma_{k}(p):= 1 - \frac14 k + \frac{1}{4p} - 2^{1 + \frac{1}{p} - k}\,.
  \end{equation*}
  In particular, $g$ converges exponentially to $\bm{H}$ in the $k$-Fourier
  norm for any $(k,p)$ such that $\sigma_{k}(p)>0$.
\end{theo}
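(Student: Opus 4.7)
The plan is to work with $\psi(t,\xi):=\varphi(t,\xi)-\bm{\Phi}(\xi)$, which by \eqref{eq:selfsim-fourier} and the fact that $\bm{\Phi}$ is a steady state satisfies
$$\partial_t\psi(t,\xi)-\tfrac{1}{4}\xi\,\partial_\xi\psi(t,\xi)=-\psi(t,\xi)+\bigl(\varphi(t,\tfrac{\xi}{2})+\bm{\Phi}(\tfrac{\xi}{2})\bigr)\,\psi(t,\tfrac{\xi}{2}),$$
where I used the factorisation $\varphi(\xi/2)^2-\bm{\Phi}(\xi/2)^2=(\varphi(\xi/2)+\bm{\Phi}(\xi/2))\psi(\xi/2)$. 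The crucial a priori bound is that $|\varphi(t,\xi)|\le 1$ (since $g(t)$ is a probability density) and $|\bm{\Phi}(\xi)|\le 1$, so $|\varphi(t,\xi/2)+\bm{\Phi}(\xi/2)|\le 2$. Also, the normalisation \eqref{eq:normalisation-varphi} together with $\bm{\Phi}(0)=1$, $\bm{\Phi}'(0)=0$, $\bm{\Phi}''(0)=-1$ gives $\psi(t,0)=\partial_\xi\psi(t,0)=\partial_\xi^2\psi(t,0)=0$, so $\psi$ vanishes to third order at $\xi=0$; this is what guarantees finiteness of $\vertiii{\psi(0)}_k$ for $k<3$ and of $\vertiii{\psi(0)}_{k,p}$ for $k<3+\tfrac{1}{p}$.

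For the sup-norm statement I would use the method of characteristics. Along $\xi(s)=\xi_0 e^{-s/4}$ one has $\frac{d}{ds}\psi(s,\xi(s))=\partial_t\psi-\tfrac{\xi}{4}\partial_\xi\psi$, so writing $N_k(s):=\vertiii{\psi(s)}_k$ and using $|\psi(s,\xi(s)/2)|\le 2^{-k}|\xi(s)|^k N_k(s)$ together with $|\xi(s)|^{-k}=|\xi_0|^{-k}e^{ks/4}$, I obtain the pointwise inequality
$$\frac{d}{ds}\!\left(\frac{|\psi(s,\xi(s))|}{|\xi_0|^k}\right)\le \left(\tfrac{k}{4}-1\right)\frac{|\psi(s,\xi(s))|}{|\xi_0|^k}+2^{1-k}N_k(s).$$
Multiplying by $e^{(1-k/4)s}$, integrating in $s$, and taking the supremum over $\xi_0$ (noting that $\xi(t)$ ranges over $\R\setminus\{0\}$ as $\xi_0$ does) yields $N_k(t)\le e^{-(1-k/4)t}N_k(0)+2^{1-k}\int_0^t e^{-(1-k/4)(t-s)}N_k(s)\,ds$. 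Gronwall's lemma applied to $M(t)=e^{(1-k/4)t}N_k(t)$ gives $M(t)\le M(0)e^{2^{1-k}t}$, that is $N_k(t)\le e^{-\sigma_k t}N_k(0)$.

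For the $L^p$ version I would compute directly $\frac{d}{dt}\vertiii{\psi(t)}_{k,p}^p$. From the PDE,
$$\partial_t|\psi|^p=\tfrac{\xi}{4}\partial_\xi|\psi|^p-p|\psi|^p+p\,\mathrm{Re}\!\left(|\psi|^{p-2}\bar{\psi}\,(\varphi(\tfrac{\xi}{2})+\bm{\Phi}(\tfrac{\xi}{2}))\,\psi(\tfrac{\xi}{2})\right).$$
The drift term is handled by integration by parts: $\int_\R \tfrac{\xi\,\partial_\xi|\psi|^p}{4|\xi|^{kp}}\,d\xi=\tfrac{kp-1}{4}\vertiii{\psi}_{k,p}^p$ since $\partial_\xi(\xi/|\xi|^{kp})=(1-kp)|\xi|^{-kp}$; the boundary terms vanish thanks to the vanishing of $\psi$ at $\xi=0$ (given the constraint $k<3+\tfrac{1}{p}$) and at infinity. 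For the remaining term I bound $|\varphi+\bm{\Phi}|\le 2$ and apply Hölder with exponents $p/(p-1)$ and $p$ after splitting $|\xi|^{-kp}=|\xi|^{-k(p-1)}\cdot|\xi|^{-k}$; the substitution $\eta=\xi/2$ then gives $\int |\psi(\xi/2)|^p|\xi|^{-kp}\,d\xi=2^{1-kp}\vertiii{\psi}_{k,p}^p$, so the contribution of this term is bounded by $p\,2^{1+1/p-k}\vertiii{\psi}_{k,p}^p$. Collecting everything produces $\frac{d}{dt}\vertiii{\psi}_{k,p}^p\le -p\sigma_k(p)\vertiii{\psi}_{k,p}^p$, and integration yields the claimed exponential decay.

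The main technical point to be careful about is the justification of the integration by parts at $\xi=0$ in the $L^p$ case and the validity of the computations with characteristics at $\xi_0=0$; both are handled by the three-fold vanishing of $\psi$ at the origin encoded in \eqref{eq:normalisation-varphi}, which makes the relevant integrands and pointwise quantities well defined on all of $\R$. A minor subtlety is that $\psi$ is complex-valued, so deriving the differential inequality for $|\psi|$ from the one for $\psi$ uses only $\frac{d}{ds}|f|\le|\frac{df}{ds}|$ and the real-valued multiplier $(k/4-1)$; no issue arises.
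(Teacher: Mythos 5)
Your proof is correct, and the two halves relate differently to the paper's argument. For the sup-norm estimate your characteristics computation is essentially the paper's proof in disguise: the solution operator along the characteristics $\xi(s)=\xi_0 e^{-s/4}$ with the damping $e^{-s}$ is exactly the semigroup $T(t)\phi(\xi)=e^{-t}\phi(\xi e^{t/4})$ used in the paper, your integrated inequality is the same Duhamel inequality $\vertiii{\psi(t)}_k\le e^{-(1-k/4)t}\vertiii{\psi_0}_k+2^{1-k}\int_0^t e^{-(1-k/4)(t-s)}\vertiii{\psi(s)}_k\,ds$, and Gronwall finishes identically. (One notational slip: in your displayed pointwise inequality the denominator should be $|\xi(s)|^k$ rather than $|\xi_0|^k$ — it is only after dividing by the moving weight $|\xi(s)|^k$ that the coefficient $\frac{k}{4}-1$ appears and the inhomogeneity becomes $2^{1-k}N_k(s)$ with no extra exponential factor; the version with $|\xi_0|^k$ is a strictly weaker inequality from which the stated Duhamel bound does not follow directly, since $\sup_{\xi_0}|\psi(t,\xi(t))|/|\xi_0|^k=e^{-kt/4}N_k(t)$.) For the $\vertiii{\cdot}_{k,p}$ estimate you genuinely depart from the paper: instead of repeating the Duhamel/Gronwall scheme with $\vertiii{T(t)h}_{k,p}=e^{-\alpha_p t}\vertiii{h}_{k,p}$ and $\vertiii{A(s)}_{k,p}\le 2^{1+1/p-k}\vertiii{\psi(s)}_{k,p}$, you differentiate $\vertiii{\psi(t)}_{k,p}^p$ directly, integrate the drift by parts (yielding $\frac{kp-1}{4}\vertiii{\psi}_{k,p}^p$), and Hölder the gain term; the constants assemble to the same $\sigma_k(p)$. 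This is a clean alternative, but it buys the sharper-looking differential inequality at the price of having to justify differentiability of $t\mapsto\vertiii{\psi(t)}_{k,p}^p$ and the absence of boundary terms at $\xi=0$ and $\xi=\infty$ (and it needs the trivial separate treatment of $p=1$, where Hölder is not invoked); the paper's mild-solution Duhamel route sidesteps these regularity issues entirely. Both arguments rest on the same two ingredients you correctly isolate: $|\varphi|,|\bm{\Phi}|\le 1$ and the third-order vanishing of $\psi$ at $\xi=0$ from \eqref{eq:normalisation-varphi}.
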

 \begin{proof} We begin with the first part of the proof, corresponding to the special case $p=\infty.$\\

\noindent \textit{$\bullet$ The case $p=\infty$.} Assume that $g=g(t,x)$ is a solution to \eqref{eq:IB-selfsim} with the normalisation
\eqref{eq:normalisation-g}, and call $\varphi = \varphi(t,\xi)$ its
Fourier transform as before. Then $\varphi(t,\cdot)$ is a solution to
\eqref{eq:selfsim-fourier} with the normalisation
\eqref{eq:normalisation-varphi}, and we may take the difference with $\bm{\Phi}$ given in \eqref{eq:Phi}
$$\psi(t,\xi)
:= \varphi(t,\xi) - \bm{\Phi}(\xi)$$ to find that
\begin{equation}\label{nlinear-self-similar}
  \p_t \psi(t,\xi) =
  \frac14 \xi  \p_\xi \psi(t,\xi)
  + \psi\Big(t, \frac{\xi}{2} \Big)
  \left( \varphi\Big(t, \frac{\xi}{2} \Big)
    + \bm{\Phi}\Big(\frac{\xi}{2}
    \Big)  \right)
  - \psi(t,\xi).
\end{equation}
If we call $\left(T(t)\right)_{t\geq0}$ the semigroup associated to the operator $\psi \mapsto \frac14 \xi
\p_\xi \psi - \psi$, given by
\begin{equation*}
  T(t) \phi(\xi) := e^{-t} \phi( \xi e^{\frac{1}{4}t}),
\end{equation*}
then by Duhamel's formula we can write
\begin{equation}
  \label{eq:decay-Duhamel}
  \psi(t) = T(t) \psi_0 + \int_0^t T(t-s) A(s) \d s,\,
\end{equation}
where we denote $\psi(t)=\psi(t,\xi)$ and
\begin{equation*}
  A(s)=A(s,\xi) := \psi\Big(s, \frac{\xi}{2} \Big)
  \left( \varphi\Big(s, \frac{\xi}{2} \Big)
    + \bm{\Phi}\Big(\frac{\xi}{2}
    \Big)  \right).
\end{equation*}
Now we notice that, for any $h$ such that $\vertiii{h}_{k}$ is finite,
\begin{equation}
  \label{eq:decay1}
  \vertiii{T(t) h}_{k}
  = e^{-t} \sup_{\xi \neq 0} \frac{|h(\xi e^{\frac14 t})|}{|\xi|^k}
  = e^{-(1 - \frac14 k)t}
  \sup_{\xi \neq 0} \frac{|h(\xi e^{\frac14 t})|}{|\xi e^{\frac14 t}|^k}
  = e^{-(1 - \frac14 k)t} \vertiii{h}_k.
\end{equation}
On the other hand,
\begin{equation*}
  |A(s,\xi)| \leq 2 \left|\psi\left(s, \frac{\xi}{2}\right)\right|,
\end{equation*}
since $\|\varphi\|_\infty =
\|\bm{\Phi}\|_\infty = 1$ (recall that both $g$ and $\bm{H}$ have unit mass). This implies
\begin{equation}
  \label{eq:decay2}
   \vertiii{A(s)}_{k}
  \leq
  2 \sup_{\xi \neq 0} \frac{|\psi(s, \frac{\xi}{2})|}{|\xi|^k}
  =
  2^{1-k} \sup_{\xi \neq 0} \frac{|\psi(s, \frac{\xi}{2})|}{|\frac{\xi}{2}|^k}
  =
  2^{1-k}  \vertiii{\psi(s)}_{k}.
\end{equation}
Notice that $ \vertiii{\psi(t,\cdot)}_{k} < +\infty$ for all $0 \leq k < 3$,
since $\psi$ is a $\mathcal{C}^2$ function in $\xi$ with
$\psi(t,0) = \p_\xi \psi(t,0) = \p_\xi^2 \psi(t,0) = 0$. Using
\eqref{eq:decay1} and \eqref{eq:decay2} in \eqref{eq:decay-Duhamel} we
see that
\begin{multline*}
   \vertiii{\psi(t)}_k \leq
\vertiii{T(t) \psi_0}_{k} + \int_0^t \vertiii{T(t-s) A(s)}_{k} \d s
  \\
  \leq
  e^{-(1 - \frac14 k) t}\vertiii{\psi_0}_{k}
  +
  \int_0^t e^{-(1 - \frac14 k) (t-s)} \vertiii{A(s)}_k \d s
  \\
  \leq
  e^{-(1 - \frac14 k) t}\vertiii{\psi_0}_{k}
  +
  2^{1-k} \int_0^t e^{-(1 - \frac14 k) (t-s)} \vertiii{\psi(s)}_{k} \d s\,,
\end{multline*}
which immediately gives by Gronwall's lemma that
\begin{equation*}
 \vertiii{\psi(t)}_{k} \leq e^{-\sigma t} \vertiii{\psi_0}_{k}
  \qquad
  \text{with $\sigma := 1 - \frac14 k - 2^{1-k}$.}
\end{equation*}
We deduce the exponential convergence in Theorem \ref{k-norm-cvgce} with rate $\sigma > 0$ for $2 < k < 3$.

\noindent \textit{$\bullet$ The general case $p\geq1.$} For $1 \leq p < \infty$, we recall that the norms $\vertiii{\cdot}_{k,p}$, defined in \eqref{def:normkp}, are given by
\begin{equation*}
 \vertiii{\psi}_{k,p}^p := \ir \frac{|\psi(\xi)|^p}{|\xi|^{kp}} \d \xi,
\end{equation*}
and are well-defined if $|\psi(\xi)| \leq \min\{1 ,C|\xi|^3\}$ for some $C>0$  and $\frac{1}{p} < k < 3 + \frac{1}{p}$.

  With a similar calculation as before,
\begin{equation*}
  \vertiii{T(t) \psi}_{k,p}
  = e^{-\alpha_p t} \vertiii{\psi}_{k,p}
\end{equation*}
with
\begin{equation*}
  \alpha_p := 1 - \frac14 k + \frac{1}{4p}.
\end{equation*}
Also,
\begin{equation*}
  \vertiii{A(s)}_{k,p}
  \leq
  2^{1 + \frac{1}{p} - k} \vertiii{\psi(s)}_{k,p},
\end{equation*}
so we can repeat the same argument to obtain
\begin{multline*}
  \vertiii{\psi(t)}_{k,p} \leq
  \vertiii{T(t) \psi_0}_{k,p} + \int_0^t \vertiii{T(t-s) A(s)}_{k,p} \d s
  \\
  \leq
  e^{-\alpha_p t}\vertiii{\psi_0}_{k,p}
  +
  \int_0^t e^{-\alpha_p (t-s)} \vertiii{A(s)}_{k,p} \d s
  \\
  \leq
  e^{-\alpha_p t}\vertiii{\psi_0}_{k,p}
  +
  2^{1 + \frac{1}{p} - k} \int_0^t e^{-\alpha_p (t-s)} \vertiii{\psi(s)}_{k,p} \d s.
\end{multline*}
 Then one concludes as previously using Gronwall's lemma. \end{proof}
 
\begin{rem}[\textit{\textbf{Invariance by scaling}}]\phantomsection\label{rem:scal}  
The above result holds for solutions $g$ to \eqref{eq:IB-selfsim} satisfying the normalisation \eqref{eq:normalisation-g}. Recall that \eqref{eq:normalisation-g} is preserved by the nonlinear dynamics \eqref{eq:IB-selfsim}. We explain briefly how it applies to solutions of \eqref{eq:IB-selfsim} with positive energy (not necessarily unitary). Namely, assume that $\tilde{g}_{0}$ is an initial datum such that
$$\int_{\R}\tilde{g}_0(x)\dx=1, \qquad \int_{\R}\tilde{g}_{0}(x)\,x\dx=0, \qquad \int_{\R}\tilde{g}_{0}(x)x^2\dx=E >0$$
and let $\tilde{g}(t,x)$ be the associated solution to \eqref{eq:IB-selfsim}. Notice that $\tilde{g}(t,x)$ share the same mass, momentum and energy of $\tilde{g}_0$ for any $t\geq0.$ Setting
$$g_{0}(x)=\lambda\,\tilde{g}_0(\lambda x), \qquad \lambda=\sqrt{E},$$
one sees that $g_0$ satisfies \eqref{eq:normalisation-g}. Denoting by $g(t,x)$ the associated solution to \eqref{eq:IB-selfsim},  the scaling invariance property of $\Q_{0}$ implies that
$$g(t,x)=\lambda\,\tilde{g}(t,\lambda x), \qquad \lambda=\sqrt{E}$$
while Theorem \ref{k-norm-cvgce} asserts that 
 \begin{equation*}
   \vertiii{\varphi(t) - \bm{\Phi}}_{k} \leq e^{-\sigma_{k} t} \vertiii{\varphi_0 - \bm{\Phi}}_{k}
    \qquad
    \text{with}  \qquad \sigma_{k} := 1 - \frac14 k - 2^{1-k}\,,
  \end{equation*} 
 where $\varphi(t)$ is the Fourier transform of $g$ and $\bm{\Phi}$ that of $\bm{H}$. Denoting by  $\widetilde{\varphi}(t,\cdot)$ the Fourier transform of $\tilde{g}(t,\cdot)$, we have 
$$\widetilde{\varphi}(t,\xi)=\varphi(t,\lambda\,\xi) \qquad \text{ and } \qquad \widehat{H}_\lambda(\xi)=\bm{\Phi}(\lambda\,\xi),$$
where $\widehat{H}_\lambda$ is the Fourier transform of the steady solution  
$$H_\lambda(x)=\lambda \bm{H}(\lambda x), \qquad \lambda >0$$
of \eqref{eq:IB-selfsim} with unit mass, zero momentum and energy $E$. Since
$$\vertiii{\widetilde{\varphi}(t)-\widehat{H}_\lambda}_{k}=\lambda^{k}\vertiii{\varphi(t)-\bm{\Phi}}_{k} \qquad \qquad \forall t\geq0$$
one sees that
\begin{equation*}
\vertiii{\widetilde{\varphi}(t) - \widehat{H}_\lambda}_{k} \leq e^{-\sigma_{k} t} \vertiii{\widetilde{\varphi}_0 - \widehat{H}_\lambda}_{k}
    \qquad
    \text{with}  \qquad \sigma_{k} := 1 - \frac14 k - 2^{1-k}\,.
  \end{equation*}
In other words, for any choice of the initial energy $E >0,$ solutions to \eqref{eq:IB-selfsim} relax exponentially fast -- in the $\vertiii{\cdot}_{k}$ norm -- towards the unique steady solution with the prescribed energy $E$.
\end{rem}
 
\subsection{Baseline regularity}\label{sec:baseline}

Let us concentrate the discussion in proving the propagation of baseline regularity of solutions, which in Fourier space follows by showing uniform propagation of decay at infinity.  The argument presented here is an alternative to the one in \cite{FPTT} where propagation of uniform regularity for the equation \eqref{eq:selfsim-fourier} has been proved.  Here the strategy is direct (no iteration/approximation step required) and based purely on comparison.   To this end we present a series of lemmas with the main purpose of proving a comparison principle and showing a proper upper barrier for solutions of the rescaled Boltzmann model. 

The key argument consists in proving that estimates for \emph{low frequencies} transfer to \emph{large frequencies}. We start adopting the following notation:
\begin{equation}\label{ft:e3}
\mathcal{D} = \xi\,\partial_{\xi}\,,\quad \text{thus} \quad e^{\mathcal{D}\,t}u(\xi) = u(e^{t}\xi)\,,\quad t\in\mathbb{R}\,.  
\end{equation}
Also, introduce the operators
\begin{equation}\label{ft:e4}
\Gamma[u](\xi) = u\left(\frac{\xi}{2}\right)\,u\left(\frac{\xi}{2}\right)\,,\quad \text{and}\quad L\,u(\xi) = \,u\left(\frac{\xi}{2}\right)\,.
\end{equation}

\begin{lem}\phantomsection\label{lem:evol-family}
For a given bounded function $\sigma_{0}(t,\cdot) \in L^{\infty}(\R)$ $(t\geq0)$, the unique solution to 
\begin{equation}\label{eq:LS}
\partial_{t}u-\sigma_{0}(t,\cdot)Lu=0, \qquad u(s,s,\xi)=u_{0}, \qquad t \geq s \geq0\end{equation}
is given by the following evolution family
$$u(s,t,\xi)=\mathcal{V}(s,t)u_{0}=\sum_{j=0}^{\infty}\mu_{j}(s,t,\xi)L^{j}u_{0}(\xi)$$
where $\mu_{0}(s,t,\xi)=1$ for any $s,t,\xi$ and
$$\mu_{j}(s,t,\xi)=\int_{\Delta_{t}^{j}(s)}\prod_{k=0}^{j-1}L^{k}\left(\sigma_{0}(s_{k},\cdot)\right)\d \bm{s}_{j}=\int_{\Delta_{t}^{j}(s)}\prod_{k=0}^{j-1}\sigma_{0}\left(s_{k},\frac{\xi}{2^{k}}\right)\d \bm{s}_{j}, \qquad j\geq1$$
with $\Delta_{t}^{j}(s)$ the simplex
$$\Delta_{t}^{j}(s)=\left\{\bm{s}_{j}=\left(s_{0},\ldots,s_{j-1}\right),\;\;\;s \leq s_{j-1} \leq s_{j-2}\leq \ldots \leq s_{1}\leq s_{0} \leq t\right\}$$
and 
$$\int_{\Delta_{t}^{j}(s)}\left(\mathrm{Expression}\right)\d\bm{s}_{j}=\int_{s}^{t}\d s_{0}\int_{s}^{s_{0}}\d s_{1}\ldots \int_{s}^{s_{j-2}}\left(\mathrm{Expression}\right)\d s_{j-1}.$$
\end{lem}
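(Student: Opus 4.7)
The plan is to verify the stated formula by a direct substitution-and-differentiation argument, which is really just a Dyson/Duhamel expansion adapted to the nonlocal operator $L$. First, I would check the initial condition: at $t=s$ the simplex $\Delta^j_t(s)$ is empty for every $j\ge1$, so $\mu_j(s,s,\xi)=0$ for $j\ge1$ while $\mu_0\equiv 1$, giving $\mathcal V(s,s)u_0=u_0$. Next, using the elementary bound
\[
|\mu_j(s,t,\xi)| \le \|\sigma_0\|_{L^\infty((s,t)\times\R)}^{\,j}\,\frac{(t-s)^j}{j!},
\]
the series $\sum_j \mu_j\,L^j u_0$ converges absolutely and uniformly on compact $\xi$-sets whenever $u_0\in L^\infty(\R)$, so term-by-term differentiation in $t$ is legitimate.

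Then I would compute $\partial_t\mu_j$. Since $t$ appears only as the upper limit of the outermost integral, Leibniz's rule gives, for $j\ge 1$,
\[
\partial_t\mu_j(s,t,\xi)=\sigma_0(t,\xi)\int_{\Delta^{j-1}_t(s)} \prod_{k=1}^{j-1}\sigma_0\!\left(s_k,\tfrac{\xi}{2^k}\right)\d s_1\cdots\d s_{j-1}.
\]
Relabelling $s_k=:s'_{k-1}$ and recognizing the resulting expression as $\mu_{j-1}(s,t,\xi/2)$, one obtains the fundamental identity
\[
\partial_t\mu_j(s,t,\xi)=\sigma_0(t,\xi)\,\mu_{j-1}(s,t,\xi/2),\qquad j\ge 1,
\]
with $\partial_t\mu_0=0$. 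Combining this with $L^j u_0(\xi)=u_0(\xi/2^j)=L^{j-1}u_0(\xi/2)$ and summing over $j\ge 1$ yields
\[
\partial_t u(s,t,\xi)=\sigma_0(t,\xi)\sum_{j\ge 1}\mu_{j-1}(s,t,\xi/2)\,L^{j-1}u_0(\xi/2)=\sigma_0(t,\xi)\,Lu(s,t,\xi),
\]
proving that $u=\mathcal V(s,t)u_0$ solves \eqref{eq:LS}.

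Finally, for uniqueness I would take the difference $w$ of two $L^\infty$-solutions. Then $w$ satisfies $\partial_t w=\sigma_0\,Lw$ with $w(s,s,\cdot)=0$, and since $\|Lw(t)\|_{L^\infty}\le \|w(t)\|_{L^\infty}$ one has
\[
\|w(s,t)\|_{L^\infty}\le \|\sigma_0\|_{L^\infty}\int_s^t \|w(s,\tau)\|_{L^\infty}\d\tau,
\]
and Gronwall's lemma gives $w\equiv 0$. I expect no serious obstacle here; the only delicate point is the careful bookkeeping of the scaling $\xi\mapsto\xi/2^k$ inside the simplex integrals, which is precisely what forces the noncommutative product $\prod_{k=0}^{j-1} L^k(\sigma_0(s_k,\cdot))$ to appear in the order written in the statement.
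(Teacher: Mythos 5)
Your proof is correct and follows essentially the same route as the paper: both verify the series ansatz through the recursion $\partial_t\mu_j(s,t,\xi)=\sigma_0(t,\xi)\,L\mu_{j-1}(s,t,\xi)$ together with the multiplicativity $L(w_1w_2)=L(w_1)L(w_2)$ (you obtain the recursion directly from the simplex formula by Leibniz's rule, whereas the paper derives the $\mu_j$ from it by induction). Your added convergence bound and the Gronwall uniqueness argument are welcome extras that the paper's proof leaves implicit.
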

\begin{proof}The proof is by direct inspection. Write
$$v(s,t,\xi)=\sum_{j=0}^{\infty}\mu_{j}(s,t,\xi)L^{j}u_{0}(\xi).$$
Observe that $\mu_{0}(s,s,\xi)=1$, $\mu_{j}(s,s,\xi)=0$ for all $j \geq 1$ so that $v(s,s,\cdot)=u_{0}.$ On the one hand,
$$\partial_{t}v(s,t,\xi)=\sum_{j=1}^{\infty}\partial_{t}\mu_{j}(s,t,\xi)L^{j}u_{0}(\xi)=\sum_{j=0}^{\infty}\partial_{t}\mu_{j+1}(s,t,\xi)L^{j+1}u_{0}(\xi)$$
since we assumed $\mu_{0}$ to be constant. On the other hand,
$$Lv(s,t,\xi)=\sum_{j=0}^{\infty}L\left(\mu_{j}(s,t,\xi)L^{j}u_{0}\right)=\sum_{j=0}^{\infty}L(\mu_{j}(s,t,\xi))L^{j+1}u_{0}(\xi)$$
since $L( w_{1}\,w_{2})=L(w_{1})L(w_{2})$ (if one of the  {$w_{i}$} is bounded at least for the product to make sense). Therefore, if 
$$\partial_{t}\mu_{j+1}(s,t,\cdot)=\sigma_{0}(t,\xi)L\mu_{j}(s,t,\cdot) \qquad \mu_{j+1}(s,s,\cdot)=0\qquad \qquad j\geq 0$$
one gets that $v(s,t,\xi)$ solves \eqref{eq:LS}. By induction, since $\mu_{0}\equiv 1$, one gets the desired expression for $\mu_{j}$, $j\geq1.$
\end{proof}
\begin{rem} If $\sigma_{0}$ is constant, say $\sigma_{0}(t,\xi)=\alpha$ and $s=0$, because the volume of the simplex $\Delta_{t}^{j}=\Delta_{t}^{j}(0)$ is equal to $\frac{t^{j}}{j!}$ one gets 
$$u(t,\xi)=\sum_{j=0}^{\infty}\frac{\left(\alpha t\right)^{j}}{j!}L^{j}u_{0}$$ which is exactly the expression $e^{\alpha t L}u_{0}$ of the semigroup generated by the bounded operator $\alpha L$. 
\end{rem}
\begin{lem}[\textit{\textbf{Comparison lemma}}]\phantomsection \label{ft:l1}
Assume continuous functions $u,v\in[0,1]$ satisfying
\begin{subequations}
\begin{align}
\partial_{t}u + \big(-\tfrac{1}{4}\mathcal{D} + 1\big)u&\geq\Gamma[u]\,,\label{comp:e1}\\
\partial_{t}v + \big(-\tfrac{1}{4}\mathcal{D} + 1\big)v&\leq\Gamma[v]\,,\label{comp:e2}
\end{align}
\end{subequations}
and $u(0,\cdot) \geq v(0,\cdot)$.  Then $u(t,\cdot)\geq v(t,\cdot)$ for any $t \geq 0$.
\end{lem}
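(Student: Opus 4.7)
The plan is to reduce the statement to a positivity statement for an integral equation, by letting $w := u - v$ and subtracting the two inequalities. Since $\Gamma[u]-\Gamma[v] = \bigl(u(\cdot/2)+v(\cdot/2)\bigr)\,w(\cdot/2) = \sigma_0(t,\xi)\,Lw(t,\xi)$ with $\sigma_0(t,\xi) := u(t,\xi/2)+v(t,\xi/2) \in [0,2]$ (this uses $u,v\in[0,1]$ in a crucial way), the function $w$ satisfies the scalar inequality
\begin{equation*}
\partial_t w + \bigl(-\tfrac{1}{4}\mathcal{D}+1\bigr)w \;\geq\; \sigma_0\,L w,\qquad w(0,\cdot)\geq 0.
\end{equation*}
The key feature is that the right-hand side depends linearly on $w$ through the nonlocal but positive operator $L$, with a nonnegative coefficient.

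The next step is to straighten out the transport term by following the characteristics of $\partial_t - \tfrac{1}{4}\mathcal{D}$, which are $\xi(t)=\xi_0 e^{-t/4}$. Setting
\begin{equation*}
F(t,\xi) := e^{t}\,w(t,\xi e^{-t/4}),\qquad \widetilde{\sigma}_0(t,\xi):=\sigma_0(t,\xi e^{-t/4})\in[0,2],
\end{equation*}
a direct chain-rule computation (using $\xi\partial_\xi[w(t,\xi e^{-t/4})] = (\mathcal{D}w)(t,\xi e^{-t/4})$) shows that the transport and damping terms cancel, yielding
\begin{equation*}
\partial_t F(t,\xi) \;\geq\; \widetilde{\sigma}_0(t,\xi)\,LF(t,\xi),\qquad F(0,\cdot)=w(0,\cdot)\geq 0.
\end{equation*}
Equivalently, $r := \partial_t F - \widetilde{\sigma}_0 LF \geq 0$, so $F$ satisfies the inhomogeneous linear equation $\partial_t F = \widetilde{\sigma}_0 LF + r$ with nonnegative source and nonnegative initial datum.

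The final step is to invoke the evolution family $\mathcal{V}(s,t)$ constructed in Lemma \ref{lem:evol-family} (with coefficient $\widetilde{\sigma}_0$, which is bounded, hence the series converges): by the Duhamel formula,
\begin{equation*}
F(t,\xi) \;=\; \mathcal{V}(0,t)F(0,\cdot)(\xi) + \int_0^t \mathcal{V}(s,t)\,r(s,\cdot)(\xi)\,\d s.
\end{equation*}
Because $\widetilde{\sigma}_0\geq 0$, every coefficient $\mu_j(s,t,\xi)$ in the series defining $\mathcal{V}(s,t)$ is nonnegative, and $L^{j}$ trivially preserves nonnegativity; hence $\mathcal{V}(s,t)$ maps nonnegative functions to nonnegative functions. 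Since both $F(0,\cdot)$ and $r$ are nonnegative, we conclude $F(t,\xi)\geq 0$ for all $t\geq 0$ and $\xi\in\R$, which rewinds to $w(t,\cdot)\geq 0$, i.e.\ $u(t,\cdot)\geq v(t,\cdot)$.

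The only mildly delicate point is the change of variables: one must check that the characteristic rectification turns the inequality into one where the only remaining $\xi$-dependence on the right-hand side is through the pointwise operator $L$, so that Lemma \ref{lem:evol-family} applies directly. Aside from that, the argument is routine, and the boundedness of $\widetilde{\sigma}_0$ by $2$ guarantees absolute convergence of the series representation of $\mathcal{V}$ (with $|\mu_j(s,t,\xi)|\leq (2(t-s))^j/j!$), so no additional regularity beyond the continuity assumed in the statement is needed.
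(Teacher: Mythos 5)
Your proof is correct and follows essentially the same route as the paper's: form the difference $w=u-v$, factor $\Gamma[u]-\Gamma[v]$ as $\bigl(u(\cdot/2)+v(\cdot/2)\bigr)Lw$ with a nonnegative bounded coefficient, rectify the characteristics of $-\tfrac14\mathcal{D}$, and conclude positivity via the nonnegative evolution family of Lemma~\ref{lem:evol-family} applied to the Duhamel representation with nonnegative source and initial datum. The only (purely cosmetic) difference is that you absorb the damping factor $e^{t}$ into $F$ at the outset, whereas the paper keeps the $+1$ in the generator and carries explicit $e^{-(t-s)}$ factors in the Duhamel formula.
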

\begin{proof}
For such two functions $u$ and $v$ define $S(t,\xi):=\big(u(t,\frac{\xi}{2}) + v(t,\frac{\xi}{2})\big)\in[0,2]$.  Then, one concludes for the difference $d=d(t,\xi):=u(t,\xi)-v(t,\xi)$ the relation
\begin{equation*}
\partial_{t}d + \big(-\tfrac{1}{4}\mathcal{D} - S(t,\xi)\,L + 1\big)d = \mathcal{R}(t,\xi)\,,
\end{equation*}
where $\mathcal{R}(t,\xi)$ is a nonnegative remainder.  One can verify by direct computation that
\begin{align*}
e^{-\frac{1}{4}\mathcal{D}t}\big(S(t,\xi)\,L\big) &= \big(e^{-\frac{1}{4}\mathcal{D}t}\,S(t,\xi) \big) \big(e^{-\frac{1}{4}\mathcal{D}t}\,L\big)\\
&= \big(e^{-\frac{1}{4}\mathcal{D}t}\,S(t,\xi) \big) \big(L\,e^{-\frac{1}{4}\mathcal{D}t}\big) =: S_{0}(t,\xi)\,\big(L\,e^{-\frac{1}{4}\mathcal{D}t}\big) \,.
\end{align*}
Then, for $h=e^{-\frac{1}{4}\mathcal{D}t}d$ it follows that
\begin{equation*}
\partial_{t}h + \big(-S_{0}(t,\xi)\,L + 1 )h = e^{-\frac{1}{4}\mathcal{D}t}\,\mathcal{R}(t,\xi)\,.
\end{equation*}
Using the previous Lemma (with $\sigma_{0}=S_{0}$) and the evolution family $\{\mathcal{V}(s,t)\}_{t\geq s}$, one gets after integrating in time
\begin{equation}\label{comp-ft:e6}
h(t)= e^{-t}\mathcal{V}(0,t)h_{0} + \int^{t}_{0}e^{-(t-s)}\mathcal{V}(s,t)\,e^{-\frac{1}{4}\mathcal{D}s}\,\mathcal{R}(s,\xi)\d s\,.
\end{equation}
It is clear from the expression of $\mathcal{V}(s,t)$ that, since $L$ preserves the positivity and $\sigma_{0}\geq0$, $\mathcal{V}(s,t)$ is a nonnegative operator for any $0\leq s\leq t$, therefore, the second term in \eqref{comp-ft:e6} is nonnegative.  Furthermore, note that $h\geq0$ if and only if $d\geq0$.   In particular, the first term in \eqref{comp-ft:e6} is also nonnegative since $d_{0}\geq0$.  In this way $h$ and hence $d$ are nonnegative.
\end{proof}

\begin{prp}[\textit{\textbf{Propagation of strong smoothness}}]\phantomsection\label{strong-smoothness}
Take $\varphi(t,\xi)$ a solution of nonlinear equation \eqref{eq:selfsim-fourier}  with $|\varphi(t,\xi)|\leq 1$ for all $t\geq 0$ and $\xi\in\R$ and assume $|\varphi(0,\xi)| \leq \bm{\Phi}(a\,\xi)$ for some $a>0$.  Then,
\begin{equation*}
|\varphi(t,\xi)|\leq\bm{\Phi}(a\,\xi) \quad \text{for all} \quad t\geq0\,.
\end{equation*}
\end{prp}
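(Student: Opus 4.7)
The natural strategy is to apply the Comparison Lemma \ref{ft:l1} with the ``barrier'' $u(t,\xi) := \bm{\Phi}(a\xi)$ and the modulus $v(t,\xi) := |\varphi(t,\xi)|$. Both quantities take values in $[0,1]$ by hypothesis (for $u$ this is a direct property of $\bm{\Phi}$), and $u(0,\cdot) \ge v(0,\cdot)$ is exactly the initial assumption. It therefore remains to verify that $u$ is a super-solution in the sense of \eqref{comp:e1} and that $v$ is a sub-solution in the sense of \eqref{comp:e2}.

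For the super-solution, the key observation is that $\bm{\Phi}$ is a stationary solution of \eqref{eq:selfsim-fourier}, i.e.\ $-\tfrac14 \mathcal{D}\bm{\Phi}+\bm{\Phi} = \Gamma[\bm{\Phi}]$. Since the operator $-\tfrac14 \mathcal{D}+1$ commutes with the rescaling $\xi\mapsto a\xi$ (indeed $\mathcal{D}=\xi\partial_\xi$ is scale-invariant), and since $\Gamma$ also commutes with this rescaling (it only involves the evaluation $\xi \mapsto \xi/2$), the function $u(\xi)=\bm{\Phi}(a\xi)$ is likewise stationary, satisfying \eqref{comp:e1} \emph{with equality}. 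This is the easy step.

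For the sub-solution, the plan is to multiply the equation \eqref{eq:selfsim-fourier} by $\overline{\varphi}/|\varphi|$ and take real parts to get formally
\begin{equation*}
\partial_t |\varphi(t,\xi)| - \tfrac14 \xi\,\partial_\xi |\varphi(t,\xi)| + |\varphi(t,\xi)|
= \mathrm{Re}\!\left(\tfrac{\overline{\varphi(t,\xi)}}{|\varphi(t,\xi)|}\,\varphi(t,\tfrac{\xi}{2})^2\right) \le |\varphi(t,\tfrac{\xi}{2})|^2 = \Gamma[v](\xi),
\end{equation*}
which is precisely \eqref{comp:e2}. The one point requiring care, and the main (but routine) obstacle, is the lack of differentiability of $|\varphi|$ at zeros of $\varphi$. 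The plan is to regularise by working with $v_\epsilon := \sqrt{|\varphi|^2+\epsilon^2}$, which is smooth and satisfies a perturbed sub-solution inequality with a right-hand side $\Gamma[v]+O(\epsilon)$; the Comparison Lemma \ref{ft:l1} then applies to $(u+\epsilon, v_\epsilon)$, and letting $\epsilon\to 0^+$ gives the conclusion. Alternatively, one can restrict attention to the open set $\{|\varphi|>0\}$ where the computation is classical, and extend the pointwise inequality $|\varphi(t,\xi)|\le \bm{\Phi}(a\xi)$ to the zero set of $\varphi$ by continuity of both sides.

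Combining the three ingredients, Lemma \ref{ft:l1} yields $v(t,\xi)\le u(t,\xi)$, i.e.\ $|\varphi(t,\xi)|\le \bm{\Phi}(a\xi)$ for all $t\ge 0$ and $\xi\in\R$, which is the claim.
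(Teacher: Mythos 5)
Your proposal is correct and follows essentially the same route as the paper: both apply the Comparison Lemma \ref{ft:l1} with $\bm{\Phi}$ as the (stationary) super-solution and $|\varphi|$ as the sub-solution, the only cosmetic difference being that you rescale the barrier to $\bm{\Phi}(a\xi)$ while the paper rescales the solution to $v(t,\xi)=|\varphi(t,\xi/a)|$ — equivalent by the scale invariance of $\mathcal{D}$ and $\Gamma$. Your extra care with the non-differentiability of $|\varphi|$ at its zeros (via $v_\epsilon=\sqrt{|\varphi|^2+\epsilon^2}$) addresses a point the paper's computation passes over silently, and is a welcome refinement rather than a deviation.
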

\begin{proof}
Set $v(t,\xi)=\left|\varphi\left(t,\frac{\xi}{a}\right)\right|$ for $a>0$ and $u(t,\xi)=\bm{\Phi}(\xi)$.  Since $v_{0}(\xi) = |\varphi(0,\frac{\xi}{a})| \leq \bm{\Phi}(\xi)$ and 
\begin{align*}
      \partial_t v(t,\xi)& =\frac{1}{2\left|\varphi\left(t,\frac{\xi}{a}\right)\right|} \left(\partial_t\varphi\left(t,\frac{\xi}{a}\right)\overline{\varphi\left(t,\frac{\xi}{a}\right)} + \varphi\left(t,\frac{\xi}{a}\right)\partial_t\overline{\varphi\left(t,\frac{\xi}{a}\right)}\right)\\
      & = \frac14 \xi \partial_\xi v(t,\xi) + \frac12 \left(\varphi\left(t,\frac{\xi}{2a}\right)+\overline{\varphi\left(t,\frac{\xi}{2a}\right)}\right) v\left(t,\frac{\xi}{2}\right) -v(t,\xi)
    \end{align*}
    with 
$$\frac12 \left(\varphi\left(t,\frac{\xi}{2a}\right)+\overline{\varphi\left(t,\frac{\xi}{2a}\right)}\right) \le  {v\left(t,\frac{\xi}{2}\right)},$$
all conditions (inequalities \eqref{comp:e1} and \eqref{comp:e2} and initial condition) of Lemma \ref{ft:l1} are satisfied.  Therefore, $v(t,\xi)\leq u(t,\xi)$ or, equivalently, $|\varphi(t,\xi)|\leq\bm{\Phi}(a\,\xi)$ for all $t\geq0$.
\end{proof}

\begin{rem}
Compare this result with  \cite[Theorem 4]{FPTT}.  Interestingly, the result here is not associated to a physical counterpart $g(t,x)$ since the inverse Fourier transform of $\varphi$ may not be positive. 
\end{rem}

\medskip
Now, we present two lemmas to relax the strong decaying condition on the initial data. For any $\beta \geq 0$, we set
$$\Psi_{\beta}(r)=\langle r\rangle^{-\beta}=\left(1+r^{2}\right)^{-\frac{\beta}{2}}, \qquad r >0.$$
We will use repeatedly that $\Psi_{\beta}(\cdot)$ is non increasing with moreover
$$\Psi_{\beta}(r) \leq \min\left(1,r^{-\beta}\right) \qquad \forall r >0.$$
\begin{lem}[\textit{\textbf{Short time estimate}}]\phantomsection\label{short-time-lemma}
Fix $\beta>0$.  Assume $u(t,\xi)\in[0,1]$ satisfies the inequality
\begin{equation}\label{ft:e12}
\partial_{t}u + \big(-\tfrac{1}{4}\mathcal{D} + 1\big)u \leq \Gamma[u] \,
\end{equation}
together with
$$0\leq u(0,\xi)=u_{0}(\xi) \leq \Psi_{\beta}(|\xi|) \quad \forall \xi \in \R.$$
Assume there is $\delta >0$ such that
$$u(t,\xi)\leq\Psi_{\beta}(|\xi|) \qquad \quad \text{ for } \quad |\xi|\leq\delta, \qquad t \geq 0.$$
Then, for any $\beta' \in \left(0,\frac{\beta}{2}\right]$, there exists $\tau(\delta, \beta, \beta')>0$ such that 
$$0\leq u(t,\xi)\leq\Psi_{\beta'}(|\xi|) \qquad \text{ for any } t\in[0,\tau(\delta,\beta,\beta')], \qquad \xi \in \R.$$  The time $\tau(\delta,\beta,\beta')$ satisfies $\lim_{\beta'\rightarrow0}\tau(\delta,\beta,\beta')=+\infty$ for any fixed $\delta>0$ and $\beta>0$.
\end{lem}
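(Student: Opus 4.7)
The plan is to apply the comparison principle of Lemma~\ref{ft:l1} by constructing a supersolution $V$ of the inequality \eqref{ft:e12} with $V(0,\cdot)\geq u_0(\cdot)$ and $V(t,\cdot)\leq \Psi_{\beta'}(|\cdot|)$ on the desired time interval $[0,\tau]$; Lemma~\ref{ft:l1} then yields $u(t,\xi)\leq V(t,\xi)\leq \Psi_{\beta'}(|\xi|)$. A natural Ansatz is the time-dependent index family $V(t,\xi) := \Psi_{\tilde\beta(t)}(|\xi|)$ where $\tilde\beta\colon[0,\tau]\to[\beta',\beta]$ is smooth, strictly decreasing, with $\tilde\beta(0)=\beta$ and $\tilde\beta(\tau)=\beta'$. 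Both required conditions on $V$ are automatic: $V(0,\xi)=\Psi_\beta(|\xi|)\geq u_0(\xi)$ by hypothesis, and $V(t,\xi)\leq \Psi_{\beta'}(|\xi|)$ because $\Psi_s$ is decreasing in $s\geq 0$ and $\tilde\beta(t)\geq\beta'$.

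Direct computation of $\partial_t V$, $\mathcal{D}V$ and $\Gamma[V]$ reduces the supersolution condition $\partial_t V+(-\tfrac14\mathcal{D}+1)V\geq \Gamma[V]$ to the pointwise inequality
\begin{equation*}
  -\frac{\tilde\beta'(t)}{2}\log(1+\xi^2) \;\geq\; G\bigl(\xi,\tilde\beta(t)\bigr), \qquad \forall\,\xi\in\R,
\end{equation*}
with $G(\xi,\tilde\beta):=(1+\xi^2)^{\tilde\beta/2}(1+\xi^2/4)^{-\tilde\beta}-1-\tilde\beta\xi^2/[4(1+\xi^2)]$. One verifies that $G(0,\tilde\beta)\equiv 0$ and $G(\xi,\tilde\beta)\to -\infty$ as $|\xi|\to\infty$, so the positive part of $G$ lies in a bounded $\xi$-interval; a Taylor expansion of $G$ in $\tilde\beta$, combined with the fact that $G(\xi,\tilde\beta)/\log(1+\xi^2)\to 0$ as $\xi\to 0$, yields bounds of the form $\Lambda(\tilde\beta) := \sup_{\xi\in\R}2G(\xi,\tilde\beta)/\log(1+\xi^2)\leq c_1(\beta)\,\tilde\beta$ for small $\tilde\beta$ and $\Lambda(\tilde\beta)\leq c_2(\beta)$ for $\tilde\beta\in(0,\beta]$, with explicit constants. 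Imposing the scalar ODE $-\tilde\beta'(t)=\Lambda(\tilde\beta(t))$ then guarantees the supersolution inequality, and its integration gives a lifespan with $\tau(\delta,\beta,\beta')\to +\infty$ as $\beta'\to 0^+$, the dominant contribution coming from the small-$\tilde\beta$ regime where $\tilde\beta(t)\sim \beta\,e^{-c_1(\beta)t}$.

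The main technical obstacle is to establish the uniform estimate $\Lambda(\tilde\beta)\leq c_1(\beta)\,\tilde\beta$ in the small-$\tilde\beta$ regime rigorously: this requires locating the maximizer in $\xi$ of $2G(\xi,\tilde\beta)/\log(1+\xi^2)$ (in a compact interval, by setting the $\xi$-derivative to zero) and controlling its value uniformly, the subleading terms of the expansion being handled so as to preserve the correct $\tilde\beta$-scaling. The low-frequency hypothesis $u(t,\xi)\leq \Psi_\beta(|\xi|)$ for $|\xi|\leq\delta$ enters through Lemma~\ref{ft:l1}, ensuring that both $u$ and $V$ lie in $[0,1]$ so that the comparison can be carried out globally in $\xi\in\R$; alternatively, it may be exploited directly via a Duhamel iteration along the characteristics $\xi(s)=\eta\,e^{(t-s)/4}$, where $\Gamma[u](\xi)=u(\xi/2)^2$ halves the argument at each step until it enters $[-\delta,\delta]$ and the assumed pointwise bound can be invoked.
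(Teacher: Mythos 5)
Your barrier construction is correct in substance, and it is a genuinely different route from the paper's. The paper conjugates by the drift semigroup, bounds $u$ from above by the positivity--preserving evolution family of Lemma~\ref{lem:evol-family}, uses the crude bound $\nu_j\le t^j/j!$ (which costs an exponentially growing factor $e^{(2^{2\beta'}-1)t}$), and compensates this loss by the small prefactor $\Psi_{\beta'}\bigl(\sqrt{\beta/(2\beta')}\,\delta\bigr)$ extracted from the inequality $\Psi_{\beta}(2^{-j}|\xi|)\le 2^{2\beta' j}\Psi_{\beta'}\bigl(\sqrt{\beta/(2\beta')}\,\delta\bigr)\Psi_{\beta'}(|\xi|)$, valid only for $|\xi|\ge\delta$; the region $|\xi|\le\delta$ is then covered directly by the low-frequency hypothesis. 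This is where $\delta$ and the gap between $\beta$ and $\beta'$ enter, and it is why the paper's $\tau$ degenerates like $\beta'$ as $\beta'\to0$ only through the logarithm $\ln\langle\sqrt{\beta/(2\beta')}\delta\rangle$. You instead feed a moving barrier $V=\Psi_{\tilde\beta(t)}$ into the comparison Lemma~\ref{ft:l1}, which is cleaner and, as it happens, proves a slightly stronger statement.

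Two points to tighten. First, the step you flag as the ``main technical obstacle'' is elementary and needs no maximiser analysis: writing $G(\xi,\tilde\beta)=e^{\tilde\beta h(\xi)}-1-\tfrac{\tilde\beta}{4}\tfrac{\xi^2}{1+\xi^2}$ with $h(\xi)=\tfrac12\log(1+\xi^2)-\log(1+\xi^2/4)$, one checks $h\le h_{\max}=\log(2/\sqrt{3})$ (attained at $\xi^2=2$); where $h\le0$ one has $G\le0$, and where $h>0$ the bound $e^x-1\le x\,e^{\beta h_{\max}}$ for $0\le x\le \beta h_{\max}$ together with $h(\xi)\le\tfrac12\log(1+\xi^2)$ gives $2G(\xi,\tilde\beta)/\log(1+\xi^2)\le e^{\beta h_{\max}}\tilde\beta$ uniformly for $\tilde\beta\in(0,\beta]$. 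Taking $\tilde\beta(t)=\beta e^{-c_1(\beta)t}$ with $c_1(\beta)=e^{\beta h_{\max}}$ then yields the supersolution property and $\tau=c_1(\beta)^{-1}\log(\beta/\beta')\to\infty$ as $\beta'\to0$. (Note $G\to-1-\tilde\beta/4$, not $-\infty$, as $|\xi|\to\infty$; this is harmless.) Second, your closing remarks on the low-frequency hypothesis are off: it is not what places $u$ and $V$ in $[0,1]$ ($u\in[0,1]$ is a standing assumption and $V\in[0,1]$ by construction), and in fact your argument never uses the bound $u(t,\xi)\le\Psi_\beta(|\xi|)$ for $|\xi|\le\delta$ at all, producing a $\tau$ independent of $\delta$. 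That is a feature rather than a bug --- the $\delta$-dependence plays no role in how the lemma is invoked in Theorem~\ref{theo-baseline} --- but it should be stated as such, and the alternative Duhamel iteration you sketch at the end is then unnecessary.
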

\begin{proof} Let $\mathcal{U}(t)$ be the semigroup associated to the generator $-\frac{1}{4}\mathcal{D}$, i.e. $\mathcal{U}(t)f(\xi)=f(\xi e^{-\frac{1}{4}t})$. Setting $w(t,\xi)=e^{t}\,\mathcal{U}(t)u(t,\xi)$ we write \eqref{ft:e12} as 
$$\partial_{t}u + \big(-\tfrac{1}{4}\D +1\big)u \leq u\Bigl(t,\frac{\xi}{2}\Bigr)Lu(t,\xi)$$ or equivalently
$$\partial_{t}w\leq \mathcal{U}(t)u\Bigl(t,\frac{\xi}{2}\Bigr)Lw(t,\xi)$$ 
and denote by $\left(\mathcal{W}(s,t)\right)_{s,t}$ the evolution family constructed in Lemma \ref{lem:evol-family} with
$$\sigma_{0}(t,\xi)=\left[\mathcal{U}(t)u\Bigl(t,\frac{\xi}{2}\Bigr)\right]=u\left(t,\frac{\xi}{2}e^{-\frac{1}{4}t}\right)\,.$$
We have
$$0 \leq w(t,\xi) \leq \mathcal{W}(0,t)u_{0}(\xi)$$
with
$$\mathcal{W}(0,t)u_{0}(\xi)=\sum_{j=0}^{\infty}\nu_{j}(t,\xi)L^{j}u_{0}(\xi)$$
where $\nu_0(t,\xi)=1$ for any $t,\xi$ and 
$$\nu_{j}(t,\xi)=\int_{0}^{t}u\left(s_{0},\frac{\xi}{2}e^{-\frac{1}{4}s_{0}}\right)\nu_{j-1}\left(s_{0},\frac{\xi}{2}\right)\d s_{0}.$$
Then
\begin{equation}\label{eq:semi}
0 \leq u(t,\xi) \leq e^{-t}\sum_{j=0}^{\infty}\nu_{j}(t,\xi\,e^{\frac{1}{4}t})L^{j}u_0\left(\xi\,e^{\frac{1}{4}t}\right).\end{equation}
Since $u_{0}(\xi) \leq \Psi_{\beta}(|\xi|)$, 
$$0 \leq u(t,\xi) \leq e^{-t}\sum_{j=0}^{\infty}\nu_{j}(t,\xi\,e^{\frac{1}{4}t})L^{j}\Psi_{\beta}\left(|\xi|\,e^{\frac{1}{4}t}\right)=e^{-t}\sum_{j=0}^{\infty}\nu_{j}(t,\xi\,e^{\frac{1}{4}t})\Psi_{\beta}\left(2^{-j}|\xi|\,e^{\frac{1}{4}t}\right).$$
In addition, since $\Psi_{\beta}$ is non increasing and $2^{-j}|\xi|\,e^{\frac{1}{4}t}\geq 2^{-j}|\xi|$, it holds that
$$
0 \leq u(t,\xi) \leq e^{-t}\sum_{j=0}^{\infty}\nu_{j}(t,\xi\,e^{\frac{1}{4}t})\Psi_{\beta}\left(2^{-j}|\xi|\right).$$ 
By assumption $u(t,\xi)\in [0,1]$, therefore
\begin{equation}\label{eq:nuj}
\nu_{j}(t,\xi\,e^{\frac{1}{4}t}) \leq  \frac{t^{j}}{j!},
\end{equation}
and
\begin{equation}\label{ft:e13}
0 \leq u(t,\xi) \leq e^{-t}\sum_{j=0}^{\infty}\frac{t^{j}}{j!}\Psi_{\beta}\left(2^{-j}|\xi|\right).
\end{equation}
Observe that $\langle r \rangle^{a} \geq \langle \sqrt{a}\,r \rangle$ for any $a\geq1$ so that, for any $\beta\geq 2\beta'$, 
$$\Psi_{\beta}(|\xi|)\leq \Psi_{2\beta'}\left(\sqrt{\frac{\beta}{2\beta'}}\,|\xi|\right), \qquad \forall \xi \in\R.$$  Consequently,
\begin{align*}
\Psi_{\beta}\big(2^{-j}\,|\xi|\big)\leq \Psi_{2\beta'}\left(2^{-j}\sqrt{\frac{\beta}{2\beta'}}\,|\xi|\right)&\leq 2^{2\beta'j}\Psi_{2\beta'}\left(\sqrt{\frac{\beta}{2\beta'}}\,|\xi|\right)\\
&\leq 2^{2\beta'j}\Psi_{\beta'}\left(\sqrt{\frac{\beta}{2\beta'}}\,\delta\right)\Psi_{\beta'}(|\xi|), \qquad |\xi|\geq\delta\,.
\end{align*}
Using this estimate in inequality \eqref{ft:e13}, it holds
\begin{multline}\label{ft:e14}
u(t,\xi)\leq  { e^{-t}}\Psi_{\beta'}(|\xi|)\Psi_{\beta'}\left(\sqrt{\frac{\beta}{2\beta'}}\,\delta\right)\sum_{j=0}^{\infty}\frac{t^{j}}{j!}2^{2\beta'\,j}\\
=\Psi_{\beta'}(|\xi|)\Psi_{\beta'}\left(\sqrt{\frac{\beta}{2\beta'}}\,\delta\right)e^{(2^{2\beta'}-1)t}\,,\qquad |\xi|\geq\delta\,.
\end{multline}
Thus, choosing
\begin{equation*}
\tau(\delta,\beta,\beta')=\frac{ \beta' \ln\big(\langle \sqrt{\frac{\beta}{2\beta'}}\delta \rangle\big)}{2^{2\beta'}-1}\,,
\end{equation*}
we have 
$$u(t,\xi)\leq\Psi_{\beta'}(|\xi|) \qquad \text{ for } |\xi|\geq\delta \quad \text{ and } \quad t\in[0,\tau(\delta,\beta,\beta')].$$ Since, by assumption, for $|\xi| \leq \delta$ it holds $u(t,\xi)\leq\Psi_{\beta}(\xi)\leq \Psi_{\beta'}(|\xi|)$ we deduce that 
$$u(t,\xi)\leq\Psi_{\beta'}(|\xi|)$$
holds true for \emph{any} $\xi \in \R$ and $t \in [0,\tau(\delta,\beta,\beta')]$. From the definition of $\tau$, it  is clear that $\lim_{ { \beta'\rightarrow 0}} \tau(\delta,\beta,\beta')=+\infty$. 
\end{proof}

\begin{lem}[\textit{\textbf{Global-in-time estimates}}]\phantomsection\label{Long-time} Assume $u(t,\xi)\in[0,1]$ satisfies the inequality \eqref{ft:e12} for any $t\geq0$ with
$u(0,\xi)=u_0(\xi)\leq \Psi_{\beta}(|\xi|)$ for any $\xi \in \R$.  If 
$$u(t,\xi) \leq \Psi_{\beta}(|\xi|) \qquad \text{ for } |\xi| \leq 4, \qquad t \geq0\,,$$ 
for some $\beta>0$, then $u(t,\xi) \leq \Psi_{\beta}(|\xi|)$ for all $\xi\in\mathbb{R}$.
\end{lem}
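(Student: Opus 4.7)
The plan is to use the semigroup/Duhamel representation \eqref{eq:semi} derived in the proof of Lemma \ref{short-time-lemma},
\begin{equation*}
0 \leq u(t,\xi) \leq e^{-t}\sum_{j=0}^{\infty} \nu_j(t,\xi e^{t/4})\, u_0\bigl(2^{-j}\xi e^{t/4}\bigr),
\end{equation*}
where the $\nu_j$ are iterated time integrals of $u$ evaluated at the rescaled frequencies $2^{-k-1}\xi e^{(t-s_k)/4}$, $k=0,\ldots,j-1$. The decisive observation is that the hypothesis $u(t,\xi) \leq \Psi_\beta(|\xi|)$ on $|\xi|\leq 4$ is uniform in $t \geq 0$, so it can be substituted \emph{inside} the integrand defining $\nu_j$, sharpening the crude bound $u \leq 1$ used in Lemma~\ref{short-time-lemma}.

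Concretely, I would split the sum at the threshold $k_0(t,\xi) := \lceil \log_2(|\xi| e^{t/4}/8)\rceil$, below which $2^{-k-1}|\xi| e^{(t-s_k)/4} > 4$ and only the trivial bound is available, and above which the hypothesis yields $u(s_k,\cdot) \leq \Psi_\beta(\cdot) \leq (2^{-k-1}|\xi|e^{(t-s_k)/4})^{-\beta}$. For terms with $j \leq k_0$ the factor $u_0\bigl(2^{-j}\xi e^{t/4}\bigr) \leq \Psi_\beta\bigl(2^{-j}|\xi|e^{t/4}\bigr)$ already exhibits $\Psi_\beta$-decay at a large argument, producing a contribution controlled by $e^{-t}\Psi_\beta(|\xi|)$ times a polynomial in $t$; for $j > k_0$, the refined bound on $\nu_j$ brings in an additional algebraic factor $\prod_{k \geq k_0}(2^{-k-1}|\xi|)^{-\beta}$ whose product with $u_0(2^{-j}\xi e^{t/4}) \leq 1$ yields a rapidly convergent series also bounded by $\Psi_\beta(|\xi|)$.

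An alternative (and in my view cleaner) route exploits the identity $\Psi_{\beta/2}(r)^2 = \Psi_\beta(r)$. First, one invokes Lemma~\ref{short-time-lemma} with $\delta = 4$ and $\beta' = \beta/2$ and iterates it in time (the uniform-in-$t$ hypothesis on $|\xi|\leq 4$ allowing a restart at each step) to obtain $u(t,\xi) \leq \Psi_{\beta/2}(|\xi|)$ on all of $[0,\infty)\times\R$; then $u^2 \leq \Psi_\beta$ globally, which fed into the Duhamel formula
\begin{equation*}
u(t,\xi) \leq e^{-t} u_0\bigl(\xi e^{t/4}\bigr) + \int_0^t e^{-(t-s)} u\bigl(s, \tfrac{\xi}{2} e^{(t-s)/4}\bigr)^2 \d s
\end{equation*}
together with a change of variables $r = (t-s)/4$ produces a bound of the form $u \leq C_\beta \Psi_\beta$.

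The main obstacle is the last step: upgrading $u \leq C_\beta \Psi_\beta$ to $u \leq \Psi_\beta$, \emph{with exact constant one}. My plan is to close this by a further bootstrap that reuses the tightness of the hypothesis on $|\xi|\leq 4$ (where the constant is already $1$) together with the algebraic decay of $\Psi_\beta$ at large $|\xi|$; alternatively, one may invoke the comparison principle (Lemma~\ref{ft:l1}) with an appropriately tailored supersolution agreeing with $\Psi_\beta$ on $|\xi|\leq 4$ and dominating it outside. The careful bookkeeping of the combinatorial factors $t^j/j!$ versus the algebraic prefactors $2^{-k\beta}$ in the series representation is the delicate point.
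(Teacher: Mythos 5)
There is a genuine gap, and it sits exactly where you flag it. Your second route begins by ``iterating Lemma~\ref{short-time-lemma} in time,'' but this iteration does not close: the short-time lemma takes as input a \emph{global-in-$\xi$} bound $u(t_{\mathrm{start}},\cdot)\leq\Psi_{\beta}$ and outputs only the weaker bound $\Psi_{\beta'}$ with $\beta'\leq\beta/2$ on the next interval. To restart at the end of that interval with the same exponent $\beta$ you would need $u\leq\Psi_{\beta}$ everywhere at that time, which you do not have; restarting with the exponent you \emph{do} have degrades it geometrically ($\beta\to\beta/2\to\beta/4\to\cdots$), so the claimed intermediate conclusion $u\leq\Psi_{\beta/2}$ on all of $[0,\infty)\times\R$ does not follow. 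Even granting it, you correctly identify that the Duhamel step only yields $u\leq C_{\beta}\Psi_{\beta}$ with $C_{\beta}>1$, and the upgrade to constant exactly one is the entire content of the lemma; the proposed fixes (a further bootstrap, or Lemma~\ref{ft:l1} with a ``tailored supersolution'') beg the question, since one would have to verify the supersolution inequality \eqref{comp:e1} for that barrier, which is essentially the statement being proved. Your first route (resumming the series with refined bounds on the $\nu_{j}$) is likewise only sketched, and the assertion that the resulting series is bounded by $\Psi_{\beta}(|\xi|)$ with constant one is precisely the point that is not established.

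The mechanism the paper uses, and which is absent from your proposal, is a \emph{frequency-range} iteration rather than an exponent or time iteration with lossy constants. A single Duhamel step gives
\begin{equation*}
u(t,\xi)\leq e^{-t}\,u_{0}\bigl(\xi e^{t/4}\bigr)+\int_{0}^{t}e^{-(t-s)}\Bigl[u\Bigl(s,\tfrac{\xi}{2}e^{(t-s)/4}\Bigr)\Bigr]^{2}\d s .
\end{equation*}
Restricting to $|\xi|\leq 8e^{-t/4}$ forces $\bigl|\tfrac{\xi}{2}e^{(t-s)/4}\bigr|\leq 4$ for all $s\in[0,t]$, so the hypothesis applies \emph{with constant one} inside the integral, yielding $u(t,\xi)\leq e^{-t}\Psi_{\beta}(|\xi|)+(1-e^{-t})\Psi_{\beta}(|\xi|/2)^{2}$. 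The elementary inequality $\Psi_{\beta}(r/2)^{2}\leq\Psi_{\beta}(r)$ for $r\geq\sqrt{8}$ then closes the estimate with constant one on $\sqrt{8}\leq|\xi|\leq 6$ for $t\in[0,t_{0}]$ with $t_{0}=4\log\tfrac{4}{3}$; combined with the hypothesis on $|\xi|\leq 4$ this extends the validity region from $|\xi|\leq 4$ to $|\xi|\leq 6$, and iterating multiplies the range by $\tfrac{3}{2}$ at each pass, covering all $\xi$ on $[0,t_{0}]$. Only then does one iterate in time over intervals of length $t_{0}$, which is legitimate because at each restart the bound $u\leq\Psi_{\beta}$ has been established \emph{for all} $\xi$ with the \emph{same} $\beta$ and constant one. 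Without the restriction $|\xi|\leq 8e^{-t/4}$ and the squaring gain $\Psi_{\beta}(r/2)^{2}\leq\Psi_{\beta}(r)$, neither of your routes produces the sharp constant.
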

\begin{proof} Inequality \eqref{ft:e12} together with
Duhamel's formula gives that
\begin{equation*}
u(t,\xi)\leq  u_{0}\left(\xi\,e^{\frac{1}{4}t}\right)e^{-t} +\int^{t}_{0}e^{-(t-s)}\left[u\left(s,\frac{\xi}{2}e^{\frac{1}{4}(t-s)}\right)\right]^{2}\d s, \qquad t \geq0.
\end{equation*}
For a given $t \geq0$, recall that $u_{0}\left(\xi\,e^{\frac{1}{4}t}\right) \leq \Psi_{\beta}\left(|\xi|\,e^{\frac{1}{4}t}\right)\leq \Psi_{\beta}(|\xi|)$ whereas, if
$|\xi| \leq 8e^{-\frac{t}{4}}$ then $\frac{|\xi|}{2}e^{\frac{1}{4}(t-s)} \leq 4$ for all $s \in [0,t]$ which by assumption gives 
$$ u\left(s,\frac{\xi}{2}e^{\frac{1}{4}(t-s)}\right)  \leq \Psi_{\beta}\left(\frac{|\xi|}{2}e^{\frac{1}{4}(t-s)}\right) \leq \Psi_{\beta}\left(\frac{|\xi|}{2}\right) \qquad \forall s \in [0,t]$$
where we used that $\Psi_{\beta}(\cdot)$ is non increasing. Consequently
$$u(t,\xi) \leq \Psi_{\beta}(|\xi|)e^{-t} + \Psi_{\beta}\left(\frac{|\xi|}{2}\right)^{2}(1-e^{-t})\,,\quad 0\leq |\xi|\leq 8e^{-t/4}\,.$$
In particular, setting 
$$t_{0}:=4\log\frac{4}{3}$$
so that $|\xi| \leq 6 \Longrightarrow |\xi| \leq 8e^{-\frac{t}{4}}$ for $t \in [0,t_{0}]$, one deduces that
\begin{equation}\label{eq:upsib}
u(t,\xi) \leq \Psi_{\beta}(|\xi|)e^{-t} + \Psi_{\beta}\left(\frac{|\xi|}{2}\right)^{2}(1-e^{-t})\,,\quad 0\leq |\xi|\leq 6, \qquad t \in [0,t_{0}].\end{equation}
Since $\Psi_{\beta}\left(\frac{|\xi|}{2}\right)^{2}\leq \Psi_{\beta}(|\xi|)$ for $|\xi|\geq {\sqrt{8}}$, one deduces that,
 \begin{equation*}
u(t,\xi) \leq  \Psi_{\beta}(|\xi|)\,,\qquad \qquad  {\sqrt{8}} \leq |\xi|\leq 6\,\qquad t \in [0,t_{0}]
\end{equation*}
which, by assumption, yields
$$u(t,\xi) \leq \Psi_{\beta}(|\xi|) \qquad \quad \text{ for all } 0 \leq |\xi| \leq 6, \qquad t \in [0,t_{0}].$$
Iterating this process $k$-times one gets 
\begin{equation*}
u(t,\xi) \leq \Psi_{\beta}(|\xi|)\,,\qquad 0\leq |\xi|\leq 4\cdot \left(\frac{3}{2}\right)^{k}\,,\quad t\in[0,t_{0}]\,.
\end{equation*}
Since $k$ is arbitrary, we get
$$u(t,\xi) \leq \Psi_{\beta}(|\xi|), \qquad \text{ for all } \xi \in \R\,,\quad t \in [0,t_{0}].$$ Since then, for any $t \geq t_{0}$
$$u(t,\xi) \leq e^{-(t-t_{0})}u\left(t_0,\xi\,e^{\frac{1}{4}(t-t_{0})}\right)+\int_{0}^{t-t_{0}}e^{-(t-t_{0}-s)}\left[u\left(s+t_0,\frac{\xi}{2}e^{\frac{1}{4}(t-t_{0}-s)}\right)\right]^{2}\d s$$
one can reproduce the above argument to show that the bound $u(t,\xi) \leq \Psi_{\beta}(|\xi|)$ holds also on the interval $[t_{0},2t_{0}]$. Iterating the procedure, the bound holds for any time $t\geq0$ and any $\xi \in \R.$
\end{proof}
We are in conditions to prove the main result of the section.
\begin{theo}\label{theo-baseline}
Let $\varphi(t,\xi)$ be a solution of the self-similar problem \eqref{eq:selfsim-fourier} satisfying $|\varphi(t,\xi)|\leq 1$ and with initial condition $\varphi_0$ enjoying the regularity
\begin{equation*}
\vertiii{\varphi_0 - \bm{\Phi}}_{k} <\infty \qquad \text{and} \qquad |\varphi_{0}(\xi)|\leq \Psi_{\alpha}(c|\xi|)
\end{equation*}
for some $k\in(2,3)$, $c\in(0,1]$, and $\alpha>0$.  Then,
\begin{equation*}
\sup_{t\geq0}|\varphi(t,\xi)| \leq \Psi_{\alpha}(c_0\,\xi)
\end{equation*}
for some positive constant $c_{0} >0$ depending only on $\alpha$, $c$, and $\vertiii{\varphi_0 - \bm{\Phi}}_{k}$.
\end{theo}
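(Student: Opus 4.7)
My plan is to reduce, by the scale-invariance of the differential inequality \eqref{ft:e12}, to a single application of Lemma \ref{Long-time}. Setting $u(t,\xi):=|\varphi(t,\xi)|$, one checks (exactly as in the proof of Proposition \ref{strong-smoothness}) that $u$ satisfies \eqref{ft:e12} with $u(t,\cdot)\in[0,1]$. For any $c_0>0$, the rescaling $v(t,\xi):=u(t,\xi/c_0)$ still satisfies \eqref{ft:e12}, and choosing $c_0\leq c$ guarantees $v(0,\xi)\leq\Psi_\alpha(|\xi|)$ globally. Applying Lemma \ref{Long-time} to $v$ with $\beta=\alpha$ reduces the theorem to proving the uniform-in-time estimate
\begin{equation*}
v(t,\xi)\leq\Psi_\alpha(|\xi|)\qquad\text{for all } |\xi|\leq 4 \text{ and } t\geq 0,
\end{equation*}
after which the conclusion $|\varphi(t,\eta)|\leq\Psi_\alpha(c_0|\eta|)$ follows by inverting the scaling.

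To establish this bounded-frequency estimate I would exploit the $k$-norm control of Theorem \ref{k-norm-cvgce}, which yields the pointwise bound $|\varphi(t,\xi)|\leq\bm{\Phi}(\xi)+Ee^{-\sigma_k t}|\xi|^k$ with $E:=\vertiii{\varphi_0-\bm{\Phi}}_k$. Comparing the Taylor expansions $\bm{\Phi}(\xi)=1-\tfrac{1}{2}\xi^2+O(|\xi|^3)$ and $\Psi_\alpha(c_0|\xi|)=1-\tfrac{1}{2}\alpha c_0^2\xi^2+O(\xi^4)$ at the origin, and choosing $c_0<1/\sqrt{\alpha}$, gives a threshold $\delta_0>0$ (depending on $\alpha,E,k,c_0$) such that $v(t,\xi)\leq\Psi_\alpha(|\xi|)$ on $|\xi|\leq c_0\delta_0$ uniformly in $t\geq 0$. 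For $c_0$ sufficiently small the exponential decay of $\bm{\Phi}$ dominates the polynomial $\Psi_\alpha$ on the bounded annulus $\delta_0\leq|\xi/c_0|\leq 4/c_0$, and taking $t\geq T_\ast$ with $T_\ast$ large enough makes the remainder $Ee^{-\sigma_k t}(|\xi|/c_0)^k$ negligible on $|\xi|\leq 4$. This delivers the required bound for all $t\geq T_\ast$. For the complementary interval $[0,T_\ast]$, I would invoke Lemma \ref{short-time-lemma} applied to $v$ with $\beta=\alpha$, picking $\beta'\in(0,\alpha/2]$ small enough that $\tau(c_0\delta_0,\alpha,\beta')\geq T_\ast$, which lifts the low-frequency bound to a global-in-$\xi$ estimate $v(t,\xi)\leq\Psi_{\beta'}(|\xi|)$ on the short interval.

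The main obstacle is the exponent mismatch: Lemma \ref{short-time-lemma} unavoidably loses a factor of two, producing $\beta'\leq\alpha/2$, whereas Lemma \ref{Long-time} insists on the full exponent $\alpha$. Closing this gap is the delicate point. The remedy is to combine the short-time output with the pointwise $k$-norm estimate restricted to the bounded range $|\xi|\leq 4$ (where $|\xi|^k$ is harmless) and to tune the constants self-consistently: $c_0$ taken small enough both to handle the long-time regime and to absorb the exponent deficit on $|\xi|\leq 4$ via a further rescaling argument analogous to the iteration in the proof of Lemma \ref{Long-time}, $\delta_0$ fixed by the Taylor comparison, $\beta'$ fixed by the matching $\tau(c_0\delta_0,\alpha,\beta')\geq T_\ast$, and $T_\ast$ large enough so that the $k$-norm tail is subdominant to $\Psi_\alpha(4)=17^{-\alpha/2}$. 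Once the uniform-in-time estimate on $|\xi|\leq 4$ is established, Lemma \ref{Long-time} extends it to all $\xi\in\R$, and reverting the scaling yields the announced bound $\sup_{t\geq 0}|\varphi(t,\xi)|\leq\Psi_\alpha(c_0\xi)$.
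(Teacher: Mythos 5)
Your architecture closely tracks the paper's: small frequencies via the $k$-norm bound of Theorem \ref{k-norm-cvgce} plus a Taylor comparison at the origin; a large-time bound on a bounded frequency range from the exponential decay of the $k$-norm; Lemma \ref{short-time-lemma} to cover $[0,T_*]$; and Lemma \ref{Long-time} to globalize in $\xi$. However, there is a genuine gap exactly at the point you flag and do not resolve. To invoke Lemma \ref{Long-time} with exponent $\alpha$ you need $v(t,\xi)\leq\Psi_\alpha(|\xi|)$ on $|\xi|\leq 4$ for \emph{all} $t\geq0$, including $t\in[0,T_*]$ and $c_0\delta_0\leq|\xi|\leq 4$. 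On that region your only tool is Lemma \ref{short-time-lemma}, which returns $\Psi_{\beta'}$ with $\beta'\leq\alpha/2$ (and in fact $\beta'$ must be taken much smaller so that $\tau(c_0\delta_0,\alpha,\beta')\geq T_*$); since $\Psi_{\beta'}>\Psi_\alpha$ there, this is strictly weaker than what Lemma \ref{Long-time} requires. The proposed remedies do not close this: the pointwise $k$-norm estimate on $[0,T_*]$ carries no exponential gain, so on $|\xi|\leq 4$, i.e. $|\eta|\leq 4/c_0$, the term $E|\eta|^k$ is of size $E(4/c_0)^{k}$ and is made \emph{worse}, not harmless, by shrinking $c_0$; and the iteration inside Lemma \ref{Long-time} enlarges the frequency range at a \emph{fixed} exponent — no rescaling of $\xi$ can ever raise the decay exponent of $\Psi_{\beta'}$ to $\alpha$.

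The missing idea is the paper's final bootstrap, which is the only mechanism in this circle of arguments that upgrades the exponent. One first runs your scheme entirely at the small exponent $\beta'$ (this is consistent, since $\Psi_\beta\leq\Psi_{\beta'}$ for $\beta\geq\beta'$, so the large-time bound on $|\xi|\leq4$ and the short-time bound match at the same exponent) to obtain the global-in-time, global-in-$\xi$ bound $|\varphi(t,\xi)|\leq\Psi_{\beta'}(|\xi|)$. Then one exploits the quadratic structure of the nonlinearity: $|\varphi(t,\xi/2)^2|\leq\Psi_{2\beta'}(|\xi|/2)$, so Duhamel yields $|\varphi(t,\xi)|\leq\max\{\Psi_\alpha(c|\xi|),\Psi_{2\beta'}(|\xi|/2)\}$, and each iteration doubles the exponent while halving the frequency scale. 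After $j$ steps with $2^j\beta'\geq\alpha$ one lands on $\Psi_\alpha(c\,2^{-j}|\xi|)$, which is where the constant $c_0=c\,2^{-j}$ actually comes from. Without this squaring step your argument cannot reach the exponent $\alpha$.
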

\begin{proof}
Note that $\Psi_{\alpha}(c\,|\xi|)\leq\Psi_{\beta}\big(\sqrt{\alpha/\beta}\,c \,|\xi|\big)$ for $\beta \in(0, c^2\,\alpha]$.  Hence, choosing $\beta=\min\{\frac{1}{2}, c^2\,\alpha\}$ it holds $|\varphi_{0}(\xi)|\leq \Psi_{\beta}(|\xi|)$.  Now, Theorem \ref{k-norm-cvgce} states that
\begin{equation*}
\vertiii{\varphi(t) - \bm{\Phi}}_{k} \leq e^{-\sigma t} \vertiii{\varphi_0 - \bm{\Phi}}_{k}
\qquad\text{with}  \qquad \sigma = 1 - \frac14 k - 2^{1-k}>0\,.
\end{equation*}
Therefore, with $C_{k}:=\vertiii{\varphi_{0}-\bm{\Phi}}_{k}$, 
\begin{equation}\begin{split}\label{eq:varPhiCk} 
|\varphi(t,\xi)| &\leq \bm{\Phi}(\xi) + C_{k}|\xi|^{k}e^{-\sigma t} \\
&\leq (1+|\xi|)e^{-|\xi|}+C_{k}|\xi|^{k}
\,\qquad \forall \xi \in \R\;;\;t \geq0.
\end{split}\end{equation}
For any $\beta \in (0,1),$ the mapping $F(r)=(1+r)e^{-r}+C_{k}r^{k}-(1+r^{2})^{-\frac{\beta}{2}}$ is such that
$$F(0)=F'(0)=0, \qquad F''(0)=-1+\beta < 0$$
from which one sees that there is $\delta >0$ (depending on $\beta$ and $C_{k}$) such that $F(r) \leq 0$ for $r \in (0,\delta)$, i.e. 
\begin{equation}\label{small-freq}
|\varphi(t,\xi)| \leq \Psi_{\beta}(|\xi|) \qquad \forall |\xi| \leq \delta\,,\,\quad t\geq0.\end{equation}
For large time, we  introduce, for $\beta \in (0,1)$,
$$G_{t}(r)=(1+r)e^{-r}+C_{k}r^{k}e^{-\sigma t}-(1+r^{2})^{-\frac{\beta}{2}}, \qquad r >0.$$
One first observes that
\begin{equation}\label{pou}
G_{t}(r) \leq (1+r)e^{-r}-1+\frac{\beta}{2}r^{2} + C_{k}r^{k}=(1+r) e^{-r} -1+ \beta r^2 + C_k r^k - \frac{\beta}2 r^2,
\end{equation}
with  
$$(1+r) e^{-r} -1+ \beta r^2 \le 0 \qquad  \mbox{ for any } 0 \le r \le 4,$$
 when $\beta < \frac{e^{-4}}2$ and  $$ C_k r^k - \frac{\beta}2 r^2 \le 0 \qquad \mbox{ for any }  0 \le r \le \left(\frac{\beta}{2C_k}\right)^{\frac{1}{k-2}}.$$ 
 Therefore,  if $\beta < \frac{e^{-4}}2$, then
$$ G_{t}(r) \le 0 \qquad \mbox{ for any } t\ge0 \mbox{ and } 0 \le r \le r_{\beta,k},$$ 
where  $r_{\beta,k}:=\min\left\{ \left(\frac{\beta}{2C_k}\right)^{\frac{1}{k-2}},4\right\}>0$. Now, for $ r_{\beta,k}\le r\le 4$, we have, again with \eqref{pou} 
$$G_t(r)\le h_{\beta} (r_{\beta,k}) + C_{k} 4^{k}e^{-\sigma t},$$
since $h_{\beta}(r):=(1+r)e^{-r}-1+\frac{\beta}{2}r^{2}$ is decreasing on $[r_{\beta,k},4]$ when $\beta < \frac{e^{-4}}2  < e^{-4}$. Note that $h_{\beta}(r_{\beta,k})<0$.  Choosing $t_*\ge \frac{-1}{\sigma} \log\left(-\frac{1}{C_k 4^k} h_{\beta}(r_{\beta,k})\right)$, we obtain that 
$$\max_{0 \leq r \leq 4}G_{t}(r) \leq 0, \qquad \forall t \geq t_{*}.$$
From this we  conclude that
\begin{align}\label{large-freq}
|\varphi(t,\xi)|\leq\Psi_{\beta}(|\xi|)\,,\quad\text{for}\quad |\xi| \leq 4\,,\quad t \geq t_{*}\,.
\end{align}
Given the estimate \eqref{small-freq}, we invoke Lemma \ref{short-time-lemma} with $u(t,\xi)=|\varphi(t,\xi)|$, $\beta \in (0,1)$, and $\beta'\in(0,\beta/2]$ sufficiently small such that $\tau(\delta,\beta,\beta')\geq t_{*}$ to obtain that
\begin{equation*}
|\varphi(t,\xi)| \leq \Psi_{\beta'}(|\xi|)\,,\qquad \xi\in\mathbb{R}\,,\quad t\in[0,t_{*}]\,.
\end{equation*}
With this and the estimate \eqref{large-freq} we use Lemma \ref{Long-time} in the interval $[t_{*},\infty)$, with $u(t,\xi)=|\varphi(t,\xi)|$ and $\beta=\beta'$, to conclude that 
\begin{equation}\label{eq:Psib'}
|\varphi(t,\xi)|\leq \Psi_{\beta'}(\xi) \qquad \text{ for all } \xi \in \R\,,\quad t\geq0\,.
\end{equation}

\medskip
\noindent
In order to upgrade the decay rate up to $\alpha$, we can bootstrap the previous estimate after noticing that, thanks to \eqref{eq:Psib'}, 
\begin{equation*}
\Big| \varphi\Big(t, \frac\xi2 \Big)^{2} \Big| \leq \Psi_{2\beta'}\left(\frac{|\xi|}{2}\right)\,
\end{equation*}
so that, $u(t,\xi)=|\varphi(t,\xi)|$ satisfies $\partial_{t}u + \big(-\tfrac{1}{4}\mathcal{D} + 1\big)u \leq \Psi_{2\beta'}\left(\frac{|\xi|}{2}\right)\,.$ Using Duhamel's formula, it holds that
\begin{equation*}
|\varphi(t,\xi)| \leq \max\left\{\Psi_{\alpha}(c\,|\xi|),\Psi_{2\beta'}\left(\frac{|\xi|}{2}\right)\right\}\,,\qquad \qquad t \geq0.
\end{equation*} 
Iterating this process, we see that, for any $j \in \N$, $j \geq 1$,
$$|\varphi(t,\xi)| \leq \max\left\{\Psi_{\alpha}(c\,|\xi|),\Psi_{2\alpha}\left(\frac{c\,|\xi|}{2}\right),\ldots,\Psi_{2^{j-1}\alpha}\left(\frac{c\,|\xi|}{2^{j-1}}\right),\Psi_{2^{j}\beta'}\left(\frac{|\xi|}{2^{j}}\right)\right\}\,$$
holds for any $\xi\in \R$ and $t \geq 0.$ Notice that
\begin{multline*}
  \max\left\{\Psi_{\alpha}(c\,|\xi|),\Psi_{2\alpha}\left(\frac{c\,|\xi|}{2}\right),\ldots,\Psi_{2^{j-1}\alpha}\left(\frac{c\,|\xi|}{2^{j-1}}\right),\Psi_{2^{j}\beta'}\left(\frac{|\xi|}{2^{j}}\right)\right\} \\
  \leq \max\left\{ \Psi_{\alpha}\left(\frac{c\,|\xi|}{2^{j-1}}\right), \Psi_{2^{j}\beta'}\left(\frac{|\xi|}{2^{j}}\right)\right\} \leq \Psi_{\alpha}\left(\frac{c\,|\xi|}{2^{j}}\right),
  \end{multline*}
as soon as $2^{j}\beta' \geq \alpha$. Setting
$$c_{0}=c \,2^{-j}\quad\text{with}\quad j=\Big\lfloor\max\bigg(\frac{\log\big(\alpha/\beta'\big)}{\log 2}\bigg)\Big\rfloor+1$$
the above condition is satisfied  and the result proved.
\end{proof}

\begin{rem}
Compare this result with Theorem 2 in \cite{FPTT}.  Again, the result here is not associated to a physical counterpart $g(t,x)$, yet it requires boundedness $|\varphi|\leq1$ linked to the mass of $g(t,x)$.

\smallskip
\noindent
It is pointed out in \cite[Lemma 14]{FPTT}, if a function $0 \leq h\in L^{1}$ with unitary norm satisfies that $\sqrt{h}\in \dot{H}^{\alpha}$ then $|\widehat{h}(\xi)| \leq \Psi_{\alpha}(c\,|\xi|)$ with $1/c^{\alpha}=\max\{2,2^{\alpha}\}\|\sqrt{h}\|_{\dot{H}^{\alpha}}$.
\end{rem}
\subsection{Higher regularity norms} Let $g$ be a solution to the Boltzmann problem \eqref{eq:IB-selfsim}-\eqref{eq:normalisation-g} with initial condition $g_{0}$. Then, its Fourier transform $\varphi$ is a solution of the self-similar problem \eqref{eq:selfsim-fourier} with initial condition $\varphi_0=\widehat{g_0}$.
Let us start the discussion by assuming that the initial datum $\varphi_{0}$ satisfies the baseline regularity 
$$\vertiii{\varphi_0 - \bm{\Phi}}_{k} <\infty \qquad \text{and} \qquad |\varphi_{0}(\xi)|\leq \Psi_{\beta}(c|\xi|)$$ for some $k\in(2,3)$, $c\in(0,1]$ and $\beta>0$.  Then, by Theorem \ref{theo-baseline} it holds that
\begin{equation}\label{under}
\sup_{t\geq0}|\varphi(t,\xi)| \leq \Psi_{\beta}(c_{0}|\xi|)\,,\,\qquad \quad \xi \in \R
\end{equation}
for some constant $c_{0}:=c_{0}(\beta,c,\vertiii{\varphi_0 - \bm{\Phi}}_{k})>0$.  With this estimate at hand we can propagate higher regularity norms:
\begin{theo}[\textit{\textbf{Sobolev norm propagation and relaxation}}]\phantomsection\label{theo:Sobolev}
Let $g(t)=g(t,x)$ be a solution to the Boltzmann problem \eqref{eq:IB-selfsim}-\eqref{eq:normalisation-g} with initial condition $g_{0}(x)=g(0,x)$ satisfying 
$$|\widehat{g_0}(\xi) | \leq \Psi_{\beta}(c\,|\xi|), \qquad \xi \in \R$$ for some $\beta,\,c>0$ and $g_0 \in H^{\ell}(\mathbb{R})$ for $\ell\geq0$.  Then, for $\frac{5}{2} < k < {3}$ and any $0<\sigma < \frac98 - \frac14 k - 2^{\frac{3}{2} - k}$ one has
\begin{equation}\label{H-relax}
  \| g(t) - \bm{H}\|_{H^{\ell}} \leq e^{-\sigma t}\,\Big( \| g_0 - \bm{H}\|_{H^{\ell}} +  C(\sigma,\ell,k)\,\| g_0 - \bm{H}\|_{ L^{1}(\w_{k}) } \Big)\,.
\end{equation}
\end{theo}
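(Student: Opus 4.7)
The plan is to perform an energy estimate in Fourier space on the difference $\psi(t,\xi):=\varphi(t,\xi)-\bm{\Phi}(\xi)$, which satisfies equation \eqref{nlinear-self-similar}. Since by Plancherel $\|g(t)-\bm{H}\|_{H^\ell}$ is equivalent to the weighted norm $\|\psi(t)\|_{L^2_w}$ with $w(\xi):=(1+|\xi|^2)^\ell$, the goal reduces to establishing exponential decay of this weighted $L^2$ norm at rate $\sigma$. Multiplying the equation by $\bar{\psi}(t,\xi)w(\xi)$, integrating in $\xi$ and integrating by parts in the drift term, one obtains
$$
\tfrac{1}{2}\tfrac{d}{dt}\|\psi(t)\|_{L^2_w}^2 \leq -\tfrac{9}{8}\|\psi(t)\|_{L^2_w}^2 + \mathcal{N}(t),
$$
where $\mathcal{N}(t):=\mathrm{Re}\!\int\psi(t,\xi/2)(\varphi(t,\xi/2)+\bm{\Phi}(\xi/2))\bar\psi(t,\xi)w(\xi)\,d\xi$. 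The naive Cauchy--Schwarz bound on $\mathcal{N}$ produces a multiplicative constant of order $2^{\ell+3/2}$, which already for $\ell=0$ exceeds the linear dissipation $\tfrac{9}{8}$: this is the main obstacle, and overcoming it is where the baseline regularity of Theorem~\ref{theo-baseline} enters.

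To handle $\mathcal{N}$, I plan to first check that the hypotheses of Theorem~\ref{theo-baseline} are satisfied (the bound $\vertiii{\psi_0}_k<\infty$ follows from $\|g_0-\bm{H}\|_{L^1(\w_k)}<\infty$ via Lemma~\ref{lem:mukvertk}), so that $|\varphi(t,\eta)|\leq \Psi_\beta(c_0|\eta|)$ uniformly in $t$; combined with the exponential decay of $\bm{\Phi}$, this yields $|\varphi(t,\eta)+\bm{\Phi}(\eta)|^2 \leq C|\eta|^{-2\beta}$ for $|\eta|\geq 1/c_0$. After applying Cauchy--Schwarz to $\mathcal{N}$ and the change of variable $\eta=\xi/2$, I would split the resulting integral $\int |\psi(\eta)|^2|\varphi+\bm{\Phi}|^2(1+4\eta^2)^\ell d\eta$ at a radius $R\geq 1/c_0$. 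On $\{|\eta|\leq R\}$, using $|\varphi+\bm{\Phi}|\leq 2$, $(1+4\eta^2)^\ell\leq C_\ell R^{2\ell}$, and the key low-frequency inequality $\int_{|\eta|\leq R}|\psi|^2\,d\eta\leq R^{2k}\vertiii{\psi(t)}_{k,2}^2$ (which follows from $|\psi|^2=|\psi|^2|\eta|^{-2k}\cdot|\eta|^{2k}$), one gets a contribution $\leq C_\ell R^{2\ell+2k}\vertiii{\psi(t)}_{k,2}^2$. On $\{|\eta|>R\}$, the baseline decay together with $(1+4\eta^2)^\ell\leq 4^\ell(1+\eta^2)^\ell$ yields a contribution $\leq C R^{-2\beta}\|\psi(t)\|_{L^2_w}^2$. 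A Young inequality then gives
$$
\mathcal{N}(t)\leq (\epsilon+CR^{-\beta})\|\psi(t)\|_{L^2_w}^2 + C(\epsilon,R,\ell,k)\vertiii{\psi(t)}_{k,2}^2.
$$

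To close the argument, I would invoke Theorem~\ref{k-norm-cvgce} (with $p=2$), which gives $\vertiii{\psi(t)}_{k,2}\leq e^{-\sigma_k(2)t}\vertiii{\psi_0}_{k,2}$ with $\sigma_k(2)=\tfrac{9}{8}-\tfrac{k}{4}-2^{3/2-k}>\sigma$. Since $\sigma<\tfrac{9}{8}$, I can choose $R$ large and $\epsilon$ small so that $\epsilon+CR^{-\beta}<\tfrac{9}{8}-\sigma$, yielding an energy inequality of the form
$$
\tfrac{d}{dt}\|\psi(t)\|_{L^2_w}^2\leq -2\sigma_\ast\|\psi(t)\|_{L^2_w}^2 + C'\vertiii{\psi_0}_{k,2}^2\,e^{-2\sigma_k(2)t}
$$
with $\sigma_\ast>\sigma$. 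A standard Gronwall integration then produces $\|\psi(t)\|_{L^2_w}^2\leq e^{-2\sigma t}\bigl(\|\psi_0\|_{L^2_w}^2+C''\vertiii{\psi_0}_{k,2}^2\bigr)$; taking square roots, applying Plancherel, and bounding $\vertiii{\psi_0}_{k,2}\leq C\|g_0-\bm{H}\|_{L^1(\w_k)}$ through Lemma~\ref{lem:mukvertk} yields the announced estimate~\eqref{H-relax}. The justification of the formal integrations by parts (tails at infinity for the drift term) should be standard, e.g.\ via a cutoff/approximation argument, given that $\psi_0\in H^\ell$ and the propagation of $H^\ell$ regularity which is exactly what the a priori estimate also provides.
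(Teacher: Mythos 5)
Your argument is correct, and it reaches the stated rate, but it is a genuinely different route from the paper's. The paper does not run an energy estimate: it multiplies the equation for $\psi=\varphi-\bm{\Phi}$ by $|\xi|^{m}$ with $m=\ell+k$, writes the resulting equation for $\psi_m$ in mild (Duhamel) form with the semigroup $T_m(t)$ generated by $\frac14\xi\p_\xi-(1+\frac m4)$, and estimates the two source terms in the $\vertiii{\cdot}_{k,2}$ norm; the dangerous factor $2^{m-k+1/p}$ is absorbed there by the H\"older interpolation $\vertiii{\psi_{m-\beta}}_{k,2}\le\vertiii{\psi_m}_{k,2}^{1-\beta/m}\vertiii{\psi}_{k,2}^{\beta/m}$ combined with the baseline bound $\||\xi|^{\beta}\varphi(t)\|_{L^\infty}\le c_0^{-\beta}$ and Young's inequality, before Gronwall and Parseval convert $\vertiii{\psi_{\ell+k}(t)}_{k,2}=\|g(t)-\bm H\|_{\dot H^{\ell}}$ into \eqref{H-relax}. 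You instead work directly with the weighted $L^2$ energy $\int|\psi|^2(1+|\xi|^2)^{\ell}\d\xi$ and defeat the same obstacle (the naive constant $2^{\ell+3/2}>\frac98$) by a frequency splitting: low frequencies are controlled by $\vertiii{\psi(t)}_{k,2}$ (hence by Theorem \ref{k-norm-cvgce}), high frequencies by the baseline decay of $|\varphi+\bm{\Phi}|$ from Theorem \ref{theo-baseline}. Both proofs thus rest on exactly the same two inputs and yield the same threshold $\sigma_k(2)=\frac98-\frac k4-2^{3/2-k}$; yours avoids the interpolation of weighted Fourier norms and is arguably more elementary, at the price of having to justify differentiating the norm and the integration by parts in the drift term (the cutoff argument you invoke does work, since $|\xi\chi_N'(\xi)|$ is bounded and $\psi(t)\in L^2_w$ by the propagation being proved, so this is the usual a priori--estimate formality that the Duhamel formulation sidesteps). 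Two small corrections: the bound $\vertiii{\psi_0}_{k,2}\le C\|g_0-\bm H\|_{L^1(\w_k)}$ is Lemma \ref{lem:mukvertkp}, not Lemma \ref{lem:mukvertk} (the latter handles the sup-norm $\vertiii{\cdot}_k$, which you correctly use to check the hypothesis of Theorem \ref{theo-baseline}); and to apply Theorem \ref{theo-baseline} one should first shrink $c$ to lie in $(0,1]$, which is harmless since $\Psi_\beta$ is decreasing.
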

\begin{rem}
Compare this result with  \cite[Theorem 5]{FPTT}.  Theorem \ref{theo:Sobolev} is new, it proves propagation and convergence in Sobolev norms at the same time with detailed rates.  Furthermore, using the interpolation
\begin{equation*}
\| g(t) - \bm{H} \|_{L^1} \leq C\| (1+|x|)^{2(1+\kappa)}(g(t) - \bm{H}) \|^{\frac15}_{L^1}\| g(t) - \bm{H}\|^{\frac45}_{L^2}\,,
\end{equation*}
with $\kappa>0$ and Theorem \ref{theo:Sobolev} with $\ell=0$ show the exponential relaxation in the $L^{1}$ topology with $\frac45\sigma$ rate assuming the finiteness of the initial datum $L^{2}$ norm.  Similarly, the exponential convergence in $L^{\infty}$ with rate $\sigma$ is shown by taking $\ell>\frac12$ and using Sobolev embedding.
\end{rem}
\begin{proof} As before, we call $\varphi = \varphi(t,\xi)$
    the Fourier transform of $g$. Then $\varphi(t,\cdot)$ is a solution to
\eqref{eq:selfsim-fourier} with the normalisation
\eqref{eq:normalisation-varphi} and the difference $\psi(t,\xi)
:= \varphi(t,\xi) - \bm{\Phi}(\xi)$ with $\bm{\Phi}$ given in \eqref{eq:Phi} satisfies \eqref{nlinear-self-similar}. We introduce the notation 
$$\phi_{m}:=|\xi|^{m}\phi$$ for any $m>0$ and any mapping $\phi=\phi(\xi)$.  Multiplying the self-similar equation \eqref{nlinear-self-similar} by $|\xi|^{m}$ we obtain that the mapping $\psi_{m}(t,\xi)=|\xi|^{m}\psi(t,\xi)$ satisfies
\begin{equation*}
  \p_t \psi_m =
  \frac14 \,\xi\,  \p_\xi \psi_m
  + 2^{m}\psi_m\Big(t, \frac{\xi}{2} \Big)\varphi\Big(t, \frac{\xi}{2}\Big)
    + 2^{m}\psi\Big(t, \frac{\xi}{2} \Big)\bm{\Phi}_{m}\Big(\frac{\xi}{2}
    \Big) - \Big(1+\frac{m}{4}\Big)\psi_m.
\end{equation*}
We define $\left(T_{m}(t)\right)_{t\geq0}$ the semigroup associated to $\frac14 \xi  \p_\xi - \big(1+\frac{m}{4}\big)$, i.e.
$$T_{m}(t)g(\xi)=e^{-\left(1+\frac{m}{4}\right)t}g\left(\xi\,e^{\frac{1}{4}t}\right), \qquad t \geq0$$
and
\begin{equation*}
A_{m}(t,\xi) := 2^{m}\psi_m\Big(t, \frac{\xi}{2} \Big)\varphi\Big(t, \frac{\xi}{2}\Big)\,,\qquad B_{m}(t,\xi):=2^{m}\psi\Big(t, \frac{\xi}{2} \Big)\bm{\Phi}_{m}\Big( \frac{\xi}{2}
    \Big)
\end{equation*}
so that
\begin{equation} \label{eq:decay-Duhamel-m}
  \psi_{m}(t) = T_{m}(t) \psi_{m}(0) + \int_0^t T_{m}(t-s)\big( A_{m}(s) + B_{m}(s) \big)  \d s\,.
\end{equation}
Note that with a similar calculation as before, for any suitable $h$,
\begin{equation*}
  \vertiii{T_{m}(t) h }_{k,p}
  = e^{-\alpha_{m,p} t} \vertiii{h}_{k,p} \quad \text{ with } \quad  \alpha_{m,p} := 1 + \frac{m-k}{4} + \frac{1}{4p}.
\end{equation*}
Also,
\begin{align*}
  &\vertiii{A_{m}(s)}_{k,p}
  \leq
  2^{m - k + \frac{1}{p}} \vertiii{\psi_{m}(s)\,\varphi(s)}_{k,p}\leq 2^{m - k + \frac{1}{p}}\vertiii{\psi_{m-\beta}(s)}_{k,p}\|\varphi_{\beta}(s)\|_{L^\infty},\\
 &\qquad \text{and}\qquad \vertiii{B_{m}(s)}_{k,p}
  \leq
  2^{m - k + \frac{1}{p}}\| \bm{\Phi}_m\|_{L^\infty} \vertiii{\psi(s)}_{k,p}\,.
\end{align*}
Observe that H\"older's inequality implies that
\begin{equation*}
\vertiii{\psi_{m-\beta}(s)}_{k,p} \leq \vertiii{\psi_{m}(s)}^{1-\frac{\beta}{m}}_{k,p}\vertiii{\psi(s)}^{\frac{\beta}{m}}_{k,p}\,,\qquad m\geq\beta, 
\end{equation*}
and \eqref{under} leads to $\|\varphi_{\beta}(s)\|_{L^\infty}\leq c^{-\beta}_{0}$.  Consequently, using Young's inequality we are led to
\begin{multline*}
  \vertiii{\psi_{m}(t)}_{k,p} \leq
  \vertiii{T_{m}(t)\psi_{m}(0)}_{k,p} + \int_0^t \vertiii{T_{m}(t-s)\big(A_{m}(s) + B_{m}(s)\big)}_{k,p} \d s
  \\
  \leq
  e^{-\alpha_{m,p} t}\vertiii{\psi_{m}(0)}_{k,p}
  +
  \int_0^t e^{-\alpha_{m,p} (t-s)} \big( \vertiii{ A_{m}(s) }_{k,p} + \vertiii{ B_{m}(s) }_{k,p}\big) \d s
  \\
  \leq
  e^{-\alpha_{m,p} t}\vertiii{\psi_{m}(0)}_{k,p}
  +
   \int_0^t e^{-\alpha_{m,p} (t-s)} \Big(\varepsilon\,\vertiii{\psi_{m}(s)}_{k,p} 
  + \frac{C}{\varepsilon^{\frac{m}{\beta}-1}} \vertiii{\psi(s) }_{k,p}\Big)\d s,
\end{multline*}
for a constant that can be taken as $C:=c^{-m}_{0}\,2^{(m-k+\frac1p)\frac m\beta} + 2^{m - k + \frac{1}{p}} \, \| \bm{\Phi}_m\|_{L^\infty}$.  Note that 
$$\alpha_{m,p}>\sigma_{k}(p)>0,$$  
where we recall that $\sigma_{k}(p) = 1 - \frac14 k + \frac{1}{4p} - 2^{1 + \frac{1}{p} - k}$.  Therefore, thanks to Theorem \ref{k-norm-cvgce} it follows that
\begin{equation*}
\int_0^t e^{-\alpha_{m,p} (t-s)}\vertiii{\psi(s) }_{k,p}\d s \leq \frac{e^{-\sigma_{k}(p) t} }{\alpha_{m,p}-\sigma_{k}(p)}\, \vertiii{\psi(0)}_{k,p} \,\qquad \qquad t\geq0.
\end{equation*} 
As a consequence, calling $u(t) := e^{\sigma_{k}(p) t}\,\vertiii{ \psi_{m}(t)}_{k,p}$ we see that
\begin{equation*}
u(t) \leq \vertiii{\psi_{m}(0)}_{k,p} +  \frac{C\, \vertiii{\psi(0) }_{k,p}}{\varepsilon^{\frac{m}{\beta}-1}(\alpha_{m,p}-\sigma_{k}(p))} + \varepsilon \int_0^t u(s) \d s,
\end{equation*}
which, by Gronwall's lemma, immediately gives  that
\begin{equation}\label{higher-norm:relax}
  \vertiii{\psi_{m}(t)}_{k,p} \leq e^{-(\sigma_{k}(p) - \varepsilon) t}\,\bigg(\vertiii{\psi_{m}(0)}_{k,p} +  \frac{C\, {\vertiii{\psi(0)}_{k,p}}}{\varepsilon^{\frac{m}{\beta}-1}(\alpha_{m,p}-\sigma_{k}(p))}\bigg)\,.
\end{equation}
One chooses  $p=2$ and $m = k$ so that 
\begin{equation*}
\vertiii{\psi_{m}(t)}_{k,p} = \| \psi(t) \|_{L^2} = \| g(t) - \bm{H} \|_{L^2}\,
\end{equation*} 
thanks to Parseval identity. Moreover, one has,  {for $2<k<3$ (see Lemma \ref{lem:mukvertkp})}
\begin{equation*}
 {\vertiii{\psi(0)}_{k,2}} \leq C\| g_0 - \bm{H}\|_{L^1(\w_{k})}\,.
\end{equation*}
Consequently, from \eqref{higher-norm:relax} one obtains the exponential relaxation in $L^{2}(\mathbb{R})$ as
\begin{equation}\label{L2:relax}
  \|g(t)- \bm{H}\|_{L^2} \leq e^{-(\sigma_{k}(2) - \varepsilon) t}\,\bigg( \| g_0 - \bm{H}\|_{L^2} +  \frac{C\,\| g_0 - \bm{H} \|_{ L^{1}(\w_k) } }{\varepsilon^{\frac{k}{\beta} - 1}(\alpha_{k,2}-\sigma_k(2)})\bigg)\,,\qquad \frac52 < k < {3}.
\end{equation}
More generally, for any $\ell>0$ one can choose $m=\ell+k$, $p=2$ and use the fact that 
\begin{equation*}
\vertiii{\psi_{\ell+k}(t)}_{k,2} = \| |\xi|^{\ell}\psi(t) \|_{L^2} = \| g(t) - \bm{H} \|_{\dot{H}^{\ell}}\,.
\end{equation*} 
Consequently, \eqref{higher-norm:relax} implies that
\begin{equation}\label{Hom-H-relax}
  \| g(t) - \bm{H}\|_{\dot{H}^{\ell}} \leq e^{-(\sigma_{k}(2) - \varepsilon) t}\,\bigg( \| g_0 - \bm{H}\|_{\dot{H}^{\ell}} +  \frac{C\,\| g_0 - \bm{H} \|_{ L^{1}(\w_{k}) } }{\varepsilon^{\frac{\ell+k}{\beta} - 1}(\alpha_{\ell+k,2}-\sigma_k(2)})\bigg)\,,\qquad \frac52 < k < {3}.
\end{equation}
Estimates \eqref{L2:relax}-\eqref{Hom-H-relax} gives the theorem.\end{proof}
\subsection{Spectral gap in Fourier norms}
\label{sec:sg}

 {We prove in this section, that the linearised operator $\mathscr{L}$ has a spectral gap with respect to the norms $\vertiii{\cdot}_{k}$ and $\vertiii{\cdot}_{k,p}$. The proof follows exactly the lines of the proof of the above Theorem \ref{k-norm-cvgce} and turns out to be simpler so we just describe the main steps of it. We begin with the following 
 \begin{defi}\label{def:L}
   We define the linearised operator 
 $$\mathscr{L}:\mathscr{D}(\mathscr{L}) \subset X_0\to X_0$$
 with the Banach space $X_0$ defined in \eqref{eq:X0:measure} by
 \begin{equation*}
 \mathscr{L}h (x) := -\frac14 \p_x( x h) + 2\Q_{0}(h,\bm{H}),
\end{equation*}
and $\mathscr{D}(\mathscr{L})=\{h \in X_{0}\;;\;\p_{x}(x h) \in X_{0}\}.$
\end{defi}
The linearised operator $\mathscr{L}$ corresponds of course to  first order expansion, for $h$ small, of the quantity
$-\frac14 \p_x (xg) + \Q_{0}(g,g)$ for $g = \bm{H} + h$. The existence of a spectral gap is useful for the study of the linearised equation
\begin{equation}
  \label{eq:linearised}
  \p_t h = \mathscr{L}h = -\frac14 \p_x( x h) + 2 \Q_{0}(h, \bm{H}),
\end{equation}
with initial datum $h(0,x)=h_{0}(x)=f_{0}(x)-\bm{H}$ such that 
\begin{equation}
   \label{eq:normalisation-lin}
   \int_\R h_{0}(x) \dx =
   \int_\R x h_{0}(x) \dx  = \int_\R x^2 h_{0}(x) \dx = 0,
 \end{equation}
 Our main result is then the following spectral gap estimate in the Fourier norms $\vertiii{\cdot}_{k}$ and $\vertiii{\cdot}_{k,p}$:
\begin{theo}\label{specgap}
  Assume that $h = h(t,x)$ is a solution to
  \eqref{eq:linearised} with the normalisation
  \eqref{eq:normalisation-lin} and call $\psi = \psi(t,\xi)$ its
  Fourier transform in the $x$ variable. Then, for $0 \leq k < 3$
  \begin{equation*}
    \vertiii{\psi(t)}_k \leq \exp\left(-\sigma_{k} t\right) \vertiii{\psi_0}_k 
    \qquad \forall t \geq0\,,\end{equation*}
    where $\psi_{0}(\xi)=\psi(0,\xi)$ and
$\sigma_{k}:= 1 - \frac14 k - 2^{1-k}.$  
 In particular, $\psi(t)$ converges exponentially to $0$ in the $k$-Fourier norm for any $2 < k < 3$. Moreover, for any $1 \leq p < \infty$, 
\begin{equation*}
\vertiii{\psi(t)}_{k,p} \leq \exp\left(-\sigma_{k}(p) t\right)\vertiii{\psi_{0}}_{k,p} \qquad \forall t \geq0,\end{equation*}
where $\sigma_{k}(p)=1 - \frac14 k + \frac{1}{4p} + 2^{1 + \frac{1}{p} - k}.$ 
\end{theo}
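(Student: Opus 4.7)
The plan is to mimic the proof of Theorem \ref{k-norm-cvgce} essentially verbatim, observing that the linear structure of \eqref{eq:linearised} makes the argument strictly simpler than the nonlinear one: the ``$\varphi + \bm{\Phi}$'' factor in the Duhamel term there is replaced here by the single Maxwellian profile $\bm{\Phi}$, whose $L^{\infty}$ norm is $1$.

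First I will pass to the Fourier side. Taking the Fourier transform in $x$ and using the weak form identity $\widehat{\Q_{0}(h,\bm{H})}(\xi) = \psi(t,\xi/2)\bm{\Phi}(\xi/2) - \tfrac{1}{2}\psi(t,\xi)\bm{\Phi}(0) - \tfrac{1}{2}\psi(t,0)\bm{\Phi}(\xi)$, combined with the normalisation \eqref{eq:normalisation-lin} that gives $\psi(t,0) = \p_\xi\psi(t,0) = \p_\xi^2\psi(t,0) = 0$ (so in particular $\bm{\Phi}(0)=1$ and $\psi(t,0)=0$), one obtains the linearised Fourier equation
\begin{equation}\label{eq:linearised-fourier-plan}
\p_t\psi(t,\xi) = \tfrac{1}{4}\xi\,\p_\xi\psi(t,\xi) + 2\,\psi(t,\xi/2)\,\bm{\Phi}(\xi/2) - \psi(t,\xi).
\end{equation}
The vanishing to order two at $\xi = 0$ is preserved by the flow, so $\vertiii{\psi(t)}_{k}$ and $\vertiii{\psi(t)}_{k,p}$ are finite throughout the argument as long as they are at $t=0$.

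Next, I will apply Duhamel's formula with the explicit semigroup $T(t)\phi(\xi) := e^{-t}\phi(\xi e^{t/4})$ generated by $\phi\mapsto \tfrac{1}{4}\xi\p_\xi\phi - \phi$, which, as computed in \eqref{eq:decay1}, satisfies $\vertiii{T(t)\phi}_k = e^{-(1-k/4)t}\vertiii{\phi}_k$ and analogously $\vertiii{T(t)\phi}_{k,p} = e^{-\alpha_p t}\vertiii{\phi}_{k,p}$ with $\alpha_p = 1 - k/4 + 1/(4p)$. Writing
$$\psi(t) = T(t)\psi_0 + \int_0^t T(t-s)\,A(s)\,ds, \qquad A(s,\xi) := 2\,\psi(s,\xi/2)\,\bm{\Phi}(\xi/2),$$
and using that $\|\bm{\Phi}\|_\infty = 1$, I obtain the key bounds
$$\vertiii{A(s)}_k \leq 2^{1-k}\vertiii{\psi(s)}_k, \qquad \vertiii{A(s)}_{k,p} \leq 2^{1+1/p-k}\vertiii{\psi(s)}_{k,p},$$
exactly as in \eqref{eq:decay2}. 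Plugging these into Duhamel, multiplying by the appropriate exponential, and applying Gronwall's lemma yields the advertised bounds with rates $\sigma_k = 1 - k/4 - 2^{1-k}$ and $\sigma_k(p) = 1 - k/4 + 1/(4p) - 2^{1+1/p-k}$ respectively.

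There is essentially no obstacle: the only point worth verifying carefully is the derivation of \eqref{eq:linearised-fourier-plan}, specifically that all boundary terms coming from $\p_\xi$, $\bm{\Phi}(0)$ and $\psi(0)$ combine correctly, and that the normalisation conditions \eqref{eq:normalisation-lin} are indeed propagated in time by \eqref{eq:linearised} so that $\psi(t,\cdot)$ remains in the class where $\vertiii{\cdot}_{k}$ and $\vertiii{\cdot}_{k,p}$ are finite. Both facts follow from taking the test functions $\phi(x)=1,x,x^2$ in the weak formulation of $\Q_0(h,\bm H)$ and using the corresponding vanishing moments of $h_0$. Once \eqref{eq:linearised-fourier-plan} is in hand, the Duhamel--Gronwall machinery is purely mechanical and yields the stated spectral gap estimates without any new ingredient.
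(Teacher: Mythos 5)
Your proposal is correct and follows essentially the same route as the paper's own proof: derive the linearised Fourier equation $\p_t\psi = \tfrac14\xi\p_\xi\psi + 2\psi(\cdot/2)\bm{\Phi}(\cdot/2) - \psi$, apply Duhamel with the semigroup $T(t)$, bound the source by $2^{1-k}\vertiii{\psi(s)}_k$ (resp. $2^{1+1/p-k}\vertiii{\psi(s)}_{k,p}$) using $\|\bm{\Phi}\|_\infty=1$, and conclude by Gronwall. Your sign for $\sigma_k(p)$ (with $-2^{1+\frac1p-k}$) is the correct one, consistent with Theorem \ref{k-norm-cvgce}; the $+$ appearing in the statement of Theorem \ref{specgap} is a typo.
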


 \begin{proof}   
One directly sees that,  {under the normalisation \eqref{eq:normalisation-lin}}, the equation \eqref{eq:linearised}
preserves mass, momentum and energy. Notice also that, for $h$ satisfying \eqref{eq:normalisation-lin}, 
$$2\Q_{0}(h,\bm{H})(x)=4\left(h\ast \bm{H}\right)(2x)-h(x), \qquad x \in \R.$$

If $h$ is a solution to
\eqref{eq:linearised} and $\psi=\psi(t,\xi)$ is the Fourier transform of $h(t,x)$ in
the $x$ variable, $\psi(t,\xi)$ satisfies the equation
\begin{equation}
  \label{eq:linearised-fourier}
  \p_t \psi(t,\xi) = \frac14 \xi  \p_\xi \psi(t,\xi)
  + 2 \psi\Big(t, \frac{\xi}{2} \Big) \bm{\Phi}\Big( \frac{\xi}{2} \Big)
  - \psi(t,\xi),
\end{equation}
which corresponds of course to the linearisation of
\eqref{eq:selfsim-fourier} around $\bm{\Phi}$. In much the same way we did
for the nonlinear equation, we can show equation
\eqref{eq:linearised-fourier} converges to equilibrium exponentially
fast: by Duhamel's formula,
\begin{equation}
  \label{eq:lin-Duhamel}
  \psi(t) = T(t) \psi_0 + \int_0^t T(t-s) B(s) \d s,
\end{equation}
where now 
\begin{equation*}
  B(s)=B(s,\xi) := 2 \psi\Big(s, \frac{\xi}{2} \Big)
  \bm{\Phi}\Big(\frac{\xi}{2} \Big).
\end{equation*}
Similarly to our calculation in Section \ref{Sec:conv:eq:fourier} we have
\begin{equation*}
  |B(s,\xi)| \leq 2 \left|\psi\left(s, \frac{\xi}{2}\right)\right|,
  \qquad
  \text{so}
  \qquad
  \vertiii{B(s)}_{k}
  \leq
  2^{1-k} \vertiii{\psi(s)}_k,
\end{equation*}
and we can use again \eqref{eq:decay1} to obtain that
\begin{multline*}
  \vertiii{\psi(t)}_{k} \leq
  \vertiii{T(t) \psi_0}_k + \int_0^t \vertiii{T(t-s) B(s)}_k \d s
  \\
  \leq
  e^{-(1 - \frac14 k) t}\vertiii{\psi_0}_{k}
  +
  \int_0^t e^{-(1 - \frac14 k) (t-s)} \vertiii{B(s)}_{k} \d s
  \\
  \leq
  e^{-(1 - \frac14 k) t}\vertiii{\psi_0}_k
  +
  2^{1-k} \int_0^t e^{-(1 - \frac14 k) (t-s)} \vertiii{ \psi(s) }_k \d s.
\end{multline*}
With the same argument as  in Theorem \ref{k-norm-cvgce} we derive the first result. The obtention of the second result also follows the same lines as in Theorem \ref{k-norm-cvgce}. \end{proof}
 
\subsection{Spectral gap in smaller spaces}
\label{sec:sg-small}

As explained in the Introduction, it is important to obtain an equivalent of the above Theorem \ref{specgap} in the more tractable space (see Definition \ref{def:spaces})  
$${\mathbb{Y}_{a}^{0}}=\left\{ f\in L^1(\w_{a}) \mid \int_{\R} f(x) \d x = \int_{\R} x\, f(x) \d x = \int_{\R} x^2 \,f(x) \d x = 0  \right\},$$ 
for some $a>0$ to be determined.  Recalling that $\mathbb{Y}_{a}^{0} \subset X_{0}$ for any $a \geq k$ and since,  by Theorem \ref{specgap}, the linearised operator 
$\mathscr{L}$, with domain 
$$\mathscr{D}(\mathscr{L})=\{f \in \mathbb{Y}^{0}_{a}\;;\;\partial_{x}(xf(x)) \in L^{1}(\w_{a})\},$$ has a spectral gap in $X_0$ for $2<k<3$. Our scope here is to prove that $\mathscr{L}$ still has a spectral gap (of comparable size) in the space $\mathbb{Y}_{a}^{0}$, namely

\begin{theo}\label{restrict}
  Let $2<a<3$. The operator $(\mathscr{L},\mathscr{D}(\mathscr{L}))$ generates a strongly continuous semigroup $\left(S_{0}(t)\right)_{t\geq0}$ on $\mathbb{Y}_{a}^{0}$ and for any $ {\nu \in(0,1-\frac{a}{4} -2^{1-a})}$, there exists $C(\nu)>0$ such that
  $$ \|S_{0}(t)h\|_{L^1(\w_{a})}\le C(\nu) e^{-\nu t} \|h\|_{L^1(\w_{a})}$$
  for any $h\in \mathbb{Y}_{a}^{0}$ and any $t\ge 0$. Moreover, one has
  $$\|\mathscr{L}h\|_{L^1(\w_a)}\ge \frac{\nu}{C(\nu)} \|h\|_{L^1(\w_a)}, \qquad \text{for any } h \in \mathscr{D}(\mathscr{L}).$$
\end{theo}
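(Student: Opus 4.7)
The overall strategy is a \emph{shrinkage} argument in the spirit of Gualdani-Mischler-Mouhot: transfer the spectral gap of $\mathscr{L}$ already established on the larger Fourier-type space $X_{0}$ (Theorem \ref{specgap}) to the smaller space $\mathbb{Y}_{a}^{0} \hookrightarrow X_{0}$, via a suitable splitting $\mathscr{L} = \mathcal{A} + \mathcal{B}$ and a Duhamel iteration.

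\emph{Step 1 (Splitting).} On $\mathbb{Y}_{a}^{0}$ one has $\int h=0$, hence the loss term simplifies as $2\Q_{0}^{-}(h,\bm{H}) = h$. I therefore write
$$\mathscr{L}h = \mathcal{A}h + \mathcal{B}h, \qquad \mathcal{A}h := 2\Q_{0}^{+}(h,\bm{H}), \qquad \mathcal{B}h := -\tfrac{1}{4}\partial_{x}(xh) - h.$$

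\emph{Step 2 (Dissipativity of $\mathcal{B}$).} The semigroup is explicit by characteristics: $e^{t\mathcal{B}}h(x) = e^{-5t/4}\,h(xe^{-t/4})$. A change of variables together with the elementary bound $(1+e^{t/4}|y|)^{a}\le e^{at/4}\w_{a}(y)$ yields
$$\|e^{t\mathcal{B}}h\|_{L^{1}(\w_{a})} \le e^{-\mu t}\|h\|_{L^{1}(\w_{a})}, \qquad \mu := 1-\tfrac{a}{4},$$
and one verifies directly that $e^{t\mathcal{B}}$ preserves $\mathbb{Y}_{a}^{0}$. Crucially $\mu > \sigma_{a} := 1-\tfrac{a}{4}-2^{1-a}$, i.e. the $\mathcal{B}$-decay strictly exceeds the target spectral gap, with a gap of size exactly $2^{1-a}$.

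\emph{Step 3 (Regularising estimate $\mathcal{A}\:X_{0}\to \mathbb{Y}_{a}^{0}$).} Using $\Q_{0}^{+}(h,\bm{H})(x) = 2(h\ast\bm{H})(2x)$ one computes in Fourier variables
$$\widehat{\mathcal{A}h}(\xi) = 2\,\hat{h}(\xi/2)\,\bm{\Phi}(\xi/2).$$
Since $|\hat{h}(\xi)|\le \vertiii{h}_{a}|\xi|^{a}$ and $\bm{\Phi}(\xi/2) = (1+|\xi|/2)e^{-|\xi|/2}$, the function $\widehat{\mathcal{A}h}$ is $O(|\xi|^{a})$ near $0$ and exponentially decaying at infinity, hence analytic in a horizontal strip. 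A standard contour-shift (reverse Paley-Wiener) argument therefore gives $|\mathcal{A}h(x)|\le C\,\vertiii{h}_{a}\,e^{-\delta|x|}$ for some $\delta>0$, so that
$$\|\mathcal{A}h\|_{L^{1}(\w_{a})} \le C\,\vertiii{h}_{a}.$$
That $\mathcal{A}h\in \mathbb{Y}_{a}^{0}$ (zero mass, momentum and energy) follows from the Taylor expansion of $\widehat{\mathcal{A}h}$ at $\xi=0$, using $\hat{h}(0)=\hat{h}'(0)=\hat{h}''(0)=0$ for $h\in X_{0}$.

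\emph{Step 4 (Shrinkage via Duhamel).} Since $\mathcal{A}$ is bounded on $L^{1}(\w_{a})$ (through the embedding $L^{1}(\w_{a})\hookrightarrow X_{0}$), the operator $\mathscr{L} = \mathcal{A}+\mathcal{B}$ generates a $C_{0}$-semigroup $S_{0}(t)$ on $L^{1}(\w_{a})$ as a bounded perturbation of $\mathcal{B}$, and by preservation of the moment conditions this restricts to $\mathbb{Y}_{a}^{0}$. The variation-of-constants formula
$$S_{0}(t)h = e^{t\mathcal{B}}h + \int_{0}^{t} e^{(t-s)\mathcal{B}}\,\mathcal{A}\,S_{0}(s)h\,\d s$$
combined with Steps 2--3, Theorem \ref{specgap}, and the embedding $\vertiii{g}_{a}\le C\|g\|_{L^{1}(\w_{a})}$ gives
$$\|S_{0}(t)h\|_{L^{1}(\w_{a})} \le e^{-\mu t}\|h\|_{L^{1}(\w_{a})} + C\,\|h\|_{L^{1}(\w_{a})}\int_{0}^{t} e^{-\mu(t-s)-\sigma_{a}s}\,\d s.$$
Since $\mu>\sigma_{a}$, the last integral equals $(\mu-\sigma_{a})^{-1}(e^{-\sigma_{a}t}-e^{-\mu t})\le (\mu-\sigma_{a})^{-1}e^{-\sigma_{a}t}$, and one obtains the exponential decay $\|S_{0}(t)h\|_{L^{1}(\w_{a})}\le C(\nu)e^{-\nu t}\|h\|_{L^{1}(\w_{a})}$ for any $\nu\in(0,\sigma_{a})$.

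\emph{Step 5 (Coercivity of $\mathscr{L}$).} The exponential decay above ensures that $\mathscr{L}$ is invertible on $\mathbb{Y}_{a}^{0}$ with $\mathscr{L}^{-1}h = -\int_{0}^{\infty}S_{0}(t)h\,\d t$ and $\|\mathscr{L}^{-1}\|_{\mathbb{Y}_{a}^{0}\to\mathbb{Y}_{a}^{0}}\le C(\nu)/\nu$. Writing $\|h\|_{L^{1}(\w_{a})} = \|\mathscr{L}^{-1}\mathscr{L}h\|_{L^{1}(\w_{a})}$ yields the claimed lower bound $\|\mathscr{L}h\|_{L^{1}(\w_{a})}\ge \tfrac{\nu}{C(\nu)}\|h\|_{L^{1}(\w_{a})}$.

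\emph{Main obstacle.} The principal technical step is the regularising bound of Step 3: converting the weak Fourier control $|\hat{h}(\xi)|\lesssim|\xi|^{a}$ (which only encodes three vanishing moments) into the genuine weighted $L^{1}$-bound on $\mathcal{A}h$. This works precisely because convolution with $\bm{H}$ makes $\widehat{\mathcal{A}h}$ analytic in a strip, and the resulting gap $\mu-\sigma_{a}=2^{1-a}$ is exactly the room needed for the Duhamel step to close; no further iteration is required.
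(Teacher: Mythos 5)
Your Steps 1, 2, 4 and 5 follow the same shrinkage scheme as the paper (which invokes \cite[Theorem 5.2]{CanizoThrom} after establishing dissipativity of $B$ and boundedness of $A:X_0\to\mathbb{Y}_a^0$), but Step 3 contains a genuine gap that breaks the argument: the \emph{untruncated} gain operator $\mathcal{A}h=2\Q_0^{+}(h,\bm{H})=4(h\ast\bm{H})(2\,\cdot)$ is \emph{not} bounded from $(X_0,\vertiii{\cdot}_a)$ into $L^{1}(\w_a)$. To see this, fix a compactly supported measure $\mu_0$ with vanishing mass, momentum and energy and nonzero third moment, and set $h_n=\delta_n\ast\mu_0$. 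Then $h_n\in X_0$, $\widehat{h_n}(\xi)=e^{-in\xi}\widehat{\mu_0}(\xi)$, so $\vertiii{h_n}_a=\vertiii{\mu_0}_a$ is independent of $n$; yet $\mathcal{A}h_n(x)=4(\mu_0\ast\bm{H})(2x-n)$ is concentrated near $x=n/2$, whence $\|\mathcal{A}h_n\|_{L^1(\w_a)}\gtrsim n^{a}\to\infty$. The Fourier norm encodes cancellation of $h$ but carries no information about where its mass sits, so no estimate of the form $\|\mathcal{A}h\|_{L^1(\w_a)}\le C\vertiii{h}_a$ can hold. Your justification via a ``reverse Paley--Wiener'' contour shift also uses the implication in the wrong direction: exponential decay of $\widehat{\mathcal{A}h}$ at infinity yields smoothness of $\mathcal{A}h$, not spatial decay; for spatial exponential decay you would need $\widehat{\mathcal{A}h}$ to extend analytically to a strip, which fails both because $\bm{\Phi}(\xi)=(1+|\xi|)e^{-|\xi|}$ is not analytic at $\xi=0$ and because $\widehat{h}$ is merely continuous. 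Indeed $\bm{H}(z)\sim z^{-4}$, so $h\ast\bm{H}$ can never decay exponentially.

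This is exactly why the paper does not use the naive splitting: it sets $A_1h=4\theta_R\,((h\rho_R)\ast\bm{H})(2\,\cdot)$, truncating both the input (via $\rho_R$) and the output (via $\theta_R$) so that $A_1$ becomes bounded from $X_0$ to $L^1(\w_a)$ (Lemma \ref{bound_A1} crucially uses $\widehat{\rho_R}\in L^1(\w_k)$), adds $A_2=-\P(A_1\cdot)$ to land in $\mathbb{Y}_a^0$, and pushes the truncation remainders into $B$. The remainder $B_{2,2}h=4((h(1-\rho_R))\ast\bm{H})(2\,\cdot)$ is where the constant $2^{1-a}$ is lost (see \eqref{B2}); in other words, the $2^{1-a}$ in the final gap $1-\frac{a}{4}-2^{1-a}$ comes from the far-field part of the gain term in physical space, not — as your ``exactly the room needed'' remark suggests — from matching the drift rate $1-\frac{a}{4}$ against the Fourier-space gap $\sigma_a$. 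To repair your proof you would need to adopt such a truncation (or prove a genuinely different regularisation estimate), after which Steps 4 and 5 go through as you describe.
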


To prove such a result, as explained already in the Introduction, we resort to results from \cite{CanizoThrom,GMM} and split the linearised operator as 
$$\mathscr{L}=A+B,$$ with 
$$A: X_0 \to \mathbb{Y}_{a}^{0} \qquad \text{ bounded }$$
and $B$ enjoying some dissipative properties.} To this end we introduce some truncation function and some projection from $L^1(\w_{a})$ to $\mathbb{Y}_{a}^{0}$. 

For $R>1$ we consider nonnegative functions $\rho_R$  {and} $\theta_R\in {\mathcal C}^\infty(\R)$ which are bounded by $1$ and satisfy 
$$\theta_R(x)=\rho_{R}(x)=1 \qquad \quad \text{ for $|x|\leq \frac{R}{2}$}$$ 
and
$$\theta_R(x)=0 \quad \text{ for $|x|\geq \frac{R}{2}+1$}, \qquad  \rho_R(x)=0  \quad \text{ for $|x|\geq \frac{2}{3}R$.}$$ Let us now introduce the normalised Maxwellian 
$$\M(x)=\dfrac{e^{-x^2}}{\sqrt{\pi}}, \qquad x\in \R$$ and
$$
\zeta_1(x)=\left(\frac{3}{2} -x^2\right) \M(x), \qquad \zeta_2(x)=2x\,\M(x), \qquad
\zeta_3(x)=(-1+2x^2)\,\M(x).$$ We then define a bounded operator $\P :L^1(\w_{a})\to L^1(\w_{a})$ by 
\begin{equation}\label{projection}
\P h(x)= \zeta_{1}(x)\int_\R h(y)\dy \; +\zeta_{2}(x)\int_\R h(y) \, y \, \dy \;+\zeta_{3}(x)\int_\R h(y)\, y^2\, \dy\;,\qquad \quad (x \in \R). 
\end{equation}
For any $f\in L^1(\w_{a})$, one easily checks that 
$$f-\P(f)\in \mathbb{Y}_{a}^{0}.$$ 
Let us split $\mathscr{L}$ as $\mathscr{L}=A+B$ with 
$$A=A_1+A_2, \qquad \text{ and  } \qquad B=B_1+B_{2}+B_3,$$ 
where 
\begin{equation*}\begin{split}
A_1h(x)&=  4\theta_R(x)\,((h\rho_{R})\ast \bm{H})(2x),   \qquad  A_2h=-\P(A_1h), \\
  B_1h(x)&= -\frac{1}{4}\partial_x(xh)-h, \qquad  B_{3}h=\P(A_{1}h)
\end{split}\end{equation*}
and $B_{2}=B_{2,1}+B_{2,2}$ with
$$B_{2,1}h(x)=  4(1-\theta_R(x))\,((h\rho_{R})\ast \bm{H})(2x), \qquad B_{2,2}h= 4((h(1-\rho_{R}))\ast \bm{H})(2x).$$
Recalling that
$$\mathscr{L}h(x)=-\frac{1}{4}\partial_{x}(xh(x))-h(x)+4\left(h \ast \bm{H}\right)(2x)$$
for any $h$ satisfying \eqref{eq:normalisation-lin}, one sees that, indeed, $A+B=A_{1}+A_{2}+B_{1}+B_{2,1}+B_{2,2}+B_{3}=\mathscr{L}.$
The main property of $B=B_{1}+B_{2}+B_{3}$ is established in the following

\begin{prp}\phantomsection\label{prp:dissi}
Let $a>0$ satisfying $1-\dfrac{a}{4} -2^{1-a}>0$. Then, for any $0\leq \nu <1-\dfrac{a}{4} -  {2^{1-a}}$, the operator
$B+\nu$ is dissipative in $L^{1}(\w_{a})$, i.e.
$$\int_{\R}Bh(x)\mathrm{sign}(h(x))\w_{a}(x)\dx \leq -\nu \int_{\R}|h(x)|\w_{a}(x)\d x, \qquad \forall h \in \mathscr{D}(\mathscr{L}) \subset L^{1}(\w_{a}).$$
\end{prp}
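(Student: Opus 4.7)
The plan is to estimate $\int_{\R} Bh\,\sgn(h)\,\w_a\,dx$ piece by piece through the splitting $B=B_1+B_{2,1}+B_{2,2}+B_3$ from the paper, showing that the three ``gain--type'' pieces $B_{2,1},B_{2,2},B_3$ together contribute no more than $(2^{1-a}+\varepsilon)\|h\|_{L^1(\w_a)}$ once the truncation parameter $R$ is chosen sufficiently large. The dissipative backbone lies entirely in $B_1h=-\tfrac14\partial_x(xh)-h$: after regularising $\sgn$ by a smooth increasing sequence (legitimate since $\partial_x(xh)\in L^1(\w_a)$ by definition of $\mathscr{D}(\mathscr{L})$), the chain rule $\sgn(h)\partial_x(xh)=\partial_x(x|h|)$ together with $\partial_x\w_a(x)=a\,\sgn(x)(1+|x|)^{a-1}$ and the identity $|x|(1+|x|)^{a-1}=\w_a(x)-(1+|x|)^{a-1}$ yield exactly
$$
 \int_{\R} B_1h\,\sgn(h)\,\w_a\,dx=\Bigl(\tfrac{a}{4}-1\Bigr)\|h\|_{L^1(\w_a)}-\tfrac{a}{4}\|h\|_{L^1(\w_{a-1})}\leq\Bigl(\tfrac{a}{4}-1\Bigr)\|h\|_{L^1(\w_a)}.
$$

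Next, $B_{2,1}h(x)=4(1-\theta_R(x))((h\rho_R)\ast\bm{H})(2x)$ is handled pointwise: the disjoint supports of $(1-\theta_R)$ (in $|x|\geq R/2$) and $\rho_R$ (in $|u|\leq 2R/3$) force $|2x-u|\geq R/3$ inside the convolution, so the monotonicity of $\bm{H}$ yields $|B_{2,1}h(x)|\leq 4\|h\|_{L^1}\bm{H}(2|x|-2R/3)$, and since $\bm{H}(y)\leq C|y|^{-4}$ the integral $\int_{|x|\geq R/2}\bm{H}(2|x|-2R/3)\w_a(x)\,dx$ vanishes as $R\to\infty$ provided $a<3$. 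For $B_3h=\P(A_1h)$, since $\P$ sends $L^1(\w_a)$ into the finite--dimensional span of the Gaussians $\zeta_1,\zeta_2,\zeta_3$, it is enough to show that each moment $\int_{\R} x^k A_1h\,dx$, $k=0,1,2$, is $o_R(1)\|h\|_{L^1(\w_a)}$. Writing $\theta_R=1-(1-\theta_R)$ and changing variables $y=2x$ in the first piece, the $k$-th moment of $A_1h$ reduces (via $\int v^j\bm{H}(v)\,dv=\delta_{j,0}+\delta_{j,2}$) to the $k$-th moment of $h\rho_R$ plus a boundary contribution on $|x|\geq R/2$ controlled exactly as for $B_{2,1}$. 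Each moment of $h\rho_R$ equals $-\int x^k h(1-\rho_R)\,dx$ thanks to the three vanishing moments encoded in $\mathscr{D}(\mathscr{L})\subset X_0$, and this is bounded by $R^{k-a}\|h\|_{L^1(\w_a)}$, which tends to zero since $k\leq 2<a$.

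The main obstacle is the estimate of $B_{2,2}h=4((h(1-\rho_R))\ast\bm{H})(2x)=2\Q_0^+(h(1-\rho_R),\bm{H})$: the crude convolution bound $\w_a((u+v)/2)\leq\w_a(u/2)\w_a(v/2)$ only produces the constant $2^{1-a}\int\bm{H}(v)\w_a(v/2)\,dv$, which is strictly larger than the desired $2^{1-a}$. The plan is to use the weak form of $\Q_0^+$ to write
$$
 \int_{\R} B_{2,2}h\,\sgn(h)\,\w_a\,dx\leq 2\int_{\R}\!\int_{\R}|h(u)|(1-\rho_R(u))\bm{H}(v)\,\w_a\!\Bigl(\tfrac{u+v}{2}\Bigr)\,du\,dv,
$$
and to split the $v$-integral according to whether $|v|\leq M$ or $|v|>M$. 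In the first region, for $|u|\geq R/2$ the sharper bound $\w_a((u+v)/2)\leq 2^{-a}(1+2/R)^a(1+2M/(R+4))^a\,\w_a(u)$ produces the leading coefficient $2^{1-a}(1+o_{R,M}(1))$; in the second region, convexity $\w_a((u+v)/2)\leq\tfrac12(\w_a(u)+\w_a(v))$ together with the integrability of $\bm{H}\w_a$ for $a<3$ bounds the tail by $\bigl(\int_{|v|>M}\bm{H}(v)(1+\w_a(v))\,dv\bigr)\|h\|_{L^1(\w_a)}$, which vanishes as $M\to\infty$. Summing the four contributions and fixing first $M$ then $R$ gives, for any $\varepsilon>0$,
$$
 \int_{\R} Bh\,\sgn(h)\,\w_a\,dx\leq -\Bigl(1-\tfrac{a}{4}-2^{1-a}-\varepsilon\Bigr)\|h\|_{L^1(\w_a)},
$$
which establishes the dissipativity of $B+\nu$ for every $\nu<1-\tfrac{a}{4}-2^{1-a}$.
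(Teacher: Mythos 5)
Your proof is correct and follows the same architecture as the paper's: the same splitting $B=B_1+B_{2,1}+B_{2,2}+B_3$, the same integration by parts for $B_1$ (your identity even retains the extra negative term $-\tfrac{a}{4}\|h\|_{L^1(\w_{a-1})}$ that the paper simply discards via $|x|\w_{a-1}\leq\w_a$), a pointwise-decay version of the paper's disjoint-support argument for $B_{2,1}$, and the same use of the vanishing moments of $h$ together with the moments of $\bm{H}$ to make $B_3=\P(A_1h)$ of size $O(R^{2-a})\|h\|_{L^1(\w_a)}$.

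The one place where your route genuinely departs from the paper's is the critical $B_{2,2}$ estimate, which is where the sharp constant $2^{1-a}$ must be extracted. The paper first changes variables to pull out the prefactor $2^{1-a}$ and then splits the \emph{weight} as $(2+|x|)^a=|x|^a+\bigl[(2+|x|)^a-|x|^a\bigr]$, controlling the leading piece through a fractional binomial expansion of $|x|^a\leq(|x-y|+|y|)^a$ that yields the coefficient $1+CR^{-\alpha}$, and the remainder by a direct $O(R^{-1})$ bound. You instead keep the double integral and split the $\bm{H}$-\emph{variable} into $|v|\leq M$ and $|v|>M$: on the near region the support condition $|u|\geq R/2$ gives $\w_a\bigl(\tfrac{u+v}{2}\bigr)\leq 2^{-a}(1+o_R(1))\,\w_a(u)$, while on the far region convexity of $\w_a$ and $\bm{H}\in L^1(\w_a)$ (here is where $a<3$ enters for you, just as it does for the paper) make the tail $o_M(1)$. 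Both devices deliver $(2^{1-a}+\varepsilon)\|h\|_{L^1(\w_a)}$ after fixing the parameters in the right order; yours trades the binomial manipulation for a two-parameter limit and is arguably more transparent about why $2^{1-a}$ is the unavoidable constant, namely the Jacobian of $x\mapsto 2x$ acting on the weight at infinity.
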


This proposition is a direct consequence of the following three lemmas. Let us note that   $1-\dfrac{a}{4} -2^{1-a}>0$ for any $2<a<3$.

\begin{lem}\phantomsection
For any $h \in \mathscr{D}(\mathscr{L}) \subset L^{1}(\w_{a})$, 
\begin{equation}\label{B1}
\int_{\R} B_1h(x) \, \mathrm{sign}(h(x))\w_{a}(x) \d x\leq \frac{a-4}{4}\int_{\R} |h(x)| \w_{a}(x) \d x.
\end{equation}
\end{lem}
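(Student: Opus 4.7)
The proof is essentially a direct computation based on the explicit form of $B_1$, combined with an integration by parts and a standard Kato-type inequality. The plan is as follows.

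First, I would rewrite $B_1 h = -\tfrac14\partial_x(xh) - h = -\tfrac54 h - \tfrac14 x\,\partial_x h$, so that, at least formally,
\begin{equation*}
\int_{\R} B_1 h(x)\,\mathrm{sign}(h(x))\,\w_a(x)\,\d x = -\tfrac{5}{4}\int_{\R}|h|\,\w_a\,\d x - \tfrac{1}{4}\int_{\R} x\,\partial_x h\,\mathrm{sign}(h)\,\w_a\,\d x.
\end{equation*}
The key observation is the Kato inequality $\mathrm{sign}(h)\,\partial_x h = \partial_x|h|$ (valid in the distributional sense for $h\in W^{1,1}_{\mathrm{loc}}$, which is guaranteed by $h\in\mathscr{D}(\mathscr{L})$). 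Integration by parts then yields
\begin{equation*}
\int_{\R} x\,\partial_x h\,\mathrm{sign}(h)\,\w_a\,\d x = \int_{\R} x\,\partial_x|h|\,\w_a\,\d x = -\int_{\R}|h|\,\partial_x\bigl(x\,\w_a(x)\bigr)\,\d x.
\end{equation*}

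Next, using $\w_a(x)=(1+|x|)^a$, a direct computation gives $\partial_x\bigl(x\,\w_a(x)\bigr)=\w_a(x)+a|x|(1+|x|)^{a-1}$. Plugging everything back and simplifying,
\begin{equation*}
\int_{\R} B_1 h\,\mathrm{sign}(h)\,\w_a\,\d x = -\int_{\R}|h|\,\w_a\,\d x + \frac{a}{4}\int_{\R}|h|\,|x|(1+|x|)^{a-1}\,\d x.
\end{equation*}
The elementary bound $|x|(1+|x|)^{a-1}\leq (1+|x|)^a=\w_a(x)$ immediately delivers \eqref{B1}.

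The only (minor) technical point that requires care is the use of $\mathrm{sign}(h)\,\partial_x h = \partial_x |h|$ when $h$ is merely in $\mathscr{D}(\mathscr{L})$ and therefore may have a sign change with only $L^1$ regularity of the derivative. I would handle this by the standard smoothing argument: replace $\mathrm{sign}$ by a smooth, bounded, nondecreasing approximation $\sigma_\epsilon$ with $\sigma_\epsilon(r)\to\mathrm{sign}(r)$ and $r\sigma_\epsilon(r)\to |r|$ pointwise as $\epsilon\downarrow 0$, perform the integration by parts on the regularised quantity (where $\sigma_\epsilon(h)\,\partial_x h=\partial_x\Sigma_\epsilon(h)$ for the primitive $\Sigma_\epsilon$ of $\sigma_\epsilon$), and pass to the limit by dominated convergence, using the integrability of $|h|\,\w_a$. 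This standard argument presents no real difficulty and is the only place where care beyond the formal calculation is required.
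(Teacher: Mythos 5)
Your proof is correct and follows essentially the same route as the paper: write $B_1h=-\tfrac54 h-\tfrac14 x\partial_x h$, integrate by parts using $\mathrm{sign}(h)\partial_x h=\partial_x|h|$, and bound $|x|\w_{a-1}(x)\le\w_a(x)$. The only difference is that you explicitly flag the regularisation needed to justify the Kato identity, which the paper's proof leaves implicit; this is a welcome but inessential refinement.
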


\begin{proof}
Since $B_1h=-\dfrac{1}{4}\;x \partial_xh-\dfrac{5}{4} \;h$, an integration by parts leads to  

\begin{eqnarray*}
& & \hspace{-1cm}\int_{\R} B_1h(x) \ \mbox{sign}(h(x))\,\w_{a}(x) \d x\\
& & \hspace{2cm} = \frac{1}{4} \int_{\R}|h(x)|\, (\w_a(x)+a|x|\w_{a-1}(x)) \d x- \frac{5}{4} \int_{\R}|h(x)|\, \w_{a}(x) \d x\\
&& \hspace{2cm} \leq -\int_{\R} |h(x)|\, \w_{a}(x) \d x+\frac{a}{4} \int_{\R} |h(x)| \,\w_{a}(x) \d x,
\end{eqnarray*}
since $|x|\w_{a-1}(x)\leq \w_{a}(x)$
and \eqref{B1} follows.
\end{proof}

\begin{lem}\phantomsection
For any $a\in(2,3)$ and any $\varepsilon>0$, there exists $R>1$ such that for any $h\in L^{1}(\w_{a})$, 
\begin{equation}\label{B2}
\begin{split}
 \int_{\R} |B_{2,1}h(x)|\, \w_{a}(x)\d x &\leq \varepsilon  \int_{\R} |h(x)|\,\w_{a}(x) \d x\\
   \int_{\R} |B_{2,2}h(x)|\, \w_{a}(x)\d x &\leq (2^{1-a} +\varepsilon)  \int_{\R} |h(x)|\,\w_{a}(x) \d x\,.
\end{split}
\end{equation}
\end{lem}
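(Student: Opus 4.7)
The plan is to estimate $B_{2,1}$ using only the support restrictions of $\theta_R$ and $\rho_R$ together with integrability of $\bm{H}\w_a$ (which holds for $a<3$), and to estimate $B_{2,2}$ by a careful asymptotic analysis of the convolution kernel in order to capture the sharp constant $2^{1-a}$.

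First I would treat $B_{2,1}$. Writing the $L^1(\w_a)$-norm as a double integral, switching the order of integration, and applying the change of variables $u = 2x - y$ in the inner integral gives
\[
\int_\R |B_{2,1}h(x)|\w_a(x)\dx
\leq 2\int_\R |h(y)|\rho_R(y)\int_\R \bm{H}(u)\,\w_a\!\Big(\tfrac{u+y}{2}\Big)\bigl(1-\theta_R(\tfrac{u+y}{2})\bigr)\du\dy.
\]
On $\supp\rho_R$ one has $|y|\le 2R/3$, and on $\supp(1-\theta_R(\cdot/2))$ one has $|u+y|\ge R$, so $|u|\ge R/3$. Using the submultiplicativity $\w_a(\tfrac{u+y}{2}) \le \w_a(u)\w_a(y)$, the right-hand side is bounded by $2\|h\|_{L^1(\w_a)}\int_{|u|\ge R/3}\bm{H}(u)\w_a(u)\du$, which tends to $0$ as $R\to\infty$ since $\int\bm{H}\w_a<\infty$ for $a<3$.

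For $B_{2,2}$, a similar change of variables reduces the problem to showing that
\[
K(y) := \int_\R \bm{H}(u)\,\w_a\!\Big(\tfrac{u+y}{2}\Big)\du \;\le\; \bigl(2^{-a}+\tfrac{\varepsilon}{2}\bigr)\w_a(y),
\qquad |y|\ge R/2,
\]
for $R$ large enough (then $\int|B_{2,2}h|\w_a\dx \leq (2^{1-a}+\varepsilon)\|h\|_{L^1(\w_a)}$ since $(1-\rho_R)$ vanishes for $|y|<R/2$). The key identity is
\[
\frac{1+|(u+y)/2|}{1+|y|} \;\le\; \frac12 + \frac{1+|u|}{2(1+|y|)},
\]
and I would split the $u$-integration at some threshold $|u|=M$ to be chosen. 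On $\{|u|\le M\}$, for $|y|\ge R/2$ the right side above is $\le 1/2 + (1+M)/(R+2)$, so the ratio $\w_a((u+y)/2)/\w_a(y)$ is bounded by $2^{-a}(1+C_a(1+M)/R)^a$, and the contribution to $K(y)/\w_a(y)$ is at most $2^{-a}(1+\varepsilon')$. On $\{|u|>M\}$, the crude bound $\w_a((u+y)/2)\le \w_a(u)\w_a(y)$ reduces the contribution to $\int_{|u|>M}\bm{H}(u)\w_a(u)\du$, which is small for $M$ large. Choosing $M$ large first (so that the tail is $<\varepsilon/4$), then $R$ large compared to $M$ (so that $(1+M)/R$ is sufficiently small) yields the required bound on $K(y)$.

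The main subtlety will be producing the sharp constant $2^{1-a}$ in the $B_{2,2}$ estimate—this is precisely what governs the admissible range $1-a/4-2^{1-a}>0$ that appears in Proposition \ref{prp:dissi}. A naive application of submultiplicativity $\w_a((u+y)/2)\le \w_a(u)\w_a(y)$ would only give constant $\int\bm{H}\w_a\,\du$, which is much larger than $2^{-a}$; the correct rate $2^{-a}$ arises from the fact that $\tfrac{u+y}{2}$ differs from $\tfrac{y}{2}$ by a $u$-contribution which is negligible when $|y|$ is much larger than the effective support of $\bm{H}$, and capturing this requires the two-scale splitting described above.
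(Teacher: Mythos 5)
Your proof is correct, and for the $B_{2,2}$ bound it takes a genuinely different route from the paper. The paper also starts from the change of variables producing the prefactor $2^{1-a}$, but then splits $(2+|x|)^{a}=|x|^{a}+\bigl[(2+|x|)^{a}-|x|^{a}\bigr]$ and controls the main term $\int |(h(1-\rho_R))\ast\bm{H}|\,|x|^{a}\dx$ by a binomial-type expansion (writing $a=p\alpha$ and expanding $|x|^{a}\le |y|^{a}+\sum_j \binom{p}{j}^{\alpha}|x-y|^{j\alpha}|y|^{(p-j)\alpha}$), gaining a factor $R^{-\alpha}$ from $|y|\ge R/2$ on the cross terms; the remainder term is handled by a crude $O(R^{-1})$ bound. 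You instead keep the weight intact and perform a two-scale splitting in the convolution variable at $|u|=M$, using the elementary inequality $1+|\tfrac{u+y}{2}|\le\tfrac12(1+|y|)+\tfrac12(1+|u|)$ together with $\int\bm{H}=1$ on the bulk and the integrability of $\bm{H}\w_a$ on the tail; the order of quantifiers (first $M$, then $R\gg M$) is stated correctly. Both arguments capture the sharp constant $2^{1-a}$; yours avoids the somewhat ad hoc fractional binomial expansion and makes transparent where $2^{-a}$ comes from (namely $\w_a(\tfrac{u+y}{2})\approx 2^{-a}\w_a(y)$ when $|u|\ll|y|$), while the paper's version yields an explicit rate in $R$. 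For $B_{2,1}$ the two proofs are essentially identical (support separation forcing $|u|\ge R/3$ plus smallness of the tail of $\bm{H}\w_a$); the paper additionally quantifies the decay as $(1+R/3)^{-(3-a)/2}$ via the splitting $\w_a=\w_{(a+3)/2}/(1+|\cdot|)^{(3-a)/2}$, which you do not need since only smallness is required.
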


\begin{proof}

We start with $B_{2,2}$ and a change of variables leads to 
\begin{multline}\label{eq:B2:1}
\int_{\R} |B_{2,2}h(x)| \,\w_{a}(x)\d x   = 2^{1-a}\int_{\R} |((h(1-\rho_R))\ast \bm{H})(x)| (2+|x|)^{a} \d x \\*
= {2^{1-a}} \int_{\R} |((h(1-\rho_R))\ast \bm{H})(x)| |x|^a \d x + 2^{1-a}\int_{\R} |((h(1-\rho_R))\ast \bm{H})(x)| \bigl((2+|x|)^{a}-|x|^a\bigr) \d x\,\\*
\leq {2^{1-a}} \int_{\R} |((h(1-\rho_R))\ast \bm{H})(x)| |x|^a \d x + 2^{2-a}a\int_{\R} |((h(1-\rho_R))\ast \bm{H})(x)|(2+|x|)^{a-1} \d x\,.
\end{multline}
On the one hand, since $\rho_R \in [0,1]$, we deduce that  
$$\int_{\R} |((h(1-\rho_R))\ast \bm{H})(x)|\, |x|^a \d x \leq \int_{\R}\int_{|y|\geq \frac{R}{2}}|h(y)|\,  \bm{H}(x-y) \, |x|^a \d y\d x\,.$$
Now, writing $a=p \alpha $ with $\alpha \in (0,1)$ and $p\in\N$, we have 
\begin{multline*}
|x|^a=|x-y+y|^{p\alpha} \leq  (|x-y|+|y| )^{p\alpha} =  \left(\sum_{j=0}^p \left(
\begin{array}{c}
p\\j
\end{array}\right) |x-y|^j\, |y|^{p-j}\right)^\alpha \\
\leq \sum_{j=0}^p \left(
\begin{array}{c}
p\\j
\end{array}\right)^\alpha |x-y|^{j\alpha} \, |y|^{(p-j)\alpha}=  |y|^{p\alpha} + \sum_{j=1}^p \left(
\begin{array}{c}
p\\j
\end{array}\right)^\alpha |x-y|^{j\alpha}\,|y|^{(p-j)\alpha}.
\end{multline*}
Consequently, recalling $a=p\alpha$,
\begin{multline*}
\int_{\R} |((h(1-\rho_R))\ast \bm{H})(x)|\, |x|^a \d x \leq \int_{\R}\int_{|y|\geq \frac{R}{2}}|h(y)|\, \bm{H}(x-y) \, |y|^a \d y\d x  \\+ \sum_{j=1}^p \left(
\begin{array}{c}
p\\j
\end{array}\right)^\alpha  \int_{\R}\int_{|y|\geq \frac{R}{2}} |h(y)|\,\bm{H}(x-y) \,|x-y|^{j\alpha}\,|y|^{(p-j)\alpha}\d y\d x\,. 
\end{multline*}
Since $\bm{H}$ has mass $1$ and $R>2$, we obtain 
\begin{multline*}
\int_{\R} |((h(1-\rho_R))\ast \bm{H})(x)|\, |x|^a \d x \\
\leq \int_{\R} |h(y)| \, |y|^a \d y  + \sum_{j=1}^p \left(
\begin{array}{c}
p\\j
\end{array}\right)^\alpha  2^{j\alpha}\int_{\R}\int_{\R} \frac{|y|^a}{R^{j\alpha}} |h(y)|\,  \bm{H}(x-y) \,|x-y|^{j\alpha}\d y\d x \\
\leq \int_{\R} |h(y)| \, |y|^a \d y  + 2^{a}R^{-\alpha}\sum_{j=1}^p \left(
\begin{array}{c}
p\\j
\end{array}\right)^\alpha  \int_{\R}|y|^{a}|h(y)|\dy \int_{\R}\bm{H}(x-y) \,|x-y|^{j\alpha}\d x\,.
\end{multline*}
Since $j \leq p$, $j\alpha \leq a$ and $|x-y|^{j\alpha} \leq 1+|x-y|^{a}$ and for $a<3$, $\bm{H}\in L^1(\w_{a})$, we conclude that 
\begin{equation}\label{eq:B2:2}
\int_{\R} |((h(1-\rho_R))\ast \bm{H})(x)|\, |x|^a \d x\leq \left(1+ \frac{C}{R^\alpha}\right) \int_{\R} |h(y)| \, |y|^a \d y,  
\end{equation}
for some constant $C>0$ depending on $a$ and $\|\bm{H}\|_{L^1(\w_{a})}$. Similarly, one has
$$ 2^{2-a}a\int_{\R} |((h(1-\rho_R))\ast \bm{H})(x)|(2+|x|)^{a-1} \d x\leq 2^{2-a}a\int_{|y|\geq \frac{R}{2}}\int_{\R} |h(y)|\bm{H}(x) (2+|x+y|)^{a-1} \d x$$
and, using that 
$$(2+|x+y|)^{a-1}\leq 2^{a-1}\frac{(1+|x|+|y|)^{a}}{1+|y|}\leq 2^{a-1}\frac{\w_a(x)\w_a(y)}{1+|y|},$$ we have
\begin{multline}\label{eq:B2:3}
 2^{2-a}a\int_{\R} |((h(1-\rho_R))\ast \bm{H})(x)|(2+|x|)^{a-1} \d x \\
 \leq \frac{4a}{2+R}\int_{\R}|h(y)|\w_{a}(y)\dy\int_{\R}\bm{H}(x)\w_{a}(x)\dx
 \leq \frac{C_{a}}{2+R}\int_{\R}|h(y)|\w_{a}(y)\dy.
\end{multline}
With this at hands, the second estimate in \eqref{B2} is a consequence of \eqref{eq:B2:1} together with \eqref{eq:B2:2} and \eqref{eq:B2:3} if we choose $R$ large enough.
 
For the first bound in \eqref{B2} we proceed similarly and first change variables and use the properties of the cutoff functions to get
\begin{equation*}\begin{split}
 \int_{\R}|B_{2,1}h(x)|\w_a(x)\dx &= 2\int_{\R}|((h\rho_R)\ast \bm{H})(x)|\left(1-\theta_R\left(\frac{x}{2}\right)\right)\w_a\left(\frac{x}{2}\right)\dx\\*
& \leq 2\int_{|x|\geq R}\int_{\R}|h(y)|\rho_R(y)\bm{H}(x-y)\left(1+\abs{\frac{x}{2}}\right)^{a}\dy\dx\\*
& \leq 2\int_{|x|\geq R}\int_{|y|\leq \frac{2}{3}R}|h(y)|\bm{H}(x-y)\left(1+|x-y|+|y|\right)^{a}\dy\dx\\
& \leq 2\int_{|x|\geq R}\int_{|y|\leq \frac{2}{3}R}|h(y)|\bm{H}(x-y)\w_{a}(y)\w_{a}(x-y)\dy\dx\,.
\end{split}\end{equation*}
We next exploit that $\bm{H}\in L^1(\w_{\frac{3+a}{2}})$ for $a<3$ and $|x-y|\geq |x|-|y|\geq \frac{R}{3}$ for $|x|\geq R$ and $|y|\leq \frac{2R}{3}$ to deduce
\begin{multline*}
 \int_{\R}|B_{2,1}h(x)|\w_a(x)\dx \leq 2 \int_{|x|\geq R}\int_{|y|\leq \frac{2}{3}R}|h(y)|\bm{H}(x-y)\w_{a}(y)\frac{\w_{\frac{a+3}{2}}(x-y)}{(1+|x-y|)^{\frac{3-a}{2}}}\dy\dx\\*
 \leq C\Bigl(1+\frac{R}{3}\Bigr)^{-\frac{3-a}{2}}\int_{\R}|h(y)|\w_a(y)\dy.
\end{multline*}
Since $2<a<3$, the first estimate in \eqref{B2} follows if we choose $R$ sufficiently large.
\end{proof}

\begin{lem}\phantomsection
For any $a\in (2,3)$ and any $\varepsilon>0$, there exists $R>1$ such that 
\begin{equation}\label{B3}
 \int_{\R} |B_3h(x)|\, \w_{a}(x)\d x \leq \varepsilon  \int_{\R} |h(x)|\, \w_{a}(x) \d x \qquad \forall h \in \mathscr{D}(\mathscr{L}) \subset \mathbb{Y}_{a}^{0}.
\end{equation}
\end{lem}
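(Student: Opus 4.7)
The plan is to exploit that $\P$ has three-dimensional range and that the hypothesis $h \in \mathbb{Y}_a^0$ (rather than only $\mathbb{Y}_a$) gives cancellation of the zeroth, first and second moments of the untruncated operator. Since $\zeta_1,\zeta_2,\zeta_3$ are Schwartz and hence belong to $L^1(\w_a)$, writing $\mu_k(f) := \int_\R x^k f(x)\dx$ one has
$$\|B_3 h\|_{L^1(\w_a)} = \|\P(A_1 h)\|_{L^1(\w_a)} \le C_a \sum_{j=0}^{2}|\mu_j(A_1 h)|,$$
so it suffices to show $|\mu_j(A_1 h)| \le \varepsilon\|h\|_{L^1(\w_a)}$ for $j=0,1,2$ whenever $R$ is large enough.

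The crucial observation is that, after the change of variables $z=2x$ and expansion of the convolution, a direct calculation gives
$$4\int_\R x^j (h\ast \bm{H})(2x)\dx = 2^{1-j}\sum_{k=0}^{j}\binom{j}{k}\mu_k(h)\,\mu_{j-k}(\bm{H}) = 0 \qquad (j=0,1,2),$$
where we use $\mu_0(h)=\mu_1(h)=\mu_2(h)=0$ (the last equality being precisely what distinguishes $\mathbb{Y}_a^0$ from $\mathbb{Y}_a$) together with $\mu_1(\bm{H})=0$. Inserting and subtracting this vanishing full-convolution in the definition of $A_1 h$ yields
$$\mu_j(A_1 h) = -4\int_\R x^j \bigl((h(1-\rho_R))\ast\bm{H}\bigr)(2x)\dx - 4\int_\R x^j (1-\theta_R(x))\bigl((h\rho_R)\ast\bm{H}\bigr)(2x)\dx =: I_1+I_2.$$

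Both $I_1$ and $I_2$ are tail remainders. For $I_1$, after $z=2x$, reapplication of the convolution moment expansion reduces the bound to estimating $|\mu_k(h(1-\rho_R))|$ for $0\le k \le j\le 2$; since $1-\rho_R$ is supported in $\{|y|\ge R/2\}$ and $a>2\ge k$, Chebyshev's inequality gives
$$|\mu_k(h(1-\rho_R))| \le \int_{|y|\ge R/2}|y|^k |h(y)|\dy \le (1+R/2)^{-(a-k)}\|h\|_{L^1(\w_a)}.$$
For $I_2$, after $z=2x$, the support properties $\supp(1-\theta_R)\subset\{|z|\ge R\}$ and $\supp(\rho_R)\subset\{|y|\le 2R/3\}$ force $|z-y|\ge R/3$ in the convolution; splitting $|z|^j\le 2^{j-1}(|z-y|^j+|y|^j)$, the first piece is controlled by $\int_{|u|\ge R/3}|u|^j \bm{H}(u)\du$, which tends to $0$ as $R\to\infty$ since $\bm{H}\sim |u|^{-4}$ at infinity ensures $|u|^j\bm{H}(u)\in L^1(\R)$ for $j\le 2<3$, while the second piece is controlled via $\int_{|z|\ge R}\bm{H}(z-y)\dz \le (1+R/3)^{-a}\|\bm{H}\|_{L^1(\w_a)}$, valid for $|y|\le 2R/3$, together with $\int_\R |y|^j|h(y)|\dy\le \|h\|_{L^1(\w_a)}$.

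Combining these estimates yields $|\mu_j(A_1 h)|\le CR^{-(a-2)}\|h\|_{L^1(\w_a)}$ for $j=0,1,2$, and thus $\|B_3 h\|_{L^1(\w_a)}\le C'R^{-(a-2)}\|h\|_{L^1(\w_a)}$; choosing $R$ so that $C'R^{-(a-2)}\le\varepsilon$ establishes \eqref{B3}. The only delicate point is the case $j=2$: this is precisely where the hypothesis $a>2$ is consumed and where the extra constraint $\mu_2(h)=0$ (that is, the restriction to $\mathbb{Y}_a^0$ rather than $\mathbb{Y}_a$) becomes essential to make the $j=2$ moment of the untruncated convolution vanish. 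No deeper obstacle arises: the argument is structurally parallel to, and technically lighter than, the bounds on $B_{2,1}$ and $B_{2,2}$ performed in the proof of \eqref{B2}, because $B_3$ only sees three scalar moments of $A_1h$.
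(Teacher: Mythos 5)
Your proof is correct and follows essentially the same route as the paper: both compute the three moments of $A_1h$, use the vanishing moments of $h\in\mathbb{Y}_a^0$ together with the normalisation of $\bm{H}$ to reduce to remainders supported where the cutoffs act, and then extract a factor $R^{-(a-2)}$ from the spare weight $\w_{a-2}$. The only difference is cosmetic — you split $1-\rho_R\theta_R$ into the two tail pieces $I_1,I_2$ and expand the convolution moments binomially, while the paper keeps a single double integral and bounds $|1-\rho_R(x)\theta_R(\tfrac{x+y}{2})|\w_2(x+y)$ pointwise — so no further comment is needed.
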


\begin{proof} Recall that $B_3h\,= \P(A_1h)$. Let us compute the first moments of $A_1h$. Using that $h\in \mathbb{Y}_{a}^{0}$ and that $\bm{H}$ has mass $1$,  momentum $0$ and energy $1$, one obtains 
\begin{eqnarray*}
\int_\R A_1h(x)\dx & = &  {2\int_\R h(x-y)\rho_{R}(x-y) \,\int_\R \bm{H}(y)\theta_R\left(\frac{x}{2}\right)\dx\dy}\\
& =&  - {2}\int_{\R^2} h(x)\bm{H}(y) \left[1-\rho_R(x)\theta_R\left(\frac{x+y}{2}\right)\right]\dy\dx,
 \end{eqnarray*}
\begin{eqnarray*}
\int_\R A_1h(x)\ x \dx & =&   {2} \int_\R h(x-y)\rho_R(x-y)\, \int_\R \frac{x}{2}\theta_R\left(\frac{x}{2}\right) \;\bm{H}(y) \dx\dy\\
&=& -  \int_{\R^2} h(x)\bm{H}(y)\left[1-\rho_R(x)\theta_R\left(\frac{x+y}{2}\right)\right]\, (x+y) \dy\dx,
 \end{eqnarray*}
and 
\begin{eqnarray*}
\int_\R A_1h(x)\ x^2 \dx & =&   {2}\int_\R h(x) \rho_R(x)\, \int_{\R} \left(\frac{x+y}{2}\right)^2\theta_R\left(\frac{x+y}{2}\right) \;\bm{H}(y)\dx\dy \\
& = &  -  {\frac{1}{2}}\int_{\R^2} h(x)\bm{H}(y)\left[1-\rho_R(x)\theta_R\left(\frac{x+y}{2}\right)\right](x+y)^2  \dy\dx.
 \end{eqnarray*}
Consequently, one easily gets that
$$\left|B_{3}h\right| \leq   2\left(\int_{\R^2}|h(x)|\bm{H}(y)\left|1-\rho_R(x)\theta_{R}\left(\frac{x+y}{2}\right)\right|\w_{2}(x+y)\dy\dx\right)\sum_{i=1}^{3}\left|\zeta_{i}(\cdot)\right|$$
since $\max\left(1,|z|,(1+|z|^2)\right)\leq \w_{2}(z)$ and thus
\begin{equation*}
 \left|B_{3}h \right| \leq   2\left(\int_{\R^2}|h(x-y)|\bm{H}(y)\left|1-\rho_R(x-y)\theta_{R}\left(\frac{x}{2}\right)\right|\w_{2}(x)\dy\dx\right)\sum_{i=1}^{3}\left|\zeta_{i}(\cdot)\right|.
\end{equation*}
We next use the properties of the cutoff functions $\theta_R$ and $\rho_R$ together with $\w_s(x) \leq \w_s(y)\w_s(x-y)$ for $s\in\{a,2\}$ to deduce that
{\begin{equation*}\begin{split}
      \abs{1-\rho_R(x-y)\theta_R\Bigl(\frac{x}{2}\Bigr)}\w_2(x)
      &\leq \bm{1}_{\{|x-y|\geq \frac{R}{2}\}}\frac{\w_a(x-y)\w_a(y)}{\w_{a-2}(x-y)\w_{a-2}(y)} + \bm{1}_{\{|x|\geq R\}} \frac{\w_a(x)}{\w_{a-2}(x)} \\*      
 &\leq \frac{2^{a-2}}{(2+R)^{a-2}}\w_{a}(x-y)\w_{a}(y) + \frac{1}{(1+R)^{a-2}}\w_{a}(x)\\*      
 &\leq \frac{C}{(1+R)^{a-2}}\w_{a}(x-y)\w_{a}(y).
\end{split}\end{equation*}}
This yields
\begin{multline*}
\|B_3h\|_{L^{1}(\w_{a})}
 \leq \frac{C}{ {(1+R)}^{a-2}}\left(\int_{\R}\bm{H}(y)\w_a(y)\int_{\R} |h(x-y)| {\w_a}(x-y)\dx\dy\right)\sum_{i=1}^{3} \|\zeta_i\|_{L^1(\w_{a})} \\
 \leq  \frac{C}{ {(1+R)}^{a-2}} \|h\|_{L^1(\w_{a})},
\end{multline*}
for some contant $C>0$ where we also used $\bm{H}\in L^{1}(\w_{a})$. We then deduce that \eqref{B3} holds provided $R$ is large enough.
\end{proof}
\begin{proof}[Proof of Proposition \ref{prp:dissi}]
The proof follows directly from the combination of \eqref{B1}--\eqref{B2}--\eqref{B3} since it implies that, for any $\varepsilon >0$, one can choose $R >1$ large enough so that
$$\int_{\R}Bh(x)\mathrm{sign}(h(x))\w_{a}(x)\dx \leq -\left(1-\frac{a}{4}-2^{1-a}-3\varepsilon\right)\|h\|_{L^{1}(\w_{a})} \qquad \forall h \in \mathscr{D}(\mathscr{L}) \subset \mathbb{Y}_{a}^{0}$$
which gives the result choosing $\varepsilon >0$ small enough so that $\nu=1-\frac{a}{4}-2^{1-a}-2\varepsilon \geq0.$
\end{proof}
We establish now the regularising effect of $A$:
\begin{prp}\label{prp:bounded}
Let $2<a<3$. The operator $A:X_0\to \mathbb{Y}_a^{0}$ is bounded. 
\end{prp}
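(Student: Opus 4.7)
The strategy is to first establish that $A_{1}$ is bounded from $X_{0}$ into $L^{\infty}(\R)$ (even into a space of functions with compact support), and then to leverage the compact support of $A_{1}h$ to conclude the $L^{1}(\w_{a})$ bound. The fact that $Ah\in\mathbb{Y}_{a}^{0}$ is a general consequence of the definition of $\P$: a direct computation of the moments of $\zeta_{1},\zeta_{2},\zeta_{3}$ against $1,x,x^{2}$ (using $\int\M=1$, $\int x^{2}\M=\tfrac{1}{2}$, $\int x^{4}\M=\tfrac{3}{4}$) shows that $\P$ is a projection onto the first three moments, so that $f-\P f$ has zero mass, momentum and energy for any $f\in L^{1}(\w_{a})$. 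Consequently, $Ah=A_{1}h-\P(A_{1}h)\in\mathbb{Y}_{a}^{0}$ as soon as $A_{1}h\in L^{1}(\w_{a})$.

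The core of the argument is a Fourier duality estimate. For fixed $x\in\R$ introduce the function
\[
 \phi_{x}(y):=\rho_{R}(y)\,\bm{H}(2x-y),\qquad y\in\R,
\]
so that $A_{1}h(x)=4\theta_{R}(x)\langle h,\phi_{x}\rangle$, where $\langle h,\phi_{x}\rangle=\int_{\R}\phi_{x}(y)\,h(\d y)$ is well defined for $h\in X_{0}$ (which contains measures with finite moments of order~$k>2$) since $\phi_{x}$ is continuous and bounded. The function $\phi_{x}$ is smooth in $y$ and has support in $[-2R/3,2R/3]$, and because $\bm{H}$ is a Schwartz function, all its $y$-derivatives are uniformly bounded in $x$. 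Hence $\hat{\phi}_{x}$ is rapidly decreasing, uniformly in $x$: for every $N\in\N$ there exists $C_{N}(R)>0$ with
\[
 |\hat{\phi}_{x}(\xi)|\leq C_{N}(R)(1+|\xi|)^{-N},\qquad \forall\xi\in\R,\; x\in\R.
\]
By Parseval's identity (applied to the finite measure $h$ and the Schwartz function $\phi_{x}$),
\[
 \langle h,\phi_{x}\rangle=\frac{1}{2\pi}\int_{\R}\hat{\phi}_{x}(\xi)\,\hat{h}(-\xi)\,\d\xi,
\]
and using $|\hat{h}(\xi)|\leq\vertiii{h}_{k}\,|\xi|^{k}$ from the very definition of the Fourier norm $\vertiii{\cdot}_{k}$, we obtain
\[
 |A_{1}h(x)|\leq\frac{4\|\theta_{R}\|_{\infty}}{2\pi}\,\vertiii{h}_{k}\int_{\R}|\xi|^{k}\,|\hat{\phi}_{x}(\xi)|\d\xi\leq C(R,k)\,\vertiii{h}_{k},
\]
uniformly in $x$. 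This is the estimate which transfers information from the large space $X_{0}$ (where $h$ lives and may only be a measure) to a pointwise bound on $A_{1}h$.

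Once this $L^{\infty}$ bound is at hand, the conclusion is immediate. Since $\theta_{R}$ is supported in $[-R/2-1,R/2+1]$, the function $A_{1}h$ is supported there as well, and therefore
\[
 \|A_{1}h\|_{L^{1}(\w_{a})}\leq \|A_{1}h\|_{L^{\infty}}\int_{|x|\leq R/2+1}\w_{a}(x)\d x\leq C(R,a,k)\,\vertiii{h}_{k}.
\]
For the projection part, the moments $\int x^{j}A_{1}h\,\d x$ for $j=0,1,2$ are controlled by the same compact-support argument: $|\int x^{j}A_{1}h|\leq C(R)\|A_{1}h\|_{L^{\infty}}$. Since $\zeta_{1},\zeta_{2},\zeta_{3}$ are Schwartz and hence belong to $L^{1}(\w_{a})$ for every $a>0$, we conclude
\[
 \|A_{2}h\|_{L^{1}(\w_{a})}=\|\P(A_{1}h)\|_{L^{1}(\w_{a})}\leq C(R,a)\|A_{1}h\|_{L^{\infty}}\leq C(R,a,k)\vertiii{h}_{k}.
\]
Combining the two estimates gives $\|Ah\|_{L^{1}(\w_{a})}\leq C\vertiii{h}_{k}$, i.e. $A:X_{0}\to\mathbb{Y}_{a}^{0}$ is bounded.

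The only non-trivial step is the uniform-in-$x$ bound on $\int|\xi|^{k}|\hat{\phi}_{x}(\xi)|\,\d\xi$; this is the place where one genuinely uses the smoothness and the compact support of the cutoffs $\rho_{R}$ and $\bm{H}$, and it explains why the splitting was arranged with $(h\rho_{R})\ast\bm{H}$ rather than with $h\ast\bm{H}$ directly. Everything else reduces to the fact that $A_{1}h$ is compactly supported.
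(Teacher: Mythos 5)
Your proof is correct, and it takes a genuinely different route from the paper's. The paper bounds $\|A_1h\|_{L^1(\w_a)}$ by first applying Cauchy--Schwarz against a decaying weight to reduce to weighted $L^2$ norms, then passing to Fourier variables via Plancherel, using the convolution estimate $|(\widehat{h}\ast\widehat{\rho_R})(\xi)|\le \vertiii{h}_k\,\w_k(\xi)\|\widehat{\rho_R}\|_{L^1(\w_k)}$, and finally handling the polynomial weight $|x|^{a+\chi}$ by identifying it with $2p$ derivatives on the Fourier side (choosing $a+\chi=4$) together with Young's inequality. You instead make the pointwise observation that $A_1h(x)=4\theta_R(x)\langle h,\phi_x\rangle$ with $\phi_x(y)=\rho_R(y)\bm{H}(2x-y)\in C_c^\infty$ uniformly in $x$, so that Fourier inversion plus $|\widehat{h}(\xi)|\le\vertiii{h}_k|\xi|^k$ yields a uniform $L^\infty$ bound on $A_1h$; the compact support inherited from $\theta_R$ then gives the $L^1(\w_a)$ bound (and the bound on the moments entering $\P$) for free. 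Your argument is more elementary and in fact delivers a stronger conclusion ($A$ maps $X_0$ into $L^\infty$ functions of fixed compact support), fully exploiting the cutoff $\theta_R$ which the paper mostly only uses through $|\theta_R|\le1$. One small inaccuracy: $\bm{H}(x)=\frac{2}{\pi(1+x^2)^2}$ is \emph{not} a Schwartz function (it decays only polynomially, consistent with $\widehat{\bm{H}}(\xi)=(1+|\xi|)e^{-|\xi|}$ failing to be smooth at $\xi=0$); what your argument actually needs, and what is true, is that all derivatives of $\bm{H}$ are bounded on $\R$, so that $\phi_x$ is $C_c^\infty$ with support and derivatives controlled uniformly in $x$, whence $|\widehat{\phi_x}(\xi)|\le C_N(R)(1+|\xi|)^{-N}$. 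With that correction the proof stands.
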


This proposition follows directly from the following two lemmas. 

\begin{lem}\phantomsection \label{bound_A1}
Let $2<a<3$. There exists some constant $C>0$ such that, for any $h\in X_0$
$$\|A_1h\|_{L^1(\w_{a})} \leq  C \vertiii{h}_{k}$$
for any $k>2$.
\end{lem}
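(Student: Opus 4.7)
The plan is to exploit the fact that $A_1 h$ is essentially a regularisation of a compactly supported version of $h$, so both the cutoff of support and the smoothing by convolution with $\bm H$ are available. Concretely, since $A_1 h(x) = 4\theta_R(x)\left((h\rho_R)\ast \bm{H}\right)(2x)$ and $\theta_R$ is supported in $[-R/2-1,R/2+1]$, I would first collapse the weighted $L^{1}$ norm to an $L^{\infty}$ estimate:
\[
\|A_1 h\|_{L^1(\w_a)} \;\leq\; C_R \,\bigl\|(h\rho_R)\ast \bm{H}\bigr\|_{L^\infty},
\]
where $C_R$ depends only on $R$ (not on $h$, $k$, or $a$). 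This removes the weight entirely from the estimate and leaves a pure $L^\infty$-bound on the convolution.

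Next, the plan is to pass to Fourier variables to exploit the regularising effect of $\bm H$. Since $\widehat{(h\rho_R)\ast \bm{H}}(\xi)=\widehat{h\rho_R}(\xi)\,\bm{\Phi}(\xi)$ with $\bm{\Phi}$ as in \eqref{eq:Phi}, Fourier inversion gives
\[
\bigl\|(h\rho_R)\ast \bm{H}\bigr\|_{L^\infty}\;\leq\;\frac{1}{2\pi}\int_{\R}\bigl|\widehat{h\rho_R}(\xi)\bigr|\,\bm{\Phi}(\xi)\,\d\xi.
\]
Using that $\widehat{h\rho_R}=(2\pi)^{-1}\widehat{h}\ast\widehat{\rho_R}$ and applying the pointwise bound $|\widehat{h}(\eta)|\leq\vertiii{h}_k |\eta|^k$ (valid for every $\eta$ because $h\in X_0$), followed by the change of variable $\eta\mapsto\xi-\eta$, gives
\[
\bigl|\widehat{h\rho_R}(\xi)\bigr|\;\leq\;\frac{\vertiii{h}_k}{2\pi}\int_{\R}|\xi-\eta|^{k}\bigl|\widehat{\rho_R}(\eta)\bigr|\,\d\eta
\;\leq\; C_R^{(1)}\,\vertiii{h}_k\,\bigl(1+|\xi|^k\bigr),
\]
where the last inequality uses $|\xi-\eta|^k\leq 2^k(|\xi|^k+|\eta|^k)$ together with the fact that $\widehat{\rho_R}$ is Schwartz, so both $\|\widehat{\rho_R}\|_{L^1}$ and $\int|\eta|^k|\widehat{\rho_R}(\eta)|\,\d\eta$ are finite.

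Combining the two steps yields
\[
\bigl\|(h\rho_R)\ast \bm{H}\bigr\|_{L^\infty}\;\leq\;\frac{C_R^{(1)}}{2\pi}\,\vertiii{h}_k\int_{\R}\bigl(1+|\xi|^k\bigr)\bm{\Phi}(\xi)\,\d\xi,
\]
and the last integral is finite for every $k>0$ because $\bm{\Phi}$ decays exponentially. Plugging this back and absorbing constants into $C=C(R,k,a)$ yields $\|A_1 h\|_{L^1(\w_a)}\leq C\vertiii{h}_k$, as required.

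There is no serious obstacle: the support cutoff via $\theta_R$ turns the weighted norm into an $L^\infty$ problem at essentially no cost, and the smoothness of $\widehat{\rho_R}$ and exponential decay of $\bm\Phi$ absorb the polynomial growth $|\xi|^k$. The only care needed is that the pointwise bound $|\widehat h(\eta)|\leq\vertiii{h}_k|\eta|^k$ is used for all $\eta$ (including large $\eta$), which is legitimate since by definition $\vertiii{h}_k$ controls $\widehat h$ at \emph{every} frequency, not only near the origin.
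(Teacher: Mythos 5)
Your proof is correct, and it takes a genuinely different (and simpler) route than the paper's. The paper reduces $\|A_1h\|_{L^1(\w_a)}$ to weighted $L^2$ norms via Cauchy--Schwarz against $(1+|x|)^{-\chi}$, then works through Parseval, handling the polynomial weight $|x|^{a+\chi}$ by choosing $a+\chi=2p$ an even integer and differentiating Fourier transforms of triple convolutions; this is the bulk of the technical effort there. You instead exploit the compact support of $\theta_R$ at the outset to collapse the weighted $L^1$ norm to $C_R\,\|(h\rho_R)\ast\bm{H}\|_{L^\infty}$, which eliminates the weight entirely, and then conclude by Fourier inversion: the pointwise bound $|\widehat{h}(\eta)|\le \vertiii{h}_k|\eta|^k$ (valid at all frequencies by definition of the norm), the convolution identity $\widehat{h\rho_R}=(2\pi)^{-1}\widehat{h}\ast\widehat{\rho_R}$ for a finite measure multiplied by a Schwartz function, and the exponential decay of $\bm{\Phi}$ absorbing the resulting polynomial growth $1+|\xi|^k$. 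Both arguments use the same two structural facts --- the Fourier norm controls $\widehat{h}$ polynomially and $\widehat{\bm H}=\bm\Phi$ decays exponentially --- but your $L^\infty$ reduction sidesteps the even-integer weight manipulation and the Young-convolution step entirely. The only (minor) point worth making explicit is that Fourier inversion applies because $\widehat{h\rho_R}\cdot\bm\Phi\in L^1$ by your own bound and $(h\rho_R)\ast\bm H$ is continuous and bounded, so the inversion formula holds pointwise; the constant you obtain depends on $R$, $k$ and $a$, exactly as in the paper.
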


\begin{proof}
First, one observes as before that
\begin{multline*}
\|A_1h\|_{L^1(\w_{a})} \leq  2\int_{\R} |((h\rho_R)\ast \bm{H})(x)|\theta_R\left(\frac{x}{2}\right) \,\w_{a}\left(\frac{x}{2}\right) \d x\\
\leq  2\int_{\R} |((h\rho_R)\ast \bm{H})(x)|\theta_R\left(\frac{x}{2}\right) \,\w_{a}(x) \d x\end{multline*}
where we used that $\w_{a}\left(\frac{x}{2}\right) \leq \w_{a}(x)$. We then deduce from the Cauchy-Schwarz inequality that 
$$
\|A_1h\|_{L^1(\w_{a})}
\leq 2\left(\int_{\R} |((h\rho_R)\ast \bm{H})(x)|^2\,\theta^2_R\left(\frac{x}{2}\right) \w_{a}(x)^2\,(1+|x|)^{2\chi} \d x\right)^{\frac{1}{2}}\left( \int_{\R} \frac{\d x}{(1+|x|)^{2\chi}} \right)^{\frac{1}{2}}
$$
with $\chi>\frac{1}{2}$. Thus, it holds
\begin{eqnarray}
\|A_1h\|_{L^1(\w_{a})}  & \leq &2\|\w_{-\chi}\|_{L^{2}} \,\|((h\rho_R)\ast \bm{H})\theta_R\left(\frac{\cdot}{2}\right)\,\w_{a+\chi}\|_{L^2} \nonumber \\
& \leq & C_{a,\chi} \,\|(h\rho_R)\ast \bm{H} \|_{L^2} + C_{a,\chi} \,\left\|((h\rho_R)\ast \bm{H})\theta_R\left(\frac{\cdot}{2}\right)\, |\cdot|^{a+\chi}\right\|_{L^2} \label{sepA}
\end{eqnarray}
where we used that $\w_{a+\chi}\leq C_{a,\chi}\left(1+|\cdot|^{a+\chi}\right)$ for some $C_{a,\chi} >0$ and we also used that $|\theta_R|\leq 1$. Let us first consider the first term in the right-hand side of \eqref{sepA}. We deduce from the properties of the Fourier transform that  
$$\|(h\rho_R)\ast \bm{H} \|_{L^2}= \frac{1}{\sqrt {2\pi}}\; \|\widehat{(h\rho_R)\ast \bm{H}} \|_{L^2}  = \frac{1}{\sqrt {2\pi}}\;\|\widehat{h\rho_R}\cdot \widehat{\bm{H}} \|_{L^2}= \frac{1}{(2\pi)^{\frac{3}{2}}} \; \|(\widehat{h}\ast \widehat{\rho_R})\, \widehat{\bm{H}} \|_{L^2}.$$
{We have $|\eta|^k\leq (|\xi-\eta|+|\xi|)^k\leq \w_{k}(\xi-\eta)\w_{k}(\xi)$.} Thus, 
\begin{equation}\label{eq:convolution:fourier}
|(\widehat{h}\ast \widehat{\rho_R})(\xi) |
\leq   \vertiii{h}_{k} \int_{\R} |\eta|^k \, |\widehat{\rho_R}(\xi-\eta)| \d \eta \leq  \vertiii{h}_{k}\, \w_{k}(\xi)\,  \|\widehat{\rho_R}\|_{L^1(\w_{k})}. 
\end{equation}
Hence, 
$$\|(h\rho_R)\ast \bm{H} \|_{L^2} \leq  \frac{ {1}}{(2\pi)^{\frac{3}{2}}}\; \vertiii{h}_{k} \, 
\| \widehat{\rho_R} \|_{L^1(\w_k)} \, \| \w_{k}(\cdot)\widehat{\bm{H}}\|_{L^2}\,.$$
Since $\rho_R \in {\mathcal C}^\infty(\R)$ is compactly supported, $\widehat{\rho_R}\in L^1(\w_{k})$ for any {$k>2$}. Furthermore, $\w_{k}(\xi)\widehat{\bm{H}}(\xi)= (1+|\xi|)^{k+1}e^{-|\xi|} \in L^2(\R)$  for any  {$k>2$}. Consequently, there exists some constant $C_1(k,R)>0$ such that 
\begin{equation}\label{A1}
\|(h\rho_R)\ast \bm{H} \|_{L^2} \leq  C_1(k,R) \vertiii{h}_{k}.
\end{equation}
Let us now consider the last term in the right-hand side of \eqref{sepA}. Set
$$F(x)=((h\rho_R)\ast \bm{H})(x)\theta_R\left(\frac{x}{2}\right)\, {|x|^{a+\chi}}= {|x|^{a+\chi}}F_{0}(x).$$
Notice that, as previously,
$$\left\|((h\rho_R)\ast \bm{H})\theta_R\left(\frac{\cdot}{2}\right)\,|\cdot|^{a+\chi}\right\|_{L^{2}}=\|F\|_{L^{2}}=\frac{1}{\sqrt{2\pi}}\|\widehat{F}\|_{L^{2}}.$$
For $\ell\in\N$ and $g\in L^1(\w_\ell)$, we have $\widehat{x^\ell g}=i^\ell \widehat{{g}}^{(\ell)}$. Thus, if $a+\chi=2p$ with $p\in\N$, we have 
$$\left|\widehat{F}(\xi)\right|= \left| \frac{\d^{2p}}{\d \xi^{2p}}\, \widehat{F}_{0}(\xi)\right|.$$
As previously, $\widehat{F}_{0}= \dfrac{1}{(2\pi)^2}  ((\widehat{h}\ast \widehat{\rho_R}) \, \widehat{\bm{H}})\ast \widehat{\theta_R\left(\frac{\cdot}{2}\right)}$ and we deduce that 
$$\frac{\d^{2p}}{\d \xi^{2p}}\, \widehat{F}_{0} =  {\frac{1}{(2\pi)^{2}}}\left[((\widehat{h}\ast \widehat{\rho_R}) \, \widehat{\bm{H}})\ast \widehat{\theta_R\left(\frac{\cdot}{2}\right)}\right]^{(2p)} $$
Hence, 
$$\left\|((h\rho_R)\ast \bm{H})\theta_R\left(\frac{\cdot}{2}\right)\, |\cdot|^{a+\chi}\right\|_{L^2}\leq \frac{1}{(2\pi)^{\frac{5}{2}}}
\left\|  \left[((\widehat{h}\ast \widehat{\rho_R}) \, \widehat{\bm{H}})\ast \widehat{\theta_R\left(\frac{\cdot}{2}\right)}\right]^{(2p)}\right\|_{L^2}\,.$$
Young's convolution inequality then implies
\begin{equation*}
 \begin{split}
 \|((h\rho_R)\ast \bm{H})\theta_R\left(\frac{\cdot}{2}\right)\, |\cdot|^{a+\chi}\|_{L^2}&\leq \frac{1}{(2\pi)^{\frac{5}{2}}}
\left\| (\widehat{h}\ast \widehat{\rho_R}) \, \widehat{\bm{H}}\right\|_{L^{1}}\left\|\widehat{\theta_R\left(\frac{\cdot}{2}\right)}^{(2p)}\right\|_{L^2}\\
& =\frac{1}{(2\pi)^2}
\left\|  (\widehat{h}\ast \widehat{\rho_R}) \, \widehat{\bm{H}}\right\|_{L^{1}}\left\||\cdot|^{2p}\theta_R\left(\frac{\cdot}{2}\right)\right\|_{L^2}\,.
 \end{split}
\end{equation*}
Recalling \eqref{eq:convolution:fourier} we get
$$
\left\|((h\rho_R)\ast \bm{H})\theta_R\left(\frac{\cdot}{2}\right)\, |\cdot|^{a+\chi}\right\|_{L^2} \leq   \vertiii{h}_{k} \; \frac{1}{(2\pi)^{2}}\;
\, \|\w_{k}\, \widehat{\bm{H}}\|_{L^1}\| \widehat{\rho_R} \|_{L^1(\w_k)} \,\left\||\cdot|^{2p}\theta_R\left(\frac{\cdot}{2}\right)\right\|_{L^2}.
$$
For any $2<a<3$, we may choose $\chi$ such that $a+\chi=2p=4$. Since $\theta_R,\rho_R \in {\mathcal C}^\infty(\R)$ are compactly supported, $\widehat{\rho_R}$ belongs to $L^1(\w_{k})$ for any {$k>2$} and $|\cdot|^{4}\theta_R\left(\frac{\cdot}{2}\right)\in L^2(\R)$. Finally, $\widehat{\bm{H}}=(1+|\xi|)e^{-|\xi|}$ and thus $\widehat{\bm{H}}\in L^1(\w_k)$ for all $k\in\N$ and there exists some constant $C_2(k,R)>0$ such that 
\begin{equation}\label{A2}
\left\|((h\theta_R)\ast \bm{H})\theta_R\left(\frac{\cdot}{2}\right) \, |\cdot|^{a+\chi}\right\|_{L^2} \leq  C_2(k,R) \vertiii{h}_{k}.
\end{equation}
Gathering \eqref{sepA}, \eqref{A1} and \eqref{A2} completes the proof. 
\end{proof}

\begin{lem}\phantomsection
Let $2<a<3$. There exists some constant $C>0$ such that, for any $h\in X_0$
$$\|A_2h\|_{L^1(\w_{a})} \leq  C \vertiii{h}_{k}$$
for any $k>2$.
\end{lem}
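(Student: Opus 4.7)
The proof is essentially a direct consequence of the previous Lemma \ref{bound_A1}. I would argue as follows.

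By definition of $A_2$ and of the projection $\P$ in \eqref{projection}, one has
\[
A_2 h(x) = -\P(A_1 h)(x) = -\sum_{i=1}^{3} \zeta_i(x)\, m_{i-1}(A_1 h),
\qquad m_j(f) := \int_\R f(y)\, y^j \, \dy,
\]
so that by the triangle inequality
\[
\|A_2 h\|_{L^1(\w_a)} \;\leq\; \sum_{i=1}^{3} \|\zeta_i\|_{L^1(\w_a)} \, \bigl|m_{i-1}(A_1 h)\bigr|.
\]
The functions $\zeta_1,\zeta_2,\zeta_3$ are Schwartz (being polynomials times a Gaussian), hence $\|\zeta_i\|_{L^1(\w_a)} < \infty$ for every $a>0$.

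The main step is then to bound the three moments $|m_{j}(A_1 h)|$ for $j\in\{0,1,2\}$ in terms of the Fourier norm $\vertiii{h}_k$. Here the key observation is simply that, since $j\leq 2 < a$, we have $|y|^j \leq \w_a(y)$ for all $y\in\R$, and therefore
\[
\bigl|m_{j}(A_1 h)\bigr| \;\leq\; \int_\R |A_1 h(y)|\, |y|^j \, \dy \;\leq\; \|A_1 h\|_{L^1(\w_a)}.
\]
Applying Lemma \ref{bound_A1} we conclude $|m_j(A_1 h)| \leq C \vertiii{h}_k$ for $j=0,1,2$, and consequently
\[
\|A_2 h\|_{L^1(\w_a)} \;\leq\; C \Bigl(\sum_{i=1}^{3}\|\zeta_i\|_{L^1(\w_a)}\Bigr) \vertiii{h}_k,
\]
which is the claim.

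There is no genuine obstacle here: the result is a soft corollary of Lemma \ref{bound_A1}, exploiting only that $\P$ is a rank-three operator whose range consists of Schwartz functions and whose coefficients are the first three moments, all of which are dominated by the $L^1(\w_a)$-norm as soon as $a>2$.
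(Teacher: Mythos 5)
Your proof is correct and follows essentially the same route as the paper: expand $\P(A_1h)$ using its definition, bound the moments $m_j(A_1h)$ for $j\in\{0,1,2\}$ by $\|A_1h\|_{L^1(\w_a)}$ (possible since $2<a$), and invoke Lemma~\ref{bound_A1}.
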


\begin{proof}
It follows from the definition \eqref{projection} of $\P$ that 
\begin{eqnarray*}
\|A_2h\|_{L^1(\w_{a})} &\leq  & 2 \int_\R |A_1h(x)| \, (1+x^2)\dx \max_{i\in\{1,2,3\}}  \|\zeta_i\|_{L^1(\w_{a})} \\
& \leq & C \|A_1h\|_{L^1(\w_{a})}, 
\end{eqnarray*}
and the result follows from Lemma \ref{bound_A1}.
\end{proof}

 \begin{proof}[Proof of Theorem~\ref{restrict}]
 The existence of a spectral gap for $\mathscr{L}$ in $\mathbb{Y}_{a}^{0}$ is now a direct consequence of Propositions~\ref{prp:dissi} and \ref{prp:bounded} together with \cite[Theorem 5.2]{CanizoThrom}.
\end{proof}
 
 We explain now how we can deduce the spectral gap estimate in Proposition \ref{restrict0} in the Introduction from Theorem \ref{restrict}.
 \begin{proof}[Proof of Proposition \ref{restrict0}] We consider $a \geq k$ and the spaces $X_{0}$ and $\X_{a}$ defined previously so that $\mathbb{Y}_{a}^{0} \subset X_{0}.$ Notice that $\mathscr{D}(\mathscr{L}_{0})=\mathscr{D} {(\mathscr{L})} \cap \X_{a}$ and, since $\bm{G}_{0}(\cdot)=\lambda_{0}\bm{H}(\lambda_{0}\cdot)$, one checks easily that, for any test function $\phi$
$$\int_{\R}\mathscr{L}_{0}(f)(x)\phi(x)\d x=\frac{1}{\lambda_{0}}\int_{\R}\mathscr{L}(\tau_{0}{f})(x)\phi\left(\lambda_{0}^{-1}x\right)\d x=\int_{\R}\mathscr{L}(\tau_{0}{f})(\lambda_{0} y)\phi(y)\d y$$
where 
$$\tau_{0}{f}(x)=f\left(\frac{x}{\lambda_{0}}\right), \qquad x \in \R.$$
This shows that
$$\mathscr{L}_{0}f=\tau_{0}^{-1}\mathscr{L}\left(\tau_{0}f\right), \qquad \forall f \in \mathscr{D}(\mathscr{L}_{0}).$$
In particular, since $\mathbb{Y}_{a},\mathbb{Y}_{a}^{0}$ are invariant under the action of the bijective transformation $\tau_{0}$ and of course
$$\mathrm{Range}(\mathscr{L}_{0})=\mathrm{Range}(\mathscr{L})=\mathbb{Y}_{a}^{0}$$
one sees that $\mathbb{Y}_{a}^{0}$
is a closed linear subspace of $\X_{a}$ stable under $\mathscr{L}_{0}.$ 

 This allows to define in a standard way  the restriction $\widetilde{\mathscr{L}}_{0}:=\mathscr{L}_{0}\vert_{\mathbb{Y}_{a}^{0}}$ of $\mathscr{L}_{0}$ to the space $\mathbb{Y}_{a}^{0}$ 
$$\widetilde{\mathscr{L}}_{0}=\mathscr{L}_{0}\vert_{\mathbb{Y}_{a}^{0}} \;\;:\;\;\mathscr{D}(\mathscr{L}_{0}) \cap \mathbb{Y}_{a}^{0} \to \mathbb{Y}_{a}^{0}$$
and one can deduce then from  Theorem \ref{restrict} the result.\end{proof}
   
\appendix

\numberwithin{equation}{section}                                                     %%

\section{Properties of the Fourier norm and interpolation estimates}\label{app:fourier}

The following lemma is a consequence of \cite[Lemma 2.5]{MR2355628}.

\begin{lem}\label{lem:mukvertk}
 Let $2<k<3$. There exists a constant $C>0$ depending only on $k$ such that
  $$\vertiii{\hat{\mu}}_{k}\leq C \int_{\R}  (1+|x|)^{k} \,|\mu|(dx),$$
 for any $\mu\in \mathcal{M}_k(\R)$ satisfying
  \begin{equation}\label{mom}\int_{\R} \mu(\d x) = \int_{\R} x\, \mu(\d x) = \int_{\R} x^2 \,\mu(\d x) = 0.
    \end{equation}
\end{lem}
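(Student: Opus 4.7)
The plan is to exploit the three vanishing moments in \eqref{mom} to obtain three orders of cancellation in $\hat{\mu}(\xi)$, then bound the resulting integrand by $C_{k}|x\xi|^{k}$ via a standard Taylor argument that works uniformly in $x,\xi$ once one distinguishes $|x\xi|\leq 1$ from $|x\xi|\geq 1$. Concretely, I would first use the definition of $\hat{\mu}$ together with \eqref{mom} to rewrite
\[
\hat{\mu}(\xi)=\int_{\R}\Bigl(e^{-ix\xi}-1+ix\xi+\tfrac{1}{2}x^{2}\xi^{2}\Bigr)\mu(\d x),\qquad \xi\in\R,
\]
since each of the subtracted polynomial pieces integrates to $0$ against $\mu$ by assumption.

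The key ingredient is then the pointwise bound
\[
\bigl|e^{-it}-1+it+\tfrac{1}{2}t^{2}\bigr|\leq C_{k}\,|t|^{k},\qquad t\in\R,
\]
for any fixed $k\in(2,3)$. This follows by splitting according to $|t|$. For $|t|\leq 1$, Taylor's theorem applied to $f(t):=e^{-it}-1+it+\tfrac12 t^{2}$ (which satisfies $f(0)=f'(0)=f''(0)=0$ and $|f'''(t)|\leq 1$) gives $|f(t)|\leq \tfrac{1}{6}|t|^{3}\leq \tfrac{1}{6}|t|^{k}$ since $k\leq 3$. For $|t|\geq 1$, the crude triangle inequality gives $|f(t)|\leq 2+|t|+\tfrac12 t^{2}\leq C\,t^{2}\leq C\,|t|^{k}$ since $k\geq 2$. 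Setting $t=x\xi$ we obtain
\[
\bigl|e^{-ix\xi}-1+ix\xi+\tfrac12 x^{2}\xi^{2}\bigr|\leq C_{k}\,|x\xi|^{k}=C_{k}\,|\xi|^{k}\,|x|^{k}.
\]

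Integrating this estimate against $|\mu|$ yields
\[
|\hat{\mu}(\xi)|\leq C_{k}\,|\xi|^{k}\int_{\R}|x|^{k}\,|\mu|(\d x)\leq C_{k}\,|\xi|^{k}\int_{\R}(1+|x|)^{k}\,|\mu|(\d x),
\]
and dividing by $|\xi|^{k}$ and taking the supremum over $\xi\in\R\setminus\{0\}$ gives the claim. The main (and only non-trivial) obstacle here is the uniform Taylor-type bound on the modified exponential, but this is a completely standard estimate and the constant $C_{k}$ stays bounded on any compact subinterval of $(2,3)$; no splitting of the frequency range $\xi$ is required once one has the pointwise inequality, because the three vanishing moments already force the correct scaling near $\xi=0$ while the polynomial weight $(1+|x|)^{k}$ absorbs the large-$|x\xi|$ regime. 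Note that $2<k<3$ enters sharply: the assumption $k<3$ is needed for the weight $(1+|x|)^{k}$ to be the natural one (and for the lemma to be formulated in $\mathcal{M}_{k}$), while $k>2$ is needed both for the three-moment cancellation to be meaningful and for the large-$|t|$ bound to give a power $\geq 2$ compatible with $|t|^{k}$.
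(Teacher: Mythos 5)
Your proof is correct, and it takes a genuinely different (and more self-contained) route than the paper. The paper works at the level of the Fourier transform itself: it notes that \eqref{mom} gives $\hat{\mu}(0)=\hat{\mu}'(0)=\hat{\mu}''(0)=0$, applies Taylor's formula to get $|\hat{\mu}(\xi)|\le |\xi|^{2}\int_{0}^{1}|\hat{\mu}''(t\xi)|\,\d t$, and then invokes an external result (Lemma 2.5 of Carrillo--Toscani) to bound $|\hat{\mu}''(\eta)|\le 2M|\eta|^{s}$ with $s=k-2$ and $M=\int_{\R}(1+x^{2})|x|^{s}\,|\mu|(\d x)$. You instead subtract the quadratic Taylor polynomial of the kernel $e^{-ix\xi}$ inside the integral (legitimate by the three vanishing moments) and use the elementary pointwise bound $|e^{-it}-1+it+\tfrac12 t^{2}|\le C_{k}|t|^{k}$ for $2\le k\le 3$, obtained by splitting $|t|\le 1$ (third-order Taylor remainder, using $k\le 3$) from $|t|\ge 1$ (crude quadratic bound, using $k\ge 2$). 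Your version avoids any citation and makes the role of each endpoint of $(2,3)$ transparent; the paper's version packages the same cancellation into a reusable modulus-of-continuity lemma that it also needs elsewhere (e.g.\ for the $\vertiii{\cdot}_{k,p}$ analogue). One small bonus of your argument worth noting: your constant $C_{k}$ is in fact bounded uniformly for $k$ in all of $[2,3]$, not just on compact subintervals of $(2,3)$, since both the small-$|t|$ and large-$|t|$ estimates are uniform there.
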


\begin{proof}
  Since $\mu$ satisfies \eqref{mom}, we have $\hat{\mu}(0)=0$, $\hat{\mu}'(0)=0$ and $\hat{\mu}''(0)=0$. Hence, Taylor formula implies that
  $$|\hat{\mu}(\xi)|\le |\xi|^2 \int_0^1 | \hat{\mu}''(t\xi)| \d t .$$
We set $s=k-2\in(0,1)$. Then, for ${\phi}(r)=r^s$, we have
  $$M:=\int_\R (1+x^2)\phi(|x|)|\mu|(\d x)<\infty.$$
  Moreover, $\phi$ is a strictly increasing function with $\frac{\phi(r)}{r}$ nonincreasing. It follows from \cite[Lemma 2.5]{MR2355628} that
  $$|\hat{\mu}''(t\xi)|\le 2 M \psi(|t\xi|),$$
  where $\psi(y)=[\phi(y^{-1})]^{-1}=y^s$. Hence,
   \begin{equation}\label{bound}
|\hat{\mu}(\xi)|\le  2M\, |\xi|^{2+s}   \int_0^1t^s\d t \le  \frac{2 M}{s+1} |\xi|^{2+s}. 
\end{equation}
This proves the result {since $s+2=k$ and $M\le\int_{\R} (1+|x|)^k \,|\mu|(\dx)$.}
\end{proof}

 { A similar Lemma holds for the more general Fourier norms $\vertiii{\cdot}_{k,p}$ defined by \eqref{def:normkp}.  Namely, one has the following.   
\begin{lem}\label{lem:mukvertkp}
 Let $1\le p <\infty$ and $2<k<3$. There exists a constant $C>0$ depending only on $k$ and $p$ such that
  $$\vertiii{\hat{\mu}}_{k,p}\leq C \int_{\R}  (1+|x|)^{k} \,|\mu|(dx),$$
 for any $\mu\in \mathcal{M}_k(\R)$ satisfying
  \begin{equation*}\int_{\R} \mu(\d x) = \int_{\R} x\, \mu(\d x) = \int_{\R} x^2 \,\mu(\d x) = 0.
    \end{equation*}
\end{lem}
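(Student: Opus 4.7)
The plan is to mimic the $L^\infty$ bound of Lemma~\ref{lem:mukvertk} on the small-frequency region and combine it with a trivial $L^\infty$ bound on $\hat\mu$ on the large-frequency region, exploiting the fact that the weight $|\xi|^{-kp}$ is integrable at infinity under the assumption $k>2$, $p\geq 1$.

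More precisely, I would split
$$\vertiii{\hat\mu}_{k,p}^p \;=\; \int_{|\xi|\leq 1}\frac{|\hat\mu(\xi)|^p}{|\xi|^{kp}}\,\d\xi \;+\; \int_{|\xi|>1}\frac{|\hat\mu(\xi)|^p}{|\xi|^{kp}}\,\d\xi \;=: I_1+I_2.$$
Set $M := \int_\R (1+|x|)^k\,|\mu|(\d x)$. On the small-frequency region, I would invoke Lemma~\ref{lem:mukvertk} to obtain, for every $\xi\neq 0$,
$$|\hat\mu(\xi)| \;\leq\; \vertiii{\hat\mu}_{k}\,|\xi|^{k} \;\leq\; C\,M\,|\xi|^{k},$$
with $C=C(k)$. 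This immediately yields
$$I_1 \;\leq\; \int_{|\xi|\leq 1} (CM)^p\,\d\xi \;=\; 2(CM)^p.$$

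On the large-frequency region, I would use the elementary bound $|\hat\mu(\xi)|\leq |\mu|(\R)\leq M$ (valid for every $\xi$), which gives
$$I_2 \;\leq\; M^p \int_{|\xi|>1}\frac{\d\xi}{|\xi|^{kp}} \;=\; \frac{2 M^p}{kp-1},$$
and the last integral is finite since $2<k<3$ and $p\geq 1$ guarantee $kp>2\geq 1$. Adding the two contributions and taking the $p$-th root gives
$$\vertiii{\hat\mu}_{k,p} \;\leq\; \left(2(CM)^p+\tfrac{2M^p}{kp-1}\right)^{1/p} \;\leq\; C'\,M,$$
with $C'=C'(k,p)$, which is the claim.

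There is no serious obstacle here: the proof is a straightforward split-and-estimate once Lemma~\ref{lem:mukvertk} is available. The only point to check carefully is that both pieces are actually finite, namely that $|\xi|^{-kp}$ is integrable at infinity (this is where $kp>1$, and hence $k>2$ together with $p\geq 1$, enters) and that the bound $|\hat\mu|\leq C M|\xi|^k$ gives an integrand bounded by a constant near the origin so that the $|\xi|^{-kp}$ singularity is absorbed.
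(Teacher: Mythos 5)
Your proof is correct and follows essentially the same route as the paper: the same split at $|\xi|=1$, the bound $|\hat\mu(\xi)|\leq |\mu|(\R)$ for large frequencies, and the Taylor-type estimate $|\hat\mu(\xi)|\leq CM|\xi|^{k}$ (i.e.\ Lemma~\ref{lem:mukvertk}) for small frequencies. No gaps.
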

\begin{proof}
First, we have 
$$\vertiii{\hat{\mu}}_{k,p}^p = \int_{|\xi|\le 1} \frac{|\hat{\mu}(\xi)|^p}{|\xi|^{kp}} \; \d\xi + \int_{|\xi|> 1} \frac{|\hat{\mu}(\xi)|^p}{|\xi|^{kp}} \; \d\xi.   $$
Next, for $|\xi|> 1$, we simply use the bound $|\hat{\mu}(\xi)| \le \int_{\R} |\mu|(\d x)$  whereas, for $|\xi|\le 1$, we use the bound \eqref{bound}. This leads to 
  $$\vertiii{\hat{\mu}}_{k,p}^p  \le \frac{2^{p+1}}{(k-1)^p}  \left(\int_{\R}  (1+|x|)^{k} \,|\mu|(dx)\right)^p + \left(\int_{\R} |\mu|(\d x)\right)^p \int_{|\xi|>1} \frac{\d \xi}{|\xi|^{pk}}.$$
The result then follows since $ \int_{|\xi|>1} \frac{\d \xi}{|\xi|^{pk}}<\infty$. 
\end{proof}}

The following lemma is a consequence of \cite[Theorem 4.1]{CGT}.
\begin{lem}\phantomsection \label{L2-knorm}
For $k> 2$, $\beta >0$ and $0<r<1$,  one has 
$$\|f\|^{2}_{L^2}\le C(r,\beta) \vertiii{\hat{f}}_{k}^{2(1-r)}\left(\|f\|_{H^M}^{2r} +\|f\|_{H^N}^{2r}\right), $$
with $\widehat{f}(\xi)=\displaystyle\int_\R f(x) e^{-ix\xi}\d x$,
$$M=\frac{k(1-r)}{r}, \qquad N=M+\frac{(1-r)(\beta+1)}{2r}, \qquad 
C(r,\beta)=\left(2 \left(1+\frac{1}{\beta}\right)\right)^{1-r}. $$
\end{lem}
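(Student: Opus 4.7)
The plan is to prove the inequality on the Fourier side via Plancherel and a Hölder-type interpolation that mixes the pointwise bound coming from $\vertiii{\cdot}_k$ with the $L^2$ bound coming from the Sobolev norms. Specifically, by Plancherel $\|f\|_{L^2}^2$ is (up to a harmless constant) equal to $\int_\R |\widehat f(\xi)|^2 \d\xi$, and I will write
$$
  |\widehat f(\xi)|^2 = |\widehat f(\xi)|^{2(1-r)} \cdot |\widehat f(\xi)|^{2r},
$$
estimating the first factor pointwise by $|\xi|^{2k(1-r)} \vertiii{\widehat f}_k^{2(1-r)}$ (which uses exactly the definition of the $\vertiii{\cdot}_k$ norm) and reserving the second factor for a Hölder estimate that reconstructs the $H^s$ norm. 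Splitting the integral at $|\xi|=1$ is what will produce the two Sobolev terms, one with $M$ derivatives controlling low frequencies and one with $N$ derivatives controlling high frequencies.

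On each of the two regions I will apply Hölder's inequality with conjugate exponents $\frac{1}{1-r}$ and $\frac{1}{r}$, inserting the weight $(1+|\xi|^2)^{-sr}$ on the factor $|\xi|^{2k(1-r)}$ and its reciprocal on $|\widehat f(\xi)|^{2r}$. This yields
$$
  \int_{\mathrm{region}} |\xi|^{2k(1-r)} |\widehat f(\xi)|^{2r} \d\xi
  \;\le\; \Bigl(\int_{\mathrm{region}} (1+|\xi|^2)^{-\frac{sr}{1-r}} |\xi|^{2k} \d\xi\Bigr)^{1-r} \|f\|_{H^s}^{2r}.
$$
On $|\xi|\le 1$ I will choose $s=M=k(1-r)/r$, so that $\frac{sr}{1-r}=k$ and the remaining integral reduces to $\int_{|\xi|\le 1} |\xi|^{2k}(1+|\xi|^2)^{-k} \d\xi \le \int_{|\xi|\le 1}|\xi|^{2k}\d\xi = \tfrac{2}{2k+1}$. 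On $|\xi|>1$ I will take $s=N=M+\tfrac{(1-r)(\beta+1)}{2r}$; here $\frac{sr}{1-r}=k+\tfrac{\beta+1}{2}$, and using $(1+|\xi|^2)^{-a}\le |\xi|^{-2a}$ for $a>0$ and $|\xi|\ge 1$, the residual integral becomes $\int_{|\xi|>1}|\xi|^{-\beta-1}\d\xi=\tfrac{2}{\beta}$.

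Adding the two pieces produces a bound of the form
$$
  \|f\|_{L^2}^2 \;\le\; \vertiii{\widehat f}_k^{2(1-r)} \Bigl[\bigl(\tfrac{2}{2k+1}\bigr)^{1-r}\|f\|_{H^M}^{2r} + \bigl(\tfrac{2}{\beta}\bigr)^{1-r}\|f\|_{H^N}^{2r}\Bigr],
$$
and since $\tfrac{2}{2k+1}\le 2$ and $\tfrac{2}{\beta}\le 2(1+1/\beta)$, both prefactors are majorised by $\bigl(2(1+1/\beta)\bigr)^{1-r}=C(r,\beta)$. This gives the stated inequality.

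There is no real obstacle; the work is routine once the decomposition $|\widehat f|^2=|\widehat f|^{2(1-r)}\cdot |\widehat f|^{2r}$ is set up. The only points requiring a little care are the choice of the Sobolev order on each region (chosen precisely so that the residual $\xi$-integrals converge and produce the constants $2/(2k+1)$ and $2/\beta$) and the verification that $N$ arises from the requirement that $|\xi|^{2k-2Nr/(1-r)}=|\xi|^{-\beta-1}$ is integrable at infinity — this is exactly what dictates the formula $N=M+\tfrac{(1-r)(\beta+1)}{2r}$.
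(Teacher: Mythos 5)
Your argument is correct. The paper does not actually prove this lemma: it simply states that it ``is a consequence of [CGT, Theorem 4.1]'' (Carlen--Gabetta--Toscani), so your self-contained derivation -- splitting $|\widehat f|^2=|\widehat f|^{2(1-r)}|\widehat f|^{2r}$, using the pointwise bound from $\vertiii{\cdot}_k$ on the first factor, and applying H\"older with exponents $\tfrac1{1-r},\tfrac1r$ and the weights $(1+|\xi|^2)^{\mp sr}$ separately on $|\xi|\le 1$ (with $s=M$) and $|\xi|>1$ (with $s=N$) -- is exactly the standard interpolation underlying that reference, and the choices of $M$ and $N$ make the residual integrals equal to $\tfrac{2}{2k+1}$ and $\tfrac{2}{\beta}$ as you compute, both majorised by $2(1+1/\beta)$. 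The only point worth making explicit is the Plancherel constant: with the paper's conventions $\|f\|_{L^2}^2=\tfrac1{2\pi}\|\widehat f\|_{L^2}^2$ while $\|f\|_{H^s}$ is defined directly through $\widehat f$ without the $2\pi$, so the extra factor $\tfrac1{2\pi}<1$ goes in the favourable direction and the stated constant $C(r,\beta)$ indeed suffices.
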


\begin{lem}\phantomsection\label{L1-L2}
Let $a_*>a$ and $\alpha \in \left( 0 , \frac{2(a_*-a)}{2a_*+1}\right)$ be given. There exists a constant $C>0$ depending only on $\alpha$, $a$ and $a_*$ such that, for every $f\in L^1(\w_{a_*})\cap L^2$, 
 $$\|f\|_{L^1(\w_{a})}\le C \|f\|^\alpha_{L^2} \|f\|^{1-\alpha}_{L^1(\w_{a_*})} .$$
\end{lem}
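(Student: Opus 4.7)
The natural approach is a one-step interpolation via three-function H\"older's inequality. The idea is to factorise the integrand $|f(x)|\w_{a}(x)$ into exactly three pieces: one that carries all the $|f|^{2}$ mass (yielding the $L^{2}$ norm to the power $\alpha$), one that carries all the $|f|\w_{a_{*}}$ mass (yielding the $L^{1}(\w_{a_{*}})$ norm to the power $1-\alpha$), and a pure weight factor whose finiteness will pin down the precise admissible range of $\alpha$.

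Concretely, I would write the trivial identity
\[
  |f(x)|\,\w_{a}(x) \;=\; |f(x)|^{\alpha} \,\cdot\, \bigl(|f(x)|\,\w_{a_{*}}(x)\bigr)^{1-\alpha}\,\cdot\, \w_{a}(x)\,\w_{a_{*}}(x)^{-(1-\alpha)}
\]
and apply H\"older's inequality with the three exponents $p_{1}=2/\alpha$, $p_{2}=1/(1-\alpha)$ and $p_{3}=2/\alpha$, which satisfy $\tfrac{1}{p_{1}}+\tfrac{1}{p_{2}}+\tfrac{1}{p_{3}}=\tfrac{\alpha}{2}+(1-\alpha)+\tfrac{\alpha}{2}=1$. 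The first two factors integrate respectively to $\|f\|_{L^{2}}^{\alpha}$ and $\|f\|_{L^{1}(\w_{a_{*}})}^{1-\alpha}$ by the very choice of $p_{1},p_{2}$, which gives
\[
  \|f\|_{L^{1}(\w_{a})} \;\le\; \|f\|_{L^{2}}^{\alpha}\,\|f\|_{L^{1}(\w_{a_{*}})}^{1-\alpha}\, \left(\ir \w_{a}(x)^{2/\alpha}\,\w_{a_{*}}(x)^{-2(1-\alpha)/\alpha}\,\dx\right)^{\!\alpha/2}.
\]

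It remains to check that the pure weight integral is finite, and this is where the hypothesis on $\alpha$ enters. At infinity the integrand behaves like $|x|^{\tau}$ with
\[
  \tau \;=\; \frac{2a}{\alpha}-\frac{2a_{*}(1-\alpha)}{\alpha},
\]
and the integral converges iff $\tau<-1$. A direct algebraic rearrangement of $2a-2a_{*}(1-\alpha)<-\alpha$ yields precisely
\[
  \alpha(2a_{*}+1) \;<\; 2(a_{*}-a),
\]
i.e.\ $\alpha<\tfrac{2(a_{*}-a)}{2a_{*}+1}$, matching the assumption. This defines the constant $C$ depending only on $\alpha,a,a_{*}$, and completes the proof. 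I do not anticipate any serious obstacle here: the only delicate point is the matching between the exponent condition and the integrability of the weight, but that matching is forced by the homogeneity of the inequality and works out cleanly with the above choice of exponents.
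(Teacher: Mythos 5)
Your proof is correct and is essentially identical to the paper's own argument: the same three-factor splitting of $|f|\w_{a}$, the same H\"older exponents $p_{1}=p_{3}=2/\alpha$, $p_{2}=1/(1-\alpha)$, and the same verification that the residual weight integral converges exactly under $\alpha<\frac{2(a_{*}-a)}{2a_{*}+1}$. Nothing to add.
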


\begin{proof}
The H\"older inequality with the three exponents $p_1=\frac{2}{\alpha}$, $p_2=\frac{1}{1-\alpha}$ and $p_3=\frac{2}{\alpha}$ leads to 
\begin{align*}
\|f\|_{L^1(\w_{a})} &= \int_\R |f(x)|^\alpha |f(x)|^{1-\alpha} {(1+|x|)^{a_*(1-\alpha)} (1+|x|)^{-a_*(1-\alpha)}} \w_{a}(x)  \d x  \\
& \le \|f\|^\alpha_{L^2} \|f\|^{1-\alpha}_{L^1(\w_{a_*})}  \left(\int_\R { (1+|x|)^{-\frac{2a_*(1-\alpha)}{\alpha}}(1+|x|)^{\frac{2a}{\alpha}} }\d x\right)^{\frac{\alpha}{2}}.
\end{align*}
The last integral converges since $\alpha<\frac{2(a_*-a)}{2a_*+1}$.
\end{proof}

\section{Nonlinear estimates for {$\Q_{\g}$ and $\mathscr{I}_{\g}$}}\label{app:QgQ0}
  
We gather here nonlinear estimates involving {integrals of the collision operator $\Q_{\g}$ for $\g \geq 0$ in the spirit of \cite{ACG}. The same kind of computations also enables to get nonlinear estimates of the functional $\mathscr{I}_{\g}$ introduced in {Section \ref{sec:upgrade}}.} We begin with the following easy result for $\Q^{+}_{0}$.
\begin{lem}\phantomsection\label{lem:Q+0} For any measurable $f,g,h$, one has
$$\int_{\R}\Q^{+}_{0}(f,g)\,h\dx \leq \sqrt{2}\|h\|_{L^2}\min\left(\|f\|_{L^1}\|g\|_{L^2},\|g\|_{L^1}\|f\|_{L^2}\right).$$\end{lem}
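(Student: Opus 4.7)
The plan is to exploit the weak form of $\Q_0^+$, which is directly available from the definition given earlier in the paper:
$$\int_{\R}\Q^{+}_{0}(f,g)(x)\,h(x)\dx=\int_{\R^{2}}f(x)\,g(y)\,h\Bigl(\tfrac{x+y}{2}\Bigr)\dx\dy.$$
The strategy is to fix $x\in\R$ and estimate the inner $y$–integral by Cauchy–Schwarz after a suitable change of variable, then integrate in $x$ against $|f|$.

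First I would fix $x$ and perform the change of variable $u=\tfrac{x+y}{2}$, so that $y=2u-x$ and $\dy=2\,\du$. This gives
$$\int_{\R} g(y)\,h\Bigl(\tfrac{x+y}{2}\Bigr)\dy \;=\; 2\int_{\R} g(2u-x)\,h(u)\,\du.$$
Applying Cauchy–Schwarz to the right-hand side and using the dilation identity $\|g(2\cdot-x)\|_{L^2}^{2}=\tfrac12\|g\|_{L^2}^{2}$, I obtain the pointwise-in-$x$ bound
$$\Bigl|\int_{\R} g(y)\,h\Bigl(\tfrac{x+y}{2}\Bigr)\dy\Bigr| \;\leq\; 2\cdot\tfrac{1}{\sqrt{2}}\,\|g\|_{L^2}\,\|h\|_{L^2} \;=\; \sqrt{2}\,\|g\|_{L^2}\,\|h\|_{L^2}.$$

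Multiplying by $|f(x)|$ and integrating over $x\in\R$ yields the first estimate
$$\Bigl|\int_{\R}\Q^{+}_{0}(f,g)\,h\,\dx\Bigr| \;\leq\; \sqrt{2}\,\|f\|_{L^1}\|g\|_{L^2}\|h\|_{L^2}.$$
For the other half of the minimum, I would simply note that the weak form is invariant under the exchange $(x,y)\leftrightarrow(y,x)$, i.e.\ $\int\Q^{+}_{0}(f,g)\,h\,\dx=\int\Q^{+}_{0}(g,f)\,h\,\dx$, so repeating the argument with the roles of $f$ and $g$ swapped gives $\sqrt{2}\|g\|_{L^1}\|f\|_{L^2}\|h\|_{L^2}$. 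Taking the minimum of the two bounds concludes the proof. There is no real obstacle here; the only point worth attention is the factor $\tfrac{1}{\sqrt{2}}$ coming from the dilation, which combines with the Jacobian $2$ to produce exactly the constant $\sqrt{2}$ in the statement.
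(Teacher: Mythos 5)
Your proof is correct and follows essentially the same route as the paper: both start from the weak form, apply Cauchy--Schwarz to the inner $y$-integral for fixed $x$ (the paper computes $\bigl\|h\bigl(\tfrac{x+\cdot}{2}\bigr)\bigr\|_{L^2}=\sqrt{2}\,\|h\|_{L^2}$ directly, whereas you first change variables and rescale $g$, which yields the same constant $2\cdot\tfrac{1}{\sqrt2}=\sqrt2$), and then integrate against $|f|$ and symmetrize. No issues.
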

\begin{proof} There is no loss of generality in assuming $f,g,h$ nonnegative. One has then
$$\int_{\R}\Q^{+}_{0}(f,g)\,h\dx=\int_{\R^{2}}f(x)g(y)h\left(\frac{x+y}{2}\right)\dx\dy.$$
Given $x \in \R$, one deduces from Cauchy-Schwarz inequality that
$$\int_{\R}g(y)h\left(\frac{x+y}{2}\right)\dy \leq \|g\|_{L^2}\left\|h\left(\frac{x+\cdot}{2}\right)\right\|_{L^2}=\sqrt{2}\|g\|_{L^2}\|h\|_{L^2}$$
from which we get that
$$\int_{\R}\Q^{+}_{0}(f,g)\,h\dx \leq \sqrt{2}\|f\|_{L^1}\|g\|_{L^2}\|h\|_{L^2}.$$
Exchanging the role of $g$ and $f$, one deduces the result.\end{proof}
We now turn to some $L^{2}$ estimate for $\Q^{+}_{\g}$ for $\g >0$:
\begin{prp}\phantomsection\label{prop:QgL2} For any  {$k \ge0$}, there is $C=C_{k,\g}>0$ such that
 {\begin{equation*}\label{eq:Qgffminus}
\|\Q_{\g}^{-}(f,f)\w_{k}\|_{L^2} \leq \|f\w_{\g}\|_{L^1}\,\|f\w_{k+\g}\|_{L^2}
  \end{equation*}
  and 
\begin{equation*}\label{eq:Qgffplus}
\|\Q_{\g}^{+}(f,f)\w_{k}\|_{L^2} \leq C\,{\min(\|f\w_{k}\|_{L^1}\,\|f\w_{k+\g}\|_{L^2},\|f\w_{k}\|_{L^2}\,\|f\w_{k+\g}\|_{L^1})}.
\end{equation*}}
\end{prp}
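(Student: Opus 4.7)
\textit{Proof plan for Proposition \ref{prop:QgL2}.} The two estimates will be treated separately; both rest on the elementary weight inequalities
$$|x-y|^{\g}\leq \w_{\g}(x)\w_{\g}(y), \qquad \w_{k}\Big(\tfrac{x+y}{2}\Big)\leq \w_{k}(x)\w_{k}(y), \qquad x,y\in\R,\;\g,k\geq0,$$
together with the subadditivity bound $|x-y|^{\g}\leq C_{\g}\big(\w_{\g}(x)+\w_{\g}(y)\big)$ valid for all $\g\geq0$.

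For the loss part, the identity $\Q^{-}_{\g}(f,f)(x)=f(x)\int_{\R}f(y)|x-y|^{\g}\dy$ combined with the first weight inequality yields the pointwise control
$$|\Q^{-}_{\g}(f,f)(x)|\,\w_{k}(x)\leq f(x)\,\w_{k+\g}(x)\,\|f\w_{\g}\|_{L^{1}},$$
and taking the $L^{2}$-norm in $x$ gives the first estimate directly.

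For the gain part, I would argue by duality: test against an arbitrary $\psi\in L^{2}(\R)$ and use the weak form \eqref{eq:weakgamma} to write
$$\int_{\R}\Q^{+}_{\g}(f,f)(x)\,\w_{k}(x)\psi(x)\dx=\int_{\R^{2}}f(x)f(y)|x-y|^{\g}\,\w_{k}\Big(\tfrac{x+y}{2}\Big)\psi\Big(\tfrac{x+y}{2}\Big)\dx\dy.$$
Applying the weight bounds above and exploiting the symmetry in $(x,y)$, the right-hand side is controlled by a constant multiple of
$$\int_{\R^{2}}f(x)\w_{k+\g}(x)\cdot f(y)\w_{k}(y)\cdot\Big|\psi\Big(\tfrac{x+y}{2}\Big)\Big|\dx\dy.$$
For fixed $x$, Cauchy--Schwarz in $y$ together with the translation-and-dilation identity $\big\|\psi(\tfrac{x+\cdot}{2})\big\|_{L^{2}}=\sqrt{2}\,\|\psi\|_{L^{2}}$ yields an inner bound of $\sqrt{2}\,\|f\w_{k}\|_{L^{2}}\,\|\psi\|_{L^{2}}$; then integrating in $x$ produces the estimate $C\,\|f\w_{k+\g}\|_{L^{1}}\|f\w_{k}\|_{L^{2}}\|\psi\|_{L^{2}}$. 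Performing Cauchy--Schwarz in $x$ instead of $y$ delivers the symmetric bound with the roles of $L^{1}$ and $L^{2}$ exchanged, which by duality yields both entries of the minimum.

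There is no essential obstacle: the argument is entirely elementary, and the only care needed is to redistribute the weights so that exactly one $\w_{\g}$-factor is absorbed on the ``heavy'' variable while the other remains in $L^{1}$ (or $L^{2}$), which is ensured by the symmetric splitting of $|x-y|^{\g}$. The constant $C=C_{k,\g}$ is explicitly tracked as $2\sqrt{2}\,C_{\g}$.
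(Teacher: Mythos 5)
Your proposal is correct and follows essentially the same route as the paper: the loss term is handled by the pointwise bound $|x-y|^{\g}\leq\w_{\g}(x)\w_{\g}(y)$ followed by taking the $L^{2}$-norm, and the gain term by duality against an $L^{2}$ test function, splitting $|x-y|^{\g}$ symmetrically, distributing $\w_{k}(\frac{x+y}{2})\leq\w_{k}(x)\w_{k}(y)$, and applying Cauchy--Schwarz in one variable with the identity $\|\psi(\frac{x+\cdot}{2})\|_{L^{2}}=\sqrt{2}\|\psi\|_{L^{2}}$ to get both entries of the minimum. The constant $2\sqrt{2}$ matches the paper's.
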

\ \begin{proof}
  One has
$$\Q_{\g}^{+}(f,f)=\int_\R f\left(x + \frac{y}{2}\right) f\left(x - \frac{y}{2}\right) |y|^\gamma \dy, \qquad \qquad \Q_{\g}^{-}(f,f)=f(x) \int_\R f(y) |x-y|^\gamma \dy.$$
and, in particular, 
$$\|\Q_{\g}^{-}(f,f)\w_{k}\|_{L^2}^{2}\leq \int_{\R}|f(x)|^{2}\w^{2}_{k}(x)\left[\int_{\R}|f(y)|\,|x-y|^{\g}\dy\right]^{2}\dx \leq \|f\w_{k+\g}\|_{L^2}^{2}\|f\w_{\g}\|_{L^1}^{2}$$
where we used that $|x-y|^{\g}\leq \w_{\g}(x)\w_{\g}(y)$.
For the $\Q^{+}_{\g}(f,f)$ term one can for instance use that
$$\|\Q_{\g}^{+}(f,f)\w_{k}\|_{L^2}=\sup_{\|\varphi\|_{L^2}\leq 1}I^{+}(\varphi)$$
where
$$I^{+}(\varphi)=\int_{\R}\Q_{\g}^{+}(f,f)\w_{k}(x)\varphi(x)\dx{=\int_{\R^2}}f(x)f(y)|x-y|^{\g}\w_{k}\left(\frac{x+y}{2}\right)\varphi\left(\frac{x+y}{2}\right)\dx\dy.$$
Since $\w_{k}\left(\frac{x+y}{2}\right) \leq \w_{k}(x)\w_{k}(y)$ one has, with $F_{k}=|f|\w_{k}$
$$I^{+}(\varphi) \leq \int_{\R^{2}}|x-y|^{\g}F_{k}(x)F_{k}(y)\varphi\left(\frac{x+y}{2}\right)\dx\dy$$
and, since $\g\in (0,1)$, $|x-y|^{\g}\leq |x|^{\g}+|y|^{\g}$ so that, with a symmetry argument
$$I^{+}(\varphi) \leq 2\int_{\R^{2}}F_{k+\g}(x)F_{k}(y)\varphi\left(\frac{x+y}{2}\right)\dx\dy.$$
One deduces easily by Cauchy-Schwarz inequality that, 
$$I^{+}(\varphi) \leq 2\|F_{k+\g}\|_{L^2}{\|F_{k}\|_{L^1}}\sup_{y}\left\|\varphi\left(\frac{\cdot+y}{2}\right)\right\|_{L^2}=2\sqrt{2}\|F_{k+\g}\|_{L^2} {\|F_{k}\|_{L^1}}\|\varphi\|_{L^2}.$$
{We also have
   $$I^{+}(\varphi) \leq 2\|F_{k+\g}\|_{L^1}\|F_{k}\|_{L^2}\sup_{y}\left\|\varphi\left(\frac{\cdot+x}{2}\right)\right\|_{L^2}=2\sqrt{2}\|F_{k+\g}\|_{L^1}\|F_{k}\|_{L^2}\|\varphi\|_{L^2}$$}
which gives 
$$\|\Q_{\g}^{+}(f,f)\w_{k}\|_{L^2} \leq 2\sqrt{2}\,{\min(\|f\w_{k+\g}\|_{L^2}\|f\w_{k}\|_{L^1},\|f\w_{k+\g}\|_{L^1}\|f\w_{k}\|_{L^2})}$$
and ends the proof.
\end{proof} 

We establish now some comparison estimates between $\Q_{\g}$ and $\Q_{0}$ in the limit $\g \to 0$:
\begin{prp}\phantomsection\label{diff_Q}
  Let $2<a<3$,  $p>1$ and $\gamma,s>0$ satisfying  $s+\gamma+a<3$. There exist some positive constant $C=C_{s,p}>0$ depending only on $s,p$ such that, for any $f \in L^{1}(\w_{s+\g+a})$ and any $g\in L^p(\w_{a})\cap L^1(\w_{s+\gamma+a})$, it holds  
  \begin{multline*}
\|\Q_{0}(g,f)-\Q_{\g}(g,f)\|_{L^{1}(\w_{a})}+\|\Q_{0}(f,g)-\Q_\gamma(f,g)\|_{L^1(\w_{a})} \\
\le C_{s,p}\g^{\frac{s}{s+1}}|\log \g| \|f\|_{L^1(\w_{a})}\|g\|_{L^1(\w_{a})}
    +  {24}\,\g^{\frac{s}{s+1}} \bigg(\|g\|_{L^1(\w_{s+\gamma+a})}\|f\|_{L^1(\w_{a})}\\ 
    +\|f\|_{L^1(\w_{s+\gamma+a})}\|g\|_{L^1(\w_{a})}
     +\|g\|_{ L^p(\w_{a})}\|f\|_{L^1(\w_{a})} +\|f\|_{ L^p(\w_{a})}\|g\|_{L^1(\w_{a})}\bigg).
    \end{multline*}
\end{prp}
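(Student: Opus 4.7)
My plan is to exploit the fact that $\Q_\gamma^{\pm}$ and $\Q_0^{\pm}$ differ only through the kernel factor $|x-y|^\gamma - 1$, which vanishes pointwise as $\gamma\to 0$. By bounding the weight $\w_a\left(\tfrac{x+y}{2}\right) \le \w_a(x)\w_a(y)$ in the gain part and using $\w_a(x)+\w_a(y)$ directly in the loss part, both $\|\Q_\gamma^{\pm}(f,g) - \Q_0^{\pm}(f,g)\|_{L^1(\w_{a})}$ are reduced (up to absolute values) to controlling
$$
I := \int_{\R^2} |f(x)|\,|g(y)|\,\bigl||x-y|^\gamma - 1\bigr|\,\w_a(x)\w_a(y)\,\dx\dy,
$$
and the symmetric analogue where the $L^p$ integrability is placed on $f$ rather than on $g$.

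The key analytic ingredient will be the elementary pointwise bound
$$
|u^\gamma - 1| \le \gamma|\log u| \quad\text{for } 0<u\le 1,
\qquad
|u^\gamma - 1| \le \tfrac{\gamma}{s}\,u^{\gamma+s} \quad\text{for } u\ge 1,
$$
the second following from $\log u \le u^s/s$ on $u\ge 1$. I will then split the integration in $I$ into three zones, with a parameter $\delta\in(0,1)$ to be optimized later. On $\{|x-y|<\delta\}$, the crude bound $||x-y|^\gamma-1|\le 1$ together with H\"older's inequality (treating $|g|\w_a$ in $L^p$ and the characteristic function of a ball of radius $\delta$ in $L^{p'}$) yields a contribution of order $\delta^{1/p'}\|f\|_{L^1(\w_{a})}\|g\|_{L^p(\w_{a})}$. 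On $\{\delta\le|x-y|\le 1\}$ the uniform bound $\gamma|\log|x-y||\le\gamma|\log\delta|$ gives a contribution of order $\gamma|\log\delta|\,\|f\|_{L^1(\w_{a})}\|g\|_{L^1(\w_{a})}$. On $\{|x-y|>1\}$, using $|x-y|^{\gamma+s} \le \w_{\gamma+s}(x)+\w_{\gamma+s}(y)$ and symmetrizing produces a contribution of order $(\gamma/s)\bigl[\|f\|_{L^1(\w_{a+\gamma+s})}\|g\|_{L^1(\w_{a})}+\|f\|_{L^1(\w_{a})}\|g\|_{L^1(\w_{a+\gamma+s})}\bigr]$.

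To recover the announced rate I will choose $\delta = \gamma^{p' s/(s+1)}$, so that $\delta^{1/p'}=\gamma^{s/(s+1)}$ in the zone-I contribution. The zone-II contribution then becomes $\gamma|\log\delta|\le C_{s,p}\,\gamma^{s/(s+1)}|\log\gamma|$ (using $\gamma\le\gamma^{s/(s+1)}$ for $\gamma\in(0,1)$), and the zone-III contribution is bounded by $(1/s)\gamma^{s/(s+1)}$ times the moment terms, with the factor $1/s$ absorbed into $C_{s,p}$. Adding the estimates for $\Q_0^{+}-\Q_\gamma^{+}$ and $\Q_0^{-}-\Q_\gamma^{-}$, then the symmetric version swapping the roles of $f$ and $g$, yields exactly the announced inequality.

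I expect the main technical care to be required in the H\"older step of zone I: one has to verify, uniformly in $x$, that $\|\mathbf{1}_{|y-x|<\delta}\,|g|\,\w_a\|_{L^1_y}\le (2\delta)^{1/p'}\|g\,\w_a\|_{L^p}$ while the weight $\w_a(x)$ factors cleanly out of the remaining $L^1_x$ integral against $|f|$. The rest of the bookkeeping, including the separation into three zones and the final absorption of logarithmic and lower-order powers of $\gamma$, is routine once this step is clean; the hypothesis $s+\gamma+a<3$ plays no direct role in the proof itself but ensures that all weighted norms on the right-hand side are finite under the quantitative assumptions of the applications we have in mind.
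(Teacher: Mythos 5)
Your proof is correct and follows essentially the same route as the paper's: the same reduction to controlling $\int_{\R^2}|f(x)|\,|g(y)|\,\bigl|1-|x-y|^{\g}\bigr|\,\w_{a}(x)\w_{a}(y)\dx\dy$ via $\w_{a}\bigl(\tfrac{x+y}{2}\bigr)\le \w_{a}(x)\w_{a}(y)$, the same three-zone split in $|x-y|$, the same H\"older step near the diagonal, and the same calibration $\delta^{1/p'}\sim \g^{s/(s+1)}$. The one place you diverge is the far zone: the paper keeps the outer radius $R$ as a second free parameter, uses the crude bound $|1-u^{\g}|\le u^{\g}\le R^{-s}u^{\g+s}$ for $u\ge R$, and optimizes $R=\g^{-1/(s+1)}$; this is precisely what produces the universal constant $24$ on the moment terms, at the price of an $R\log R$ term in the intermediate zone (absorbed into $C_{s,p}\g^{s/(s+1)}|\log\g|$). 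Your version fixes the threshold at $u\ge 1$ and uses the sharper elementary inequality $u^{\g}-1\le \tfrac{\g}{s}u^{\g+s}$, which yields the same rate (since $\g\le\g^{s/(s+1)}$) but places a factor $s^{-1}$ in front of the moment terms. So strictly you prove the inequality with the explicit $24$ replaced by an $s$-dependent constant; this is harmless for every application in the paper (where only finiteness and explicitness of the constants matter), but it is not literally the displayed estimate. Two small points of hygiene: your subadditivity $|x-y|^{\g+s}\le \w_{\g+s}(x)+\w_{\g+s}(y)$ does hold here, but only because $a>2$ and $s+\g+a<3$ force $s+\g<1$ — so the hypothesis you dismiss as playing "no direct role" is in fact quietly used (otherwise a factor $2^{\g+s-1}$ is needed); and the loss-part weight should be handled as in the paper via $\tfrac12(\w_{a}(x)+\w_{a}(y))\le \w_{a}(x)\w_{a}(y)$ so that both gain and loss reduce to the same integral.
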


\begin{proof} We prove the result for $\|\Q_{0}(f,g)-\Q_{\g}(f,g)\|_{L^{1}(\w_{a})}$ only. First a change of variables leads to
  \begin{align*}
    \|\Q_{0}(f,g)-\Q_\gamma(f,g)\|_{L^1(\w_{a})}& \le\int_\R\int_\R |f(x)| |g(y)| \left|1-|x-y|^\gamma\right|  {\w_{a}\left(\frac{x+y}{2}\right)} \dx\dy \\
    & +\int_\R\int_\R|f(x)| |g(y)|  \left|1-|x-y|^\gamma\right|  {\left(\frac{\w_{a}(x)}{2}+\frac{\w_{a}(y)}{2}\right)}\d x\d y \\
    & \leq 2 \int_\R\int_\R|f(x)| |g(y)|  \left|1-|x-y|^\gamma\right| \w_{a}(x)\w_{a}(y)\d x\d y, 
  \end{align*}
  since {$\w_{a}(\frac{x+y}{2})\le \w_{a}(x)\w_{a}(y)$ and $\frac{1}{2}\left(\w_{a}(x)+\w_{a}(y)\right)\le \w_{a}(x)\w_{a}(y)$}.
  
  Let $0<\delta<1$ and $R>1$. Splitting the above integral according to $|x-y|\le \delta$, $\delta<|x-y|<R$ and $|x-y|\ge R$, we get
  $$\|\Q_{0}(f,g)-\Q_\gamma(f,g)\|_{L^1(\w_{a})} \le I_1+I_2+I_3,$$
  with
  \begin{align*}
    I_1 &= 2\int_\R\int_{|x-y|\le\delta} |f(x)| |g(y)| \left|1-|x-y|^\gamma\right| \w_{a}(x)\w_{a}(y)\dx\dy\\
    I_2 &= 2\int_\R\int_{\delta<|x-y|<R} |f(x)| |g(y)| \left|1-|x-y|^\gamma\right| \w_{a}(x)\w_{a}(y)\dx\dy \\
    I_3 &= 2\int_\R\int_{|x-y|\ge R} |f(x)| |g(y)| \left|1-|x-y|^\gamma\right|\w_{a}(x)\w_{a}(y)\dx\dy.  \end{align*}
  Since $\delta<1$, for  $|x-y|\le \delta$, we have $\left|1-|x-y|^\gamma\right|\le1$. We then deduce from H\"older's inequality that
  \begin{align*}
    I_1& \le 2 \int_\R \left(\int_{|x-y|\le \delta}\dy \right)^{\frac{p-1}{p}}\left(\int_\R |g(y)|^p \w_{a}(y)^p \dy \right)^\frac{1}{p} |f(x)| \w_{a}(x)\dx \\
       & \le 2  (2\delta)^{\frac{p-1}{p}} \|g\w_{a}\|_{ L^p}\|f\|_{L^1(\w_{a})},
  \end{align*}
  for $p>1$.

  Now,  since $R>1$, for $|x-y|\ge R$, we have $\left|1-|x-y|^\gamma\right|\le |x-y|^\gamma$. Moreover, $|x-y|\ge R$ implies that either $|x|\ge \frac{R}{3}$ or $|y|\ge \frac{R}{3}$. If $|x|\ge |y|$, this means that, necessarily, $|x|\ge\frac{R}{3}$ and $|x-y|\le 2|x|$. Now, if $|y|\ge |x|$, this means that $|y|\ge\frac{R}{3}$ and $|x-y|\le 2|y|$. We thus deduce that
  \begin{align*}
    I_2 & \le 2^{1+\gamma} \int_\R \int_{|x|\ge \frac{R}{3}} |f(x)| |g(y)| |x|^{\gamma}\w_{a}(x)\w_{a}(y) \d x\d y\\
    & + 2^{1+\gamma} \int_\R \int_{|y|\ge \frac{R}{3}} |f(x)| |g(y)|\w_{a}(x) |y|^{\gamma}\w_{a}(y) \d x\d y\\
    & \le \frac{2^{1+\gamma} 3^s}{R^s} \int_\R \int_{|x|\ge \frac{R}{3}} |f(x)| |g(y)| |x|^{\gamma+s }\w_{a}(x)\w_{a}(y) \d x\d y\\
    & + \frac{2^{1+\gamma} 3^s}{R^s} \int_\R \int_{|y|\ge \frac{R}{3}} |f(x)| |g(y)| \w_{a}(x) |y|^{\gamma+s}\w_{a}(y) \d x\d y\\
    & \le \frac{2^{1+\gamma} 3^s}{R^s}\left( \|g\|_{L^1(\w_{s+\gamma+a})}\|f\|_{L^1(\w_{a})}+\|f\|_{L^1(\w_{s+\gamma+a})}\|g\|_{L^1(\w_{a})}\right).
  \end{align*}
  Finally, for $\delta<|x-y|<R$, we have $\left|1-|x-y|^\gamma\right|\le \gamma\max\{|\log\delta|, R\log R\}$. Hence,
  \begin{align*}
    I_3& \le 2 \gamma\max\{|\log\delta|, R\log R\}  \int_\R\int_\R |f(x)| |g(y)| \w_{a}(x)\w_{a}(y) \d x\d y\\
    & \le 2 \gamma\max\{|\log\delta|, R\log R\}\|f\|_{L^1(\w_{a})}\|g\|_{L^1(\w_{a})}.
  \end{align*}
We deduce that
 \begin{multline*}
    \|\Q_{0}(f,g)-\Q_\gamma(f,g)\|_{L^1(\w_{a})}\le  C_{0}\gamma \|f\|_{L^1(\w_{a})}\|g\|_{L^1(\w_{a})}\\
    + C_{1}\left(\|g\|_{L^1(\w_{s+\gamma+a})}\|f\|_{L^1(\w_{a})}+\|f\|_{L^1(\w_{s+\gamma+a})}\|g\|_{L^1(\w_{a})}+\|g\|_{ L^p(\w_{a})}\|f\|_{L^1(\w_{a})}\right)
    \end{multline*}
with $C_{0}=C_{0}(\delta,R)=2\max\left\{|\log \delta|\,,\,R\log R\right\},$ and
$$C_{1}=C_{1}(\delta,R,p,s)=\max\left\{2(2\delta)^{\frac{p-1}{p}}\,,\,2^{1+\g}3^{s}R^{-s}\right\}.$$
We choose then $R,\delta$ such that {$R=\g^{-\frac{1}{s+1}}$} and $2\delta=\g^{\frac{ps}{(p-1)(s+1)}}$
from which
$$C_{1}=\g^{\frac{s}{s+1}}\,\max\left\{2,2^{1+\g}3^{s}\right\} \leq {12}\g^{\frac{s}{s+1}}$$
since $0 < s,\g <1$. Now, with such a choice, one sees easily that 
$$C_{0} \leq C_{s,p}\g^{-\frac{1}{s+1}}|\log \g|$$
for some positive constant $C_{s,p}$ depending only on $s$ and $p$. The conclusion follows.
\end{proof}

We recall here some notations introduced in {Section \ref{sec:upgrade}}. Namely, set
$$\mathscr{I}_{0}(f,g)=\int_{\R^{2}}f(x)g(y)|x-y|^{2}\Lambda_{0}\left(|x-y|\right)\dx\dy, \qquad f,g \in L^{1}(\w_{s}), \qquad s>2,$$
and
$$\mathscr{I}_{\g}(f,g)=\int_{\R^{2}}f(x)g(y)|x-y|^{2}\Lambda_{\g}\left(|x-y|\right)\dx\dy, \qquad \qquad f,g \in L^{1}(\w_{2+\g})$$ 
where
$$\Lambda_{0}(r)=\log r,\qquad \qquad \Lambda_{\g}(r)=\frac{r^{\g}-1}{\g}, \qquad \g>0, r >0.$$
One has then the following first basic observation
\begin{lem}\phantomsection\label{lem:I0fg}
For $a >2,$ $f,g\in L^{1}(\w_{a})$, one has
$$\left|\mathscr{I}_{0}(f,g)\right| \leq C_{a}\|f\|_{L^{1}(\w_{a})}\|g\|_{L^{1}(\w_{a})}$$
for some positive constant $C_{a} >0$ depending only on $a.$
\end{lem}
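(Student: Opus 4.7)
The plan is to split the domain of integration according to whether $|x-y|$ is small or large, since the kernel $K(r) := r^{2}\log r$ is bounded as $r \to 0^{+}$ but grows logarithmically at infinity. Accordingly, I would write
\[
\mathscr{I}_{0}(f,g) = \int_{|x-y|\leq 1} f(x)g(y)\,|x-y|^{2}\log|x-y|\,\dx\,\dy + \int_{|x-y|>1} f(x)g(y)\,|x-y|^{2}\log|x-y|\,\dx\,\dy,
\]
and bound the two pieces separately.

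For the first (near-diagonal) piece I would use the elementary fact that $r \mapsto r^{2}|\log r|$ is bounded on $(0,1]$, with $\sup_{0<r\leq 1} r^{2}|\log r| = (2e)^{-1}$. This immediately gives a bound by a multiple of $\|f\|_{L^{1}}\|g\|_{L^{1}} \leq \|f\|_{L^{1}(\w_{a})}\|g\|_{L^{1}(\w_{a})}$, with an explicit constant independent of $a$. For the second (far-field) piece, the main observation is that since $a>2$, there exists $C_{a}>0$ such that $\log r \leq C_{a} r^{a-2}$ for all $r \geq 1$. Consequently,
\[
|x-y|^{2}\log|x-y| \leq C_{a}\,|x-y|^{a} \leq C_{a}\,\w_{a}(x)\,\w_{a}(y) \qquad \text{for } |x-y|>1,
\]
using $|x-y| \leq (1+|x|)(1+|y|)$ and $\w_{a}$ multiplicativity. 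Substituting this into the far-field integral yields the bound $C_{a}\|f\|_{L^{1}(\w_{a})}\|g\|_{L^{1}(\w_{a})}$, and summing the two contributions gives the claim.

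There is essentially no obstacle here: the whole argument is a direct two-region splitting combined with the trivial inequality $|\log r| \leq C_{a} \max(1, r^{a-2})$ valid because $a>2$; the condition $a>2$ is used only to absorb the logarithm at infinity. The constant $C_{a}$ can be made explicit and depends only on $a$, matching the statement of the lemma.
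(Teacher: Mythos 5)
Your proof is correct and follows essentially the same route as the paper's: the same two-region splitting at $|x-y|=1$, the same bound $\sup_{0<r\leq 1}r^{2}|\log r|=(2e)^{-1}$ near the diagonal, and the same use of $\log r\leq c_{a}r^{a-2}$ together with $|x-y|^{a}\leq \w_{a}(x)\w_{a}(y)$ in the far field. No issues.
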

\begin{proof} Up to replacing $f$ with $|f|$ and $g$ with $|g|$, we may assume without loss of generality that both $f$ and $g$ are nonnegative. One has then
\begin{equation*}\begin{split}
{\left|\mathscr{I}_{0}(f,g)\right|}&=\int_{|x-y|>1}f(x)g(y)|x-y|^{2}\log|x-y|\dx\dy\\
&\phantom{++++} +\int_{|x-y|<1}f(x)g(y)|x-y|^{2}\bigl|\log|x-y|\bigr|\dx\dy\\
&\leq c_{a}\int_{|x-y|>1}f(x)g(y)|x-y|^{a}\dx\dy+{\frac{1}{2e}}\|f\|_{L^1}\|g\|_{L^1}\end{split}\end{equation*}
since there is $c_{a} >0$ such that $\log r \leq c_{a}r^{a-2}$ for any $r >1$ and $a >2$ and 
{$r^2\bigl|\log r\bigr|\leq \frac{1}{2e}$ for $r \in (0,1)$}. This gives the result since {$|x-y|^{a} \leq \w_{a}(x)\w_{a}(y)$} and $\|\cdot\|_{L^1}\leq \|\cdot\|_{L^{1}(\w_a)}$.
\end{proof}
Recalling that $\lim_{\g\to0^{+}}\Lambda_{\g}(r)=\Lambda_{0}(r)$ for any $r >0$, one has the following estimate for the difference between $\mathscr{I}_{0}$ and $\mathscr{I}_{\g}$:
\begin{lem}\phantomsection\label{lem:IgfgI0}
Let $2<a<3$,  $p>1$ and $\gamma,s>0$ satisfying  $s+\gamma+a<3$. There exist some positive constant $C=C_{a,s,p}>0$ depending only on $s,p$ such that, for any $f \in L^{1}(\w_{s+\g+a})$ and any $g\in L^p(\w_{a})\cap L^1(\w_{s+\gamma+a})$, it holds  
\begin{multline*}
\left|\mathscr{I}_{\g}(f,g)-\mathscr{I}_{0}(f,g)\right| \leq C_{a,s,p}\g^{\frac{s}{s+1}}|\log \g| \|f\|_{L^1(\w_{a})}\|g\|_{L^1(\w_{a})}		\\
    +  {12}\,\g^{\frac{s}{s+1}} \bigg(\|g\|_{L^1(\w_{s+\gamma+a})}\|f\|_{L^1(\w_{a})}
    +\|f\|_{L^1(\w_{s+\gamma+a})}\|g\|_{L^1(\w_{a})}
     +\|g\|_{ L^p(\w_{a})}\|f\|_{L^1(\w_{a})}\bigg).\end{multline*}
\end{lem}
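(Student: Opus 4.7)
The plan is to mimic the three-region splitting of Proposition \ref{diff_Q}, replacing the kernel $|1-|x-y|^\g|$ by the kernel $|x-y|^2\bigl|\Lambda_\g(|x-y|) - \log|x-y|\bigr|$ and providing sharp pointwise estimates for the latter. The starting point is the exact identity
$$\Lambda_\g(r) - \log r \;=\; \frac{r^\g - 1 - \g \log r}{\g} \;=\; \frac{1}{\g}\bigl(e^{\g \log r} - 1 - \g \log r\bigr),$$
combined with the elementary inequality $|e^u - 1 - u| \le \frac{u^2}{2} e^{\max(u,0)}$. Applied with $u = \g \log r$, this yields the refined $O(\g)$ pointwise bound
$$r^2 \,\bigl|\Lambda_\g(r) - \log r\bigr| \;\le\; \tfrac{\g}{2}\, r^2 (\log r)^2 \max\{1, r^\g\}, \qquad r > 0,$$
together with the cruder triangle-type bounds $r^2|\Lambda_\g(r)-\log r| \le 2 r^2|\log r|$ for $r\le 1$ (using $|\Lambda_\g(r)|\le|\log r|$) and $r^2|\Lambda_\g(r)-\log r| \le 2 r^{2+\g}\log r$ for $r\ge 1$ (using $\Lambda_\g(r)\le r^\g\log r$).

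Fix then $0 < \delta < 1 < R$ to be tuned, and split
$$|\mathscr{I}_\g(f,g) - \mathscr{I}_0(f,g)| \;\le\; I_1 + I_2 + I_3,$$
where $I_1, I_2, I_3$ are the integrals over $\{|x-y| \le \delta\}$, $\{\delta < |x-y| < R\}$, $\{|x-y| \ge R\}$ respectively. For $I_1$ I will apply the small-$r$ raw bound $r^2|\Lambda_\g(r)-\log r| \le 2\delta^2|\log\delta|$ (valid once $\delta \le e^{-1/2}$) and then H\"older's inequality in $y$ exactly as in Proposition \ref{diff_Q}, namely $\int_{|y-x|\le\delta}|g(y)|\w_a(y)\,dy \le (2\delta)^{(p-1)/p}\|g\|_{L^p(\w_a)}$, yielding $I_1 \le C\,\delta^{2+(p-1)/p}|\log\delta|\,\|g\|_{L^p(\w_a)}\|f\|_{L^1(\w_a)}$.

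For $I_2$ I will use the refined $O(\g)$ bound, splitting further into $\{\delta<r\le1\}$ (where $r^2(\log r)^2 \le e^{-2}$) and $\{1<r<R\}$ (where $r^{2+\g}(\log r)^2 \le (\log R)^2 r^{2+\g}$); absorbing the moment of order $2+\g<a$ into $\w_a(x)\w_a(y)$ gives $I_2 \le C\g(\log R)^2\|f\|_{L^1(\w_a)}\|g\|_{L^1(\w_a)}$. For $I_3$ I will use the large-$r$ raw bound together with $\log r \le r^{a-2}/(a-2)$ (valid for $r\ge 1$) to obtain $r^2|\Lambda_\g(r)-\log r| \le \tfrac{2}{a-2} r^{a+\g}$; then I follow the Proposition \ref{diff_Q} scheme verbatim (WLOG $|x|\ge|y|$, so $|x|\ge R/2$ and $|x-y|\le 2|x|$, and $\mathbf{1}_{|x|\ge R/2} \le (2|x|/R)^s$) to produce the factor $R^{-s}$ together with the weights $\w_{s+\g+a}$, arriving at $I_3 \le C_a\, R^{-s}\bigl(\|f\|_{L^1(\w_{s+\g+a})}\|g\|_{L^1(\w_a)} + \|g\|_{L^1(\w_{s+\g+a})}\|f\|_{L^1(\w_a)}\bigr)$.

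Finally, tuning $R = \g^{-1/(s+1)}$ and $\delta = \g^{ps/((p-1)(s+1))}$ exactly as in Proposition \ref{diff_Q} gives $R^{-s} = \g^{s/(s+1)}$, $\delta^{(p-1)/p} = \g^{s/(s+1)}$, and $\g(\log R)^2 \le C\, \g^{s/(s+1)}|\log \g|$ (since $\g^{1/(s+1)}|\log \g|$ is bounded on $(0,1]$), which collectively yield the claimed inequality. The main analytical difficulty is the dual pointwise control of $r^2|\Lambda_\g(r) - \log r|$: I must extract the factor $\g$ on the bounded range $r\le R$ (to balance $(\log R)^2$ against the small-$\g$ decay), while simultaneously using the cruder $r^{2+\g}\log r$ bound on the large-$r$ region so that moments of order $s+\g+a$ (rather than higher) suffice. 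Once these pointwise estimates are in hand, the balancing is mechanical and the proof closes exactly as Proposition \ref{diff_Q}.
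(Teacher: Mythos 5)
Your overall architecture is the same as the paper's: a pointwise comparison of $\Lambda_\g$ with $\log$, followed by the three-region splitting over $\{|x-y|\le\delta\}$, $\{\delta<|x-y|<R\}$, $\{|x-y|\ge R\}$ with the identical tuning $R=\g^{-1/(s+1)}$, $\delta\sim\g^{ps/((p-1)(s+1))}$. The paper takes a slicker route at the first step: differentiating $\beta\mapsto\Lambda_\beta(r)$ and using $0\le x\log x-x+1\le(x-1)^2$, it obtains the \emph{multiplicative} bound $0\le\Lambda_\g(r)-\Lambda_0(r)\le(r^\g-1)\log r$, hence $r^2\,|\Lambda_\g(r)-\log r|\le C_a(1+r^a)\,\bigl|1-r^\g\bigr|$, which reduces the whole lemma verbatim to the integral already estimated in Proposition \ref{diff_Q}. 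Your Taylor bound $\tfrac{\g}{2}r^2(\log r)^2\max\{1,r^\g\}$ is essentially equivalent pointwise, but because you keep the factor $r^{2+\g}(\log r)^2$ explicit you must redo the region analysis instead of quoting Proposition \ref{diff_Q}, and that is where an error creeps in.

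In the middle region you assert $2+\g<a$ in order to absorb $r^{2+\g}$ into $\w_{a}(x)\w_{a}(y)$. This does not follow from the hypotheses: they only give $\g<3-a-s$, and for instance $a=2.1$, $s=0.1$, $\g=0.7$ is admissible while $2+\g=2.7>a$. As written, $I_2$ therefore cannot be bounded by $C\g(\log R)^2\|f\|_{L^1(\w_{a})}\|g\|_{L^1(\w_{a})}$. The step is repairable: either split off $r^{a}$ and control the leftover $r^{(2+\g-a)_{+}}(\log r)^2\le R^{(2+\g-a)_{+}}(\log R)^2$, which with $R=\g^{-1/(s+1)}$ still yields a $\g^{s/(s+1)}|\log\g|$ bound because $a-1-\g>0$ uniformly over the admissible range; or distribute the excess weight onto a single factor via $r^{2+\g}\le C\bigl(\w_{s+\g+a}(x)\w_{a}(y)+\w_{a}(x)\w_{s+\g+a}(y)\bigr)$, which lands in the heavy-weight terms of the claimed estimate. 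The cleanest repair, though, is to adopt the paper's factorization and invoke Proposition \ref{diff_Q} directly. Your treatment of $I_1$ and $I_3$ is sound (up to unimportant constants).
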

\begin{proof} As before, we assume without loss of generality that $f,g$ are nonnegative and observe that
$$\left|\mathscr{I}_{\g}(f,g)-\mathscr{I}_{0}(f,g)\right| \leq \int_{\R^{2}}f(x)g(y)|x-y|^{2}\left|\Lambda_{\g}(|x-y|)-\Lambda_{0}(|x-y|)\right|\dx\dy.$$
Observe that, given $r >0$, 
$$\dfrac{\d}{\d\beta}\Lambda_{\beta}(r)=\frac{r^{\beta}\log r^{\beta}-r^{\beta}+1}{\beta^{2}}$$
so that
$$0 \leq \dfrac{\d}{\d\beta}\Lambda_{\beta}(r) \leq \Lambda_{\beta}(r)^{2}$$
since for any $x >0$, $0 \leq x\log x-x+1 \leq (x-1)^{2}$. Integrating this inequality  over $\beta \in (0,\g)$, yields
 {
$$-\frac{1}{\Lambda_{\g}(r)}+\frac{1}{\Lambda_{0}(r)} \leq \g$$
which also reads}
$$0\leq \Lambda_{\g}(r)-\Lambda_{0}(r) \leq \gamma\Lambda_{\g}(r)\Lambda_{0}(r)=\left(r^{\g}-1\right)\Lambda_{0}(r).$$
Therefore
\begin{multline*}
\left|\mathscr{I}_{\g}(f,g)-\mathscr{I}_{0}(f,g)\right| \leq \int_{\R^{2}}f(x)g(y)|x-y|^{2}\left||x-y|^{\g}-1\right|\left|\log(|x-y|)\right|\dx\dy\\
\leq C_{a}\int_{\R^{2}}f(x)g(y)\left(1+|x-y|^{a}\right)|\left|1-|x-y|^{\g}\right|\dx\dy
\end{multline*}
where we used that there exists $C_{a} >0$ such that $r^{2}\log r \leq C_{a}\left(1+r^{a}\right)$ for any $a >2, r >0.$ Therefore, there exists $\tilde{C}_{a} >0$ such that
$$\left|\mathscr{I}_{\g}(f,g)-\mathscr{I}_{0}(f,g)\right| \leq \tilde{C}_{a}\int_{\R^{2}}f(x)g(y)\left|1-|x-y|^{\g}\right|\w_{a}(x)\w_{a}(y)\dx\dy.$$
The computations performed in Proposition \ref{diff_Q} give then the result.
\end{proof}

\section{Rigorous justifications of $L^{2}$ and Sobolev estimates}\label{app:rigor} 									

We provide in this Appendix the rigorous justifications of some of the formal estimates derived in Sections \ref{sec:L2est}, \ref{Sec:limit:temp} and \ref{sec:weighted} about the regularity of the profile $\Gg.$ We begin with the rigorous proof of Lemma \ref{lem:boundL2L}.

\subsection{Justification of the $L^2$ estimates}\label{sec:justif-L2}

To justify rigorously the estimates in Section \ref{sec:L2est} we begin with the following lower bound on the collision frequency 
$$\Sigma_{\g}(y)=\int_{\R}\Gg(x)|x-y|\d x,\qquad y \in \R.$$ We point out that, even though such a lower bound is not uniform with respect to $\g$, it will allows subsequently to derive $L^{2}$-estimates which are uniform with respect to $\g$:
\begin{lem}\label{Lem:coll:freq:non:uniform}
 Let $\g\in(0,1)$ and $\Gg\in\mathscr{E}_{\g}$. There exists a constant $c_\g$ such that
 \begin{equation*}
  \Sigma_{\g}(y)\geq c_\g \qquad \text{for all } y\in\R.
 \end{equation*}
\end{lem}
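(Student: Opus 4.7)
The plan is to establish the lower bound by splitting $\R$ into two regions: a region $|y|\ge R$ where the mass/energy constraints force $|x-y|^\gamma$ to be large whenever $\Gg(x)$ captures substantial mass, and a compact region $|y|\le R$ where a continuity-plus-positivity argument gives a uniform bound. Since the constant $c_\g$ is allowed to depend on $\g$, we do \emph{not} need the quantitative refinements of Lemma~\ref{lem:Sigmag} (which require $L^{2}$ control of $\Gg$ not yet available at this stage).

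First, for the large-$y$ regime, we exploit the energy bound $M_{2}(\Gg)\le\frac12$ from Lemma~\ref{lem:energy}. Markov's inequality then gives $\int_{|x|>1}\Gg(x)\dx\le \frac12$, so that $\int_{|x|\le 1}\Gg(x)\dx\ge \frac12$. For any $|y|\ge 2$ and $|x|\le 1$ we have $|x-y|\ge 1$, hence $|x-y|^{\g}\ge 1$, and therefore
\begin{equation*}
\Sigma_{\g}(y)\ge \int_{|x|\le 1}\Gg(x)\dx \ge \frac{1}{2},\qquad \forall\,|y|\ge 2.
\end{equation*}

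Second, for $|y|\le 2$ we argue by continuity and compactness. The map $y\mapsto \Sigma_{\g}(y)$ is continuous on $\R$: for $|y|\le 2$ we have the pointwise bound $|x-y|^{\g}\le (|x|+2)^{\g}\le 2(1+|x|)^{\g}\le 2(1+|x|)$, and since $\Gg\in L^{1}_{1}(\R)$ (by \eqref{eq:tildeC}) this provides an integrable dominating function, so that dominated convergence yields continuity of $\Sigma_{\g}$. Moreover, for each fixed $y\in\R$ one has $\Sigma_{\g}(y)>0$: the function $x\mapsto |x-y|^{\g}$ vanishes only at the single point $x=y$, while $\Gg\ge 0$ has unit mass, so the integrand is strictly positive on a set of positive measure. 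Since $\Sigma_{\g}$ is continuous and strictly positive on the compact interval $[-2,2]$, it attains a strictly positive minimum $c_{\g}':=\min_{|y|\le 2}\Sigma_{\g}(y)>0$. Setting $c_{\g}:=\min\bigl\{\tfrac12,\,c_{\g}'\bigr\}>0$ yields the desired conclusion.

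The only subtle point in this plan is that the constant $c_{\g}'$ arising from compactness is not explicit and may a priori degenerate as $\g\to 0$; however, the statement of the lemma only requires existence for each fixed $\g$, so no uniformity is needed. The quantitative bound of Lemma~\ref{lem:Sigmag}, with $\kappa_{\g}\to 1$, will then allow one to \emph{upgrade} this qualitative bound to a $\g$-uniform one once the $L^{2}$-estimate \eqref{eq:boundL2L} has been rigorously derived from the present non-uniform bound.
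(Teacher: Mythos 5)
Your proof is correct, but it takes a genuinely different route from the paper's. The paper splits the line the other way around: on the $\g$-dependent ball $|y|\le (M_\g(\Gg)/2)^{1/\g}$ it uses the subadditivity $|x-y|^\g\ge |x|^\g-|y|^\g$ to get the explicit bound $\Sigma_\g(y)\ge M_\g(\Gg)/2$, and for $|y|$ outside that ball it reuses the $\ind_{|x-y|<\tilde\delta}$ decomposition from Lemma \ref{lem:Sigmag}, but controls the mass near the singularity with the pointwise bound \eqref{eq:pointX} (rather than the $L^2$ norm), obtaining $\Sigma_\g(y)\ge \tilde\delta^\g-4C\tilde\delta\,(M_\g(\Gg)/2)^{-1/\g}>0$ for $\tilde\delta$ small. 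You instead handle the far field by Markov's inequality ($\int_{|x|\le1}\Gg\ge\frac12$ from $M_2(\Gg)\le\frac12$), giving the clean, even $\g$-independent, bound $\Sigma_\g(y)\ge\frac12$ for $|y|\ge2$, and the compact region by continuity (via dominated convergence and $\Gg\in L^1_1$) plus strict positivity. The trade-off is that your near-origin constant $c_\g'$ is non-constructive where the paper's is explicit in terms of $M_\g(\Gg)$ and the constant $C$ of \eqref{eq:pointX}; since the lemma is only used qualitatively (to launch the $L^2$ regularisation argument in Appendix \ref{sec:justif-L2}, where the non-uniformity in $\g$ is immaterial), this costs nothing, and your correct observation that the $\g$-uniform statement is recovered later from Lemma \ref{lem:Sigmag} matches the paper's logic.
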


\begin{proof}
 Clearly from the triangle inequality we have
 \begin{equation*}
  \Sigma_\g(y)=\int_{\R}\Gg(x)|x-y|^\g \dx\geq M_\g(\Gg)-|y|^\g.
 \end{equation*}
 Consequently,  
 \begin{equation}\label{eq:coll:freq:nu:1}
  \Sigma_\g(y)\geq  \frac{M_\g(\Gg)}{2} \qquad  \text{ if }  |y|\leq \left(\frac{M_\g(\Gg)}{2}\right)^{\frac{1}{\g}}.
 \end{equation}
On the other hand, for $\tilde{\delta}<1$, as in the proof of Lemma~\ref{lem:Sigmag}, we have
\begin{equation*}\begin{split}
\Sigma_{\g}(y)&= \int_{\R}\left(|x-y|^{\g}+\ind_{|x-y|<\tilde{\delta}}\right)\,\Gg (x)\,\dx - \int_{\R}\ind_{|x-y|<\tilde{\delta}}\,\Gg (x)\,\dx\\
&\geq \tilde{\delta}^{\gamma}-\int_{-\tilde{\delta}}^{\tilde{\delta}}\Gg(x+y)\dx\,.
\end{split}\end{equation*}
Thus, if $|y|\geq \left(\frac{M_\g(\Gg)}{2}\right)^{\frac{1}{\g}}$ we deduce from the pointwise upper bound \eqref{eq:pointX} that
\begin{equation*}
\Sigma_{\g}(y)\geq \tilde{\delta}^{\gamma}-C\int_{-\tilde{\delta}}^{\tilde{\delta}}\frac{1}{|x+y|}\dx\geq \tilde{\delta}^{\g}-\frac{2C\tilde{\delta}}{|y|-\tilde{\delta}}\geq \tilde{\delta}^{\g}- 4C\tilde{\delta} \left(\frac{M_\g(\Gg)}{2}\right)^{-\frac{1}{\g}}\end{equation*}
as soon as  $\tilde{\delta} <\frac{1}{2}\left(\frac{M_\g(\Gg)}{2}\right)^{\frac{1}{\g}}$. Since $\g<1$, there exists $\tilde{\delta}=\tilde{\delta}_*<\frac{1}{2}\left(\frac{M_\g(\Gg)}{2}\right)^{\frac{1}{\g}}$ (depending on $\g$) for which the above left-hand-side is positive. Recalling \eqref{eq:coll:freq:nu:1} and introducing
\begin{equation*}
 c_{\g}=\min\left\{\frac{M_\g(\Gg)}{2}\,,\,  \tilde{\delta}_{*}^{\g}- 4C\tilde{\delta}_{*} \left(\frac{M_\g(\Gg)}{2}\right)^{-\frac{1}{\g}}\right\}\,,
\end{equation*}
the claim follows.
\end{proof}

\begin{proof}[Rigorous proof of  Lemma \ref{lem:boundL2L}:] We introduce the following regularization of $\Gg$: 
$$\Psi_{\varepsilon}=\Gg \ast \varrho_{\varepsilon}, \qquad \varepsilon >0$$
where  $\left(\varrho_{\varepsilon}\right)_{\varepsilon >0}$ is a family of mollifiers
$$\varrho_{\varepsilon}(x)=\varepsilon^{-1}\varrho\left(\frac{x}{\varepsilon}\right), \qquad x \in \R,\qquad \varepsilon >0$$
where $\varrho \in \mathscr{C}^{\infty}(\R)$ is nonnegative, compactly supported in the interval $[-1,1]$, with unit mass and such that $x\varrho'(x)\le 0$ for any $x\in\R$ (one possible choice being the classical function $\varrho(x)=\exp\left(\frac{1}{x^{2}-1}\right)\ind_{(-1,1)}(x)$).

It is not difficult to check then that $\Psi_{\varepsilon}$ satisfies 
\begin{equation}\label{eq:L2:regular:1}
\frac{1}{4}\dfrac{\d }{\d x} \left(x\Psi_{\varepsilon}\right)=\frac{1}{4}\dfrac{\d }{\d x}\left[\Gg \ast (x\varrho_{\varepsilon})\right] + \Q_{\g}(\Gg,\Gg) \ast \varrho_{\varepsilon}. 
\end{equation}
Now, as for the proof of \eqref{eq:QgQ0}, one sees hat for any \emph{nonnegative} $\varphi$
$$\int_{\R}\Q^{+}_{\g}(\Gg,\Gg)\varphi\d x \leq 2\int_{\R}\Q^{+}_{0}(|\cdot|^{\g}\Gg,\Gg)\varphi\d x$$
which, in turns, shows that, since $\varrho_{\varepsilon}\geq0$
$$\left[\Q^{+}_{\g}(\Gg,\Gg)\ast \varrho_{\varepsilon}\right] \leq 2 \Q^{+}_{0}(|\cdot|^{\g}\Gg,\Gg) \ast \varrho_{\varepsilon}.$$
Now, since 
$$\Q^{+}_{0}(f,g)(x)=2(f\ast g)(2x)$$
one has
$$\Q^{+}_{0}(f,g) \ast \varrho_{\varepsilon}(x) = \Q_{0}^{+}(f,g \ast \tilde{\varrho}_{\varepsilon}), \qquad \widetilde{\varrho}_{\varepsilon}(x)=\frac{1}{2}\varrho_{\varepsilon}\left(\frac{x}{2}\right).$$
Setting then
$$\widetilde{\Psi}_{\varepsilon}:=\Gg \ast \widetilde{\varrho}_{\varepsilon}$$
we further deduce that
\begin{equation}\label{eq:L2:gain:term}
\left[\Q^{+}_{\g}(\Gg,\Gg)\ast \varrho_{\varepsilon}\right] \leq 2 \Q^{+}_{0}\left(|\cdot|^{\g}\Gg,\widetilde{\Psi}_{\varepsilon}\right). 
\end{equation}
Notice that $\widetilde{\varrho}_{\varepsilon}=\varrho_{2\varepsilon}$ and
$$\widetilde{\Psi}_{\varepsilon}=\Psi_{2\varepsilon}.$$
On the other hand, using  Lemma~\ref{Lem:coll:freq:non:uniform} we have
\begin{equation}\label{eq:L2:loss:lower}
 [\Q_\g^{-}(\Gg,\Gg)\ast\varrho_{\varepsilon}](x)=\int_{\R}\Gg(z)\Sigma_\g(z)\varrho_{\varepsilon}(x-z)\dz\geq c_\g\Psi_{\varepsilon}(x)\qquad x \in \R.
\end{equation} {In the remainder we follow ideas from \cite[Proposition~2.1]{MM09} and introduce for $A>0$ the cut-off function}
\begin{equation}\label{eq:Lambda}
 \Lambda(x):=\Lambda_A(x):=\frac{x^2}{2}\ind_{x\leq A}+\Bigl(A x-\frac{A^2}{2}\Bigr)\ind_{x>A}
\end{equation}
which satisfies $\Lambda'(x)=\min\{x,A\}$, $\Lambda(x)\leq x\Lambda'(x)$ as well as $x\Lambda'(y)\leq \Lambda(x)+\Lambda(y)$.

We test now \eqref{eq:L2:regular:1} with $\Lambda'(\Psi_\varepsilon)$ to get
\begin{multline*}
 \int_{\R}[\Q^{-}_{\g}(\Gg,\Gg)\ast \varrho_{\varepsilon}]\Lambda'(\Psi_\varepsilon)\dx+\frac{1}{8}\int_{\R}\Psi_\varepsilon^2(x)\ind_{\Psi_\varepsilon\leq A}+A^2\ind_{\Psi_\varepsilon>A}\dx\\*
 \leq \frac{1}{4}\int_{\R}\dfrac{\d }{\d x}\left[\Gg \ast (x\varrho_{\varepsilon})\right]\Lambda'(\Psi_{\varepsilon}(x))\d x+2\int_{\R}\Q^{+}_{0}\left(|\cdot|^{\g}\Gg,\widetilde{\Psi}_{\varepsilon}\right)\Lambda'(\Psi_{\varepsilon})\d x.
\end{multline*}
Notice that
$\dfrac{\d }{\d x}\left[\Gg \ast (x\varrho_{\varepsilon})\right]=\Gg \ast \varrho_{\varepsilon}+ \Gg \ast \left(x \varrho'_{\varepsilon}\right)$
i.e.
$$\dfrac{\d}{\d x}\left[\Gg \ast (x\varrho_{\varepsilon})\right]=\Psi_{\varepsilon} - \Gg \ast h_{\varepsilon}, \qquad h_{\varepsilon}(x)=-\frac{x}{\varepsilon^{2}}\varrho'\left(\frac{x}{\varepsilon}\right).$$
Using \eqref{eq:L2:loss:lower} and $\Psi_\varepsilon^2(x)\ind_{\Psi_\varepsilon\leq A}+A^2\ind_{\Psi_\varepsilon>A}=\min\{\Psi_\varepsilon^2(x),A^2\}$ we can further deduce that
\begin{multline*}
 c_{\g}\int_{\R}\Psi_\varepsilon(x)\Lambda'(\Psi_\varepsilon)\dx+\frac{1}{8}\int_{\R}\min\{\Psi_\varepsilon^2(x),A^2\}\dx\\*
 \leq \frac{1}{4}\int_{\R}\left[\Psi_{\varepsilon}(x)-\Gg \ast h_{\varepsilon}\right]\Lambda'(\Psi_{\varepsilon}(x))\d x+2\int_{\R}\Q^{+}_{0}\left(|\cdot|^{\g}\Gg,\widetilde{\Psi}_{\varepsilon}\right)\Lambda'(\Psi_{\varepsilon})\d x.
\end{multline*}
Recalling $\Lambda(x)\leq x\Lambda'(x)$ we finally get 
\begin{multline}\label{eq:L2:reg:1}
 c_{\g}\int_{\R}\Lambda(\Psi_\varepsilon)\dx+\frac{1}{8}\int_{\R}\min\{\Psi_\varepsilon^2(x),A^2\}\dx\\*
 \leq \frac{1}{4}\int_{\R}\left[\Psi_{\varepsilon}(x)-\Gg \ast h_{\varepsilon}\right]\Lambda'(\Psi_{\varepsilon}(x))\d x+2\int_{\R}\Q^{+}_{0}\left(|\cdot|^{\g}\Gg,\widetilde{\Psi}_{\varepsilon}\right)\Lambda'(\Psi_{\varepsilon})\d x.
\end{multline}
Next, we note that
\begin{multline*}
2\int_{\R}\Q^{+}_{0}\left(|\cdot|^{\g}\Gg,\widetilde{\Psi}_{\varepsilon}\right) {\Lambda'(\Psi_{\varepsilon})}\d x \leq 
2\int_{-\ell}^{\ell}|x|^{\g}\Gg(x)\dx\int_{\R}\widetilde{\Psi}_{\varepsilon}(y)\Lambda'\left(\Psi_{\varepsilon}\left(\frac{x+y}{2}\right)\right)\dy\\
\phantom{++++} + 2C\ell^{\g-1}\int_{\R}\dx\int_{\R}\widetilde{\Psi}_{\varepsilon}(y)\Lambda'\left(\Psi_{\varepsilon}\left(\frac{x+y}{2}\right)\right)\dy\,.
\end{multline*}
Using $\Lambda'(x)\leq x$ to get $$\int_{\R}\dx\int_{\R}\widetilde{\Psi}_{\varepsilon}(y)\Lambda'\left(\Psi_{\varepsilon}\left(\frac{x+y}{2}\right)\right)\dy\leq \int_{\R}\dx\int_{\R}\widetilde{\Psi}_{\varepsilon}(y)\Psi_{\varepsilon}\left(\frac{x+y}{2}\right)\dy \leq2\|\Gg\|_{L^1}^2=2$$
we obtain together with $x\Lambda'(y)\leq \Lambda(x)+\Lambda(y)$ that
\begin{multline}\label{eq:L2:reg:2}
2\int_{\R}\Q^{+}_{0}\left(|\cdot|^{\g}\Gg,\widetilde{\Psi}_{\varepsilon}\right) {\Lambda'(\Psi_{\varepsilon})}\d x\leq 2\int_{-\ell}^{\ell}|x|^{\g}\Gg(x)\dx\int_{\R}\Lambda\left(\widetilde{\Psi}_{\varepsilon}(y)\right)+\Lambda\left(\Psi_{\varepsilon}\left(\frac{x+y}{2}\right)\right)\dy\\
\phantom{++++} + 4C\ell^{\g-1}\\*
\leq  {2\int_{-\ell}^{\ell}|x|^{\g}\Gg(x)\dx\int_{\R}\Lambda\left(\widetilde{\Psi}_{\varepsilon}(y)\right)+2\Lambda\left({\Psi}_{\varepsilon}(y)\right)\dy}+ 4C\ell^{\g-1}.
\end{multline}
With $\Lambda'(\Psi_\varepsilon)\leq A$, we have furthermore that 
\begin{equation}\label{eq:L2:regular:conv}
 \frac{1}{4}\left|\int_{\R}\left[\Psi_{\varepsilon}(x)-\Gg \ast h_{\varepsilon}\right]\Lambda'(\Psi_{\varepsilon}(x))\dx\right|\leq \frac{A}{4}\|\Psi_{\varepsilon}-\Gg \ast h_{\varepsilon}\|_{L^1}.\end{equation}
 Notice that the family $\left(h_{\varepsilon}\right)_{\varepsilon >0}$ is also a family of approximation of identity in particular since {each $h_{\varepsilon}$ is nonnegative and}  
 \begin{equation}\label{eq:h:eps}
 \int_{\R}h_{\varepsilon}(x)\d x=-\int_{\R}\frac{x}{\varepsilon^{2}}\varrho_{\varepsilon}'(x)\d x=-\int_{\R}y\varrho'(y)\d y=\int_{\R}\varrho(y)\d y=1, \qquad \forall \varepsilon >0.  
 \end{equation}
Thus
 $$\lim_{\varepsilon\to0}\left\|\Gg \ast h_{\varepsilon}-\Gg\right\|_{L^{1}}=0.$$
 Since $\Psi_{\varepsilon}$ also converges to $\Gg$ {in $L^1$ } as $\varepsilon \to 0$, we deduce from \eqref{eq:L2:regular:conv} that
$$\lim_{\varepsilon\to0}\frac{1}{4}\left|\int_{\R}\left[\Psi_{\varepsilon}(x)-\Gg \ast h_{\varepsilon}\right]\Lambda'(\Psi_{\varepsilon}(x))\dx\right|=0.$$
Combining this with \eqref{eq:L2:reg:1} and \eqref{eq:L2:reg:2}, there exists $\nu_A(\varepsilon)\to 0$ for $\varepsilon\to 0$ and $A>0$ fixed such that
\begin{multline}\label{eq:L2:reg:3}
 c_{\g}\int_{\R}\Lambda(\Psi_\varepsilon)\dx+\frac{1}{8}\int_{\R}\min\{\Psi_\varepsilon^2(x),A^2\}\dx\\*
 \leq \nu_{A}(\varepsilon)+ {2\int_{-\ell}^{\ell}|x|^{\g}\Gg(x)\dx\int_{\R}\Lambda\left(\widetilde{\Psi}_{\varepsilon}(y)\right)+2\Lambda\left({\Psi}_{\varepsilon}(y)\right)}\dy+ 4C\ell^{\g-1}.
\end{multline}
Since $\Psi_{\varepsilon}\to \Gg$ in $L^1$ as $\varepsilon\to 0$, there exists a sequence $\left(\varepsilon_k\right)_{k \in \N}$ converging to $0$ as $k\to\infty$ such that $\Psi_{\varepsilon_k}\to \Gg$ and $\tilde{\Psi}_{\varepsilon_k}\to \Gg$ for a.e.  $x\in\R$ as $k\to\infty$. Moreover, since $0\leq \Lambda(x)\leq Ax$, we have $\Lambda(\Psi_{\varepsilon_k})\leq A\Psi_{\varepsilon_k}$ and $\Lambda(\tilde{\Psi}_{\varepsilon_k})\leq A\tilde{\Psi}_{\varepsilon_k}$ as well as $\Lambda(\Psi_{\varepsilon_k}),\Lambda(\tilde{\Psi}_{\varepsilon_k})\to \Lambda(\Gg)$ a.e. on $\R$ as $k\to\infty$. By a generalised version of Lebesgue's dominated convergence theorem it then also follows that $\Lambda(\Psi_{\varepsilon_k}),\Lambda(\tilde{\Psi}_{\varepsilon_k})\to \Lambda(\Gg)$ in $L^1$ as $k\to\infty$. In the same way, using $\min\{\Psi_\varepsilon^2(x),A^2\}\leq A\Psi_\varepsilon(x)$, we get $\min\{\Psi_{\varepsilon_k}^2,A^2\}\to \min\{\Gg^2,A^2\}$ in $L^1$. Thus, restricting to the sequence $\left(\varepsilon_k\right)_{k\in \N}$ in \eqref{eq:L2:reg:3} and passing to the limit $k\to\infty$ we get for fixed $A>0$ that
\begin{multline}\label{eq:L2:reg:4}
 c_{\g}\int_{\R}\Lambda(\Gg)\dx+\frac{1}{8}\int_{\R}\min\{\Gg^2(x),A^2\}\dx\\*
 \leq 6\int_{-\ell}^{\ell}|x|^{\g}\Gg(x)\dx\int_{\R}\Lambda\left(\Gg(y)\right)\dy+ 4C\ell^{\g-1}.
\end{multline}
We can choose $\ell>0$ sufficiently small such that $6\int_{-\ell}^{\ell}|x|^{\g}\Gg(x)\dx<\frac{c_\g}{2}$ which implies
\begin{equation*}
 \frac{c_{\g}}{2}\int_{\R}\Lambda(\Gg)\dx+\frac{1}{8}\int_{\R}\min\{\Gg^2(x),A^2\}\dx\leq 4C\ell^{\g-1}.
\end{equation*}
Thus, for $A\to\infty$ by Fatou we get $\|\Gg\|_{L^2}<\infty$ (with of course a non uniform estimate with respect to $\g$). 
Since $\min\{\Gg^2,A^2\}\leq \Gg^2$ and $\Lambda(\Gg)\leq \Gg^2$ we get for $A\to\infty$ by means of Lebesgue's dominated convergence theorem from \eqref{eq:L2:reg:4} that
\begin{equation*}
  \Bigl( {\frac{c_{\g}}{4}}+\frac{1}{8}\Bigr)\int_{\R}\Gg^2(x)\dx\leq 3\int_{-\ell}^{\ell}|x|^{\g}\Gg(x)\dx\int_{\R}\Gg^2(y)\dy+ 4C\ell^{\g-1}.
\end{equation*}
This fully justifies the estimates in Lemma \ref{lem:boundL2L}.\end{proof}

Following the same lines of proof, we can rigorously prove the weighted $L^{2}$-estimates in Corollary \ref{L2-weighted}

\begin{proof}[Justification of Corollary \ref{L2-weighted}:] We proceed as in the above proof and introduce, for $\varepsilon >0$,
$$\Psi_{\varepsilon}=\Gg \ast \varrho_{\varepsilon}$$ and recall that $\Psi_{\varepsilon}$ satisfies \eqref{eq:L2:regular:1}.

We next introduce $\varphi_{\ell}\in \mathcal{C}^{1}_{b}(\R)$ such that
\begin{equation}\label{eq:Cor:regular:2}
 \varphi_{\ell}(x)=\begin{cases}
                    |x|^{k} & \text{for } |x|\leq \ell\\
                    \ell^{k}+\frac{k\ell^{k-1}}{2} &\text{for } |x|\geq \ell+1
                   \end{cases},
                   \quad 0\leq\varphi_{\ell}(x)\leq |x|^{k}\quad \text{and}\quad
                   x\varphi'_{\ell}(x)\leq k\varphi_{\ell}(x).
\end{equation}
We use $\varphi_{\ell}^2\Psi_\varepsilon$ as test function in \eqref{eq:L2:regular:1} to get
\begin{multline}\label{eq:Cor:regular:weak}
 \frac{1}{8}\int_{\R}\Psi^2_\varepsilon(x)[\varphi_{\ell}^{2}(x)-2x\varphi_{\ell}(x)\varphi_{\ell}'(x)]\dx+\int_{\R}\left[\Q_\g^{-}(\Gg,\Gg)\ast \varrho_\varepsilon\right](x)\varphi_{\ell}^{2}(x)\Psi_{\varepsilon}(x)\dx\\*
 \leq \frac{1}{4}\int_{\R}\Bigl[\Psi_{\varepsilon}(x)-\Gg\ast h_{\varepsilon}\Bigr]\varphi_{\ell}^{2}(x)\Psi_{\varepsilon}(x)\dx\\
 +\int_{\R}\left[\Q_\g^{+}(\Gg,\Gg)\ast \varrho_\varepsilon\right](x)\varphi_{\ell}^{2}(x)\Psi_{\varepsilon}(x)\dx.
\end{multline}
We first note that by means of Cauchy-Schwarz and Young's inequality we have
\begin{multline*}
 \frac{1}{4}\int_{\R}\Bigl[\Psi_{\varepsilon}(x)-\Gg\ast h_{\varepsilon}\Bigr]\varphi_{\ell}^{2}(x)\Psi_{\varepsilon}(x)\dx\leq \frac{1}{4}\Bigl\|\Psi_{\varepsilon}-\Gg\ast h_{\varepsilon}\Bigr\|_{L^2}\|\varphi_\ell^{2}\Psi_{\varepsilon}\|_{L^2}\\*
 \leq \frac{\|\varphi_\ell\|_{L^\infty}^2}{4}\Bigl\|\Psi_{\varepsilon}-\Gg\ast h_{\varepsilon}\Bigr\|_{L^2}\|\Gg\|_{L^2}\|\varrho_\varepsilon\|_{L^1}\end{multline*}
 so that
\begin{equation}\label{eq:Cor:regular:3}
 \frac{1}{4}\int_{\R}\Bigl[\Psi_{\varepsilon}(x)-\Gg\ast h_{\varepsilon}\Bigr]\varphi_{\ell}^{2}(x)\Psi_{\varepsilon}(x)\dx \leq C_{\ell}\Bigl\|\Psi_{\varepsilon}-\Gg\ast h_{\varepsilon}\Bigr\|_{L^2}=: \nu_{\ell}(\varepsilon)
\end{equation}
where we also used that $\|\Gg\|_{L^2}$ is uniformly bounded according to Theorem \ref{theo:Unique} and that $\varphi_{\ell}$ is bounded by a constant depending on $\ell$. Thus, as in \eqref{eq:L2:regular:conv} we see that for fixed $\ell>0$, we have $\nu_{\ell}(\varepsilon)\to 0$ as $\varepsilon\to0$.

Next, using Lemma \ref{lem:Sigmag} together with $\w_\g\geq 1$, we can bound $\Q_\g^{-}(\Gg,\Gg)$ from below as
\begin{multline*}
 \int_{\R}\left[\Q_\g^{-}(\Gg,\Gg)\ast \varrho_\varepsilon\right](x)\varphi_{\ell}^{2}(x)\Psi_{\varepsilon}(x)\dx=\int_{\R}[(\Gg\Sigma_\g)\ast \varrho_{\varepsilon}]\varphi_{\ell}^{2}(x)\Psi_{\varepsilon}(x)\dx\\*
 \geq \kappa_{\g}\int_{\R}[(\Gg\w_\g)\ast \varrho_{\varepsilon}]\varphi_{\ell}^{2}\Psi_{\varepsilon}\dx-[(1-\tilde{\delta}^{\g})+\sqrt{2\tilde{\delta}}\|\Gg\|_{L^2}]\int_{\R}\Psi_{\varepsilon}^{2}(x)\varphi_{\ell}^{2}(x)\dx\\*
 \geq \Bigl(\kappa_{\g}-(1-\tilde{\delta}^{\g})-\sqrt{2\tilde{\delta}}\|\Gg\|_{L^2}\Bigr)\int_{\R}\Psi_{\varepsilon}^{2}(x)\varphi_{\ell}^{2}(x)\dx.
\end{multline*}
Choosing $\tilde{\delta}=\g^2$, we get, as in the formal proof of Corollary \eqref{L2-weighted}, that $-(1-\tilde{\delta}^{\g}) \simeq 2\g\log\g$ and $\sqrt{2\tilde{\delta}}=\sqrt{2}\g$ which yields together with the uniform bound on $\|\Gg\|_{L^2}$ from Theorem \ref{theo:Unique} that
\begin{equation}\label{eq:Cor:regular:4}
 \int_{\R}\left[\Q_\g^{-}(\Gg,\Gg)\ast \varrho_\varepsilon\right](x)\varphi_{\ell}^{2}(x)\Psi_{\varepsilon}(x)\dx\geq \Bigl(\kappa_{\g}-C{\g|\log\g|}\Bigr)\int_{\R}\Psi_{\varepsilon}^{2}(x)\varphi_{\ell}^{2}(x)\dx.
\end{equation}
From \eqref{eq:Cor:regular:2} we obtain
\begin{equation}\label{eq:Cor:regular:5}
 \frac{1}{8}\int_{\R}\Psi^2_\varepsilon(x)[\varphi_{\ell}^{2}(x)-2x\varphi_{\ell}(x)\varphi_{\ell}'(x)]\dx\geq \frac{1-2k}{8}\int_{\R}\Psi^2_\varepsilon(x)\varphi_{\ell}^{2}(x)\dx.
\end{equation}
Finally, to estimate $\Q_\g^{+}(\Gg,\Gg)$ we use \eqref{eq:L2:gain:term} to deduce that
\begin{multline*}
 \int_{\R}\left[\Q_\g^{+}(\Gg,\Gg)\ast \varrho_\varepsilon\right](x)\varphi_{\ell}^{2}(x)\Psi_{\varepsilon}(x)\dx\\
 \leq 2\int_{\R}\int_{\R}|x|^{\g}\Gg(x)\widetilde{\Psi}_{\varepsilon}(y)\varphi_{\ell}^{2}\Bigl(\frac{x+y}{2}\Bigr)\Psi_{\varepsilon}\Bigl(\frac{x+y}{2}\Bigr)\dy\dx.
\end{multline*}
Next, with \eqref{eq:Cor:regular:2} we have $\varphi_{\ell}(\frac{x+y}{2})\leq |\frac{x+y}{2}|^{k}\leq \frac{1}{2}(|x|^{k}+|y|^{k})$ which yields
\begin{multline*}
 \int_{\R}\left[\Q_\g^{+}(\Gg,\Gg)\ast \varrho_\varepsilon\right](x)\varphi_{\ell}^{2}(x)\Psi_{\varepsilon}(x)\dx\\*
 \leq \int_{\R}\int_{\R}|x|^{k+\g}\Gg(x)\widetilde{\Psi}_{\varepsilon}(y)\varphi_{\ell}\Bigl(\frac{x+y}{2}\Bigr)\Psi_{\varepsilon}\Bigl(\frac{x+y}{2}\Bigr)\dy\dx\\*
 +\int_{\R}\int_{\R}|x|^{\g}|y|^{k}\Gg(x)\widetilde{\Psi}_{\varepsilon}(y)\varphi_{\ell}\Bigl(\frac{x+y}{2}\Bigr)\Psi_{\varepsilon}\Bigl(\frac{x+y}{2}\Bigr)\dy\dx.
\end{multline*}
Young's inequality implies $|x|^{\g}|y|^{k}\leq \frac{\g}{k+\g}|x|^{\g+k}+\frac{k}{k+\g}|y|^{k+\g}\leq |x|^{k+\g}+|y|^{k+\g}$ and thus
\begin{multline*}
 \int_{\R}\left[\Q_\g^{+}(\Gg,\Gg)\ast \varrho_\varepsilon\right](x)\varphi_{\ell}^{2}(x)\Psi_{\varepsilon}(x)\dx\\*
 \leq 2\int_{\R}\int_{\R}|x|^{k+\g}\Gg(x)\widetilde{\Psi}_{\varepsilon}(y)\varphi_{\ell}\Bigl(\frac{x+y}{2}\Bigr)\Psi_{\varepsilon}\Bigl(\frac{x+y}{2}\Bigr)\dy\dx\\*
 +\int_{\R}\int_{\R}\Gg(x)|y|^{k+\g}\widetilde{\Psi}_{\varepsilon}(y)\varphi_{\ell}\Bigl(\frac{x+y}{2}\Bigr)\Psi_{\varepsilon}\Bigl(\frac{x+y}{2}\Bigr)\dy\dx.
\end{multline*}
Using Cauchy Schwarz in the $y$ integral for the first term and in the $x$ integral for the second term on the right-hand side, we finally conclude
\begin{multline}\label{eq:Cor:regular:6}
 \int_{\R}\left[\Q_\g^{+}(\Gg,\Gg)\ast \varrho_\varepsilon\right](x)\varphi_{\ell}^{2}(x)\Psi_{\varepsilon}(x)\dx\\*
 \leq \Bigl(2 M_{k+\g}(\Gg)\|\widetilde{\Psi}_{\varepsilon}\|_{L^2}+M_{k+\g}(\widetilde{\Psi}_{\varepsilon})\|\Gg\|_{L^2}\Bigr)\biggl(\int_{\R}\Psi_{\varepsilon}^{2}(x)\varphi_{\ell}^{2}(x)\dx\biggr)^{\frac{1}{2}}.
\end{multline}
From Young's inequality we get $\|\widetilde{\Psi}_{\varepsilon}\|_{L^2}\leq \|\Gg\|_{L^2}$ and one has
\begin{multline}\label{eq:Cor:regular:7}
 M_{k+\g}(\widetilde{\Psi}_{\varepsilon})=\int_{\R}\int_{\R}|x|^{k+\g}\Gg(x-y)\widetilde{\varrho}_{\varepsilon}(y)\dy\dx=\int_{\R}\int_{\R}|x+y|^{k+\g}\Gg(x)\widetilde{\varrho}_{\varepsilon}(y)\dx\dy\\*
 \leq 2^{k+\g-1}\Bigl(M_{k+\g}(\Gg)+\int_{\R}|y|^{k+\g}\widetilde{\varrho}_{\varepsilon}(y)\dy\Bigr)\leq 4 (M_{k+\g}(\Gg)+c)\,,
\end{multline}
where we observe that $\varrho$  {has been chosen in such a way that $\sup_{\varepsilon\in(0,1)}\int_{\R}|y|^{k+\g}\widetilde{\varrho}_{\varepsilon}(y)\d y < \infty.$}
Gathering \eqref{eq:Cor:regular:weak}, \eqref{eq:Cor:regular:3}, \eqref{eq:Cor:regular:4}, \eqref{eq:Cor:regular:5}, \eqref{eq:Cor:regular:6} and \eqref{eq:Cor:regular:7}, we get
\begin{equation}\label{eq:Cor:regular:8}
 \Bigl(\kappa_{\g}-C\g|\log \g|+\frac{1}{8}-\frac{k}{4}\Bigr)\|\Psi_{\varepsilon}\varphi_{\ell}\|_{L^2}^{2}\leq \nu_{\ell}(\varepsilon)+4 (M_{k+\g}(\Gg)+c)\|\Psi_{\varepsilon} \varphi_{\ell}\|_{L^2}.
\end{equation}
For fixed $\ell>0$, we have $\varphi_{\ell}\leq C_\ell$ which together with $\Psi_\varepsilon \to \Gg$ in $L^2$ yields $\|\Psi_\varepsilon \varphi_\ell\|_{L^2}\to \|\Gg \varphi_\ell\|_{L^2}$ as $\varepsilon\to 0$. Thus, passing to the limit $\varepsilon\to 0$ in \eqref{eq:Cor:regular:8} yields
\begin{equation*}
 \Bigl(\kappa_{\g}-C\g|\log \g|+\frac{1}{8}-\frac{k}{4}\Bigr)\|\Gg\varphi_{\ell}\|_{L^2}^{2}\leq 4 (M_{k+\g}(\Gg)+c)\|\Gg \varphi_{\ell}\|_{L^2}.
\end{equation*}
Since $\kappa_\gamma\rightarrow1$ as $\gamma \to 0^{+}$, one easily concludes that for some explicit $\gamma_{\star} >0$, it holds $\kappa_\gamma  + \frac{1}{8} - \frac{k}{4} - \gamma\,{|\log \g|}\,C \geq \frac{1}{8}$ for any $\gamma\in[0,\gamma_{\star})$ which implies
\begin{equation*}
 \|\Gg\varphi_{\ell}\|_{L^2}\leq  32 (M_{k+\g}(\Gg)+c)\qquad \text{for } \g<\gamma_{\star}.
\end{equation*}
The claim then follows by Fatou's lemma upon passing to the limit $\ell\to\infty$.\end{proof} 

\subsection{Rigorous justification of the Sobolev estimates}\label{sec:justification:sobolev}

We now fully justify the regularity estimates in Theorem \ref{theo:gradient}. For this, we proceed by a series of lemmas. 
\begin{lem}\label{lem:Q+:Linfty:L1}
 For each $\Gg\in \mathscr{E}_{\g}$ we have $\Q_{\g}^{+}(\Gg,\Gg)\in L^{\infty}(\R)\cap L^{1}(\w_{k})$ for any $k\in[0,3-\gamma)$
 \begin{equation*}
  \begin{split}
     \|\Q_{\g}^{+}(\Gg,\Gg)\|_{L^{\infty}}&\leq  {4} \|\Gg\|_{L^2(\w_{\g})}\|\Gg\|_{L^2}\\
    \|\Q_{\g}^{\pm}(\Gg,\Gg)\|_{L^{1}(\w_k)}&\leq 2\|\Gg\|_{L^1(\w_{k+\g})}\|\Gg\|_{L^1(\w_{k})}.
  \end{split}
 \end{equation*}
\end{lem}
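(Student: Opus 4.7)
The proof is elementary and proceeds by direct manipulation of the definitions of $\Q_\gamma^\pm$, using repeatedly the subadditivity inequality
\[
|u+v|^\gamma \leq |u|^\gamma + |v|^\gamma, \qquad u,v\in\R,\ \gamma\in[0,1],
\]
together with the submultiplicativity $\w_k\bigl(\frac{x+y}{2}\bigr) \leq \w_k(x)\w_k(y)$ and the elementary bound $|u|^\gamma \leq \w_\gamma(u)$. No deep input is needed; the preceding a posteriori estimates (Theorem \ref{theo:Unique} and Corollary \ref{L2-weighted}) ensure only that the right-hand sides are finite.

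First I would handle the $L^\infty$ bound. Writing
\[
\Q_\gamma^+(\Gg,\Gg)(x) = \int_\R \Gg\!\left(x+\tfrac{y}{2}\right)\Gg\!\left(x-\tfrac{y}{2}\right)|y|^\gamma \dy,
\]
I apply $|y|^\gamma = |(x+\tfrac{y}{2})-(x-\tfrac{y}{2})|^\gamma \leq |x+\tfrac{y}{2}|^\gamma + |x-\tfrac{y}{2}|^\gamma \leq \w_\gamma(x+\tfrac{y}{2}) + \w_\gamma(x-\tfrac{y}{2})$. By the symmetry $y \mapsto -y$ the two resulting contributions are equal, so
\[
\Q_\gamma^+(\Gg,\Gg)(x) \leq 2\int_\R \Gg\!\left(x+\tfrac{y}{2}\right)\w_\gamma\!\left(x+\tfrac{y}{2}\right)\Gg\!\left(x-\tfrac{y}{2}\right)\dy.
\]
Cauchy–Schwarz in $y$ followed by the changes of variables $u=x+\tfrac{y}{2}$ and $v=x-\tfrac{y}{2}$ (each contributing a Jacobian factor $2$, one of which is absorbed by the square root) yields $\|\Q_\gamma^+(\Gg,\Gg)\|_{L^\infty}\leq 4\|\Gg\|_{L^2(\w_\gamma)}\|\Gg\|_{L^2}$.

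For the $L^1(\w_k)$ bound on $\Q_\gamma^+$, since $\Gg \geq 0$ ensures $\Q_\gamma^+(\Gg,\Gg)\geq 0$, I use the weak form to write
\[
\|\Q_\gamma^+(\Gg,\Gg)\|_{L^1(\w_k)} = \int_{\R^2}\Gg(x)\Gg(y)\,|x-y|^\gamma\,\w_k\!\left(\tfrac{x+y}{2}\right)\dx\dy,
\]
then use $\w_k(\frac{x+y}{2})\leq \w_k(x)\w_k(y)$ and $|x-y|^\gamma \leq \w_\gamma(x)+\w_\gamma(y)$. Expanding the two resulting terms and using the symmetry $x \leftrightarrow y$ gives $2\|\Gg\|_{L^1(\w_{k+\gamma})}\|\Gg\|_{L^1(\w_k)}$. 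The $L^1(\w_k)$ bound on $\Q_\gamma^-$ follows similarly and even more directly from its pointwise definition $\Q_\gamma^-(\Gg,\Gg)(x) = \Gg(x)\int_\R \Gg(y)|x-y|^\gamma\dy$: the subadditivity $|x-y|^\gamma \leq \w_\gamma(x)+\w_\gamma(y)$ splits the double integral into $\|\Gg\|_{L^1(\w_{k+\gamma})}\|\Gg\|_{L^1}+\|\Gg\|_{L^1(\w_k)}\|\Gg\|_{L^1(\w_\gamma)}$, each bounded by $\|\Gg\|_{L^1(\w_{k+\gamma})}\|\Gg\|_{L^1(\w_k)}$.

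There is no real obstacle here; the only point worth stressing is that the finiteness of the right-hand sides for $k\in[0,3-\gamma)$ follows from Theorem \ref{theo:Unique} and Corollary \ref{L2-weighted} (for the $L^2$ and weighted $L^2$ norms) and from $\Gg\in L^1_3(\R) \subset L^1(\w_{k+\gamma})$ for the $L^1$ norms.
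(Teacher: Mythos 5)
Your proof is correct and follows essentially the same route as the paper's: subadditivity $|u+v|^{\g}\leq|u|^{\g}+|v|^{\g}$ plus Cauchy--Schwarz for the $L^{\infty}$ bound, and subadditivity plus $\w_{k}\bigl(\frac{x+y}{2}\bigr)\leq\w_{k}(x)\w_{k}(y)$ with a symmetry argument for the $L^{1}(\w_{k})$ bounds. The only cosmetic differences are that you symmetrise in $y$ before applying Cauchy--Schwarz (the paper keeps both terms) and that you make the finiteness of the right-hand sides for $k\in[0,3-\g)$ explicit, which the paper leaves implicit.
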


\begin{proof}
 Using $|y|^{\g}=|x+\frac{y}{2}+\frac{y}{2}-x|^\g\leq |x+\frac{y}{2}|^{\g}+|x-\frac{y}{2}|^{\g}$  {since $\gamma\in(0,1)$} together with the Cauchy Schwarz inequality, we find
 \begin{multline*}
  \Q_{\g}^{+}(\Gg,\Gg)(x)=\int_{\R}\Gg\left(x+\frac{y}{2}\right)\Gg\left(x-\frac{y}{2}\right)|y|^{\g}\dy\\*
  \leq \int_{\R}\left|x+\frac{y}{2}\right|^{\g}\Gg\left(x+\frac{y}{2}\right)\Gg\left(x-\frac{y}{2}\right)\dy+\int_{\R}\Gg\left(x+\frac{y}{2}\right)\left|x-\frac{y}{2}\right|^{\g}\Gg\left(x-\frac{y}{2}\right)\dy\\*
  \leq   {4} \|\Gg\|_{L^2(\w_{\g})}\|\Gg\|_{L^2}.
 \end{multline*}
This proves the first claim. For the second claim we use $|x-y|^{\g}\leq |x|^{\g}+|y|^{\g}$  {for $\gamma\in(0,1)$} to deduce
 \begin{multline*}
  \int_{\R}\Q_{\g}^{+}(\Gg,\Gg)(x)\w_k(x)\dx=\int_{\R^2}\Gg(x)\Gg(y)|x-y|^{\g} \w_k\left(\frac{x+y}{2}\right)\dy\dx\\*
  \leq \int_{\R^2}\Gg(x)\Gg(y) (|x|^{\g}+|y|^{\g}) \w_k(x)\w_k(y) \dy\dx \\*
    \leq  2\|\Gg\|_{L^1(\w_{k+\g})}\|\Gg\|_{L^1(\w_k)}.
 \end{multline*}
and
\begin{multline*}
  \int_{\R}\Q_{\g}^{-}(\Gg,\Gg)(x)\w_k(x)\dx=\int_{\R^2}\Gg(x)\Gg(y)|x-y|^{\g} \w_k(x)\dy\dx\\*
  \leq \int_{\R^2} |x|^{\g} \w_k(x) \Gg(x)\Gg(y)\dy\dx+\int_{\R^2}\Gg(x)\w_k(x) |y|^{\g}\Gg(y)\dy\dx\\*
  \leq  2\|\Gg\|_{L^1(\w_{k+\g})}\|\Gg\|_{L^1(\w_k)}.
 \end{multline*}
 This concludes the proof.
\end{proof}

\begin{lem}\label{lem:L1:sing}
 For each $\alpha\in(0,1)$ and $\Gg\in\mathscr{E}_{\g}$ we have
 \begin{equation*}
  \frac{\Gg}{|\cdot|^\alpha}\in L^1(\w_{1}) \quad \text{and}\qquad |\cdot|^{\alpha}\Gg' \in L^1(\w_{1}).
 \end{equation*}
 In particular, we have $\partial_{x}(|\cdot|^{\alpha}\Gg)\in L^1(\w_{1})$.
\end{lem}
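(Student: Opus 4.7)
The plan is to extract both statements from the stationary equation \eqref{eq:steadyg} rewritten in the form
\[
x\,\Gg'(x) = 4\,\Q_{\g}(\Gg,\Gg)(x) - \Gg(x),
\]
combined with the pointwise $L^{\infty}$ bound of Corollary \ref{cor:hoelder}, the moment estimates of Theorem \ref{theo:Unique} and the $L^{\infty}\cap L^{1}(\w_{k})$ bounds for $\Q_{\g}^{\pm}(\Gg,\Gg)$ provided by Lemma \ref{lem:Q+:Linfty:L1}. In both cases I would split $\R=\{|x|\le1\}\cup\{|x|>1\}$, handling the singularity near the origin by boundedness of the relevant objects and the tail via the moment bounds.

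For the first claim $\Gg/|\cdot|^{\alpha}\in L^{1}(\w_{1})$, I would estimate separately: on $\{|x|\le1\}$ the weight $\w_{1}$ is bounded and $\Gg(x)\le\|\Gg\|_{L^{\infty}}$ by Corollary \ref{cor:hoelder}, so the integrand is controlled by $C|x|^{-\alpha}$, integrable since $\alpha<1$; on $\{|x|>1\}$ one has $\w_{1}(x)/|x|^{\alpha}\le 2|x|^{1-\alpha}$ and the integral is bounded by $2(M_{1-\alpha}(\Gg)+M_{0}(\Gg))$, finite by Lemma \ref{lem:energy}.

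For the second claim $|\cdot|^{\alpha}\Gg'\in L^{1}(\w_{1})$, the ODE above yields, for $x\neq 0$,
\[
|x|^{\alpha}|\Gg'(x)| \le \frac{4\,|\Q_{\g}(\Gg,\Gg)(x)|+\Gg(x)}{|x|^{1-\alpha}}.
\]
On $\{|x|\le 1\}$ I would use $\Gg\in L^{\infty}$, $\Q_{\g}^{+}(\Gg,\Gg)\in L^{\infty}$ from Lemma \ref{lem:Q+:Linfty:L1}, together with the elementary bound $\Sigma_{\g}(x)\le|x|^{\g}+M_{\g}(\Gg)\le 1+M_{\g}(\Gg)$ to control $\Q_{\g}^{-}(\Gg,\Gg)(x)=\Gg(x)\Sigma_{\g}(x)$ uniformly; the integrand then is dominated by $C|x|^{-(1-\alpha)}$, which is integrable since $1-\alpha<1$. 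On $\{|x|>1\}$ we have $\w_{1}(x)/|x|^{1-\alpha}\le 2|x|^{\alpha}$, so the integral is bounded by $8\|\Q_{\g}^{+}(\Gg,\Gg)\|_{L^{1}(\w_{\alpha})}+8\|\Q_{\g}^{-}(\Gg,\Gg)\|_{L^{1}(\w_{\alpha})}+2\|\Gg\|_{L^{1}(\w_{\alpha})}$; each term is finite by Lemma \ref{lem:Q+:Linfty:L1} (applicable since $\alpha<1<3-\g$) and Theorem \ref{theo:Unique}.

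For the final in-particular statement, I would observe that $|\cdot|^{\alpha}\in W^{1,1}_{\mathrm{loc}}(\R)$ with weak derivative $\alpha\,\sgn(x)|x|^{\alpha-1}\in L^{1}_{\mathrm{loc}}$ (since $\alpha-1>-1$), while $\Gg\in W^{1,1}(\R)$ by Theorem \ref{theo:gradient}; both factors are absolutely continuous on each half-line and share the value at $0$, so the Leibniz rule gives, in the sense of distributions,
\[
\partial_{x}\bigl(|x|^{\alpha}\Gg(x)\bigr)=\alpha\,\sgn(x)\,|x|^{\alpha-1}\Gg(x)+|x|^{\alpha}\Gg'(x).
\]
The second summand lies in $L^{1}(\w_{1})$ by the previous step, and the first equals $\alpha\,\sgn(x)\,\Gg(x)/|x|^{1-\alpha}$, which belongs to $L^{1}(\w_{1})$ by the first step applied with $1-\alpha\in(0,1)$ in place of $\alpha$. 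The only mildly delicate point is justifying this product rule across $x=0$, and this is handled by the local absolute continuity of both factors.
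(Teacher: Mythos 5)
Your proof is circular. You invoke Corollary~\ref{cor:hoelder} (the $L^{\infty}$ bound on $\Gg$) to control $\Gg/|\cdot|^{\alpha}$ near the origin, and Theorem~\ref{theo:gradient} ($\Gg\in W^{1,1}$) for the final product rule. But Corollary~\ref{cor:hoelder} is derived directly from Theorem~\ref{theo:gradient}, and the entire point of Appendix~\ref{sec:justification:sobolev} is to give the rigorous proof of Theorem~\ref{theo:gradient}: the chain is Lemma~\ref{lem:L1:sing} $\Rightarrow$ Lemma~\ref{Lem:gain:der:L1:L2} $\Rightarrow$ Lemma~\ref{Lem:loss:der:L1:L2} $\Rightarrow$ Lemma~\ref{Lem:der:L2} $\Rightarrow$ Theorem~\ref{theo:gradient}. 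So Theorem~\ref{theo:gradient} and its consequences may not be assumed here.

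The tools that \emph{are} available --- the pointwise bound \eqref{eq:pointX} ($\Gg(x)\le C/|x|$), the $L^2$ bound from Theorem~\ref{theo:Unique}/Corollary~\ref{L2-weighted}, and Lemma~\ref{lem:Q+:Linfty:L1} --- do not suffice for your pointwise splitting: \eqref{eq:pointX} gives $\Gg(x)/|x|^{\alpha}\lesssim |x|^{-1-\alpha}$ near $0$, which is worse, not better; and Cauchy--Schwarz with the $L^{2}$ bound only controls $\int_{|x|\le1}\Gg |x|^{-\alpha}\dx$ for $\alpha<\frac12$, whereas the lemma requires all $\alpha\in(0,1)$. The paper avoids all of this by testing the steady equation \eqref{eq:steadyg} against a regularized version $\nu_{\varepsilon}$ of $|x|^{-\alpha}$: the transport term $\frac14\partial_x(x\Gg)$ integrated against $\nu_{\varepsilon}$ \emph{produces} the quantity $\frac{\alpha}{4}\int_{\R}\Gg/|x|^{\alpha}\dx$ on the left-hand side, while the right-hand side is bounded by Lemma~\ref{lem:Q+:Linfty:L1} and the $L^2$ estimates and $\eqref{eq:pointX}$ handles the boundary terms at $\pm R_n$. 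That is, the singular weight is controlled \emph{via the equation}, not via a priori pointwise decay of $\Gg$. Your tail estimates and the Leibniz-rule observation at the end are fine, but the singularity at the origin has to come from the weighted testing argument, not from $L^{\infty}$ control.
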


\begin{proof}
 To prove the first claim it suffices to show that $\Gg/|\cdot|^{\alpha}\in L^1$ since $\Gg\in\mathscr{E}_{\g}$ already implies $\Gg\in L^1(\w_{1})$.
 
 We take a differentiable  {nonnegative} approximation $\nu_{\varepsilon}(x)$ of $|x|^{-\alpha}$ such that $\nu_{\varepsilon}(x)\to |x|^{-\alpha}$ and $\nu_{\varepsilon}'(x)\to -\alpha \sgn(x)|x|^{-\alpha-1}$ pointwise as $\varepsilon\to 0$ and $\nu_{\varepsilon}(x)\leq |x|^{-\alpha}$ for all $x\in\R$. Multiplying \eqref{eq:steadyg} by $\nu_\varepsilon$, we obtain
 \begin{equation}\label{eq:L1:sing:1}
  \frac{1}{4}\partial_{x}(x\Gg \nu_{\varepsilon})-\frac{1}{4}x\Gg \nu_\varepsilon' =\nu_{\varepsilon}\Q_{\g}(\Gg,\Gg).
 \end{equation}
Since $\Gg$ satisfies $x\partial_{x}\Gg = 4\Q_{\g}(\Gg ,\Gg ) - \Gg$ in the sense of distributions and the right-hand side is in $L^1$, we have $x\Gg'\in L^1$. Thus, there exists a sequence $R_n\to \infty$ as $n\to\infty$ such that upon integrating \eqref{eq:L1:sing:1} we get
\begin{equation}\label{eq:L1:sing:2}
 \frac{1}{4}x\Gg(x)\nu_{\varepsilon}(x)\big|_{-R_n}^{R_n}-\frac{1}{4}\int_{-R_n}^{R_n}x\Gg(x)\nu_{\varepsilon}'(x)\dx=\int_{-R_n}^{R_n}\nu_{\varepsilon}(x)\Q_{\g}(\Gg,\Gg)(x)\dx.
\end{equation}
According to \eqref{eq:pointX} and $\nu_{\varepsilon}(x)\leq |x|^{-\alpha}$ we have $|\pm R_n\Gg(\pm R_n)\nu_{\varepsilon}(\pm R_n)|\leq R_{n}^{-\alpha}\to 0$ as $n\to\infty$.
Moreover, recalling Lemma~\ref{lem:Q+:Linfty:L1}, $\nu_{\varepsilon}(x)\leq |x|^{-\alpha}$ and choosing $n$ large enough such that $R_n>1$ we have
\begin{multline*}
 \int_{-R_n}^{R_n}\nu_{\varepsilon}(x)\Q_{\g}(\Gg,\Gg)(x)\dx\leq C \|\Gg\|_{L^2(\w_{\g})}\|\Gg\|_{L^2}\int_{-1}^{1}|x|^{-\alpha}\dx+\int_{-R_n}^{R_{n}}\Q_{\g}^{+}(\Gg,\Gg)\dx\\*
 \leq \frac{C}{1-\alpha}\|\Gg\|_{L^2(\w_{\g})}\|\Gg\|_{L^2}+C\|\Gg\|_{L^1(\w_{\g})}\|\Gg\|_{L^1}.
\end{multline*}
Thus, passing first to the limit $n\to\infty$ and then to $\varepsilon\to 0$ in \eqref{eq:L1:sing:2} we get
\begin{equation*}
 \frac{\alpha}{4}\int_{\R}\frac{\Gg(x)}{|x|^{\alpha}}\dx\leq \frac{C}{1-\alpha}\|\Gg\|_{L^2(\w_{\g})}\|\Gg\|_{L^2}+C\|\Gg\|_{L^1(\w_{\g})}\|\Gg\|_{L^1}.
\end{equation*}
The first part of the claim then follows from Theorem~\ref{theo:Unique} and Corollary~\ref{L2-weighted}. The second part is an immediate consequence of \eqref{eq:L1:sing:1} which can be rewritten as
\begin{equation*}
 \frac{1}{4}x\nu_{\varepsilon}(x)\Gg'(x)=-\frac{1}{4}\Gg(x)\nu_{\varepsilon}(x)+\nu_{\varepsilon}\Q_{\g}(\Gg,\Gg).
\end{equation*}
From the arguments above together with Theorem~\ref{theo:Unique}, the right-hand side is in $L^1(\w_{1})$ uniformly in $\varepsilon$. Thus, we can pass to the limit $\varepsilon\to 0$ and deduce that $\frac{x}{|x|^{\alpha}}\Gg'(x)\in L^1(\w_{1})$ for each $\alpha\in (0,1)$ which immediately implies the claim.
\end{proof}

\begin{lem}\label{Lem:gain:der:L1:L2}
 Let $\gamma\in(0,1)$. For each $\alpha\in[0,1]$, for every $\Gg\in\mathscr{E}_{\g}$,  we have  $\frac{\d}{\dx}\Q_{\g}^{+}(\Gg,\Gg)\in L^1(\w_{\alpha})\cap L^2(\w_{\alpha})$. 
\end{lem}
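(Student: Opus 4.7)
The plan is to differentiate $\Q_\gamma^+(\Gg,\Gg)$ under the integral sign and reduce the statement to weighted estimates for $\Q_\gamma^+(|\Gg'|,\Gg)$. Writing
$$
\Q_\gamma^+(\Gg,\Gg)(x) = \int_\R \Gg(x+y/2)\,\Gg(x-y/2)\,|y|^\gamma\,\dy,
$$
the weight $|y|^\gamma$ is $x$-independent, so (using $\Gg \in W^{1,1}(\R)$ from Theorem~\ref{theo:gradient}) a test-function argument combined with Fubini --- whose absolute integrability follows from the $L^1$ bound on $\Q_\gamma^+(|\Gg'|,\Gg)$ established below --- shows that, in the distributional sense,
$$
\frac{\d}{\dx}\Q_\gamma^+(\Gg,\Gg)(x) = \Q_\gamma^+(\Gg',\Gg)(x) + \Q_\gamma^+(\Gg,\Gg')(x) = 2\,\Q_\gamma^+(\Gg',\Gg)(x),
$$
the last equality coming from the symmetry $\Q_\gamma^+(f,g)=\Q_\gamma^+(g,f)$ (visible via $y\mapsto -y$). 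It therefore suffices to show that $\Q_\gamma^+(|\Gg'|,\Gg)$ belongs to $L^1(\w_\alpha)\cap L^2(\w_\alpha)$ for every $\alpha\in[0,1]$.

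For the $L^1(\w_\alpha)$ bound I would proceed exactly as in Lemma~\ref{lem:Q+:Linfty:L1}: after the change of variables $u=x+y/2$, $v=x-y/2$, the elementary inequalities $\w_\alpha\bigl(\tfrac{u+v}{2}\bigr)\le \w_\alpha(u)\w_\alpha(v)$ and $|u-v|^\gamma\le |u|^\gamma+|v|^\gamma$ yield
$$
\|\Q_\gamma^+(|\Gg'|,\Gg)\|_{L^1(\w_\alpha)} \le \|\Gg'\|_{L^1(\w_{\alpha+\gamma})}\|\Gg\|_{L^1(\w_\alpha)} + \|\Gg'\|_{L^1(\w_\alpha)}\|\Gg\|_{L^1(\w_{\alpha+\gamma})}.
$$
The weighted $L^1$ norms of $\Gg$ are finite for $\alpha+\gamma\le 2<3-\delta$ by Theorem~\ref{theo:Unique}, while the weighted $L^1$ norms of $\Gg'$ follow from Theorem~\ref{theo:gradient} together with Cauchy--Schwarz: $\|\Gg'\|_{L^1(\w_s)} \le \|\Gg'\|_{L^2(\w_k)}\|\w_{s-k}\|_{L^2}$, which is finite whenever $k>s+\tfrac12$. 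For $s\in\{\alpha,\alpha+\gamma\}\subset[0,1+\gamma]$, such a $k$ can be chosen in the admissible range $\bigl(s+\tfrac12,\,3-\tfrac{5\gamma}{2}-\delta\bigr)$ provided $\gamma$ is small enough.

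For the $L^2(\w_\alpha)$ bound I would mimic the duality argument in the proof of Proposition~\ref{prop:QgL2}: testing against $\varphi\in L^2(\R)$, applying the same pointwise inequalities and using Cauchy--Schwarz in one of the inner integrations together with the translation invariance $\|\varphi(\tfrac{\cdot+v}{2})\|_{L^2}=\sqrt 2\,\|\varphi\|_{L^2}$ gives
$$
\|\Q_\gamma^+(|\Gg'|,\Gg)\|_{L^2(\w_\alpha)} \le \sqrt 2\,\Bigl(\|\Gg'\|_{L^2(\w_{\alpha+\gamma})}\|\Gg\|_{L^1(\w_\alpha)} + \|\Gg\|_{L^2(\w_{\alpha+\gamma})}\|\Gg'\|_{L^1(\w_\alpha)}\Bigr).
$$
Here $\|\Gg\|_{L^2(\w_{\alpha+\gamma})}$ is finite by Corollary~\ref{L2-weighted} (as $\alpha+2\gamma<3-\delta$), $\|\Gg'\|_{L^2(\w_{\alpha+\gamma})}$ is finite by Theorem~\ref{theo:gradient} (as $\alpha+\gamma<3-\tfrac{5\gamma}{2}-\delta$), and the two $L^1$ factors are controlled as in the previous step. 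The main obstacle is merely parameter bookkeeping: one must verify that for $\alpha\in[0,1]$ all the exponents arising above lie inside the admissible ranges of Theorem~\ref{theo:Unique}, Corollary~\ref{L2-weighted} and Theorem~\ref{theo:gradient}. The tightest constraint, $\alpha+\tfrac{7\gamma}{2}<\tfrac{5}{2}-\delta$, is met by restricting $\gamma$ to be sufficiently small, which is consistent with the perturbative framework adopted throughout the paper; no essentially new functional estimate beyond those already developed in Section~\ref{sec:weighted} and Appendix~\ref{app:QgQ0} is required.
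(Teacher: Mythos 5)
There is a genuine gap: your argument is circular within the logical structure of the paper. Lemma~\ref{Lem:gain:der:L1:L2} sits in Appendix~\ref{sec:justification:sobolev}, whose stated purpose is to \emph{justify} Theorem~\ref{theo:gradient}; in particular it feeds into Lemma~\ref{Lem:der:L2}, which is what rigorously establishes $\Gg'\in L^2$. You, however, control every norm of $\Gg'$ that appears in your estimates ($\|\Gg'\|_{L^1(\w_{\alpha+\g})}$, $\|\Gg'\|_{L^2(\w_{\alpha+\g})}$, and $\Gg\in W^{1,1}$ for the Fubini step) by invoking Theorem~\ref{theo:gradient}. At the stage where this lemma must be proved, none of these is available: the only rigorously established information on $\Gg'$ is Lemma~\ref{lem:L1:sing}, namely $|\cdot|^{\beta}\Gg'\in L^1(\w_1)$ for $\beta\in(0,1)$, which degenerates at the origin and does \emph{not} give $\Gg'\in L^1$. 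Your bound
\begin{equation*}
\|\Q_\gamma^+(|\Gg'|,\Gg)\|_{L^1(\w_\alpha)} \le \|\Gg'\|_{L^1(\w_{\alpha+\gamma})}\|\Gg\|_{L^1(\w_\alpha)} + \|\Gg'\|_{L^1(\w_\alpha)}\|\Gg\|_{L^1(\w_{\alpha+\gamma})}
\end{equation*}
therefore cannot be closed with the tools legitimately at hand.

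The paper's proof is designed precisely to avoid this. Working first with $\Gg\in C_c^\infty$ and then by density, it does not split $|u-v|^\gamma\le|u|^\gamma+|v|^\gamma$ directly; instead it uses the exact decomposition $|X-Y|^{\g}=|X|^{\g/2}|Y|^{\g/2}K(X,Y)+|X|^{\g}+|Y|^{\g}$ with $|K|\le 2$ (inequality \eqref{eq:weight:tr}), so that the $\Gg'$ factor always carries a weight $|\cdot|^{\g/2}$ or $|\cdot|^{\g}$; and in the one remaining term where the power of the relative velocity would attach to the $\Gg$ factor rather than the $\Gg'$ factor, it integrates by parts in $y$ to produce $(\Gg|\cdot|^{\g})'$ instead. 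Every resulting quantity --- $\|\Gg'|\cdot|^{\g/2}\|_{L^1(\w_1)}$, $\|\Gg'|\cdot|^{\g}\|_{L^1(\w_1)}$, $\|(\Gg|\cdot|^{\g})'\|_{L^1(\w_1)}$ --- is controlled by Lemma~\ref{lem:L1:sing} alone, together with the $L^2(\w_k)$ bounds on $\Gg$ itself from Corollary~\ref{L2-weighted}. If you want to repair your argument you must replace the appeal to Theorem~\ref{theo:gradient} by these two devices (the $\g/2$--$\g/2$ splitting and the integration by parts); as written, the proposal proves the lemma only under hypotheses that the lemma itself is needed to establish.
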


\begin{proof}
 We argue by density and assume first $\Gg\in C_{c}^{\infty}(\R)$. We can thus write
 \begin{multline}\label{eq:grad:gain:1}
  \frac{\d}{\dx}\Q_{\g}^{+}(\Gg,\Gg)(x)=\int_{\R}\left(\Gg'\left(x+\frac{y}{2}\right)\Gg\left(x-\frac{y}{2}\right)+\Gg\left(x+\frac{y}{2}\right)\Gg'\left(x-\frac{y}{2}\right)\right)|y|^{\g}\dy\\*
  =2\int_{\R}\Gg'\left(x+\frac{y}{2}\right)\Gg\left(x-\frac{y}{2}\right)|y|^{\g}\dy=2\int_{\R}\Gg'\left(x+\frac{y}{2}\right)\Gg\left(x-\frac{y}{2}\right)\Bigl|x+\frac{y}{2}-\left(x-\frac{y}{2}\right)\Bigr|^{\g}\dy.
 \end{multline}
The inequality $||1-z|^\g-(1+|z|^\g)|\leq 2|z|^{\theta}$ for all $z\in\R$ and $0\leq \theta\leq \g\leq 1$ implies for $\theta=\frac{\g}{2}$ and $z=X/Y$ that
\begin{equation}\label{eq:weight:tr}
 K(X,Y):=\frac{|X-Y|^{\g}-|X|^\g-|Y|^{\g}}{|X|^{\frac{\g}{2}}|Y|^{\frac{\g}{2}}}\leq 2 \qquad \text{for all } X,Y\in\R.
\end{equation}
We thus rewrite \eqref{eq:grad:gain:1} to get
\begin{multline*}
  \frac{1}{2}\frac{\d}{\dx}\Q_{\g}^{+}(\Gg,\Gg)(x)=\int_{\R}\Gg'\left(x+\frac{y}{2}\right)\Gg\left(x-\frac{y}{2}\right)\Bigl|x+\frac{y}{2}\Bigr|^{\frac{\g}{2}}\Bigl|x-\frac{y}{2}\Bigr|^{\frac{\g}{2}}K\Bigl(x+\frac{y}{2},x-\frac{y}{2}\Bigr)\dy\\*
  +\int_{\R}\Gg'\left(x+\frac{y}{2}\right)\Gg\left(x-\frac{y}{2}\right)\Bigl|x+\frac{y}{2}\Bigr|^{\g}\dy+\int_{\R}\Gg'\left(x+\frac{y}{2}\right)\Gg\left(x-\frac{y}{2}\right)\Bigl|x-\frac{y}{2}\Bigr|^{\g}\dy.
 \end{multline*}
 Integrating by parts in the last term on the right-hand side, we obtain
 \begin{multline}\label{eq:der:gain:0}
  \frac{1}{2}\frac{\d}{\dx}\Q_{\g}^{+}(\Gg,\Gg)(x)=\int_{\R}\Gg'\left(x+\frac{y}{2}\right)\Gg\left(x-\frac{y}{2}\right)\Bigl|x+\frac{y}{2}\Bigr|^{\frac{\g}{2}}\Bigl|x-\frac{y}{2}\Bigr|^{\frac{\g}{2}}K\Bigl(x+\frac{y}{2},x-\frac{y}{2}\Bigr)\dy\\*
  +\int_{\R}\Gg'\left(x+\frac{y}{2}\right)\Gg\left(x-\frac{y}{2}\right)\Bigl|x+\frac{y}{2}\Bigr|^{\g}\dy+\int_{\R}\Gg\left(x+\frac{y}{2}\right)\bigl(\Gg|\cdot|^{\g}\bigr)'\left(x-\frac{y}{2}\right)\dy.
 \end{multline}
 Together with \eqref{eq:weight:tr} we thus obtain on the one hand
 \begin{multline*}
  \frac{1}{2}\left\|\frac{\d}{\dx}\Q_{\g}^{+}(\Gg,\Gg)\right\|_{L^{1}(\w_{\alpha})}\leq 2\int_{\R^2}|y|^{\frac{\g}{2}}|\Gg'(y)|\Gg(x)|x|^{\frac{\g}{2}}\w_{\alpha}\left(\frac{x+y}{2}\right)\dx\dy\\*
  +\int_{\R^2}|y|^{\g}|\Gg'(y)|\Gg(x)\w_{\alpha}\left(\frac{x+y}{2}\right)\dx\dy+\int_{\R^2}\Gg(y)|(\Gg|\cdot|^{\g})'(x)|\w_{\alpha}\left(\frac{x+y}{2}\right)\dx\dy.
 \end{multline*}
Since $\w_{\alpha}\bigl(\frac{x+y}{2}\bigr)\leq \w_{\alpha}(x)\w_{\alpha}(y)$, it follows immediately
 \begin{multline}\label{eq:der:gain:L1:1}
  \frac{1}{2}\left\|\frac{\d}{\dx}\Q_{\g}^{+}(\Gg,\Gg)\right\|_{L^{1}(\w_{\alpha})}\leq 2\|\Gg'|\cdot|^{\frac{\g}{2}}\|_{L^{1}(\w_{\alpha})}\|\Gg|\cdot|^{\frac{\g}{2}}\|_{L^{1}(\w_{\alpha})}\\*
  +\|\Gg'|\cdot|^{\g}\|_{L^{1}(\w_{\alpha})}\|\Gg\|_{L^{1}(\w_{\alpha})}+\|\Gg\|_{L^{1}(\w_{\alpha})}\|(\Gg|\cdot|^{\g})'\|_{L^{1}(\w_{\alpha})}.
 \end{multline}
 According to Lemma \ref{lem:L1:sing} the right-hand side is uniformly bounded with respect to $\alpha$ and thus, by density, we have that $\frac{\d}{\dx}\Q_{\g}^{+}(\Gg,\Gg)\in L^{1}(\w_{\alpha})$.
 
 To get the $L^2$ bound, we proceed similarly. Taking the $L^2$-norm of \eqref{eq:der:gain:0}, changing variables in the integrals in the right-hand side and taking also \eqref{eq:weight:tr} into account, we obtain
 \begin{multline*}
  \frac{1}{2}\bigl\|\frac{\d}{\dx}\Q_{\g}^{+}(\Gg,\Gg)\bigr\|_{L^{2}(\w_{\alpha})}\\*
  \leq 4\biggl(\int_{\R}\w_{2\alpha}(x)\biggl(\int_{\R}|\Gg'(y)|\Gg(2x-y)|y|^{\frac{\g}{2}}|2x-y|^{\frac{\g}{2}}\dy\biggr)^2\dx\biggr)^{\frac{1}{2}}\\*
  +2\biggl(\int_{\R}\w_{2\alpha}(x)\biggl(\int_{\R}|\Gg'(y)|\Gg(2x-y)|y|^{\g}\dy\biggr)^{2}\dx\biggr)^{\frac{1}{2}}\\*
  +2\biggl(\int_{\R}\w_{2\alpha}(x)\biggl(\int_{\R}\Gg(2x-y)\bigl|\bigl(\Gg|\cdot|^{\g}\bigr)'(y)\bigr|\dy\biggr)^{2}\dx\biggr)^{\frac{1}{2}}.
 \end{multline*}
 By means of Minkowski's inequality, the change $x\mapsto \frac{x+y}{2}$ and $\w_{2\alpha}\bigl(\frac{x+y}{2}\bigr)\leq \w_{2\alpha}(x)\w_{2\alpha}(y)$, we deduce
 \begin{multline}\label{eq:der:gain:L2:1}
  \frac{1}{2}\bigl\|\frac{\d}{\dx}\Q_{\g}^{+}(\Gg,\Gg)\bigr\|_{L^{2}(\w_{\alpha})}\leq 2\sqrt{2}\|\Gg'|\cdot|^{\frac{\g}{2}}\|_{L^{1}(\w_{\alpha})}\|\Gg|\cdot|^{\frac{\g}{2}}\|_{L^{2}(\w_{\alpha})}\\*
  +\sqrt{2}\|\Gg'|\cdot|^{\g}\|_{L^{1}(\w_{\alpha})}\|\Gg\|_{L^{2}(\w_{\alpha})}+\sqrt{2}\|\Gg\|_{L^{2}(\w_{\alpha})}\|(\Gg|\cdot|^{\g})'\|_{L^{1}(\w_{\alpha})}.
 \end{multline}
 Again the right-hand side is uniformly bounded with respect to $\alpha$ due to Corollary~\ref{L2-weighted} and Lemma~\ref{lem:L1:sing} from which the claimed $L^2$ bound follows by density.
\end{proof}

\begin{lem}\label{Lem:loss:der:L1:L2}
 Let $\gamma\in(0,1).$ For each $\alpha\in[0,1]$, for every $\Gg\in\mathscr{E}_{\g}$, we have $\Gg(\Gg\ast|\cdot|^{\g})'\in L^1(\w_{\alpha})\cap L^2(\w_{\alpha})$.
\end{lem}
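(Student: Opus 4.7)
The proof will exploit the bounds $\Gg\in L^\infty(\R)\cap L^1(\R)$ available from Corollary \ref{cor:hoelder} together with Theorem \ref{theo:Unique}. The key observation is that, since $|\cdot|^\gamma$ has distributional derivative $\gamma\,\sgn(\cdot)\,|\cdot|^{\gamma-1}$ which is locally integrable for $\gamma\in(0,1)$, the convolution $\Gg*|\cdot|^\gamma$ is differentiable almost everywhere with
\[
(\Gg * |\cdot|^\gamma)'(x) = \gamma \int_{\R} \Gg(y)\,\sgn(x-y)|x-y|^{\gamma-1}\,\d y,\qquad x\in\R,
\]
the integral converging absolutely for every $x$ thanks to $\Gg\in L^\infty$ (which tames the integrand near $x=y$) and $\Gg\in L^1(\R)$ (which controls the tail).

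The next step is to extract a uniform pointwise bound on this derivative by splitting the integral according to $|x-y|\leq 1$ and $|x-y|>1$. On the singular part one uses the $L^\infty$ bound on $\Gg$ together with the elementary identity $\int_{|z|\leq 1}|z|^{\gamma-1}\d z=2/\gamma$; on the tail one uses $|x-y|^{\gamma-1}\leq 1$ and $\|\Gg\|_{L^1}=1$. This yields
\[
\bigl|(\Gg*|\cdot|^\gamma)'(x)\bigr|\leq 2\|\Gg\|_{L^\infty}+\gamma,\qquad x\in\R,
\]
so that $(\Gg*|\cdot|^\gamma)'\in L^\infty(\R)$.

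Once this uniform bound on $(\Gg*|\cdot|^\gamma)'$ is in hand, the two claims of the lemma are immediate. Multiplying the pointwise inequality by $|\Gg(x)|\w_\alpha(x)$ and integrating yields
\[
\|\Gg\,(\Gg*|\cdot|^\gamma)'\|_{L^1(\w_\alpha)}\leq\bigl(2\|\Gg\|_{L^\infty}+\gamma\bigr)\|\Gg\|_{L^1(\w_\alpha)},
\]
while squaring, integrating against $\w_{2\alpha}$, and taking a square root gives
\[
\|\Gg\,(\Gg*|\cdot|^\gamma)'\|_{L^2(\w_\alpha)}\leq\bigl(2\|\Gg\|_{L^\infty}+\gamma\bigr)\|\Gg\|_{L^2(\w_\alpha)},
\]
and both right-hand sides are finite by Theorem \ref{theo:Unique} and Corollary \ref{L2-weighted}.

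The main technical point will be the rigorous identification of $(\Gg*|\cdot|^\gamma)'$ with the singular convolution above. This can be settled, in the spirit of Lemma \ref{Lem:gain:der:L1:L2}, by approximating $\Gg$ by $\Gg_n:=\Gg*\varrho_{1/n}\in C^\infty(\R)$, for which the identity $(\Gg_n*|\cdot|^\gamma)'=\Gg_n'*|\cdot|^\gamma$ holds in the classical sense and coincides with $\gamma\,\Gg_n*(\sgn(\cdot)|\cdot|^{\gamma-1})$ after an integration by parts. Since $\|\Gg_n\|_{L^\infty}\leq\|\Gg\|_{L^\infty}$ and $\|\Gg_n\|_{L^1}\leq\|\Gg\|_{L^1}$, the pointwise bound above holds for each $\Gg_n$ with constants independent of $n$, and dominated convergence then allows one to pass to the limit $n\to\infty$ and transfer the estimates to $\Gg$ itself.
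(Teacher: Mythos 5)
Your argument is clean and would be correct if the $L^\infty$ bound on $\Gg$ were available at this stage, but there is a circularity problem. You invoke Corollary \ref{cor:hoelder} for $\|\Gg\|_{L^\infty}<\infty$, yet that corollary is itself a consequence of the $H^1$ bound in Theorem \ref{theo:gradient}, and Lemma \ref{Lem:loss:der:L1:L2} sits inside Appendix~\ref{sec:justification:sobolev}, whose stated purpose is precisely to give a rigorous justification of Theorem \ref{theo:gradient}: the chain Lemma~\ref{lem:L1:sing} $\to$ Lemma~\ref{Lem:gain:der:L1:L2} $\to$ Lemma~\ref{Lem:loss:der:L1:L2} $\to$ Lemma~\ref{Lem:der:L2} is what eventually yields $\Gg'\in L^2$ and hence $\Gg\in L^\infty$. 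At this point in the logical order, the only pointwise bound available is $\Gg(x)\le C/|x|$ from \eqref{eq:pointX}, which fails to control $\Gg$ near the origin, so the singular part $\int_{|x-y|\le 1}\Gg(y)|x-y|^{\gamma-1}\dy$ in your estimate cannot be bounded as you do.

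The paper sidesteps this by never differentiating the kernel $|\cdot|^\gamma$: instead it writes $(\Gg*|\cdot|^\gamma)'=\Gg'*|\cdot|^\gamma$ (for smooth compactly supported $\Gg$) and decomposes $|x-y|^\gamma$ via the bounded kernel $K(x,y)$ of \eqref{eq:weight:tr}, so that the final estimate \eqref{eq:der:loss:L2:1} only involves $\|\Gg'|\cdot|^{\g/2}\|_{L^1(\w_\alpha)}$, $\|\Gg'|\cdot|^\g\|_{L^1(\w_\alpha)}$ and the $L^2(\w_\alpha)$ norms of $\Gg$ — all supplied by Lemma~\ref{lem:L1:sing}, Theorem~\ref{theo:Unique} and Corollary~\ref{L2-weighted}, which are established independently of any Sobolev regularity of $\Gg$. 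If you want to retain your more elementary route, you would first have to produce an $L^\infty$ bound on $\Gg$ that does not pass through Theorem \ref{theo:gradient}; the paper's structure gives no such bound, so the kernel-decomposition approach is not optional here.
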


\begin{proof}
 We proceed analogously to the proof of Lemma \ref{Lem:gain:der:L1:L2} and first assume $\Gg\in C_{c}^{\infty}(\R)$. We have together with \eqref{eq:weight:tr} that
 \begin{multline*}
  (\Gg\ast|\cdot|^{\g})'(x)=\int_{\R}\Gg'(y)|x-y|^{\g}\dy=\int_{\R}\Gg'(y)|y|^{\frac{\g}{2}}|x|^{\frac{\g}{2}}K(x,y)\dy\\*
  +\int_{\R}\Gg'(y)|y|^{\g}\dy+\int_{\R}\Gg'(y)|x|^{\g}\dy\\*
  =\int_{\R}\Gg'(y)|y|^{\frac{\g}{2}}|x|^{\frac{\g}{2}}K(x,y)\dy+\int_{\R}\Gg'(y)|y|^{\g}\dy.
 \end{multline*}
In the last step we used that $\int_{\R}\Gg'(y)\dy=0$ for $\Gg\in C_{c}^{\infty}(\R)$. Consequently, we get together with \eqref{eq:weight:tr} that
\begin{equation}\label{eq:der:loss:L1:1}
 \|\Gg(\Gg\ast|\cdot|^{\g})'\|_{L^{1}(\w_{\alpha})}\leq 2\|\Gg'|\cdot|^{\frac{\g}{2}}\|_{L^{1}(\w_{\alpha})}\|\Gg|\cdot|^{\frac{\g}{2}}\|_{L^{1}(\w_{\alpha})}+\|\Gg'|\cdot|^{\g}\|_{L^{1}(\w_{\alpha})}\|\Gg\|_{L^{1}(\w_{\alpha})}.
\end{equation}
The first claim thus follows from Lemma \ref{lem:L1:sing} by density. Similarly we get
\begin{equation}\label{eq:der:loss:L2:1}
 \|\Gg(\Gg\ast|\cdot|^{\g})'\|_{L^{2}(\w_{\alpha})}\leq 2\|\Gg'|\cdot|^{\frac{\g}{2}}\|_{L^{1}(\w_{\alpha})}\|\Gg|\cdot|^{\frac{\g}{2}}\|_{L^{2}(\w_{\alpha})}+\|\Gg'|\cdot|^{\g}\|_{L^{1}(\w_{\alpha})}\|\Gg\|_{L^{2}(\w_{\alpha})}
\end{equation}
from which the second claim follows again by density taking Corollary \ref{L2-weighted} and Lemma \ref{lem:L1:sing} into account.
\end{proof}

\begin{lem}\label{Lem:loss:der:2}
 If $\gamma\in(0,1)$ we have for each $\alpha\in(0,1]$ that $|\cdot|^{\alpha}\Gg'(\Gg\ast|\cdot|^{\g})\in L^1(\R)$ for every $\Gg\in\mathscr{E}_{\g}$.
\end{lem}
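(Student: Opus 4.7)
The plan is to combine a simple upper bound on the collision frequency $\Sigma_\gamma(x) = (\Gg \ast |\cdot|^\gamma)(x)$ with the weighted integrability of $\Gg'$ that has already been established. First I would observe that since $|x-y|^\gamma \leq \w_\gamma(x)\w_\gamma(y)$ for $\gamma \in (0,1)$, we get
\begin{equation*}
\Sigma_\gamma(x) = \int_\R \Gg(y) |x-y|^\gamma \dy \leq \w_\gamma(x) \|\Gg\|_{L^1(\w_\gamma)} \leq C \w_\gamma(x),
\end{equation*}
where finiteness of $\|\Gg\|_{L^1(\w_\gamma)}$ comes from $\Gg \in L^1_3(\R)$. (Alternatively one could appeal to Jensen, as in \eqref{eq:frequency}, to obtain $\Sigma_\gamma(x) \leq |x|^\gamma$.) Hence it suffices to show
\begin{equation*}
\int_\R |x|^\alpha |\Gg'(x)|\, \w_\gamma(x) \dx < \infty.
\end{equation*}

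Next I would split the argument according to whether $\alpha < 1$ or $\alpha = 1$. For $\alpha \in (0,1)$, Lemma \ref{lem:L1:sing} directly yields $|\cdot|^\alpha \Gg' \in L^1(\w_1)$, and since $\gamma \leq 1$ we have $\w_\gamma \leq \w_1$, giving the bound immediately.

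For the endpoint $\alpha = 1$, Lemma \ref{lem:L1:sing} is not available, so I would return to the steady equation \eqref{eq:steadyg}, which after expanding the drift reads
\begin{equation*}
x \Gg'(x) = 4\, \Q_\gamma(\Gg,\Gg)(x) - \Gg(x).
\end{equation*}
By Lemma \ref{lem:Q+:Linfty:L1}, $\Q_\gamma^\pm(\Gg,\Gg) \in L^1(\w_\gamma)$ since $\gamma \in [0, 3-\gamma)$ for $\gamma \in (0,1)$, and obviously $\Gg \in L^1(\w_\gamma)$. Thus $x\Gg' \in L^1(\w_\gamma)$, which is exactly the desired bound for $\alpha = 1$.

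There is no real obstacle here; the only subtle point is the endpoint case $\alpha = 1$, which is not covered by Lemma \ref{lem:L1:sing} and must be handled directly through the profile equation. With these two cases combined one concludes $|\cdot|^\alpha \Gg' (\Gg \ast |\cdot|^\gamma) \in L^1(\R)$ for every $\alpha \in (0,1]$.
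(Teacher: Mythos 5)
Your proof is correct and follows essentially the same route as the paper's: bound the collision frequency $\Sigma_\gamma = \Gg \ast |\cdot|^\gamma$ by a weight of order $\gamma$, reducing everything to the weighted $L^1$-integrability of $|\cdot|^\alpha\Gg'$ supplied by Lemma~\ref{lem:L1:sing}. Your explicit treatment of the endpoint $\alpha=1$ via the profile equation and Lemma~\ref{lem:Q+:Linfty:L1} is a careful refinement; the paper handles it by invoking Lemma~\ref{lem:L1:sing} directly, which suffices because the $\w_1$-weight in that lemma's conclusion already bounds $\||\cdot|^\beta\Gg'\|_{L^1}$ for all $\beta\in(0,2)$, so both arguments are valid.
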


\begin{proof}
 The claim is an immediate consequence of Lemma~\ref{lem:L1:sing}. In fact, for $\g\in(0,1)$ we have $|x-y|^{\g}\leq|x|^\g+|y|^\g$ and thus
 \begin{equation*}
  0\leq (\Gg\ast|\cdot|^{\g})\leq |x|^{\g}\int_{\R}\Gg(y)\dy+\int_{\R}|y|^{\g}\Gg(y)\dy\leq |x|^{\g}+\|\Gg\|_{L^{1}(\w_{\g})}.
 \end{equation*}
 Thus, using Lemma~\ref{lem:L1:sing} we have
 \begin{equation*}
  \||\cdot|^{\alpha}\Gg'(\Gg\ast|\cdot|^{\g})\|_{L^{1}}\leq \||\cdot|^{\alpha+\g}\Gg'\|_{L^1}+\||\cdot|^{\alpha}\Gg'\|_{L^1}\|\Gg\|_{L^{1}(\w_{\g})}
 \end{equation*}
 and the right-hand side is bounded according to Lemma~\ref{lem:L1:sing}.
\end{proof}

\begin{lem}\label{Lem:der:L2}
 For each $\Gg\in \mathscr{E}_{\g}$ we have $\Gg'\in L^2(\R)$.
\end{lem}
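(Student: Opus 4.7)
The plan is to recast \eqref{gradient-equation} so that $\Gg'$ satisfies a first-order ODE in $x$ with an $L^2(\R)$ right-hand side, and then to derive an $L^2$ bound for $\Gg'$ by testing against itself after a suitable truncation. Combining the symmetry $\Q_{\g}^{+}(f,g)=\Q_{\g}^{+}(g,f)$ with the identity $2\Q_{\g}^{+}(\Gg,\Gg')=\frac{\d}{\dx}\Q_{\g}^{+}(\Gg,\Gg)$ (see the computation \eqref{eq:grad:gain:1}), together with the loss term identities $\Q_{\g}^{-}(\Gg',\Gg)=\Sigma_{\g}\Gg'$ and $\Q_{\g}^{-}(\Gg,\Gg')=\Gg\,\Sigma_{\g}'$, equation \eqref{gradient-equation} becomes
\begin{equation*}
\frac{x}{4}\Gg''(x)+\frac{1}{2}\Gg'(x)+\Sigma_{\g}(x)\Gg'(x) = F(x), \qquad F:=\frac{\d}{\dx}\Q_{\g}^{+}(\Gg,\Gg)-\Gg\,\Sigma_{\g}',
\end{equation*}
and by Lemmas \ref{Lem:gain:der:L1:L2} and \ref{Lem:loss:der:L1:L2} applied with $\alpha=0$, one has $F\in L^2(\R)$.

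Multiplying this identity formally by $\Gg'$ and integrating, one integration by parts in the transport term gives $\int\frac{x}{4}\Gg''\Gg'\,\d x = -\frac{1}{8}\|\Gg'\|_{L^2}^{2}$, while $\Sigma_{\g}\geq 0$ so the loss contribution is nonnegative. Cauchy--Schwarz on the right-hand side then yields $\frac{3}{8}\|\Gg'\|_{L^2}^{2}\leq\|F\|_{L^2}\|\Gg'\|_{L^2}$ and hence $\|\Gg'\|_{L^2}\leq\frac{8}{3}\|F\|_{L^2}<\infty$. The main obstacle is that this computation is not licit a priori: $\Gg'$ is only known to lie in $L^{1}(\R)$ (via Lemma \ref{lem:L1:sing}), not in $L^2(\R)$, so it cannot be used directly as a test function, and the integration by parts in the unbounded transport term requires justification.

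To make this rigorous, I would test the equation against $\psi_A(\Gg')\chi_R$, where $\psi_A\colon\R\to\R$ is a smooth, odd, nondecreasing, bounded approximation of the identity with $\psi_A(u)\to u$ as $A\to\infty$ and such that $\Psi_A(u):=\int_{0}^{u}\psi_A(v)\,\d v$ satisfies $\Psi_A(u)\to u^2/2$ pointwise as $A\to\infty$, and $\chi_R\in\mathcal{C}_c^\infty(\R)$ is a standard spatial cut-off equal to $1$ on $[-R,R]$. For each fixed $A,R$, this is an admissible bounded compactly supported test function, so all the integrations by parts are justified. The transport term produces $-\frac{1}{4}\int\chi_R\Psi_A(\Gg')\,\d x$ plus an error of the form $\frac{1}{4}\int x\,\chi_R'\,\Psi_A(\Gg')\,\d x$ which vanishes as $R\to\infty$ thanks to the $L^1$-bound on $\Gg'$ from Lemma \ref{lem:L1:sing}; the $\Sigma_{\g}$-contribution is nonnegative since $u\psi_A(u)\geq 0$; and the right-hand side is bounded by $\|F\|_{L^2}\|\psi_A(\Gg')\|_{L^2}\leq\|F\|_{L^2}\|\Gg'\|_{L^2}$. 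Letting first $R\to\infty$ by monotone convergence and then $A\to\infty$ by Fatou's lemma promotes $\Psi_A(\Gg')\to\tfrac{1}{2}|\Gg'|^2$ and recovers the formal estimate rigorously. If additional regularity on $\Gg''$ is needed to give a classical meaning to $\frac{x}{4}\Gg''$ before testing, one further mollifies $\Gg\mapsto\Gg\ast\varrho_\delta$ as in the rigorous justification of Lemma \ref{lem:boundL2L} performed in Appendix \ref{sec:justif-L2}, performs the above manipulation on the mollified equation, and passes to the limit $\delta\to 0$ at the very end.
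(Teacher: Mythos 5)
Your overall strategy coincides with the paper's: the same decomposition of $\frac{\d}{\dx}\Q_{\g}(\Gg,\Gg)$ into $\frac{\d}{\dx}\Q^{+}_{\g}(\Gg,\Gg)-\Gg(\Gg\ast|\cdot|^{\g})'-\Gg'(\Gg\ast|\cdot|^{\g})$, keeping the last term on the coercive side and placing the first two in $L^2$ via Lemmas \ref{Lem:gain:der:L1:L2} and \ref{Lem:loss:der:L1:L2}, followed by a truncated-quadratic renormalization ($\Psi_A$ in your notation, $\Lambda_A$ in \eqref{eq:Lambda}) closed by Fatou. However, there is one concrete gap: you assert that $\Gg'\in L^1(\R)$ "via Lemma \ref{lem:L1:sing}", but that lemma only gives $|\cdot|^{\alpha}\Gg'\in L^1(\w_{1})$ for $\alpha\in(0,1)$; integrability of $\Gg'$ \emph{across the origin} is not available at this stage (from $x\Gg'=4\Q_{\g}(\Gg,\Gg)-\Gg$ one only gets $|\Gg'(x)|\lesssim(1+\Gg(x))/|x|$ near $x=0$, which is not in $L^1$). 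Without it, the quantity $\int_{\R}\Psi_A(\Gg')\chi_R\dx$ in your truncated energy identity is not known to be finite — the spatial cutoff $\chi_R$ does not help at $x=0$, and since $\Psi_A(u)\sim A|u|$ for large $|u|$, the left-hand side of your estimate could a priori read $\infty-\infty$ after the integration by parts in the transport term. This is exactly why the paper multiplies first by $|x|^{\alpha}\sgn(\Gg')$, works with $F=|x|^{\alpha}|\Gg'|$ (which \emph{is} in $L^1$ for $\alpha>0$) throughout, and removes the weight only at the very end by letting $\alpha\to 0$ after $A\to\infty$. Your argument goes through verbatim once you carry the same weight: the coercivity constant becomes $\frac{1-\alpha}{4}$ instead of $\frac38$, which is still positive, and the $L^2$ bounds of Lemmas \ref{Lem:gain:der:L1:L2}--\ref{Lem:loss:der:L1:L2} are stated for the weighted spaces precisely for this purpose.

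A second, minor, point: the bound $\|\psi_A(\Gg')\|_{L^2}\le\|\Gg'\|_{L^2}$ on the right-hand side is circular, since $\|\Gg'\|_{L^2}$ is what you are trying to prove finite. The estimate must instead be closed at fixed $A$ using $\psi_A(u)^2\le 2\Psi_A(u)$ (the analogue of $(\Lambda'(F))^2\le 2\Lambda(F)$ used in the paper), which gives $\frac{1-\alpha}{4}J_A\le \|F\|_{L^2}\sqrt{2J_A}$ with $J_A:=\int\Psi_A(\Gg')|x|^{\alpha}\dx<\infty$, hence a bound on $J_A$ uniform in $A$ (and $\alpha$), and only then Fatou. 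With these two repairs your proof is the paper's proof.
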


\begin{proof}
Taking the distributional derivative of the steady equation \eqref{eq:steadyg} we get
 \begin{equation}\label{eq:profile:der}
  \frac{1}{4}\frac{\d}{\d x}(x\Gg')+\frac{1}{4}\Gg'=\frac{\d}{\d x}\Q_{\g}(\Gg,\Gg).
 \end{equation} 
Multiplying with $|x|^\alpha\sgn(\Gg'(x))$ for $\alpha\in(0,1)$ gives
\begin{equation*}
 \frac{1}{4}\frac{\d}{\d x}(x|x|^{\alpha}|\Gg'|)+\frac{1-\alpha}{4}|x|^{\alpha}|\Gg'|=|x|^{\alpha}\sgn(\Gg'(x))\frac{\d}{\d x}\Q_{\g}(\Gg,\Gg).
\end{equation*}
Denoting $F(x):=|x|^{\alpha}|\Gg'(x)|$ and $F_{\varepsilon}=F\ast\varrho_{\varepsilon}$ for a suitable mollifier one can check that $F_\varepsilon$ satisfies
\begin{equation*}
 \frac{1}{4}\frac{\d}{\d x}(xF_{\varepsilon})+\frac{1-\alpha}{4}F_{\varepsilon}=\frac{\d}{\dx}(F\ast (x\varrho_{\varepsilon}))+\Bigl(|x|^{\alpha}\sgn(\Gg'(x))\frac{\d}{\d x}\Q_{\g}(\Gg,\Gg)\Bigr)\ast\rho_{\varepsilon}.
\end{equation*}
We test this equation with $\Lambda'(F_{\varepsilon})$ where $\Lambda=\Lambda_A$ is given in \eqref{eq:Lambda}. Together with $\Lambda(x)\leq x\Lambda'(x)$ this yields after straightforward manipulations (similarly as in Section \ref{sec:justif-L2})
\begin{multline}\label{eq:der:L2:1}
 \frac{1-\alpha}{4}\int_{\R}\Lambda(F_{\varepsilon}(x))\dx\\*
 \leq\int_{\R}\frac{\d}{\dx}(F\ast (x\varrho_{\varepsilon}))\Lambda'(F_{\varepsilon})\dx+\int_{\R}\Bigl(|\cdot|^{\alpha}\sgn(\Gg'(\cdot))\frac{\d}{\d x}\Q_{\g}(\Gg,\Gg)\Bigr)\ast \rho_{\varepsilon}\Lambda'(F_{\varepsilon})\dx.
\end{multline}
By means of Lemma~\ref{lem:L1:sing} we can proceed similarly as in Section~\ref{sec:justif-L2} to control the first term on the right-hand side, i.e.\@ with $h_{\varepsilon}(x)=-\frac{x}{\varepsilon^{2}}\varrho'\left(\frac{x}{\varepsilon}\right)$ we have 
\begin{equation*}
 \biggl|\int_{\R}\frac{\d}{\dx}(F\ast (x\varrho_{\varepsilon}))\Lambda'(F_{\varepsilon})\dx\biggr|\leq A \|F_{\varepsilon}-F\ast h_{\varepsilon}\|_{L^1} \to 0 \qquad \text{as }\varepsilon\to 0 \text{ for any } A>0.
\end{equation*}
As in Section~\ref{sec:justif-L2}, there exists a sequence $(\varepsilon_k)_{k\in\N}$ such that $(\Lambda(F_{\varepsilon_k}(x)))_{k\in\N}$ converges towards $\Lambda(F)$ in $L^1$. 
Thus, taking $\varepsilon= \varepsilon_k$ in \eqref{eq:der:L2:1} and passing to the limit $k\to \infty$  we obtain
\begin{equation}\label{eq:der:L2:3}
 \frac{1-\alpha}{4}\int_{\R}\Lambda(F(x))\dx=\int_{\R}|x|^{\alpha}\sgn(\Gg'(x))\frac{\d}{\d x}\Q_{\g}(\Gg,\Gg)\Lambda'(F)\dx.
\end{equation}
{Using that $\frac{\d}{\d x}\Q_{\g}(\Gg,\Gg)=\frac{\d}{\d x}\Q^{+}_{\g}(\Gg,\Gg)-\Gg(\Gg\ast|\cdot|^{\g})'-\Gg'(\Gg\ast|\cdot|^{\g})$,
we can rewrite and estimate the right-hand side together with $\Lambda'(F)\geq 0$ as
\begin{multline*}
 \frac{1-\alpha}{4}\int_{\R}\Lambda(F(x))\dx=\int_{\R}|x|^{\alpha}\sgn(\Gg'(x))\frac{\d}{\dx}\Q^{+}_{\g}(\Gg,\Gg)\Lambda'(F)\dx\\*
 -\int_{\R}|x|^{\alpha}\sgn(\Gg'(x))\Gg(x)(\Gg\ast|\cdot|^{\g})'(x)\Lambda'(F)\dx-\int_{\R}|x|^{\alpha}|\Gg'(x)|(\Gg\ast|\cdot|^{\g})(x)\Lambda'(F)\dx\\*
 \leq \int_{\R}|x|^{\alpha}\Bigl|\frac{\d}{\dx}\Q^{+}_{\g}(\Gg,\Gg)\Bigr|\Lambda'(F)\dx +\int_{\R}|x|^{\alpha}\bigl|\Gg(x)(\Gg\ast|\cdot|^{\g})'(x)\bigr|\Lambda'(F)\dx.
\end{multline*}
Thus, by means of Cauchy-Schwarz we get 
\begin{equation*}
 \frac{1-\alpha}{4}\int_{\R}\Lambda(F(x))\dx\leq \Bigl(\bigl\||\cdot|^{\alpha}\frac{\d}{\dx}\Q^{+}_{\g}(\Gg,\Gg)\bigr\|_{L^2} +\||\cdot|^{\alpha}\Gg(\Gg\ast|\cdot|^{\g})'\|_{L^2}\Bigr)\|\Lambda'(F)\|_{L^2}.
\end{equation*}
Recalling \eqref{eq:der:gain:L2:1} and \eqref{eq:der:loss:L2:1} (with $\alpha=1$) and taking $|\cdot|^{\alpha}\leq\w_{\alpha}(\cdot)\leq \w_{1}(\cdot)$ into account, the right-hand side can be further estimated 
\begin{multline*}
 \frac{1-\alpha}{4}\int_{\R}\Lambda(F(x))\dx \leq\Bigl( (4\sqrt{2}+2)\|\Gg'|\cdot|^{\frac{\g}{2}}\|_{L^{1}(\w_{1})}\|\Gg|\cdot|^{\frac{\g}{2}}\|_{L^{2}(\w_{1})}\\*
  +(2\sqrt{2}+1)\|\Gg'|\cdot|^{\g}\|_{L^{1}(\w_{1})}\|\Gg\|_{L^{2}(\w_{1})}+2\sqrt{2}\|\Gg\|_{L^{2}(\w_{1})}\|(\Gg|\cdot|^{\g})'\|_{L^{1}(\w_{1})}\Bigr)\|\Lambda'(F)\|_{L^2}.
\end{multline*}
Now $(\Lambda'(F))^2 \le 2\Lambda(F)$ implies that $\|\Lambda'(F)\|_{L^2}\leq \sqrt{2}\|\Lambda(F)\|_{L^1}^{\frac{1}{2}}$ and we get 
\begin{multline*}
 \frac{1-\alpha}{4}\biggl(\int_{\R}\Lambda(F(x))\dx\biggr)^{\frac{1}{2}}\leq (8+2\sqrt{2})\|\Gg'|\cdot|^{\frac{\g}{2}}\|_{L^{1}(\w_{1})}\|\Gg|\cdot|^{\frac{\g}{2}}\|_{L^{2}(\w_{1})}\\*
  +(4+\sqrt{2})\|\Gg'|\cdot|^{\g}\|_{L^{1}(\w_{1})}\|\Gg\|_{L^{2}(\w_{1})}+4\|\Gg\|_{L^{2}(\w_{1})}\|(\Gg|\cdot|^{\g})'\|_{L^{1}(\w_{1})}.
\end{multline*}
According to Corollary \ref{L2-weighted} and Lemma \ref{lem:L1:sing} the right-hand side is bounded (independent of $\alpha$ and $A$). Thus, we can pass to the limit $A\to \infty$ and $\alpha\to 0$ which yields $\|\Gg'\|_{L^2}<\infty$.
}\end{proof}

\subsection{Proof of Proposition \ref{prop:allmoments}}\label{app:C3}

We conclude the paper with the proof of Proposition \ref{prop:allmoments} stated in the Introduction.
\begin{proof}[Proof of Proposition \ref{prop:allmoments}]
{The idea is to apply \cite[Proposition 2.4]{ABCL} to some solution to the evolution equation $\partial_t f=\Q_\g(f,f)$ with some family of approximation of $\Gg$ as initial data. We then translate this result in terms of self-similar variables and pass to the limit. First, we define some variant of the Mehler transform introduced in \cite{LM}, 
$$f_0^n(x)=e^n\int_{\R}M\left(e^n\left(x-y\right)\right)\Gg(y)\dy,\qquad
  \mbox{ where } \qquad
  M(x)=\frac{e^{-\frac{x^2}{2M_2(\Gg)}}}{\sqrt{2\pi M_2(\Gg)}}.$$
We then have 
$$\int_{\R}f_0^n(x) \dx= \int_{\R}\Gg(x) \dx, \qquad
\int_{\R}f_0^n(x) \, x \dx= \int_{\R}\Gg(x) \, x \dx,$$
$$\int_{\R}f_0^n(x)\, x^2\dx = (1+e^{-2n})\int_{\R}\Gg(x) \,x^2\dx, \qquad
\int_{\R}f_0^n(x) \,|x|^3\dx \le C \|\Gg\|_{L^1_3},$$
for some constant depending only on $M_2(\Gg)$. Moreover, for any $\psi\in L^\infty_{-3}(\R)\cap {\mathcal C}(\R)$, 
$$\lim_{n\to \infty}\int_{\R} \psi(x)f_0^n(x)\dx= \int_{\R} \psi(x)\Gg(x) \dx.$$
 For every $n$, we then choose $K_n>n$ such that 
$$\int_{\R}\left(f_0^n(x)-\min\left(f_0^n(x),K_n\right) e^{-\frac{x^2}{K_n}}\right) \langle x \rangle^3 \dx \le \frac{1}{2n},$$
and we set 
$$\tilde{f}_0^n(x)=\min\left(f_0^n(x),K_n\right) e^{-\frac{x^2}{K_n}}.$$
For any $\psi\in L^\infty_{-3}(\R)\cap {\mathcal C}(\R)$, we have 
$$\lim_{n\to \infty}\int_{\R} \psi(x)\tilde{f}_0^n(x)\dx= \int_{\R} \psi(x)\Gg(x) \dx.$$
Since $\tilde{f}_0^n\in \cap_{k\in\N} L^1_k(\R)$, we deduce from \cite[Theorem A.1]{ABCL} that there exists a unique weak solution $f^n\in {\mathcal C}([0,\infty);L^1_3(\R))$  to $\partial_t f=\Q_\g(f,f)$ with initial condition $\tilde{f}_0^n$. Moreover, for every $t\ge0$, $f^n(t)\in \cap_{k\in\N} L^1_k(\R)$ and it satisfies
$$\int_{\R} f^n(t,x) \dx = \int_{\R} \tilde{f}_0^n(x) \dx, \qquad  \int_{\R} f^n(t,x) x \dx = \int_{\R} \tilde{f}_0^n(x) x  \dx,$$
$$\int_{\R} f^n(t,x) |x|^k \dx \le \int_{\R} \tilde{f}_0^n(x) |x|^k\dx\qquad \mbox{ for any } k\ge 2.$$
With the scaling  $$g^n(t,x)=\frac{1}{V(\tau(t))} f^n\left(\tau(t),\frac{x}{V(\tau(t))}\right)=e^{-\frac{t}{4}}f^{n}\left(\tau(t),e^{-\frac{t}{4}}x\right),$$
 where  
$$V(\tau)=V_{\g}(\tau)=\left(1+\frac{\g}{4}\tau\right)^{\frac{1}{\g}} \qquad \mbox{ and } \qquad 
\tau(t)= \frac{4}{\g}\left(e^{\frac{\g t}{4}}-1\right),$$
 we deduce the existence of a unique weak solution $g^n\in {\mathcal C}([0,\infty);L^1_3(\R))$  to 
\begin{equation}\label{evol_Qgamma}
\partial_t g= -\frac14 \partial_x(xg)+\Q_\g(g,g)
\end{equation}
  with initial condition $\tilde{f}_0^n$. Then, 
$$\int_{\R} g^n(t,x) \dx = \int_{\R} \tilde{f}_0^n(x) \dx, \qquad  \int_{\R} g^n(t,x) x \dx = e^{\frac{t}{4}} \int_{\R} \tilde{f}_0^n(x) x  \dx,$$
and, more generally, for any $k\ge0$ 
$$\int_{\R} g^n(t,x) |x|^k \dx =  e^{\frac{kt}{4}}  \int_{\R} f^n\left(\tau(t),x \right) |x|^k\dx <\infty. $$
Now, since $f^n(t)\in \cap_{k\in\N} L^1_k(\R)$ for any $t\ge 0$, we may deduce from \cite[Proposition 2.4]{ABCL} the existence, for any $k>2$, of a constant $C_k$ depending only on $k$, $\g$ and $\| \tilde{f}_0^n\|_{L^1_2}$ such that
 $$\int_{\R} f^n(t,x) |x|^k \dx \le C_k(\g,\| \tilde{f}_0^n\|_{L^1_2}) \min\left(t^{-\frac{k-2}{\g}}, t^{-\frac{k}{\g}}\right), \qquad \forall t>0. $$
Observing that $\lim_{n\to\infty} \| \tilde{f}_0^n\|_{L^1_2} = \|\Gg\|_{L^1_2}$ and  setting $\tilde{C}_k(\g)= \sup_{n\ge 1} C_k(\g,\| \tilde{f}_0^n\|_{L^1_2}) <\infty$, we get 
$$ \sup_{n\ge 1} \int_{\R} f^n(t,x) |x|^k \dx \le \tilde{C}_k(\g) \min\left(t^{-\frac{k-2}{\g}}, t^{-\frac{k}{\g}}\right), \qquad \forall t>0. $$
It then follows that, for every $n\ge 1$,  
$$  \int_{\R} g^n(t,x) |x|^k \dx \le \tilde{C}_k(\g)\, e^{\frac{kt}{4}} \min\left(\tau(t)^{-\frac{k-2}{\g}}, \tau(t)^{-\frac{k}{\g}}\right), \qquad \forall t>0. $$
Our aim is now to pass to the limit $n\to\infty$ in the above inequality. To this end, we fix $T>0$ and we shall prove that $(g^n)_{n\ge1}$ is relatively sequentially compact in ${\mathcal C}([0,T], w-L^1(\R))$, where ${\mathcal C}([0,T], w-L^1(\R))$ denotes the space of continuous function from $[0,T]$ in $L^1(\R)$ endowed with its weak topology. Let us first show that, for any $t\in[0,T]$, the set $\{g^n(t), n\ge 1 \}$ is weakly relatively compact in $L^1(\R)$. Since $\Gg\in L^1(\R)$, a refined version of the de la Vall\'ee Poussin Theorem ensures the existence of nonnegative convex function $\Phi\in{\mathcal C}^2([0,\infty))$ such that  $\Phi(0)=0$, $\Phi'(0)=0$, $\Phi'$ is concave, $\Phi'(r)>0$ if $r>0$,  
$$\lim_{r\to\infty} \frac{\Phi(r)}{r}= \lim_{r\to\infty} \Phi'(r)=\infty \qquad \mbox{ and }  \qquad \int_{\R}\Phi(\Gg(x))\dx < \infty.$$
Let us note that $\Phi$ also satisfies, for $r\ge0$, $s\ge 0$, 
\begin{equation}\label{propPhi}
\Phi(r)\le r\Phi'(r),\qquad  s\Phi'(r)\le \Phi(r)+\Phi(s). 
\end{equation}
Since $\Phi$ is convex, the Jensen inequality implies that 
$$\Phi(f_0^n(x)) \leq \int_{\R} \Phi(\Gg(y))   e^n M\left(e^n\left(x-y\right)\right) \dy,$$
and thus, 
$$\int_{\R} \Phi(f_0^n(x)) \dx  \leq \int_{\R} \Phi(\Gg(x)) \dx.$$ 
Now, since $\Phi$ is nondecreasing and $\tilde{f}_0^n \le f_0^n$, we get 
 $$\int_{\R} \Phi(\tilde{f}_0^n(x)) \dx  \leq \int_{\R} \Phi(f_0^n(x))  \dx \leq \int_{\R} \Phi(\Gg(x)) \dx.$$ 
Let us show that $\sup_{t\in[0,T]}\sup_{n\ge1} \int_{\R} \Phi(g^n(t,x)) \dx <\infty.$ 
Multiplying \eqref{evol_Qgamma} by $\Phi'(g^n(t,x))$ and integrating by parts, we obtain
\begin{eqnarray*}
\frac{\d}{\d t} \int_{\R} \Phi(g^n(t,x))\dx & =& -\frac14 \int_{\R} g^n(t,x) \Phi'(g^n(t,x))\dx +\frac14 \int_{\R}\Phi(g^n(t,x))\dx \\
& &+\int_{\R}\Q_\gamma(g^n(t),g^n(t))(x) \Phi'(g^n(t,x))\dx.
\end{eqnarray*}
Thanks to \eqref{propPhi} and the nonnegativity of $\Phi'$, this leads to 
$$ \frac{\d}{\d t} \int_{\R} \Phi(g^n(t,x))\dx  \leq \int_{\R}\Q^+_\gamma(g^n(t),g^n(t))(x) \Phi'(g^n(t,x))\dx.$$
Now, since $\g\in(0,1)$, we have $|x-y|^\g\le |x|^\g+|y|^\g$ and thus, thanks to symmetry,  
$$ \frac{\d}{\d t} \int_{\R} \Phi(g^n(t,x))\dx  \leq 2 \int_{\R^2} |x|^\g g^n(t,x)g^n(t,y) \Phi'\left(g^n\left(t,\frac{x+y}{2}\right)\right)\dx\dy.$$
Finally, we deduce from  \eqref{propPhi} that 
\begin{eqnarray*}
\frac{\d}{\d t} \int_{\R} \Phi(g^n(t,x))\dx & \le&  2 \int_{\R^2} |x|^\g g^n(t,x) \left( \Phi(g^n(t,y)) +  \Phi\left(g^n\left(t,\frac{x+y}{2}\right)\right)\right)\dx\dy \\ 
&\le&  4 \int_{\R} |x|^\g g^n(t,x) \dx  \int_{\R}\Phi(g^n(t,y)) \dy \\
& \le & 4\, e^{\frac{\g t}{4}} \|\tilde{f}_0^n\|_{L^1_2} \int_{\R}\Phi(g^n(t,y)) \dy. 
\end{eqnarray*}
We thus conclude that 
$$\int_{\R} \Phi(g^n(t,x)) \dx \le  \int_{\R} \Phi(\tilde{f}_0^n(x)) \dx\, e^{4 \|\tilde{f}_0^n\|_{L^1_2}\int_0^t e^{\frac{\g s}{4}} \d s } \le  \int_{\R} \Phi(\Gg(x)) \dx \, e^{\frac{16}{\g} \|\tilde{f}_0^n\|_{L^1_2}(e^{\frac{\g t}{4}}-1)}.$$
Therefore, we have proved that 
$$ \sup_{t\in[0,T]} \sup_{n\ge 1} \left(  \int_{\R} g^n(t,x) (1+x^2) \dx 
+ \int_{\R} \Phi(g^n(t,x)) \dx  \right)<\infty,$$
and we deduce from the Dunford-Pettis Theorem that, for any $t\in[0,T]$, the set $\{g^n(t), n\ge 1 \}$ is weakly relatively compact in $L^1(\R)$.
It now suffices to check that the family $g^n : [0,T]\to L^1(\R)$ is weakly equicontinuous to conclude that $(g^n)_{n\ge1}$ is relatively sequentially compact in ${\mathcal C}([0,T], w-L^1(\R))$. Let $\varphi\in{\mathcal C}^1_{c}(\R)$, $t_1,t_2\in[0,T]$. We infer from \eqref{evol_Qgamma} that 
\begin{multline*}
\left| \int_{\R} \varphi(x) g^n(t_1,x) \dx -  \int_{\R} \varphi(x) g^n(t_2,x) \dx \right|  \le  \frac14 \left| \int_{t_1}^{t_2} \int_{\R} \varphi'(x) x g^n(s,x) \dx \d s \right|  \\
+\frac12 \left|\int_{t_1}^{t_2}\int_{\R^2} |x-y|^\g g^n(s,x)g^n(s,y) \left(2\varphi\left(\frac{x+y}{2}\right) -\varphi(x) -\varphi(y)\right) \dx \dy \d s \right|.
\end{multline*}
Hence, since $|x-y|^\g\le |x|^\g+|y|^\g$ for $\g\in(0,1)$, 
 \begin{multline*}
\left| \int_{\R} \varphi(x) g^n(t_1,x) \dx -  \int_{\R} \varphi(x) g^n(t_2,x) \dx \right|  \le  \frac14 \|\varphi\|_{W^{1,\infty}} \left| \int_{t_1}^{t_2} e^{\frac{s}{4}} \int_{\R}  |x| f^n( \tau(s) ,x) \dx \d s \right|  \\
+ 4 \|\varphi\|_{L^\infty} \left|\int_{t_1}^{t_2}\int_{\R^2} |x|^\g g^n(s,x)g^n(s,y)  \dx \dy \d s \right|.
\end{multline*}
Finally, we have 
\begin{multline*}
\left| \int_{\R} \varphi(x) g^n(t_1,x) \dx -  \int_{\R} \varphi(x) g^n(t_2,x) \dx \right|  \\
 \le  \frac14 \|\varphi\|_{W^{1,\infty}}  \|\tilde{f}_0^n\|_{L^1_2}\left| \int_{t_1}^{t_2} e^{\frac{s}{4}} \d s \right|  
+ 4 \|\varphi\|_{L^\infty} \|\tilde{f}_0^n\|_{L^1_2} \|\tilde{f}_0^n\|_{L^1} \left|\int_{t_1}^{t_2} e^{\frac{s}{4}} \d s \right|,
\end{multline*}
where the right-hand side tends to $0$ as $|t_1-t_2|$ tends to $0$. 
Enlarging this result to $\varphi\in L^\infty(\R)$ is classical and uses the uniform bound for moments of order $2$ of $g^n(t)$ with respect to both $n\ge1$ and $t\in[0,T]$. It enables to conlude that there exists a nonnegative function $g$ and a subsequence of $(g^n)_{n\ge 1}$ (not relabelled) such that  
$$g\in L^\infty( (0,T);L^1_3(\R))  \qquad \mbox{ and } \qquad g^n\to g \mbox{ in } {\mathcal C}([0,T],w-L^1(\R)).$$
Moreover, for any $k>2$, 
\begin{equation}\label{mom_limit}
 \int_{\R} g(t,x) |x|^k \dx \le \tilde{C}_k(\g)\, e^{\frac{k t}{4}} \min\left(\tau(t)^{-\frac{k-2}{\g}}, \tau(t)^{-\frac{k}{\g}}\right), \qquad \forall t\in(0,T). 
\end{equation}
It is easy to check that $g$ is a solution to \eqref{evol_Qgamma} with initial condition $\Gg$. By uniqueness of such a solution (see \cite[Theorem A.1]{ABCL}), we deduce that $g(t,\cdot)=\Gg$ for any $t\in[0,T]$. It follows from \eqref{mom_limit} that $\Gg\in \bigcap_{k\ge 0 } L^1_k(\R)$.
}\end{proof}

\bibliographystyle{plainnat-linked}

\end{document}